\numberwithin{equation}{section}
 \newtheorem{theorem}{Theorem}[section]
\newtheorem{lemma}[theorem]{Lemma}
 \newtheorem{theo}[theorem]{Theorem}
  \newtheorem{pro}[theorem]{Proposition}
 \newtheorem{thm}[theorem]{Theorem}
\newtheorem{lem}[theorem]{Lemma}
 \newtheorem{cor}[theorem]{Corollary}
\theoremstyle{definition}
\newtheorem{defi}[theorem]{Definition}
\theoremstyle{remark}
\newtheorem{remark}[theorem]{Remark}
\numberwithin{equation}{section}
 \theoremstyle{plain}
\newtheorem*{namedthm}{\namedthmname}
\newcounter{namedthm}
\newenvironment{named}[1]
  {\def\namedthmname{#1}%
   \refstepcounter{namedthm}%
   \namedthm\def\@currentlabel{#1}}
  {\endnamedthm}
 \newcommand{\B}{\mathbb B}
 \newcommand{\R}{\mathbb R}
 \newcommand{\C}{\mathbb C}
 \newcommand{\N}{\mathbb N}
 \newcommand{\Tc}{\mathcal T}
 \newcommand{\e}{\varepsilon}
 \newcommand{\f}{\varphi}
 \newcommand{\p}{\psi}
 \newcommand{\s}{\sigma}
 \newcommand \PSH {{\rm PSH}}
 \newcommand \loc {{\rm loc}}
 \newcommand \Sub {\Subset}
 \newcommand \sub{\subset}
 \newcommand \setdef{\ ; \ }
\author{Vincent Guedj}
\address{Vincent Guedj, Institut de Math\'ematiques de Toulouse  \\ 
Universit\'e de Toulouse, CNRS \\
UPS, 118 route de Narbonne \\
31062 Toulouse cedex 09, France}
\email{vincent.guedj@math.univ-toulouse.fr}
\author{Chinh H. Lu}
\address{Hoang-Chinh  Lu, Laboratoire de Math\'ematiques d'Orsay,
 Univ. Paris-Sud,
 CNRS, Universit\'e Paris-Saclay,
  91405 Orsay, France}
\email{hoang-chinh.lu@math.u-psud.fr}
\author{Ahmed Zeriahi}
\address{Ahmed Zeriahi, Institut de Math\'ematiques de Toulouse,   \\ Universit\'e de Toulouse, CNRS \\
UPS, 118 route de Narbonne \\
31062 Toulouse cedex 09, France}
\email{ahmed.zeriahi@math.univ-toulouse.fr}
\thanks{The authors are partially supported by the ANR project GRACK}
\keywords{Complex Monge-Amp\`ere flow,  pluripotential solution, Perron envelope, comparison principle}
\subjclass[2010]{53C44, 32W20, 58J35}
\title[Pluripotential Complex Monge-Amp\`ere flows]{The Pluripotential Cauchy-Dirichlet problem \\  for complex Monge-Amp\`ere flows}
\date{\today}
\begin{document}

\begin{abstract}  
We develop the first steps of a parabolic pluripotential theory in bounded strongly pseudo-convex domains of $\mathbb{C}^n$.
We study certain degenerate parabolic complex Monge-Amp\`ere equations, modelled on the K\"ahler-Ricci flow 
evolving on complex algebraic varieties with Kawamata log-terminal singularities.

 Under natural assumptions on the Cauchy-Dirichlet boundary data, we show that the envelope of pluripotential subsolutions
is semi-concave in time and continuous in space, and provides the unique pluripotential solution with such regularity.
\end{abstract} 

 \setcounter{tocdepth}{1}

\maketitle

\tableofcontents

\section*{Introduction} 

The Ricci flow, first introduced by Hamilton \cite{Ham82} is the equation
$$
\frac{\partial}{\partial t} g_{ij}=-2R_{ij},
$$
evolving a Riemannian metric by its Ricci curvature. If the Ricci flow starts from a K\"ahler metric,
the evolving metrics remain K\"ahler and the resulting PDE is called the K\"ahler-Ricci flow.

It is expected that the K\"ahler-Ricci flow can be used 
to give a geometric classification of 
complex algebraic and K\"ahler manifolds, and produce canonical metrics at the same time.
Solving the K\"ahler-Ricci flow  boils down to solving a parabolic scalar equation modeled on
$$
\det \left (\frac{ \partial^2 u_t}{\partial z_j \partial \bar{z}_k}(t,z) \right)=e^{\partial_t u_t(z)+H(t,z)+\lambda u_t(z)}
$$
where $t \mapsto u_t(z)=u(t,z)$ is a smooth family of strictly plurisubharmonic functions in $\C^n$,
$\lambda \in \R$ and $g=e^{H}$ is a smooth and positive density.

It is important for geometric applications to study {\it degenerate} versions of these complex Monge-Amp\`ere flows,
where the functions $u_t$ are no longer smooth nor strictly  plurisubharmonic,
and the densities may vanish or blow up
(see \cite{SW13, CT15, Song_Tian_2017KRflowInvent,Eyssidieux_Guedj_Zeriahi_2017FlowIII}
and the references therein).

A viscosity approach has been developed recently in \cite{Eyssidieux_Guedj_Zeriahi_2015FlowI},
following its elliptic counterpart \cite{EGZ11,HL11,HL13}. 
While the viscosity theory is very robust, it requires the data to be continuous hence has a limited scope of applications.
Several geometric situations encountered in the Minimal Model program (MMP)
 necessitate one to deal with Kawamata log-terminal (klt) singularities.
The viscosity approach breaks down in these cases and a more flexible method is necessary.  

\smallskip
 
There is a well established pluripotential 
theory of weak solutions to degenerate elliptic complex Monge-Amp\`ere equations,
following the pioneering work of Bedford-Taylor \cite{Bedford_Taylor_1976Dirichlet,Bedford_Taylor_1982Capacity}.
This theory allows to deal with $L^p$-densities as established in a corner stone result of Ko{\l}odziej \cite{Kol98},
which provides a great generalization of
 \cite{Yau78}.

No similar theory has ever been developed on the parabolic side.
The purpose of this article, the first of a series on this subject, is to develop a pluripotential theory for
degenerate complex Monge-Amp\`ere flows.
This article settles the foundational material for this theory and
 focuses on solving the Cauchy-Dirichlet problem in domains of $\mathbb{C}^n$. 

\smallskip

We consider the following family of Monge-Amp\`ere flows
\begin{equation}\tag{CMAF} \label{eq: CMAF}
 d t \wedge (dd^c u)^n =e^{\partial_t {u} + F (t,z,u)} g(z) d t \wedge d V ,
\end{equation}
in $\Omega_T :=]0,T[ \times \Omega$, where $d V$ is the euclidean volume form on $\C^n$ and
  \begin{itemize}
 \item $T>0$ and $\Omega \Subset \C^n$ is a bounded strictly pseudoconvex domain;
  \item $F (t,z,r)$ is continuous in $[0,T[ \times \Omega \times \R$, increasing  in $r$, bounded in $[0,T[ \times \Omega \times J$, for each $J\Subset \mathbb{R}$;
\item $(t,r) \mapsto F(t,\cdot,r)$ is uniformly Lipschitz and  semi-convex  in $(t,r)$;
  \item  $g \in L^{p} (\Omega)$, $p > 1$, and $g>0$ almost everywhere ; 
   \item $u : [0,T[ \times \Omega \rightarrow \R$ is the unknown function.
   \end{itemize}
   
   Here $d=\partial+\overline{\partial}$ and $d^c =i(\overline{\partial}-\partial)/2$ so that 
   $dd^c =i\partial \overline{\partial}$ and 
   $(dd^c u)^n$ represents the determinant of the complex Hessian of $u$ in space (the complex Monge-Amp\`ere operator)
   whenever $u$ is ${\mathcal C}^2$-smooth.
   
For less regular functions $u$, the equation \eqref{eq: CMAF} should be understood in the weak sense of pluripotential theory as we  explain in Section \ref{sec: MAP operator}. 

We let $\mathcal P (\Omega_T)$ denote the set of  {\it parabolic potentials}, i.e. 
those functions $u:{\Omega}_T \rightarrow [- \infty, + \infty[$ defined  in $\Omega_T = ]0,T[ \times {\Omega}$ and satisfying the following conditions:
\begin{itemize} 
\item for any $t \in ]0,T[$, $u (t,\cdot) $ is plurisubharmonic in $\Omega$;
\item  the family   $\{u (\cdot,z) \setdef  z\in \Omega\}$ is locally uniformly Lipschitz in $]0,T[$.

\end{itemize}

We study in Section \ref{sec:recap}  basic properties of parabolic potentials. We show  in Lemma \ref{lem: usc} that if $u  \in \mathcal P (\Omega_T)$ and is bounded from above in $\Omega_T$ then it can be uniquely  extended as an upper-semicontinuous function in $[0,T[ \times \Omega$ such that $u (0,\cdot)$ is plurisubharmonic in $\Omega$.
We show that  parabolic potentials satisfy
approximate submean value inequalities (Lemma \ref{lem: approximate sub-mean})
and enjoy good compactness properties (Proposition \ref{pro: Montel property of P}).

\smallskip

We show in Section \ref{sec: MAP operator} that 
parabolic complex Monge-Amp\`ere operators are well defined on 
$\mathcal P (\Omega_T) \cap L^{\infty}_{\loc} (\Omega_T)$ and enjoy 
nice continuity properties, allowing to make sense of pluripotential 
sub/super/solutions to \eqref{eq: CMAF} (see Definition \ref{def: subsolution}).  
A crucial convergence property is obtained in Proposition 
\ref{pro: convergence semiconcave}, under a semi-concavity
assumption on the family of parabolic potentials.

 

\smallskip
 
 	A {\it Cauchy-Dirichlet boundary data} is a function $h$ defined on the parabolic boundary of $\Omega_T$  denoted by
 \begin{equation*} 
  \partial_0{\Omega_T} : = ([0,T[ \times \partial{\Omega}) \cup (\{0\} \times {\Omega}),
 \end{equation*}
 such that  
 \begin{itemize}
 \item  the restriction of  $h$ on $[0,T[ \times \partial \Omega$ is continuous; 
 \item  the family $\{h (\cdot, z) \setdef z\in  \partial \Omega\}$ 
  is locally uniformly Lipschitz in $]0,T[$ ;
 \item $h$ satisfies the following compatibility condition : $ \forall \zeta\in \partial \Omega$, 
 \begin{equation} \label{eq: weak compatibility condition}
h_0 := h(0,\cdot) \in \PSH(\Omega)\cap L^{\infty}(\Omega) \ \textrm{and}\ \lim_{\Omega\ni z\to \zeta} h(0,z) = h (0 ,\zeta). 
\end{equation}
 \end{itemize}

 The Cauchy-Dirichlet problem for the parabolic equation \eqref{eq: CMAF} with Cauchy-Dirichlet boundary data $h$
consists in finding $u \in  \mathcal P (\Omega_T) \cap L^{\infty} (\Omega_T)$ such that \eqref{eq: CMAF} holds in the pluripotential sense in $\Omega_T$ and  the following Cauchy-Dirichlet boundary conditions are satisfied :

 \begin{equation}  \label{eq: Dirichlet condition}
 \forall (\tau,\zeta)\in  [0,T[ \times \partial {\Omega}, \, \, \, \, \lim_{\Omega_T\ni (t,z)\to (\tau,\zeta)}u(t,z) = h(\tau,\zeta).
 \end{equation}

 \begin{equation}  \label{eq: Cauchy condition}
   \lim_{t \to 0^+} u_t  =  h_0  \, \, \, \, \mathrm{in} \, \, \, \, L^1 (\Omega).
 \end{equation}
In this case we  say that 
{\it $u$ is a solution to the Cauchy-Dirichlet problem for the  equation \eqref{eq: CMAF} with  boundary values $h$.}

  Observe that  a solution $u$ to the  equation \eqref{eq: CMAF} has plurisubharmonic slices in $\Omega$ and   the Cauchy condition (\ref{eq: Cauchy condition}) implies by a classical result in pluripotential theory  that $(\limsup_{t \to 0} u_t)^* = h_0^*  \in \PSH (\Omega)$, hence $h_0 = h_0^* \in \PSH (\Omega)$. 
This observation shows that the Cauchy data $h_0$ must be plurisubharmonic as it is required in   the compatibility condition (\ref{eq: weak compatibility condition}).

For a solution to   the Cauchy-Dirichlet problem for   the  equation \eqref{eq: CMAF}, the Cauchy condition (\ref{eq: Cauchy condition}) implies that 
 \begin{equation*} 
 \forall z \in \Omega, \, \, \,  \lim_{t \to 0^+} u _t (z) = h_0 (z).
 \end{equation*}
 
\smallskip
It is possible to consider less regular initial Cauchy data $h (0,\cdot)$ (see \cite{Son1,Son2}), but we will not pursue this here.

\smallskip

We try and construct a solution to the Cauchy-Dirichlet problem by the Perron method, considering the upper envelope $U$ of pluripotential subsolutions. 

 The technical core of the paper lies in Section \ref{sect: Perron envelope boundary value} and Section \ref{sect: time regularity}. In Section \ref{sect: Perron envelope boundary value}
  we construct subbarriers and controls from above to ensure that $U$ has the right boundary values 
(see Theorem \ref{thm: boundary value of U}).  In Section \ref{sect: time regularity} we prove that the Perron envelope of subsolutions is locally uniformly Lipschitz and semiconcave in time.

\begin{named}{Theorem A}\label{thm: main thm A}
	Assume $h$ is a Cauchy-Dirichlet boundary data in $\Omega_T$  such that for all $0<S<T$,
	and for all $(t,z) \in ]0,S]\times \partial \Omega$,
	\begin{equation}\tag{\dag}
		\label{eq: Lip and concave h}
		t |\partial_t h(t,z)| \leq C(S) 
\;	\; 	\text{ and } \; \;
	  t^2 \partial_t^2 h(t,z) \leq C(S), 
	\end{equation}
		 	
	 Then the envelope $U=U_{h,g,F}$ is locally uniformly Lipschitz and locally uniformly semi-concave in $t\in ]0,T[$.  
	 Moreover, $U$ satisfies the Cauchy-Dirichlet boundary conditions \eqref{eq: Dirichlet condition}, \eqref{eq: Cauchy condition}.
\end{named}

Here $C(S)$ is a positive constant depending on $S$ which may blow up as $S\to T$.
The proof of \ref{thm: main thm A}, which shows in particular that $U$ satisfies \eqref{eq: Lip and concave h},  is given in Theorem \ref{thm: U is local Lip in t}, Theorem \ref{thm: U is subsolution} and Theorem \ref{thm: U is local semiconcave}. The Lipschitz and semi-concave constants of $U$ depend explicitly on $C(S)$. 

We prove in  Theorem \ref{thm: U is Lip z} that the envelope $U$ is moreover (Lipschitz) continuous in space 
 if so are the data $(h_0,\log g,F)$.

\smallskip

Focusing for a while on the case of the unit ball with regular boundary data,
we obtain
the following parabolic analogue of Bedford and Taylor's celebrated result  \cite{Bedford_Taylor_1976Dirichlet} :

\begin{named}{Theorem B}
	\label{thm: main thm B}
	  Assume $\Omega=\mathbb{B}$ is the unit ball in $\mathbb{C}^n$  and
   \begin{itemize}
   	\item $G := \log g$ is ${\mathcal C}^{1,1}$ in $\bar{\B}$; 
   	\item $h$ is uniformly Lipschitz in $t\in [0,T[$, satisfies $\partial_t^2 h(t,z) \leq C/t^2$, $z\in \partial \B$, and $h$ is uniformly  ${\mathcal C}^{1,1}$  in $z\in \bar{\B}$; 
   	\item $F$ is Lipschitz and semi-convex  in $[0,T[\times \bar{\B}\times J$, for each $J\Subset \mathbb{R}$.  
   \end{itemize}
   
 Then the upper envelope $U := U_{h,g,F}$ is  locally  uniformly ${\mathcal C}^{1,1}$ in $z $
 and locally uniformly Lipschitz  in $t\in ]0,T[$.
For almost any $(t,z) \in \B_T$, we have 
 $$
 \mathrm{det} \, \left(\partial_j \bar \partial_k U (t,z)\right) = e^{\partial_t U (t,z) + F (t,z,U(t,z))} g(z).
 $$
 
In particular $U$ is a pluripotential solution to the Cauchy-Dirichlet problem for the parabolic equation \eqref{eq: CMAF} with boundary values $h$.
\end{named}

This result is obtained as a combination of Theorem \ref{thm: U is C1,1}
and Theorem \ref{thm: U is solution ball reg}.
Using an approximation and balayage process we then treat the case of more general domains $\Omega$
with less regular boundary data, obtaining the following solution to our original problem :
 
\begin{named}{Theorem C}
	\label{thm: main thm C}
	Assume $h$ is a Cauchy-Dirichlet boundary data in $\Omega_T$  such that for all $0<S<T$,
	and for all $(t,z) \in ]0,S]\times \partial \Omega$,
	\begin{equation}\tag{\dag}
		t |\partial_t h(t,z)| \leq C(S) 
\;	\; 	\text{ and } \; \;
	  t^2 \partial_t^2 h(t,z) \leq C(S), 
	\end{equation}
		
	The envelope of all subsolutions to  $\eqref{eq: CMAF}$ with Cauchy-Dirichlet boundary data $h$
	is a pluripotential solution to this Cauchy-Dirichlet problem.
\end{named}

 The proof of this fundamental result is given in Theorem \ref{thm: U is solution general}.
We eventually establish a comparison principle, which shows that 
$U_{h,g,F}$ is unique:

 \begin{named}{Theorem D}
	\label{thm: main thm D}
 	Same assumptions as in \ref{thm: main thm A}.  Let $\Phi$ be a bounded pluripotential subsolution  to  \eqref{eq: CMAF} with boundary values $h_{\Phi}$. Let $\Psi$ be a bounded pluripotential supersolution  with boundary values $h_{\Psi}$, such that 
 $\Psi$ is locally uniformly semi-concave in $t\in ]0,T[$
and  $h_{\Phi}$ satisfies \eqref{eq: Lip and concave h}. Then
$$
h_{\Psi} \geq h_{\Phi}
\text{  on } \partial_{0}\Omega_T
\Longrightarrow
\Phi \leq \Psi
\text{  in }
\Omega_T.
$$

In particular, there is a unique  
pluripotential solution to the Cauchy-Dirichlet problem for \eqref{eq: CMAF} with boundary data $h$,
which is locally uniformly semi-concave in $t$. 
 \end{named}
 
 The proof of \ref{thm: main thm D} is given in Section \ref{sect: uniqueness};
 it uses some ideas from \cite{GLZ_stability,DiNezza_Lu_2017KRflow}.
 When all the data $(h,F,g,u)$ are continuous, one can show that the solution $U$ coincides with the viscosity solution 
 constructed in \cite{Eyssidieux_Guedj_Zeriahi_2015FlowI}. We refer the reader to \cite{GLZ3} for a detailed comparison of viscosity 
 and pluripotential concepts.


\bigskip

\centerline{{\it Notations and assumptions on the data}}

\medskip

 We  finish this introduction by fixing some notations that will be used throughout the paper. 

 \subsubsection*{The domain}
 
In the whole article we let $d V$  denote the euclidean volume form in $\C^n$
and
$\Omega \Subset \C^n$ be a strictly pseudoconvex domain : there exists a smooth function $\rho$ in a neighborhood $V$ of $\bar{\Omega}$ such that 
$$
\Omega = \{z\in V \setdef \rho(z)<0\},
$$
where  $\partial_z \rho \neq 0$ on $\partial \Omega$ and $\rho$ is strictly plurisubharmonic in $V$. We set $\Omega_T:=]0,T[ \times \Omega$ with $T > 0$. Most of the time we will assume that $T<+\infty$. 

Recall that if a function $u:\Omega \rightarrow [- \infty , + \infty[$ is plurisubharmonic, then $dd^c u \geq 0
 $ is a positive current on $\Omega$. Here $d=\partial+\overline{\partial}$
and  $d^c= (i \slash 2) (\overline{\partial}-\partial)$ are both real operators so that $dd^c = i \partial \overline{\partial} $.

We let $\mathbb{B}$ denote the euclidean unit ball in $\mathbb{C}^n$ and $\lambda_{\mathbb{B}}$ denote the normalized Lebesgue measure on $\mathbb{B}$. 



\subsubsection*{The function $F$}    
\label{subsect: F}
 We assume that $F: [0,T[\times \Omega \times \mathbb{R}\rightarrow \mathbb{R}$ is continuous and
\begin{itemize}
	\item bounded in $[0,T[\times \Omega \times J$ for each $0<S<T$, $J\Subset \mathbb{R}$;
	\item increasing in $r$:  $r \mapsto F(t,x,r)$ is increasing for all $(t,x) \in \Omega_T$ fixed;
	\item locally uniformly Lipschitz in $(t,r)$ : for each compact  $J\Subset \mathbb{R}$ and each $0<S<T$ there exists a constant $\kappa=\kappa(S,J)>0$ such that for all $t,\tau \in [0,S]$, $z\in \Omega$, $r,r'\in J$,
   \begin{equation}
   	\label{eq: Lip F}
   	   |F(t,z,r)-F(\tau,z,r')| \leq \kappa ( |t-\tau| + |r-r'|);
   \end{equation}
	\item locally uniformly semi-convex in $(t,r)$ : for each compact subset $[0,S]\times J \Subset [0,T[\times \mathbb{R}$ there exists a constant $C=C(S,J)>0$ such that, for any $z\in \Omega$, the function \begin{equation}
	\label{eq: semi convex F} 
	(t,r)\mapsto F(t,z,r)+C(t^2+r^2)\ \textrm{is convex in}\  [0,S]\times J.
\end{equation}      
\end{itemize}

\subsubsection*{The density $g$} 
\label{subsect: g}

We assume that
\begin{itemize}
\item  $0\leq g \in L^p(\Omega)$ for some $p>1$ that is fixed thoughout the paper ;
\item     the set $\{z\in \Omega \setdef g(z)=0\}$ has Lebesgue measure zero.  
\end{itemize}

 \subsubsection*{Boundary data $h$}
 \label{subsect: h}

We assume throughout the article that 
\begin{itemize}
	\item $h: \partial_0 \Omega_T \rightarrow \R$ is bounded, upper semi-continuous on $\partial_0\Omega_T$; 
	\item the restriction of $h$ on $[0,T[\times \partial \Omega$ is continuous;
	\item 	$t \mapsto h (t,z)$ 
	is locally uniformly Lipschitz in $]0,T[$:  for all $0<S<T$ there is $C(S)>0$ such that
	 for all $(t,z) \in ]0,S] \times \partial \Omega$, 
	$$
			t |\partial_t h(t,z)| \leq C(S);
    $$
	\item 
	$h(0,\cdot)$ is bounded, plurisubharmonic in $\Omega$,
	and satisfies
\[
\lim_{\Omega\ni z\to \zeta} h(0,z) =h(0,\zeta), \ \forall \zeta \in \partial \Omega. 
\]
\end{itemize}

We eventually also assume that $t \mapsto h(t,z)$ is locally uniformly semi-concave in $]0,T[$ : 
for all $0<S<T$ there is $C(S)>0$ such that 
	$$
	  t^2 \partial_t^2 h(t,z) \leq C(S), \ \forall (t,z)\in [0,S]\times \partial \Omega. 
	$$

\subsubsection*{The constants} 

We fix once and for all various uniform constants:  
	\begin{equation}
		\label{eq: MF and Mh}
		M_h:= \sup_{\partial_0 \Omega_T} |h| \ , \  M_F:= \sup_{\Omega_T} F(\cdot,\cdot, M_h). 
	\end{equation}
We fix a   plurisubharmonic function $\rho$ in $\Omega$, continuous in $\bar{\Omega}$ so that 
 \begin{equation} \label{eq: rho}
 (dd^c \rho)^n = g d V, \, \, \, \rho = 0 \, \, \text{in} \, \, \partial \Omega,
 \end{equation} 
 in the weak sense  in $\Omega$. 
 Such a function exists by \cite{Kol95,Kol98}
 and there is moreover a uniform a priori bound on $\rho$,
 \begin{equation*} \label{eq: rho uniform bound}
  \Vert \rho\Vert_{L^ {\infty} (\Omega)} \leq c_n \Vert f\Vert_{L^ p (\Omega)}^ {1 \slash n},
  \end{equation*}
where $c_n > 0$ is a uniform constant depending on $n, \Omega$.


\section{Families of plurisubharmonic functions} \label{sec:recap}
Parabolic potentials form the basic objects of our study. They can be seen as weakly regular family of plurisubharmonic functions. In this section we define them and establish their first properties.

\subsection{Basic properties}

\subsubsection{Parabolic potentials}

We start with some basic definitions which will be used throughout all the paper.

 
 \begin{defi} Let  $ u : \Omega_T :=  ]0,T[ \times  {\Omega} \longrightarrow [- \infty , + \infty[$ be a given function.
 
  We say that  the family $\{u(\cdot,z) \setdef z \in \Omega\}$ is locally  uniformly  Lipschitz in $]0,T[$ if for any subinterval $J \Subset ]0,T[$ there exists a constant $\kappa := \kappa_J (u) > 0$ such that 
 \begin{equation} \label{eq: def Lip}
  u (t,z)  \leq u (s,z)  + \kappa \vert t - s\vert,
\text{  for all } s, t \in J
\text{ and } z \in {\Omega}.
 \end{equation}

\end{defi}
 
 
 \begin{defi} \label{defi: parabolic potential}
The set of parabolic potentials  $ \mathcal P (\Omega_T)$ is the set of functions  
$ u : \Omega_T :=  ]0,T[ \times  {\Omega} \longrightarrow [- \infty , + \infty[$  such that
\begin{itemize}
 \item for all $t  \in ]0,T[$, the slice $u_t : z \mapsto u (t,z)$ is plurisubharmonic in $\Omega$;
 \item  the family $\{u(\cdot,z) \setdef z \in \Omega\}$ is locally  uniformly  Lipschitz in $]0,T[$.
\end{itemize}  
\end{defi}

 \smallskip
 
 $\PSH(\Omega)$ embeds in $\mathcal P (\Omega_T)$ as the class of time independent potentials.
 Basic operations on plurisubharmonic functions extend naturally to parabolic potentials:
 \begin{itemize}
 \item  if $u,v \in  \mathcal P (\Omega_T)$  then 
 $u+v \in  \mathcal P (\Omega_T)$ and $\max(u,v) \in  \mathcal P (\Omega_T) $;
 \item   if $u  \in  \mathcal P (\Omega_T)$ and $\lambda \geq 0$
  then $\lambda u$ is also a parabolic potential. 
 \end{itemize}
 
 It follows from the next Lemma that parabolic potentials extend naturally as  
 upper semi-continous functions in $[0,T[ \times \Omega$.

 \begin{lem}
\label{lem: usc} 
 Let $u:  ]0,T[ \times \Omega \longrightarrow [- \infty , + \infty[$ be a function bounded from above and satisfying the following conditions :
	
	$ (i)$  for any $t \in ]0,T[$ the function $u_t := u (t,\cdot)$ is plurisubharmonic in $\Omega$ ; 
	
	$(ii)$ for all $z \in \Omega$  the function $ u (\cdot,z)$ is upper semicontinuous in $]0,T[$.
	
\noindent	For $z \in \Omega$ we set
	$$
	u_0(z) := (\limsup_{t\to 0^+} u_t)^* (z) = \limsup_{\zeta \to z} \left(\limsup_{t\to 0^+} u_t(\zeta)\right).
	$$

 Then $u_0$ is plurisubharmonic in $\Omega$ and the extension $u : [0,T[ \times \Omega \rightarrow [-\infty,+\infty[$ is upper semicontinuous 
 in $[0,T[ \times \Omega$. 
 \end{lem}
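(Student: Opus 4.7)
The plan is to first prove that $u_0$ is plurisubharmonic in $\Omega$, and then to deduce joint upper semicontinuity of the extension at every point $(t_0,z_0)\in[0,T[\times\Omega$ by combining the sub-mean value inequality with the reverse Fatou lemma applied to each slice.

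For the plurisubharmonicity of $u_0$, I let $w_s(z) := \sup_{0<t<s} u(t,z)$ for $s\in ]0,T[$. Since $\{u_t : 0<t<s\}$ is a family of plurisubharmonic functions in $\Omega$ uniformly bounded above by the upper bound $M$ of $u$, the classical upper-envelope theorem yields that $w_s^*$ is plurisubharmonic. As $s\to 0^+$ the functions $w_s^*$ are pointwise decreasing, so their limit $\psi(z) := \inf_{s>0} w_s^*(z)$ is itself plurisubharmonic (a decreasing limit of plurisubharmonic functions bounded above, or else identically $-\infty$). Setting $v := \limsup_{t\to 0^+} u_t$, we have $\psi\ge v$ on $\Omega$, and the upper semicontinuity of $\psi$ then gives $\psi \ge v^* = u_0$. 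For the converse, each $w_s^*$ coincides with $w_s$ off a pluripolar set $E_s$, so $E := \bigcup_{s\in \Q\cap ]0,T[} E_s$ is pluripolar and $\psi = v$ on $\Omega\setminus E$. The standard fact that a plurisubharmonic function $\phi$ satisfies $\phi(z) = \limsup_{\zeta\to z,\,\zeta\notin F}\phi(\zeta)$ for every pluripolar $F$ then gives $\psi(z) = \limsup_{\zeta\to z,\,\zeta\notin E} v(\zeta) \le \limsup_{\zeta\to z} v(\zeta) = u_0(z)$. Hence $\psi = u_0$ is plurisubharmonic.

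For the joint upper semicontinuity of the extended $u$ (with $u(0,\cdot) := u_0$), fix $(t_0,z_0)\in[0,T[\times\Omega$ and a sequence $(t_n,z_n)\to(t_0,z_0)$ in $]0,T[\times\Omega$, and choose $r>0$ small enough that $\bar B(z_0,2r)\Subset \Omega$. The sub-mean value inequality applied to the plurisubharmonic slices $u_{t_n}$ yields, for $n$ large,
\[
u(t_n,z_n) \le \frac{1}{|B(z_n,r)|}\int_{B(z_n,r)} u(t_n,\xi)\,dV(\xi).
\]
Setting $f_n(\xi) := u(t_n,\xi)\,\chi_{B(z_n,r)}(\xi)$, one has $f_n \le M\,\chi_{\bar B(z_0,2r)}$, and for $\xi\in B(z_0,r)$ the point $\xi$ lies in $B(z_n,r)$ eventually, so hypothesis (ii) (when $t_0>0$) or the definition of $v$ combined with $v\le u_0$ (when $t_0=0$) gives $\limsup_n f_n(\xi)\le u(t_0,\xi)$; for $\xi\notin\bar B(z_0,r)$, $f_n(\xi)=0$ eventually. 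The reverse Fatou lemma then produces
\[
\limsup_n u(t_n,z_n) \le \frac{1}{|B(z_0,r)|}\int_{B(z_0,r)} u(t_0,\xi)\,dV(\xi),
\]
where $u(t_0,\cdot)$ means $u_{t_0}$ if $t_0>0$ and $u_0$ if $t_0=0$. Since $u(t_0,\cdot)$ is plurisubharmonic (by (i) if $t_0>0$ and by the first step if $t_0=0$), letting $r\to 0^+$ the right-hand side converges to $u(t_0,z_0)$, which yields $\limsup_n u(t_n,z_n)\le u(t_0,z_0)$.

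The main obstacle is the identification $\psi = u_0$ in the first step: one is handed the limsup-regularization $v^* = u_0$ and must match it with the plurisubharmonic candidate $\psi$ extracted from the upper-envelope theorem and a monotone limit, which requires invoking the property that plurisubharmonic functions are determined off pluripolar sets. Once this identification is secured, the joint upper semicontinuity reduces to a routine sub-mean value plus reverse Fatou computation, uniform in whether $t_0>0$ or $t_0=0$.
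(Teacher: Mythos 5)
Your proof is correct and takes essentially the same approach as the paper: the joint upper semicontinuity comes from the sub-mean value inequality combined with (reverse) Fatou applied to the slices, and the plurisubharmonicity of $u_0$ comes from the regularized upper limit of a locally uniformly bounded family of psh functions. The paper condenses the latter to a one-line appeal to that standard fact (which you reprove via the upper-envelope theorem and negligibility of pluripolar sets), and sidesteps the indicator-function bookkeeping by normalizing $u\le 0$ and integrating over the fixed ball $B(z_0,r)\subset B(z,r+\delta)$; these are cosmetic differences.
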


 \begin{proof} 
 Since $u$ is bounded from above we can assume that $u \leq 0$. Fix $(t_0,z_0) \in \Omega_T$  and let $r > 0$ be such that $B (z_0,2 r) \Subset \Omega$. Fix  $\delta \in ]0,r[$.  
 
 Since $u_t\leq 0$, by the submean value inequality for psh functions, we have for $\vert z - z_0\vert \leq \delta$ and $t \in ]0,T[$,
 \begin{equation*}
  u(t,z)   \leq \frac{1}{\mathrm{Vol}(B(z,r+\delta))}\int_{B(z_0,r)} u(t,\zeta) dV(\zeta).
 \end{equation*} 
It thus follows from Fatou's Lemma and assumption $(ii)$ that
 \begin{equation}
 	\label{eq: usc extension}
 	 \limsup_{(t,z) \to (t_0,z_0)} u (t,z) \leq \frac{1}{\mathrm{Vol} (B(z_0,r+\delta))}  \int_{B(z_0,r)} u (t_0,\zeta)dV(\zeta). 
 \end{equation}
Since $u (t_0,\cdot)$ is plurisubharmonic in $\Omega$,  letting $\delta \to 0^+$ and $r \to 0^+$ we obtain
  
  $$
   \limsup_{(t,z) \to (t_0,z_0)} u (t,z) \leq  u (t_0,z_0),
   $$
   which proves that $u$ is upper semi-continuous at $(t_0,z_0)$. 
 
 Now if $t_0 = 0$, since  $\{ u_t \setdef t \in ]0,T[ \}$ is a  family of plurisubharmonic functions  in $\Omega$ which is uniformly bounded from above, it follows that  $u_0$ is plurisubharmonic in $\Omega$. 
 Then by \eqref{eq: usc extension},
 $$
 \limsup_{(t,z) \to (0,z_0)} u (t,z) \leq \frac{1}{\mathrm{Vol} (B(z_0,r+\delta))}  \int_{B(z_0,r)} u_0 (\zeta)dV(\zeta). 
 $$
 
 Letting $r \to 0^+$ we obtain, by the plurisubharmonicity of $u_0$,
 $$
  \limsup_{(t,z) \to (0,z_0)} u (t,z)   \leq u_0 (z_0) =: u (0,z_0),
  $$
   which proves the semi-continuity of the extension at the point $(0,z_0)$.    
 \end{proof}

The next result provides a parabolic analogue of a classical result of Lelong about negligible sets 
for plurisubharmonic functions; it will play an important role in section \ref{sect: Perron envelope boundary value}.

 \begin{lem} \label{lem: negligible}
  Let $\mathcal U  \subset \mathcal P (\Omega_T)$ be a  family of functions  which is locally uniformly bounded from above. 
 Assume $ U := \sup \{u \setdef u \in  \mathcal U \} $
  is  locally uniformly Lipschitz in  $t \in ]0,T[$.
  Then
  \begin{itemize}
  \item  the upper semicontinuous regularization  $U^{*}$ (in $\Omega_T$) 
 belongs to $\mathcal P (\Omega_T)$;
 \item for any $t \in ]0,T[$,  $U^* (t,\cdot) = (U_t)^*$ in $\Omega$ and the exceptional set  
 $$
 E (U) := \{ (t,z) \in \Omega_T \setdef  \ U (t,z) < U^{*}  (t,z) \}
 $$ 
 has zero $(2 n +1)$-dimensional Lebesgue measure in $\Omega_T \subset \R^{2 n +1}$.  
  \end{itemize}
 \end{lem}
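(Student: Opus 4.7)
The main reduction I would aim for is the slicewise identification
\[
U^{*}(t_0,z_0) \;=\; (U_{t_0})^{*}(z_0), \qquad \forall\,(t_0,z_0)\in \Omega_T,
\]
where $(U_{t_0})^{*}$ denotes the usc regularization of the slice $U_{t_0}=U(t_0,\cdot)$ in the space variable $z$ alone. Once this identity is established, both bullets of the lemma follow essentially immediately from the classical (elliptic) Lelong theorem applied on each time slice.

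To prove the identity, the inequality $U^{*}(t_0,z_0)\geq (U_{t_0})^{*}(z_0)$ is immediate since a limsup over a larger set is larger. For the reverse, I would fix $J\Subset ]0,T[$ containing $t_0$ in its interior and let $\kappa=\kappa_J$ be the local Lipschitz constant of $U$. The bound $U(t,z)\leq U(t_0,z)+\kappa|t-t_0|$, valid for $(t,z)\in J\times \Omega$, gives
\[
U^{*}(t_0,z_0) \;=\; \limsup_{(t,z)\to(t_0,z_0)} U(t,z) \;\leq\; \limsup_{z\to z_0} U(t_0,z) \;=\; (U_{t_0})^{*}(z_0),
\]
since the Lipschitz term vanishes in the limit. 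Note that the local uniform boundedness from above of $\mathcal{U}$ guarantees all these quantities are finite.

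With the identity in hand, the first bullet follows: each slice $U^{*}(t_0,\cdot)=(U_{t_0})^{*}$ is plurisubharmonic in $\Omega$ by the classical Lelong theorem (applied to the locally uniformly bounded family $\{u(t_0,\cdot): u\in \mathcal{U}\}$ of psh functions), and the Lipschitz property in $t$ is transferred from $U$ to $U^{*}$ by applying usc regularization in $z$ to both sides of $U(t,z)\leq U(s,z)+\kappa|t-s|$, yielding $(U_t)^{*}(z)\leq (U_s)^{*}(z)+\kappa|t-s|$, i.e.\ $U^{*}(t,z)\leq U^{*}(s,z)+\kappa|t-s|$. Hence $U^{*}\in \mathcal{P}(\Omega_T)$. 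For the second bullet, the same classical theorem gives that for each fixed $t\in ]0,T[$ the set $E_t := \{z\in \Omega : U(t,z) < (U_t)^{*}(z)\}$ is pluripolar in $\Omega$, hence has vanishing $2n$-dimensional Lebesgue measure. Using the identity just proved, one has $E(U)=\bigcup_{t\in ]0,T[}\{t\}\times E_t$, and Fubini--Tonelli delivers the $(2n+1)$-Lebesgue negligibility of $E(U)$ in $\Omega_T$.

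I do not anticipate a substantive obstacle: the argument is driven entirely by the local Lipschitz hypothesis on $U$, which is exactly what prevents regularization in the $t$ variable from contributing anything beyond the slicewise regularization in $z$. The only genuine input is the classical Lelong theorem, reduced to fixed-time slices by the identification above.
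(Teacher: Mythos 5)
Your proposal is correct and follows essentially the same route as the paper: you establish the slicewise identity $U^{*}(t,\cdot)=(U_t)^{*}$ using the uniform Lipschitz bound in $t$ (the paper cites its Lemma~\ref{lem: partial continuity} for exactly this point, whereas you re-derive it inline), then apply the classical Lelong theorem on each fixed-time slice and conclude via Fubini. The only cosmetic difference is that you prove the partial-regularization identity directly rather than invoking the paper's abstract version of it.
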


The smallness of the exceptional set $E(U)$  can be made more precise:  all the $t$-slices of   $E (U)$ have zero $2 n$-dimensional Lebesgue measure in $\Omega$.

 \begin{proof} 
 Our assumption ensures that the function $U$ is locally bounded from above. The first statement follows immediately from (\ref{eq: def Lip}).  Since $U$ is locally Lipschitz in $t$, there is no need to regularize in the $t$ variable: it follows from Lemma \ref{lem: partial continuity} that for all $(t,z) \in \Omega_T$,
 $$
 U^{*}  (t,z) = (U_t)^* (z),
 $$ 
where the upper semicontinuous regularization in the LHS is in the $(t,z)$-variable, while the upper semicontinuous regularization in the RHS is in the $z$-variable only, $t$ being fixed.  A classical theorem of  Lelong (see \cite[Proposition 1.40]{GZbook}) ensures that $E_t = \{z \in \Omega \setdef U_t(z) < (U_t)^* (z)\}$ has zero Lebesgue measure in $\mathbb{C}^n$.  Since $E= \{(t,z) \in \Omega_T  \setdef  \ z \in E_t \}$ the second statement of the lemma follows from Fubini's theorem.
 \end{proof}

 \subsubsection{Semi-continuous regularization}
  Given a   function $u$ on a metric space $(Z,d)$ which is locally bounded from above, we define $\text{usc}_Z u$ 
  to be the smallest upper semi-continuous function lying above $u$,
  $$
{\rm usc}_Z u \,  (z): = \limsup_{z' \to z} u(z') = \inf_{r > 0} \left(\sup_{B (z,r)} u \right).
  $$
  
   Fix $I \subset \R$ an interval, $(Y,d)$   a metric space and 
   $\phi : I \times Y \longrightarrow [- \infty, \infty[$ a   function. 
For any  $\delta >0$, we denote by $\kappa_I (\phi,\delta)$  the smallest constant $\kappa > 0$ such that for any $s, t \in I$ with $\vert s -t \vert \leq \delta$ and any $y \in Y$, 
 $$
 \phi (t,y) \leq \phi (s,y) + \kappa.
 $$
  
 \begin{lem} \label{lem: partial continuity}
 Assume  $\phi : I \times Y \longrightarrow [- \infty, + \infty[$ 
 satisfies $ \lim_{\delta \to 0} \kappa_I (\phi,\delta)= 0$.
Then for all $t \in I$ and $y \in Y$,  
$$
({\rm usc}_Y \phi_t) (y) = ({\rm usc}_{I \times Y} \phi)  (t,y).
$$ 

  In particular if   $\phi (t_0,\cdot)$ is upper semi-continuous  at a point $y_0 \in Y$
  for some $t_0 \in I$, then $\phi$ is upper semi-continuous  at the point $(t_0,y_0) \in I \times Y$.
 \end{lem}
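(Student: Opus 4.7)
The plan is to prove the two inequalities separately, then deduce the ``in particular'' clause as a one-line consequence.

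One inequality is free: for any $t \in I$ and $y \in Y$, every sequence $y_n \to y$ in $Y$ gives rise to the constant-$t$ sequence $(t, y_n) \to (t, y)$ in $I \times Y$, so the set of limsups defining the right-hand side contains the one defining the left-hand side, yielding
\[
({\rm usc}_Y \phi_t)(y) \;\leq\; ({\rm usc}_{I \times Y} \phi)(t, y).
\]

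The core of the lemma is the reverse inequality, and it is here that the hypothesis $\lim_{\delta \to 0} \kappa_I(\phi, \delta) = 0$ is used. I would take an arbitrary sequence $(s_n, y_n) \to (t, y)$ in $I \times Y$ and set $\delta_n := |s_n - t| \to 0$. The very definition of $\kappa_I(\phi, \delta_n)$ provides the one-sided control
\[
\phi(s_n, y_n) \;\leq\; \phi(t, y_n) + \kappa_I(\phi, \delta_n),
\]
which shifts the time argument at the cost of an error tending to $0$. Passing to $\limsup$ as $n \to \infty$ and using the hypothesis kills the error term, leaving $\limsup_n \phi(s_n, y_n) \leq \limsup_n \phi(t, y_n) \leq ({\rm usc}_Y \phi_t)(y)$. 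Taking the supremum over all such sequences yields $({\rm usc}_{I \times Y} \phi)(t, y) \leq ({\rm usc}_Y \phi_t)(y)$, which combined with the easy inequality gives equality.

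For the final assertion, assume $\phi(t_0, \cdot)$ is upper semi-continuous at $y_0$, so $({\rm usc}_Y \phi_{t_0})(y_0) = \phi(t_0, y_0)$. Applying the equality just established at the point $(t_0, y_0)$ gives $({\rm usc}_{I \times Y} \phi)(t_0, y_0) = \phi(t_0, y_0)$, which is precisely upper semi-continuity of $\phi$ at $(t_0, y_0)$.

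The main ``obstacle'' is really nothing more than recognizing that the hypothesis $\lim_{\delta \to 0} \kappa_I(\phi, \delta) = 0$ is exactly the one-sided uniform-in-$y$ modulus of continuity needed to exchange a joint $\limsup$ with a sliced $\limsup$; the proof is essentially a two-line computation. I would, however, be careful to allow $\phi$ to take the value $-\infty$ (so that the inequality $\phi(s_n,y_n) \leq \phi(t,y_n) + \kappa_I(\phi,\delta_n)$ holds in the extended-real sense) and to recall the standard convention that ${\rm usc}_Y \phi_t \geq \phi_t$ pointwise, so that the ``in particular'' step is genuinely equivalent to $({\rm usc}_Y \phi_{t_0})(y_0) = \phi(t_0, y_0)$.
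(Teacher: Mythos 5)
Your proof is correct and takes essentially the same route as the paper's: the easy inequality is noted, and the reverse inequality is obtained by transferring the $t$-slice bound to a $\delta$-neighborhood in $t$ using the hypothesis $\kappa_I(\phi,\delta)\to 0$. The only cosmetic difference is that you phrase the argument via sequences $(s_n,y_n)\to(t,y)$ while the paper fixes $A\geq({\rm usc}_Y\phi_t)(y)$ and $\varepsilon>0$ and works directly with a product neighborhood $\{\lvert s-t\rvert\leq\delta\}\times V$; both are equivalent since $I\times Y$ is a metric space.
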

 
 It follows from Lemma \ref{lem: partial continuity} that parabolic potentials are upper semi-continuious in $\Omega_T$.
 
 We can introduce similarly the lower semi-continuous regularization ${\rm lsc}_Z u$ of a  function $u$ which is locally bounded from below on $Z$. A similar conclusion holds.
 
 \begin{proof} 
 Observe that $ \text{usc}_{I \times Y} \phi  (t,y) \geq  \text{usc}_Y (\phi_t)  (y)$ for any $(t,y) \in I \times Y$.  To prove the reverse inequality, fix $t \in I$, $y \in Y$, $A \geq \text{usc}_Y (\phi_t)  (y)$ and $\varepsilon>0$. Then there exists a neighborhood $V$ of $y$ in $Y$ such that for $z \in V$ we have
 \[
\phi (t,z) \leq A+ \varepsilon.
\]
Since $\lim_{\delta \to 0} \kappa_I (\phi,\delta)= 0$, there exists $\delta > 0$ small enough such that $\kappa_I (\phi,\delta) \leq \e$.
For  $(s,z) \in I \times V$ with $\vert s - t\vert \leq \delta$ we have
$$
  \phi (s,z) \leq  \phi (t,z) + \kappa_I (\phi,\delta) \leq A + \e + \kappa_I (\phi,\delta) \leq A+ 2\e.
 $$
 This proves the inequality $ \text{usc}_{I \times Y} \phi  (t,y) \leq  \text{usc}_Y (\phi_t)  (y)$.
 \end{proof}

\subsubsection{Approximate submean value inequalities} 
 
Parabolic potentials satisfy approximate sub-mean value inequalities:

  \begin{lem}  \label{lem: approximate sub-mean}
 Let $\Omega \subset \C^n$ be a domain and $u \in \mathcal P (\Omega_T)$.
 Fix $(t_0,x_0) \in \Omega_T$ and $\e_0, r_0 > 0$ so that $[t_0 - \e_0 , t_0 + \e_0] \times \bar B (x_0,r_0) \Subset \Omega_T$. Then for any $0 < \e \leq \e_0$, $0 < r \leq r_0$, 
 $$
 u (t_0,x_0) \leq  \int_{-1}^1 \int_{\B} u (t_0 +  \e s, x_0 + r \xi) \ d \lambda_\B (\xi) \ d s \slash 2  +  \kappa_0 \e,
 $$
 where $\kappa_0 > 0$ is the uniform Lipschitz constant of $u$ in $[t_0 - \e_0 , t_0 + \e_0]\times B (x_0 ,r)$.  
 
 \end{lem}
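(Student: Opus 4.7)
The plan is to combine the spatial submean value inequality (valid because each slice $u_{t_0}$ is plurisubharmonic on $\Omega$) with the Lipschitz control in the time variable, and then average over a time interval.

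First I would start from plurisubharmonicity of $u_{t_0}$ on $\Omega$: since $\bar B(x_0, r) \Subset \Omega$, the classical submean value inequality gives
\[
u(t_0, x_0) \leq \int_{\mathbb{B}} u(t_0, x_0 + r\xi)\, d\lambda_{\mathbb{B}}(\xi).
\]
Next I would invoke the local uniform Lipschitz property \eqref{eq: def Lip} of the family $\{u(\cdot, z)\}_{z \in \Omega}$ on the interval $[t_0 - \varepsilon_0, t_0 + \varepsilon_0]$: for any $s \in [-1,1]$ and any $\xi \in \mathbb{B}$, since $x_0 + r\xi \in B(x_0, r_0)$, we have
\[
u(t_0, x_0 + r\xi) \leq u(t_0 + \varepsilon s, x_0 + r\xi) + \kappa_0 \,\varepsilon |s| \leq u(t_0 + \varepsilon s, x_0 + r\xi) + \kappa_0 \varepsilon.
\]

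Then I would average this inequality in $s$ over $[-1,1]$ with weight $\tfrac{1}{2}\,ds$, which preserves the inequality and produces
\[
u(t_0, x_0 + r\xi) \leq \tfrac{1}{2}\int_{-1}^{1} u(t_0 + \varepsilon s, x_0 + r\xi)\, ds + \kappa_0 \varepsilon.
\]
Integrating in $\xi$ against $d\lambda_{\mathbb{B}}$ and combining with the submean value step (applying Fubini to interchange the $\xi$ and $s$ integrals, which is justified since $u$ is locally bounded from above and measurable in both variables thanks to upper semi-continuity via Lemma \ref{lem: partial continuity}) yields exactly the desired estimate.

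The argument is essentially a two-line combination and I do not expect a genuine obstacle; the only point requiring a small amount of care is the measurability/Fubini justification and verifying that the cylinder $[t_0 - \varepsilon_0, t_0 + \varepsilon_0] \times \bar B(x_0, r_0)$ really contains all the evaluation points so that the Lipschitz constant $\kappa_0$ applies uniformly. Both are handled by the hypothesis $[t_0 - \varepsilon_0, t_0 + \varepsilon_0] \times \bar B(x_0, r_0) \Subset \Omega_T$ and the definition of a parabolic potential.
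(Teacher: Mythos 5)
Your argument is the same as the paper's: apply the spatial submean inequality to the slice $u_{t_0}$, use the Lipschitz estimate $u(t_0,\cdot) \leq u(t_0+\varepsilon s,\cdot) + \kappa_0\varepsilon|s|$, and average in $s$ over $[-1,1]$. The only cosmetic difference is that you bound $\kappa_0\varepsilon|s|$ by $\kappa_0\varepsilon$ before averaging while the paper keeps $|s|$ (yielding the slightly sharper constant $\kappa_0\varepsilon/2$), and you integrate in $\xi$ last rather than first; both are immaterial.
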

 
\begin{proof}
 Since $u (t_0,\cdot)$ is psh in $\Omega$, the submean-value inequality  yields,  for all $ 0 < r \leq r_0$,
 $$
 u (t_0,z_0) \leq   \int_{\mathbb{B}} u (t_0 , z_0 + r \xi) \ d \lambda_\B (\xi).
 $$
The Lipschitz condition ensures that for $ 0 < r \leq r_0$, $0 < \e \leq \e_0$, and $-1 \leq s \leq 1$,
 \[
  \int_{\B} u (t_0 , z_0 + r \xi) \ d \lambda_\B (\xi) \leq   \int_{\B} u (t_0 +  \e s, z_0 + r \xi)  \ d \lambda_\B (\xi) +  \kappa_0 \e \vert s\vert .
\]
 Integrating in $s$ we obtain the required inequality.
 \end{proof}
 
 Parabolic potentials therefore enjoy  interesting integrability properties:
 
 \begin{cor} 
 We have $\mathcal P (\Omega_T) \subset L^q_{\loc} (\Omega_T)$ for any $q \geq 1$.
  Moreover if $u \in \mathcal P (\Omega_T)$ then
 for all $(t,z) \in \Omega_T$,
 $$
 u (t,z) = \lim_{\e, r \to 0}  \int_{-1}^1  \int_{\B} u (t +  \e s, z + r \xi) \ d \lambda_\B (\xi) \ d s \slash 2.
 $$
 In particular if $u , v \in \mathcal P (\Omega_T)$ and $u \leq v$ a.e. in $\Omega_T$, then $u \leq v$ everywhere.
 \end{cor}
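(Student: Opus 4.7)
The corollary asserts three things, which I address in order. My plan is to deduce local $L^q$-integrability from the integrability of a single psh slice using the Lipschitz hypothesis in time; to establish the pointwise averaging identity by combining the one-sided bound of Lemma \ref{lem: approximate sub-mean} with the upper semi-continuity of $u$ provided by Lemma \ref{lem: partial continuity}; and to derive the comparison statement directly from the identity.

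For the first claim, I fix $J \Subset \,]0,T[$ with Lipschitz constant $\kappa = \kappa_J(u)$ and a compact $K \Subset \Omega$. Pick any $t_0 \in J$. Exchanging the roles of $s$ and $t$ in \eqref{eq: def Lip} yields $|u(t,z) - u(t_0,z)| \leq \kappa \,\mathrm{diam}(J)$ whenever $u(t_0,z) > -\infty$, i.e.\ off the pluripolar set $\{u_{t_0} = -\infty\}$, which has zero Lebesgue measure. Since the plurisubharmonic slice $u_{t_0}$ lies in $L^q_{\loc}(\Omega)$ for every $q \geq 1$, Fubini's theorem then yields $\|u\|_{L^q(J \times K)} < \infty$.

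For the pointwise identity at $(t,z) \in \Omega_T$, abbreviate the double average by $I(\e,r)$. Lemma \ref{lem: approximate sub-mean} gives $u(t,z) \leq I(\e,r) + \kappa_0 \e$, hence $u(t,z) \leq \liminf_{\e, r \to 0} I(\e,r)$. Conversely, $u$ is upper semi-continuous on $\Omega_T$ by Lemma \ref{lem: partial continuity} together with Definition \ref{defi: parabolic potential}; if $u(t,z) > -\infty$, then for any $\delta > 0$ one may choose a neighborhood of $(t,z)$ on which $u \leq u(t,z) + \delta$, and the integrand inherits this bound for sufficiently small $\e,r$, giving $\limsup I(\e,r) \leq u(t,z) + \delta$ and hence the reverse inequality upon letting $\delta \to 0$. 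When $u(t,z) = -\infty$, the same argument with $-M$ in place of $u(t,z) + \delta$ shows $\lim I(\e,r) = -\infty$.

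The comparison is then immediate: if $u \leq v$ almost everywhere, the null set $\{u > v\}$ pulls back under the bi-Lipschitz affine map $(s,\xi) \mapsto (t + \e s, z + r \xi)$ to a null set in $(s,\xi)$-coordinates, so $\int_{-1}^1\int_\B u \leq \int_{-1}^1\int_\B v$ for every $(\e,r)$, and the limit identity applied to both potentials yields $u(t,z) \leq v(t,z)$. Nothing here is genuinely hard; the only mildly delicate point is the $-\infty$ case in the pointwise formula, which is precisely why one must invoke upper semi-continuity rather than relying on the approximate submean value inequality alone.
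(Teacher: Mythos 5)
Your proof is correct. For the $L^q_{\loc}$ statement you argue exactly as the paper does: fix a slice $u_{t_0}\in L^q_{\loc}(\Omega)$, propagate in time via the Lipschitz bound, and apply Fubini; the one-sided bound $|u(t,z)-u(t_0,z)|\leq\kappa\,\mathrm{diam}(J)$ off the pluripolar slice is a coarser but equally valid version of the paper's $\leq\kappa_J|t-t_0|$, and since $u(t_0,z)=-\infty$ forces $u(t,z)=-\infty$ for all $t$ via \eqref{eq: def Lip}, the exceptional set causes no trouble. The comparison statement is also handled the same way, via the pointwise identity.

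For the pointwise identity you take a slightly different route from the paper. The paper bounds $I(\e,r)$ from above by fixing $r$ small so that the \emph{spatial} mean of the psh slice $u(t_0,\cdot)$ is within $\delta$ of $u(t_0,z_0)$, then folds in the time Lipschitz estimate and integrates in $s$; this stays entirely at the level of one-variable averages and does not invoke any two-variable semi-continuity. You instead invoke the upper semi-continuity of $u$ on $\Omega_T$ (which follows from Lemma \ref{lem: partial continuity} and the psh slices, and is established before the corollary), which immediately bounds the double average by $u(t,z)+\delta$ for small $\e,r$, with the $-\infty$ case handled uniformly. Both arguments use the same two ingredients (psh slices and time Lipschitz); yours packages them through the usc statement, which is mildly slicker but leans on Lemma \ref{lem: partial continuity}, whereas the paper's is more self-contained. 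Either is acceptable.
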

 
 \begin{proof}
Let $u\in \mathcal{P}(\Omega_T)$ and fix $K\Subset \Omega_T$ a compact subset. Then there exists a compact  interval  $J\Subset ]0,T[$ and a compact subset $D\Subset \Omega$ such that $K\subset J\times D$. 

Fix $t_0\in J$. Since $u(t_0,\cdot)$ is plurisubharmonic in $\Omega$ we have that $u(t_0,\cdot) \in L^q(D)$. Since $u(\cdot,z)$ is uniformly Lipschitz in $J$ we infer  $|u(t,z)-u(t_0,z)| \leq \kappa_J|t-t_0|$ for all $t\in J$ and $z\in D$. It thus follows from Fubini's theorem that 
\begin{flalign*}
\int_{J\times D} |u(t,z)|^q& dV(z)dt = \int_{J\times D} |u(t,z)|^q dV(z) dt\\
& \leq  2^{q-1} \left( \int_{J\times D} \left( |u(t_0,z)|^q +  |u(t,z)-u(t_0,z)|^q\right)  dV(z) dt \right)  	\\
&\leq   2^{q-1} \int_{J\times D}  |u(t_0,z)|^q + 2^{q-1} \kappa_J^q {\rm Vol}(D) \int_J |t-t_0|^q dt. 
\end{flalign*}
This proves that $u\in L^q(K)$, hence $u\in L^q_{\rm loc}(\Omega_T)$.  

\smallskip

 Fix $(t_0,z_0)\in \Omega_T$ and $\delta>0$. Since $u(t_0,\cdot)$ is psh in $\Omega$ we have 
\[
u(t_0,z_0) = \lim_{r\to 0^+}  \int_{\mathbb{B}} u(t_0,z_0+r\xi) d\lambda_{\mathbb{B}} (\xi). 
\]
For small enough $r>0$ we infer
\[
u(t_0,z_0) \geq   \int_{\mathbb{B}} u(t_0,z_0+r\xi) d\lambda_{\mathbb{B}} (\xi) -\delta. 
\]
Fix $\varepsilon_0>0$ such that $[t_0-\varepsilon_0,t_0+\varepsilon_0]\Subset ]0,T[$. 
Let $\kappa_0$ be the uniform Lipschitz constant of $u$ in $[t_0-\varepsilon_0,t_0+\varepsilon_0] \times \Omega$. 
Then 
for $\varepsilon \in ]0,\varepsilon_0[$,  
\begin{flalign*}
u(t_0,z_0)& \geq \int_{\mathbb{B}} u(t_0+ \varepsilon s,z_0+r\xi) d\lambda_{\mathbb{B}} (\xi)  - \kappa_0\varepsilon |s| -\delta.
\end{flalign*}
Integrating in $s\in [-1,1]$ we obtain 
\[
u(t_0,z_0) \geq \int_{-1}^1\int_{\mathbb{B}} u(t_0+ \varepsilon s,z_0+r\xi) d\lambda_{\mathbb{B}} (\xi)ds/2  - \kappa_0\varepsilon  -\delta.
\]
Letting $r\to 0, \varepsilon\to 0$ and then $\delta\to 0$ we get one inequality. 
The reverse inequality was already obtained in Lemma  \ref{lem: approximate sub-mean}.
\end{proof}

 \subsection{Behaviour on slices}

We now estimate the $L^1$-norm on slices in terms of the global $L^1$-norm. 

 \begin{lem} \label{lem:L1Slice-L1} 
Fix $u, v \in \mathcal P (\Omega_T)$ and  $0 < T_0 < T_1  <S< T$. 
Then  for all $T_0 \leq t \leq T_1$,
$$
 \Vert u (t,\cdot) - v (t,\cdot)\Vert_{L^1 (\Omega)} \leq 2  M \max \left\{\Vert u - v\Vert_{L^1 (\Omega_{T_1})}^{1 \slash2}, \Vert u - v\Vert_{L^1 (\Omega_{T_1})} \right\}, 
$$ 
 where 
 $
 M :=\max \{\sqrt{ \kappa {\rm Vol}(\Omega)}, (S - T_1)^{- 1}\},
 $
 and 
 $$
 \kappa:=\sup_{z \in \Omega} {\sup}^{*}_{t,s \in [T_0,S]} \frac{\left|  (u-v)(t,z)-(u-v)(s,z) \right|}{|t-s|}
 $$ 
 is the uniform Lipschitz constant of $u-v$ in $[T_0,S]$.
 \end{lem}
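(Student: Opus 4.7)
The plan is to reduce this slice-vs-global comparison for $u - v$ to a one-variable Lipschitz argument on the function
\[
\phi(\tau) := \|u(\tau, \cdot) - v(\tau, \cdot)\|_{L^1(\Omega)}.
\]
The hypothesis on $\kappa$ gives the pointwise bound $|(u-v)(t, z) - (u-v)(s, z)| \leq \kappa |t - s|$ uniformly in $z \in \Omega$ for $t, s \in [T_0, S]$; integrating in $z$ yields that $\phi$ is Lipschitz on $[T_0, S]$ with constant $L := \kappa \,\mathrm{Vol}(\Omega)$. Coupled with $\phi \geq 0$, this produces the tent inequality $\phi(\tau) \geq \max\bigl(0, \phi(t) - L|\tau - t|\bigr)$ on $[T_0, S]$.

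Fix $t \in [T_0, T_1]$ and set $a := \phi(t)$. I will integrate the tent lower bound to the right of $t$, up to $\min(t + a/L, S)$, and compare with the global $L^1$-norm $I$ of $u - v$ on the parabolic cylinder appearing in the statement. A dichotomy appears. In the \emph{quadratic regime} $a \leq L(S - t)$, the full tent fits and $\int_t^{t+a/L}\phi(\tau)\,d\tau \geq a^2/(2L)$, which yields $a \leq \sqrt{2L\,I}$. In the \emph{linear regime} $a > L(S - t)$, the tent truncated at $S$ still integrates to at least $a(S - t)/2$, using that $L(S-t)^2/2 < a(S-t)/2$; since $t \leq T_1$, this produces $a \leq 2I/(S - T_1)$.

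Combining the two bounds gives $a \leq 2M \max\{\sqrt{I}, I\}$ with $M = \max\{\sqrt{L}, (S - T_1)^{-1}\}$, which is precisely the stated inequality. The essential and only mildly subtle step is the dichotomy itself: the quadratic regime naturally contributes the $\sqrt{I}$ term (and the $\sqrt{L}$ inside $M$), while the linear regime contributes the $I$ term (and the $(S - T_1)^{-1}$ inside $M$), explaining the form of the maximum on the right. The hypotheses $0 < T_0 < T_1 < S < T$ together with $u, v \in \mathcal P(\Omega_T)$ are used solely to guarantee that $\kappa$ is finite on the compact sub-interval $[T_0, S] \subset\, ]0, T[$, so no serious obstacle arises.
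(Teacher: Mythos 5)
Your argument is correct and reproduces the paper's proof in all essentials: both reduce the slice estimate to the one-variable observation that $\phi(\tau)=\|u(\tau,\cdot)-v(\tau,\cdot)\|_{L^1(\Omega)}$ is Lipschitz with constant $L=\kappa\,\mathrm{Vol}(\Omega)$, and then integrate a Lipschitz lower bound over an interval $[t,t+\delta]\subset[T_0,S]$ to compare the peak value $\phi(t)$ against the global $L^1$ mass. The paper packages this as the averaged inequality $\theta(t)\leq \kappa\delta/2+\delta^{-1}\|\theta\|$ followed by an optimization over $\delta\in(0,S-T_1]$, which is exactly your tent integration with the dichotomy governed by whether the optimal width $a/L$ fits in $[t,S]$ — the two presentations are algebraically equivalent.
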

 
 Here  $\sup^*$ is the essential sup with respect to Lebesgue measure. 
 This lemma quantifies the following facts : for functions in $\mathcal P (\Omega_T) $, 
 \begin{itemize}
 \item convergence in $L^1 (\Omega_T)$   implies  convergence of their slices in $L^1 (\Omega)$;
 \item boundedness in $L^1 (\Omega_T)$   implies  compactness of their slices in $L^1 (\Omega)$.
 \end{itemize}

 \begin{proof} 
  Since  $u, v$ are uniformly Lipschitz in $[T_0,S] \times \Omega$,  we deduce that for any $T_0 \leq t \leq S$, and  $T_0\leq s \leq   S$,
$$
 \vert u (t,x) - v (t,x)\vert    \leq    \kappa \vert s - t\vert + \vert u (s,x) - v (s,x)\vert,
$$
where  $\kappa > 0$ is the uniform Lipschitz constant of $u - v$ on $[T_0,S]$. 
We infer
$$
\int_\Omega \vert u (t,z) - v (t,z)\vert  d V (z)  \leq    
\kappa \vert s - t\vert {\rm Vol}(\Omega)+ \int_\Omega \vert u (s,z) - v (s,z)\vert d V (z).
$$
Thus the function
$$
t \mapsto \theta (t) := \int_\Omega \vert u (t,z) - v (t,z)\vert  d V (z),
$$
is a Lipschitz function in $[T_0,S]$ with  Lipschitz constant $ \kappa {\rm Vol}(\Omega)$. 
The  conclusion follows from the next lemma, an  elementary result in one real variable.
\end{proof}

We have used the following  inequality :

\begin{lem}
Fix $0<S_0 < S_1 < S$ and let $\theta : [S_0,S] \longrightarrow \R$ be such that for all 
$s, \s \in [S_0,S]$ with $s\leq \sigma$,
$
 \theta (s) \leq \theta (\sigma) + \kappa (\sigma - s).
$
Then 
$$
 \max_{S_0 \leq s \leq S_1} \theta (s)  \leq 2  M \max \{\sqrt{\Vert \theta \Vert}, \Vert \theta\Vert \},
$$
where $M :=  \max \{\sqrt{ \kappa}, (S - S_1)^{- 1}\}$ and  $\Vert \theta \Vert := \Vert \theta \Vert_{L^1 ([S_0,S])}$.
\end{lem}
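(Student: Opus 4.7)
The key observation is that the backward sub-Lipschitz inequality forces $\theta$ to remain close to its maximum for some time \emph{after} the maximum is (nearly) attained; integrating that lower bound, and exploiting the extra room $S-S_1$ to the right of $[S_0, S_1]$, will then produce the $L^1$ estimate.

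Concretely, I will set $A := \sup_{s\in[S_0,S_1]}\theta(s)$; if $A \leq 0$ the inequality is trivial, so one may assume $A>0$. Fix $\varepsilon \in (0,A)$ and pick $s_0 \in [S_0,S_1]$ with $\theta(s_0) > A - \varepsilon$. The hypothesis, applied with $s = s_0$ and $\sigma \in [s_0, S]$, gives $\theta(\sigma) \geq (A-\varepsilon) - \kappa(\sigma-s_0)$. Set $L := \min\{(A-\varepsilon)/(2\kappa),\ S-S_1\}$ (with the convention $1/0 = +\infty$ to allow $\kappa = 0$). Since $s_0 \leq S_1$, the interval $[s_0, s_0 + L]$ lies in $[S_0, S]$, and on it $\theta \geq (A-\varepsilon)/2$. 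Integrating yields $\Vert\theta\Vert \geq (A-\varepsilon)L/2$.

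The proof then splits according to which term realises the minimum in $L$: if $L = (A-\varepsilon)/(2\kappa)$ one deduces $A - \varepsilon \leq 2\sqrt{\kappa\Vert\theta\Vert}$, while if $L = S-S_1$ one deduces $A - \varepsilon \leq 2\Vert\theta\Vert/(S-S_1)$. In either case
\[
A - \varepsilon \leq 2\max\{\sqrt{\kappa\Vert\theta\Vert},\ \Vert\theta\Vert/(S-S_1)\}.
\]
Since $M \geq \sqrt{\kappa}$ one has $M\sqrt{\Vert\theta\Vert} \geq \sqrt{\kappa\Vert\theta\Vert}$, and since $M \geq 1/(S-S_1)$ one has $M\Vert\theta\Vert \geq \Vert\theta\Vert/(S-S_1)$; hence the right-hand side is bounded by $2M\max\{\sqrt{\Vert\theta\Vert},\Vert\theta\Vert\}$, and letting $\varepsilon \to 0$ completes the argument.

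No step is genuinely delicate; the only mildly subtle feature is the appearance of $\max\{\sqrt{\Vert\theta\Vert},\Vert\theta\Vert\}$ in the conclusion, which merely packages the two regimes uniformly: when $\Vert\theta\Vert$ is small the square-root bound from the first case is binding, while for $\Vert\theta\Vert$ large it is the linear bound from the second case that takes over.
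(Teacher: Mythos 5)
Your proof is correct. It is, however, the ``dual'' of the argument the paper gives rather than a reproduction of it. The paper fixes an arbitrary $s\in[S_0,S_1]$ and integrates the inequality $\theta(s)\leq\theta(\sigma)+\kappa(\sigma-s)$ in $\sigma$ over $[s,s+\delta]$ to get the \emph{upper} bound $\theta(s)\leq\kappa\delta/2+\delta^{-1}\|\theta\|$ for every $s$, then optimizes in $\delta$ subject to the constraint $\delta\leq S-S_1$; the case split is on whether the optimal $\delta$ fits inside $[S_0,S]$. You instead pick a near-maximizer $s_0$, use the same inequality to show $\theta$ persists above $(A-\varepsilon)/2$ on an interval of length $L$ to the right of $s_0$, and turn that into a \emph{lower} bound $\|\theta\|\geq L(A-\varepsilon)/2$; your case split (whether $L=(A-\varepsilon)/(2\kappa)$ or $L=S-S_1$) is the image of the paper's split under the substitution $\delta\leftrightarrow L$. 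Both proofs lean on exactly the same two resources --- the one-sided Lipschitz condition and the buffer $S-S_1$ --- and produce the same intermediate bound $\max\theta\leq 2\max\{\sqrt{\kappa\|\theta\|},\,\|\theta\|/(S-S_1)\}$, after which the passage to the stated $M$-form is the same bookkeeping. Your route has the small advantage of making the mechanism transparent (``a near-maximal value can't decay too fast, so it contributes to $\|\theta\|$''), and of not requiring $\theta$ to be integrable on $[s,s+\delta]$ for every $s$ at once; the paper's route has the advantage of giving an explicit pointwise inequality valid at every $s$, which is sometimes the more reusable form.
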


\begin{proof}
Fix $0<  \delta \leq S - S_1$. Then for  $\s, s \in [S_0,S_1]$ with $s\leq \sigma$,
$$
\theta (s) \leq \theta (\s) +  \kappa (\sigma - s).
$$
Fix $S_0 \leq s \leq S_1$. Integrating in $\s $ in $[s , s + \delta] \sub [S_0,S ]$,  we get 
\begin{equation} \label{eq:Sup-L1ineq}
\theta (s)  \leq  \frac{ \kappa \, \delta}{2}  +  \int_{s}^{s + \delta}  \theta (\s) \frac{d \s }{\delta} \\
 \leq   \frac{\delta \kappa}{2}  \,   + \delta^{- 1} \Vert \theta \Vert.
\end{equation}
The minimum  of $ \tau \longmapsto  \kappa \tau/2    + \tau^{- 1} \Vert \theta \Vert$
is achieved  at $\tau_0 := \sqrt{2}\Vert \theta \Vert^{1 \slash 2} \slash \sqrt {\kappa}$.
If $ 2\Vert \theta \Vert \leq \kappa  (S - S_1)^2 $ i.e. $\tau_0 \leq S - S_1$, then 
$
\theta (t) \leq 2 \sqrt{\kappa \Vert \theta\Vert},
$
 for $t \in [S_0, S_1]$.
If  $ 2\Vert \theta \Vert \geq \kappa  (S - S_1)^2 $, applying  (\ref{eq:Sup-L1ineq}) with $\delta = S - S_1$ yields
$$
\max_{S_0 \leq t \leq S_1} \theta (t) \leq \kappa (S - S_1)/2 + \Vert \theta\Vert (S - S_1)^ {- 1} \leq  2  \Vert \theta\Vert (S - S_1)^ {- 1}.
$$
Altogether we obtain
$$
 \max_{S_0 \leq t \leq S_1} \theta (t)  \leq 2  \max \{\sqrt{ \kappa \Vert \theta \Vert}, \Vert \theta\Vert (S - S_1)^ {- 1}\}.
$$
\end{proof}

 \subsection{Time derivatives and semi-concavity}
 
 In this section we observe that a parabolic potential $\f$ has well defined time derivatives $\partial_t \f$  almost everywhere. Let $\ell$ denote the Lebesgue measure on $\R$, and fix a positive Borel measure  $\mu$ on $\Omega$.

\begin{lem} \label{lem:Radmacher}
  Fix $\f \in \mathcal P (\Omega_T)$. 
Then there exists a Borel set $E \subset \Omega_T$ 
 $\ell \otimes \mu$-negligible such that $\partial_t \f (t,z)$ exists for all $(t,z) \notin E$.

 In particular  $\partial_t \f \in L_{\loc}^{\infty} (\Omega_T)$ 
 and for any continuous function 
 $\gamma \in {\mathcal C}^0(\R,\R)$, \, $ \gamma (\partial_t \f) \, \ell \otimes \mu$ is a well defined Borel measure in $\Omega_T$.  
\end{lem}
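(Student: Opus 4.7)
The plan is to combine the uniform-in-$z$ Lipschitz regularity built into the definition of $\mathcal P(\Omega_T)$ with the classical one-variable fact that every Lipschitz function is differentiable outside a Lebesgue-null set, and then to promote this slicewise statement to a joint statement on $\Omega_T$ via Tonelli's theorem. Fix $\f \in \mathcal P(\Omega_T)$. For every compact subinterval $J \Subset ]0,T[$, Definition~\ref{defi: parabolic potential} gives a constant $\kappa_J(\f)$ that bounds the Lipschitz norm of every slice $t \mapsto \f(t,z)$ on $J$. The Lebesgue differentiation theorem for real-valued Lipschitz functions of one variable then supplies, for each fixed $z \in \Omega$, a Lebesgue-null set $N_z \subset ]0,T[$ off which $\partial_t \f(\cdot,z)$ exists and satisfies $|\partial_t \f(t,z)| \leq \kappa_J(\f)$ on $J \sm N_z$.

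In order to apply Tonelli I must know that the joint exceptional set
\[
E := \{(t,z) \in \Omega_T \setdef \partial_t \f(t,z) \text{ does not exist}\}
\]
is Borel, which is the one technical step that requires care. I would realize $E$ as the locus where the upper and lower Dini derivatives
\[
\overline D_t \f(t,z) := \limsup_{\substack{h \to 0 \\ h \in \Q^*}} \frac{\f(t+h,z) - \f(t,z)}{h}, \qquad \underline D_t \f(t,z) := \liminf_{\substack{h \to 0 \\ h \in \Q^*}} \frac{\f(t+h,z) - \f(t,z)}{h}
\]
disagree. Since $\f$ is upper semicontinuous on $\Omega_T$ by Lemma~\ref{lem: usc}, each difference quotient is Borel on $\Omega_T$, so a countable $\limsup$/$\liminf$ keeps them Borel; the Lipschitz bound forces both Dini derivatives to be real-valued and locally bounded. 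Hence $E = \{\overline D_t \f > \underline D_t \f\}$ is Borel. Its vertical slice $E^z$ is contained in $\bigcup_n N_z^{(n)}$ for a countable exhaustion $J^{(n)} \Subset ]0,T[$, so $\ell(E^z) = 0$ for every $z \in \Omega$, and Tonelli's theorem yields $(\ell \otimes \mu)(E) = \int_\Omega \ell(E^z) \, d\mu(z) = 0$.

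Finally, defining $\partial_t \f$ off $E$ as the common value of the Dini derivatives (and, say, zero on $E$) produces a Borel representative on $\Omega_T$ whose restriction to any cylinder $J \times \Omega$ with $J \Subset ]0,T[$ is bounded by $\kappa_J(\f)$, giving $\partial_t \f \in L^\infty_{\loc}(\Omega_T)$. For any $\gamma \in \mathcal C^0(\R,\R)$, the composition $\gamma(\partial_t \f)$ is then a locally bounded Borel function on $\Omega_T$, well-defined up to the $\ell \otimes \mu$-null set $E$, so $\gamma(\partial_t \f) \, \ell \otimes \mu$ defines an unambiguous Borel measure on $\Omega_T$. The only mildly delicate point in this argument is the Borel measurability of $E$, which is why I systematically pass to increments in the countable set $\Q^*$ rather than work with a direct differentiability condition.
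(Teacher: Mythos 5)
Your proof follows essentially the same approach as the paper's: define upper and lower Dini derivatives using rational increments, observe that the exceptional set $E$ where they disagree is measurable, apply the classical one-variable Rademacher/Lebesgue fact on each slice $t \mapsto \f(t,z)$, and conclude $(\ell\otimes\mu)(E)=0$ by Fubini--Tonelli, with the Lipschitz bound supplying $L^\infty_{\loc}$. One small correction: the upper semicontinuity of $\f$ on $\Omega_T$, which you invoke to get Borel measurability of the difference quotients, should be attributed to Lemma~\ref{lem: partial continuity} (as the paper remarks) rather than to Lemma~\ref{lem: usc}, whose hypotheses require boundedness from above and whose conclusion concerns the extension to $t=0$.
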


\begin{proof}
We set, for $(t,z) \in \Omega_T$,     
 $$
\partial_t^u \f  (t,z) := \limsup_{s \to 0} \frac{\f (t + s,z) - \f (t,z)}{s} =\limsup_{\mathbb{Q}^* \ni s\to 0} \frac{\f(t+s,z)-\f(t,z)}{s},
 $$
 and 
 $$
\partial_t^l \f (t,z) :=\liminf_{s \to 0} \frac{\f (t + s,z) - \f (t,z)}{s}=\liminf_{\mathbb{Q}^* \ni s\to 0} \frac{\f(t+s,z)-\f(t,z)}{s}.
 $$
The  equalities above follow from the Lipschitz property of $\varphi$.
These two functions are measurable in $(\Omega_T,\ell \otimes \mu)$, hence the set
$$
E:= \{(t,x) \in \Omega_T \setdef  \partial^l \varphi(t,z) < \partial^u \varphi (t,z)\}
$$ 
is $\ell \otimes \mu$-measurable.  
For each $z \in \Omega$ fixed, $t\mapsto \varphi(t,z)$ is locally Lipschitz hence differentiable almost everywhere in $]0,T[$. Hence, for all $z\in \Omega$,
$$
E_z:= \{t\in ]0,T[  \setdef (t,z) \in E\}
$$ 
has zero $\ell$-measure. Fubini's theorem thus ensures that $\ell \otimes \mu (E)=0$.
\end{proof}  

The previous lemma  shows that $\partial_t^u \f = \partial_t^l \f $, $\ell \otimes \mu$-almost everywhere in $\Omega_T$. 
These thus define a function which we denote by $\partial_t \f \in L_{\loc}^{\infty} (\Omega_T)$.

 When $\f$ is semi-concave (or semi-convex) in $t$, we can improve  the previous result.
 
 \begin{defi}
 We say that  $\f : \Omega_T \longrightarrow \R $ is  uniformly semi-concave in $]0,T[$ if for any compact $J \Subset ]0,T[$, there exists  
 $\kappa = \kappa (J,\f) > 0$ such that for all $z \in \Omega$, the function  $t \longmapsto \f (t,z) - \kappa t^ 2$ is concave in $J$. 
 \end{defi}
 The definition of uniformly semi-convex functions is analogous.
Note that such functions are automatically locally uniformly Lipschitz.

 \begin{lem} \label{lem: left and right derivative}  
 Let $\f : \Omega_T \longrightarrow \R $ be a continuous function which is uniformly semi-concave
 in $]0,T[$. Then $(t,z) \mapsto \partial_t^- \f (t,z)$
  is upper semi-continuous  while $(t,z) \mapsto \partial_t^+ \f (t,z)$
 is lower semi-continuous  in $\Omega_T$.
 In particular, there exists a Borel set $E \subset \Omega_T$ which is $\ell\otimes \mu$-negligible,  such that $\partial_t^+ \f$ and $\partial_t^- \f$ coincide and are continous at each point in $\Omega_T \setminus E$. 
 \end{lem}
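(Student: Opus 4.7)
The plan is to locally reduce to the concave case via the semi-concavity constant. Fix an arbitrary compact subinterval $J \Subset ]0,T[$, choose $\kappa = \kappa(J,\f) > 0$ from the semi-concavity assumption, and set
$$
\psi(t,z) := \f(t,z) - \kappa t^2,
$$
so that $t \mapsto \psi(t,z)$ is concave on $J$ for every $z \in \Omega$. Since $\partial_t^\pm \f = \partial_t^\pm \psi + 2\kappa t$ on $J \times \Omega$ and $(t,z) \mapsto 2\kappa t$ is continuous, it is enough to prove the analogous statements for $\psi$.

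For the semi-continuity statements, I would exploit the monotonicity of difference quotients of concave functions. Concavity in $t$ gives, for each fixed $z$, that $s \mapsto \bigl(\psi(t+s,z) - \psi(t,z)\bigr)/s$ is decreasing in $s > 0$ and that $s \mapsto \bigl(\psi(t,z) - \psi(t-s,z)\bigr)/s$ is increasing in $s > 0$. Hence
$$
\partial_t^+ \psi(t,z) = \sup_{s>0}\ \frac{\psi(t+s,z) - \psi(t,z)}{s},
\qquad
\partial_t^- \psi(t,z) = \inf_{s>0}\ \frac{\psi(t,z) - \psi(t-s,z)}{s}.
$$
For each admissible $s$, the difference quotient is continuous on its natural open set in $J \times \Omega$ because $\f$ (hence $\psi$) is continuous. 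A supremum of continuous functions is lower semi-continuous, so $\partial_t^+ \psi$ is lsc; an infimum of continuous functions is upper semi-continuous, so $\partial_t^- \psi$ is usc. Translating back to $\f$, this proves the first claim on any compact $J$, hence throughout $\Omega_T$.

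To construct the negligible set $E$, observe that the ordering $\partial_t^+ \psi \leq \partial_t^- \psi$ (left derivative dominates right derivative for concave functions) gives $\partial_t^+ \f \leq \partial_t^- \f$ everywhere. For each fixed $z \in \Omega$, the concave function $t \mapsto \psi(t,z)$ has at most countably many points of non-differentiability in $J$, so the slice
$$
E_z := \bigl\{\, t \in J \setdef \partial_t^- \f(t,z) > \partial_t^+ \f(t,z)\,\bigr\}
$$
has zero $\ell$-measure. The set $E := \{(t,z) \setdef \partial_t^- \f(t,z) > \partial_t^+ \f(t,z)\}$ is Borel (difference of a usc and an lsc function), and Fubini's theorem on a countable exhaustion of $\Omega_T$ by products $J \times K$ yields $(\ell \otimes \mu)(E) = 0$.

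Finally, for continuity at a point $(t_0,z_0) \in \Omega_T \setminus E$, I would sandwich both one-sided derivatives. Let $(t_n,z_n) \to (t_0,z_0)$. Using $\partial_t^+ \f \leq \partial_t^- \f$ together with the semi-continuity just established and the equality $\partial_t^-\f(t_0,z_0) = \partial_t^+\f(t_0,z_0)$,
$$
\partial_t^+ \f(t_0,z_0) \leq \liminf_n \partial_t^+ \f(t_n,z_n) \leq \limsup_n \partial_t^- \f(t_n,z_n) \leq \partial_t^- \f(t_0,z_0) = \partial_t^+\f(t_0,z_0),
$$
so $\partial_t^+ \f(t_n,z_n) \to \partial_t^+ \f(t_0,z_0)$, and the same chain gives the analogous limit for $\partial_t^- \f$. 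Thus the common value $\partial_t \f$ is continuous at every point of $\Omega_T \setminus E$. The only point that requires real care is the order of quantifiers in the first step, namely that the monotone representation of $\partial_t^\pm \psi$ as a sup/inf of continuous functions is valid jointly in $(t,z)$, rather than merely $z$ by $z$; but this is immediate once $\psi$ is concave in $t$ on all slices with the same constant $\kappa = \kappa(J)$.
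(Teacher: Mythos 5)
Your proof is correct and takes essentially the same approach as the paper: after subtracting $\kappa t^2$ to reduce to concavity in $t$, you express the one-sided derivatives as a supremum/infimum of continuous difference quotients (monotonicity of slopes for concave functions) to obtain the semi-continuity claims, then use countability of the non-differentiability set in $t$ together with Fubini for the negligible set, and finally the lsc/usc sandwich for continuity off $E$. The paper works with the semi-convex normalization and leaves the final sandwich argument implicit (referring to Lemma~\ref{lem:Radmacher}), but the key ideas are identical.
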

 
  By replacing $\f$ with $-\f$ one obtains similar conclusions for uniformly semi-convex functions.

   For a bounded parabolic potential $\varphi$ which is locally  semi-concave in $t$  the left and right derivatives 
 $$
 \partial_t^+ \f (t,z) = \lim_{s \to 0^+} \frac{\f (t + s,z) - \f (t,z)}{s},
 $$ 
 and
 $$
 \partial_t^- \f (t,z):= \lim_{s \to 0^-} \frac{\f (t + s,z) - \f (t,z)}{s}
 $$
 exist for all $t\in ]0,T[$, and $\partial_t \varphi(t,z)$ exists if $ \partial_t^+ \f (t,z) = \partial_t^- \f (t,z)$.

 \begin{proof} 
  For simplicity we only treat the semi-convex case.
  It suffices to consider the case when  $t \mapsto \f (t,z)$ is convex  in $]0,T[$, for all $z \in \Omega$.
In this case for all $(t,z) \in \Omega_T$, the slope function
  $$
   s \longmapsto p_s (t,z) := \frac{\f (t + s,z) - \f (t,z)}{s},
  $$
  is monotone increasing on each interval not containing $0$. It is moreover continuous in $(t,z)$.
 In particular, 
  $$
  \partial_t^+ \f (t,z) = \lim_{s \to 0^ +} p_s (t,z) = \inf_{s > 0} p_s (t,z),
  $$
  is upper semi-continuous in $\Omega_T$
  and
  $$
  \partial_t^- \f (t,z) = \lim_{s \to 0^-} p_s (t,z) = \sup_{s < 0} p_s (t,z),
  $$
  is lower semi-continuous in $\Omega_T$. 
  This proves the first part of the lemma.
  
  The second part follows from  the fact that convex functions are locally Lipschitz
  in their domain and Lemma~\ref{lem:Radmacher}.
 \end{proof}

\subsection{Compactness properties}

 We introduce a natural complete metrizable topology on the convex set $\mathcal P (\Omega_T)$. 
 
We recall the definition of the  Sobolev space $W^{1,0}_{ \infty, \loc} (\Omega_T)$ : 
this is the set of functions $u \in L^1_{\loc} (\Omega_T)$ 
whose  partial time derivative  (in the sense of distributions) satisfies  
$\partial_t u \in L^{\infty}_{\loc} (\Omega_T)$. 
It follows from Lemma~\ref{lem:Radmacher} that
  $$
  \mathcal P (\Omega_T)  \subset W^{1,0}_{ \infty, \loc} (\Omega_T).
  $$
  
Let    $K \subset \Omega$ be a compact subset. The local uniform Lipschitz constant of $\f \in \mathcal P (\Omega_T) $  on a  compact subinterval $J \Subset ]0,T[$ is given by
  $$
  {\sup_{t, s \in J, s \neq t} {\sup_{z \in K}}^*} \frac{\vert \f (s,z) - \f (t,z)\vert}{\vert s - t\vert} = 
  \Vert \partial_t \f \Vert_{L^{\infty} (J \times \Omega)}.
  $$
  
  \begin{defi}
   We endow $\mathcal P (\Omega_T)$ with the semi-norms associated to  $W^{1,0}_{\infty, \loc} (\Omega_T)$:
given a compact subset $J \Sub ]0,T[$ and $u \in W^{1,0}_{ \infty, \loc} (\Omega)$, we set
$$
 \rho_{J,K} (u) := \Vert \partial_t u \Vert_{L^{\infty} (J \times K)}
 + \int_J \int_{K} \vert u (t,z) \vert d V (z) d t.
$$
 \end{defi}
 
The spaces $L^q (\Omega_T)$ are defined with respect to the $(2 n + 1)$-dimensional  Lebesgue measure in $\Omega_T$. 
For  $k, \ell \in \N$ and   $q \geq 1$, we   denote by $ W^{k,\ell}_{q,\loc} (\Omega_T)$ the Sobolev space of Lebesgue measurable functions whose partial derivatives with respect to $t$ up to order $k$ and partial derivatives with respect to $z$ up to order $\ell$ in the sense of distributions are in $L^q_{\loc} (\Omega_T)$.

\smallskip

Parabolic potentials enjoy useful compactness  properties :

\begin{pro} \label{pro: Montel property of P}   
Let $(\varphi_j) \subset \mathcal P (\Omega_T)$ be a sequence which 
\begin{itemize}
\item is locally uniformly bounded from above in $\Omega_T$;
\item is locally uniformly Lipschitz in $]0,T[$;
\item does not converge locally uniformly to $- \infty$ in $\Omega_T$. 
\end{itemize}

Then  $(\f_j)$ is bounded in $L^1_{\loc} (\Omega_T)$ and there exists a subsequence which converges to some function 
$\varphi \in  \mathcal P (\Omega_T)$ in $L^1_{\loc} (\Omega_T)$-topology.

If  $(\f_j)$ converges weakly  to $\f$ in the sense of distributions in $\Omega_T$, then it converges in $L^q_{\loc} (\Omega_T)$  for all $q \geq 1$. 
\end{pro}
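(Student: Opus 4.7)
The plan is to combine the classical Montel-type compactness for plurisubharmonic functions, applied slice-wise in time, with the uniform Lipschitz control of $\varphi_j(\cdot,z)$ in $t$ to lift compactness from slices to the product $\Omega_T$.

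\emph{Step 1: Uniform $L^1_{\loc}(\Omega_T)$ bound.} From the non-degeneracy assumption, one extracts points $(t_j,z_j)$ in a common compact $K\Subset \Omega_T$ with $\varphi_j(t_j,z_j)\geq -M$. After a subsequence $(t_j,z_j)\to (t_0,z_0)$, the uniform Lipschitz property in $t$ gives $\varphi_j(t_0,z_j)\geq -M'$ for all $j$. The submean inequality for $\varphi_j(t_0,\cdot)\in \PSH(\Omega)$, together with the local upper bound, then yields a uniform $L^1$ bound on $\varphi_j(t_0,\cdot)$ on a small ball around $z_0$, and by the classical propagation for psh families this extends to every $K'\Subset \Omega$. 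Integrating the Lipschitz bound $\|\varphi_j(t,\cdot)-\varphi_j(t_0,\cdot)\|_{L^\infty(K')}\leq \kappa_J|t-t_0|$ over any compact subinterval $J\Subset ]0,T[$ propagates the bound to $L^1(J\times K')$.

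\emph{Step 2: Extraction of an $L^1_{\loc}(\Omega_T)$-convergent subsequence.} Pick a countable dense set $\{t_k\}\subset ]0,T[$. Each $\{\varphi_j(t_k,\cdot)\}_j$ is a family of plurisubharmonic functions, locally uniformly bounded above and bounded in $L^1_{\loc}(\Omega)$ by Step~1, hence relatively compact in $L^1_{\loc}(\Omega)$ by the classical Lelong/H\"ormander theorem. A diagonal extraction produces a subsequence, still denoted $(\varphi_j)$, with $\varphi_j(t_k,\cdot)\to \psi_k$ in $L^1_{\loc}(\Omega)$ for every $k$. The uniform Lipschitz estimate in $t$ then shows that $(\varphi_j)$ is Cauchy in $L^1(J\times K')$: for any $t\in J$ choose $t_k$ with $|t-t_k|<\delta$, so that $\|\varphi_i(t,\cdot)-\varphi_j(t,\cdot)\|_{L^1(K')} \leq 2\kappa_J\delta\,\mathrm{Vol}(K') + \|\varphi_i(t_k,\cdot)-\varphi_j(t_k,\cdot)\|_{L^1(K')}$, and integrating in $t$ concludes. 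Hence $\varphi_j\to \varphi$ in $L^1_{\loc}(\Omega_T)$.

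\emph{Step 3: The limit is parabolic, and $L^q$ upgrade.} For a.e. $t$, Fubini gives $\varphi_j(t,\cdot)\to \varphi(t,\cdot)$ in $L^1_{\loc}(\Omega)$ along a subsequence, so the usc regularization of $\varphi(t,\cdot)$ is plurisubharmonic. Setting $\tilde\varphi(t_k,\cdot):=\psi_k^*$ at the dense times and extending by the inherited Lipschitz bound in $t$ (which passes to the limit), one obtains a representative $\tilde\varphi\in \mathcal P(\Omega_T)$ agreeing with $\varphi$ almost everywhere. For the final assertion, if $(\varphi_j)$ converges to $\varphi$ in $\mathcal D'(\Omega_T)$, the uniqueness of the distributional limit forces the whole sequence to converge in $L^1_{\loc}(\Omega_T)$. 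The slice-wise bound $\|u\|_{L^q(K')}\leq C(1+\|u\|_{L^1(K'')})$ for plurisubharmonic $u$ locally bounded above, integrated in $t$, gives a uniform $L^q_{\loc}(\Omega_T)$-bound on $(\varphi_j)$; this turns the $L^1_{\loc}$ convergence into $L^q_{\loc}$ convergence for every $q\geq 1$ by uniform integrability of $|\varphi_j|^q$.

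The main obstacle is Step~3: one must ensure that the a.e.-defined $L^1$-limit admits a representative that is plurisubharmonic on \emph{every} slice and inherits the uniform Lipschitz constant in $t$, not merely on a set of full measure. The diagonal extraction at a dense set of times, combined with the Lipschitz estimate transferred to the limit, is the key device that fixes the correct usc slice values everywhere and yields an honest element of $\mathcal P(\Omega_T)$.
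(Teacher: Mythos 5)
Your overall plan mirrors the paper's: establish an $L^1_{\loc}(\Omega_T)$ bound, diagonal-extract at a countable dense set of times (the paper uses $\mathbb{Q}\cap ]0,T[$), and use the uniform Lipschitz control in $t$ to glue the slice-wise psh compactness into $L^1_{\loc}(\Omega_T)$ compactness and to identify the limit as an element of $\mathcal{P}(\Omega_T)$. Steps 1 and 2 are essentially the paper's argument, and are correct.

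The genuine point of divergence is the final assertion. The paper's proof fixes $t_0\in ]0,T[$, pairs the distributional convergence on $\Omega_T$ against $\chi(z)\eta_\varepsilon(t)$ with $\eta_\varepsilon$ a mollifier concentrating at $t_0$, and uses the uniform Lipschitz estimate in $t$ to pass from the $(t,z)$-integral to the slice integral up to an $O(\varepsilon)$ error. This yields distributional convergence of each slice $\varphi_j(t_0,\cdot)\to\varphi(t_0,\cdot)$ on $\Omega$, hence $L^q_{\loc}(\Omega)$ convergence of slices by standard psh theory, and one then integrates in $t$. Your route is more direct: every subsequence has a further subsequence converging in $L^1_{\loc}$ to $\varphi$ (forced by $\mathcal{D}'$-uniqueness), so the whole sequence converges in $L^1_{\loc}$; then uniform $L^q_{\loc}(\Omega_T)$ bounds (from the slice-wise $L^1$--$L^q$ estimate for psh functions integrated in $t$) plus Vitali upgrade $L^1_{\loc}$ to $L^q_{\loc}$. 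Both arguments are valid; yours is a bit more elementary, while the paper's yields the finer slice-wise statement (each $\varphi_j(t,\cdot)\to\varphi(t,\cdot)$ in $L^q_{\loc}(\Omega)$), which is potentially useful information. One small imprecision in your Step 3: a uniform $L^q$ bound by itself does \emph{not} give uniform integrability of $|\varphi_j|^q$; you need a uniform $L^{q'}$ bound with $q'>q$ (then H\"older does the job). Since the slice-wise estimate you invoke is available for every exponent, this is immediately fixed by applying it with $q+1$ in place of $q$, but the causal link as written is not quite right.
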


The proof is an extension of Hartog's lemma for sequences of plurisubharmonic functions (see e.g. \cite[Theorem 1.46]{GZbook}).

\begin{proof}
We first prove that $(\varphi_j)$ is bounded in $L^1_{\loc}(\Omega_T)$. Fix $J\Subset ]0,T[$ and $K\Subset \Omega$. From the assumptions it follows that, for each $t\in J$ fixed, $\varphi_j(t,\cdot)$ does not converge locally uniformly in $\Omega$ to $-\infty$. Hence $\varphi_j(t,\cdot)$ is bounded in $L^1_{\loc}(\Omega,dV)$.  The second condition thus ensures that $\{\varphi_j\}$ is uniformly bounded in $L^1(J\times K)$.

For each $r\in \mathbb{Q}\cap ]0,T[$, there exists a subsequence of $\varphi_j(r,\cdot)$ which converges in $L^1_{\loc}(\Omega)$ to some plurisubharmonic function  $\varphi(r,\cdot)$ in $\Omega$.
After a Cantor process we can extract a subsequence from $\{\varphi_j\}$, still denoted by $\{\varphi_{j}\}$, such that for each $r\in \mathbb{Q}\cap ]0,T[$, the sequence $\{\varphi_j(r,\cdot)\}$ converges in $L^1_{\loc}(\Omega)$ to $\varphi(r,\cdot)$. Since the sequence $\{\varphi_j\}$ is locally uniformly Lipschitz in $t$, it follows that the function $(r,z)\mapsto \varphi(r,z)$ is also locally uniformly Lipschitz in $r$. The function $\varphi$ therefore uniquely extends to $]0,T[\times \Omega$ by
\[
\varphi(t,z) := \lim_{\mathbb{Q}\ni r\to t} \varphi(r,z). 
\]
Since $\{\varphi_j\}$ is uniformly Lipschitz in $t$ it follows that $\{\varphi_j(t,\cdot)\}$ converges in $L^1_{\loc} (\Omega)$ to $\varphi(t,\cdot)$, for all $t\in ]0,T[$ and $\varphi$ is locally uniformly Lipschitz in $t\in ]0,T[$. The latter then implies that $\varphi \in \mathcal{P}(\Omega_T)$.  By Fubini's theorem and dominated convergence it follows that  $\{\varphi_j\}$ converges in $L^1_{\loc}(\Omega_T)$ to $\varphi$. 

We now prove the last statement, assuming that $\varphi \in \mathcal{P}(\Omega_T)$ and that the sequence $\{\varphi_j\}$ converges in the weak sense of distributions to $\varphi$. We claim that for each $t\in ]0,T[$,  $\{\varphi_j(t,\cdot)\}$ converges in the sense of distributions in $\Omega$ to $\varphi(t,\cdot)$. Indeed, fix $t_0\in ]0,T[$ and let $\chi: \Omega\rightarrow \mathbb{R}$ be a smooth test function in $\Omega$. Let $\varepsilon>0$ be a small constant and let $\eta_{\varepsilon} : \mathbb{R}\rightarrow \mathbb{R}^+$ be a smooth test function which is supported in $[t_0-\varepsilon,t_0+\varepsilon]$ and such that $\int_{\mathbb{R}} \eta_{\varepsilon}(t)dt=1$. By assumption, 
\begin{equation}\label{eq: Lp convergence 1}
\lim_{j\to +\infty} \int_{\Omega_T} \varphi_j(t,z) \chi(z)\eta_{\varepsilon}(t) dtdV(z) = \int_{\Omega_T} \varphi(t,z) \chi(z)\eta_{\varepsilon}(t) dtdV(z).
\end{equation}
Since the sequence $\{\varphi_j\}$ is locally uniformly Lipschitz in $t$, there exists a constant $\kappa_0$ depending on $\varepsilon_0:= \min(t_0,T-t_0)\slash 2$ such that 
\[
|\varphi_j(t,z)-\varphi_j(t_0,z)| + |\varphi(t,z)-\varphi(t_0,z)| \leq \kappa_0|t-t_0|, 
\]
for all $t\in [t_0-\varepsilon_0,t_0+\varepsilon_0]$ and $\ z\in \Omega$. We infer
\begin{flalign}
&\left | \int_{\Omega_T} \varphi_j(t,z) \chi(z)\eta_{\varepsilon}(t) dtdV(z)  - \int_{\Omega_T} \varphi_j(t_0,z) \chi(z)\eta_{\varepsilon}(t) dtdV(z)\right| \nonumber \\
& \ \ \ \ \ \ \ \ \ \ \ \ \ \ \ \ \leq  \kappa_0 \varepsilon \int_{\Omega} |\chi(z) |dV(z).
\label{eq: Lp convergence 2}
\end{flalign}
The same estimate holds for $\varphi$. 
Combining \eqref{eq: Lp convergence 1} 
and \eqref{eq: Lp convergence 2} yields
\[
\lim_{j\to +\infty} \int_{\Omega} \varphi_j(t_0,z) \chi(z) dV(z) = \int_{\Omega} \varphi(t_0,z) \chi(z)dV(z) +O(\varepsilon).
\]
We finally let $\varepsilon\to 0$ to conclude the proof of the claim. 

Classical properties of plurisubharmonic functions now ensure  that  $\{\varphi_j(t_0,\cdot)\}$ converges in $L^q_{\loc}(\Omega)$ to $\varphi(t_0,\cdot)$. Since  $\{\varphi_j\}$ is locally uniformly  Lipschitz in $t$, we  conclude as above that $\{\varphi_j\}$ converges in $L^q_{\loc}(\Omega_T)$ to $\varphi$. 
\end{proof}

\begin{cor}
The class $\mathcal P (\Omega_T)$ is a  subset of $ L^q_{\loc} (\Omega_T)$ for all $q \geq 1$, and the inclusions
$\mathcal P (\Omega_T) \hookrightarrow  L^q_{\loc} (\Omega_T)$ are continuous. 
\end{cor}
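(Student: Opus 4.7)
The first assertion (the set-theoretic inclusion $\mathcal P(\Omega_T) \subset L^q_{\loc}(\Omega_T)$) was already established in the Corollary following Lemma \ref{lem: approximate sub-mean}, so only continuity of the inclusions remains to be proved.

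The plan is to reduce continuity to Proposition \ref{pro: Montel property of P}. Since the semi-norms $\rho_{J,K}$ defining the topology on $\mathcal P(\Omega_T)$ can be indexed by a countable exhaustion of $]0,T[ \times \Omega$ by pairs $(J,K)$, the topology is metrizable and it suffices to argue sequentially. Consider therefore $\{\varphi_j\} \subset \mathcal P(\Omega_T)$ with $\varphi_j \to \varphi \in \mathcal P(\Omega_T)$ in this topology: for every $J \Subset ]0,T[$ and $K \Subset \Omega$ we have $\|\varphi_j - \varphi\|_{L^1(J\times K)} \to 0$ and $\|\partial_t(\varphi_j-\varphi)\|_{L^\infty(J\times K)} \to 0$. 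In particular $\varphi_j \to \varphi$ in the sense of distributions on $\Omega_T$, and $\|\partial_t \varphi_j\|_{L^\infty(J\times K)}$ is bounded uniformly in $j$, so $\{\varphi_j\}$ is locally uniformly Lipschitz in $t \in ]0,T[$.

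To invoke Proposition \ref{pro: Montel property of P}, I still need to check that $\{\varphi_j\}$ is locally uniformly bounded from above and does not tend locally uniformly to $-\infty$. The latter is automatic from the $L^1_{\loc}$-convergence to the integrable limit $\varphi$. For the upper bound---which I expect to be the main technical step---the approximate submean value inequality (Lemma \ref{lem: approximate sub-mean}) provides, for $(t_0,z_0) \in \Omega_T$ and $\varepsilon,r > 0$ small enough that $[t_0-\varepsilon,t_0+\varepsilon] \times \overline{B(z_0,r)} \Subset \Omega_T$,
$$
\varphi_j(t_0,z_0) \;\leq\; \tfrac{1}{2}\int_{-1}^{1}\!\!\int_{\B} \varphi_j(t_0+\varepsilon s,\, z_0+r\xi)\, d\lambda_\B(\xi)\, ds \;+\; \kappa_0 \,\varepsilon,
$$
where $\kappa_0$ is dominated by $\sup_j \|\partial_t \varphi_j\|_{L^\infty}$ on a fixed compact neighborhood, hence uniform in $j$. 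The average on the right is majorized by a constant multiple of $\|\varphi_j\|_{L^1(J\times K)}$ for an appropriate $(J,K)$, and this quantity is bounded in $j$ by the $L^1_{\loc}$-convergence. This yields the required local uniform upper bound.

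All three hypotheses of Proposition \ref{pro: Montel property of P} being verified, that result applies and gives $\varphi_j \to \varphi$ in $L^q_{\loc}(\Omega_T)$ for every $q \geq 1$, which is exactly the continuity of the inclusions $\mathcal P(\Omega_T) \hookrightarrow L^q_{\loc}(\Omega_T)$. The only delicate point is passing from $L^1$-convergence together with a uniform Lipschitz bound in $t$ to a genuine uniform upper bound; the approximate submean value inequality is precisely the tool that handles this, while the remainder of the argument is a formal chain of implications.
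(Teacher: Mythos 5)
Your proof is correct and follows the route the paper intends: the corollary is stated immediately after Proposition \ref{pro: Montel property of P} with no explicit argument, precisely because the authors view it as a direct consequence of that proposition's last assertion, and you supply exactly the missing verification that its three hypotheses hold. Your reduction to sequential continuity via metrizability (countable family of semi-norms), the observation that $W^{1,0}_{\infty,\loc}$-convergence gives both local $L^1$-convergence and a uniform Lipschitz bound in $t$, and the use of Lemma \ref{lem: approximate sub-mean} together with the local $L^1$-bound to obtain the local uniform upper bound are all sound and match what the paper leaves implicit.
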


The weak topology and the $L^q_{\loc}$-topologies  are thus all equivalent when restricted to the class $\mathcal P (\Omega_T)$. The set $\mathcal{P}(\Omega_T)$ is thus a complete metric space when endowed with any of these topologies.  

\begin{lemma}
	\label{lem: Parabolic potentials are in W11loc}
	We have $\mathcal{P}(\Omega_T) \subset W^{1,1}_{\loc}(\Omega_T)$. 
\end{lemma}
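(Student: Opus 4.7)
The plan is to verify the three ingredients that together define membership in $W^{1,1}_{\loc}(\Omega_T)$: for any $\varphi \in \mathcal P(\Omega_T)$, any $J \Subset ]0,T[$ and any $K \Subset \Omega$, we need $\varphi$, $\partial_t \varphi$ and the spatial gradient $\nabla_z \varphi$ (all taken in the distributional sense on $\Omega_T$) to lie in $L^1(J \times K)$. The first is already known: the Corollary to Lemma~\ref{lem: approximate sub-mean} gives $\mathcal P(\Omega_T) \subset L^q_{\loc}(\Omega_T)$ for every $q \geq 1$.

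For the time derivative, I would invoke Lemma~\ref{lem:Radmacher} to obtain $\partial_t \varphi \in L^{\infty}_{\loc}(\Omega_T)$ as a pointwise a.e. derivative. Since for each fixed $z$ the slice $t \mapsto \varphi(t,z)$ is locally Lipschitz, hence absolutely continuous on $J$, its pointwise derivative coincides with its one-dimensional distributional derivative. A standard Fubini argument (test against $\chi \in C_c^\infty(J \times K)$ and integrate by parts in $t$ slice by slice) then identifies this with the full distributional $\partial_t$ of $\varphi$ on $\Omega_T$, so $\partial_t \varphi \in L^{\infty}_{\loc}(\Omega_T) \subset L^1(J \times K)$.

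The spatial gradient is the only part requiring a genuine input from pluripotential theory. The key classical fact is that every plurisubharmonic (indeed subharmonic) function $u$ on $\Omega$ belongs to $W^{1,p}_{\loc}(\Omega)$ for $p < 2n/(2n-1)$, together with the quantitative estimate
\[
\|\nabla_z u\|_{L^1(K)} \leq C(K,K') \|u\|_{L^1(K')}, \qquad K \Subset K' \Subset \Omega,
\]
which one obtains from the Riesz representation $u = N_{2n} * \Delta u + h$ combined with the mass bound $\Delta u(K_1) \lesssim \|u\|_{L^1(K')}$ for $K \Subset K_1 \Subset K'$ (obtained by pairing $\Delta u$ with a cutoff $\chi$ and integrating by parts). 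Applying this to each slice $u_t := \varphi(t,\cdot) \in \PSH(\Omega)$ and integrating in $t$ via Fubini yields
\[
\int_J \int_K |\nabla_z u_t(z)|\, dV(z)\, dt \leq C \, \|\varphi\|_{L^1(J \times K')} < \infty.
\]
A final Fubini-and-integration-by-parts step against test functions $\chi \in C_c^\infty(J \times K)$ shows that the $L^1_{\loc}$ function $(t,z) \mapsto \nabla_z u_t(z)$ represents the distributional spatial gradient of $\varphi$ on $\Omega_T$. The main (and essentially only) non-routine step is the quotation of the slicewise $W^{1,1}$-regularity of plurisubharmonic functions; everything else is an application of Fubini and of the results already proved in this section.
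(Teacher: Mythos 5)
Your proof is correct and follows essentially the same route as the paper: the $L^1_{\loc}$ membership is already known, the time derivative lies in $L^\infty_{\loc}$ by the Lipschitz hypothesis, and the spatial gradient is controlled by a slicewise gradient estimate for plurisubharmonic functions followed by integration in $t$ via Fubini. The only cosmetic difference is the slicewise input you invoke --- the generic subharmonic regularity $W^{1,p}_{\loc}$ for $p < 2n/(2n-1)$ obtained via the Riesz representation --- whereas the paper cites H\"ormander's sharper estimate $\nabla_z \varphi \in L^p_{\loc}$ for $p<2$ for plurisubharmonic functions; either is more than enough for $p=1$.
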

\begin{proof}
	Fix $u\in \mathcal{P}(\Omega_T)$. The goal is to prove that $u$ has partial derivative (in $t$ and $z$) in $L^1_{\loc} (\Omega_T)$.

We first recall a basic estimate for the gradient of a plurisubharmonic function. Fix $z_0 \in \Omega$ and $r>0$ such that the polydisc $D(z_0,2r)$ is contained in $\Omega$. It follows from  \cite[Theorem 4.1.8]{Hor07} (see also \cite[Theorem 1.48]{GZbook} and its proof at page 32-33) that the derivative of any plurisubharmonic function $z\mapsto \varphi(z)$  exists in $L^p_{\loc}(\Omega)$ for any $p<2$ and the uniform estimate  
\[
\left(\int_{D(z_0,r)} |\nabla_z \varphi |^p dV\right)^{1 \slash p} \leq  C(p, r) \int_{D(z_0,2r)} |\varphi| dV  
\]
holds for a positive constant $C(p,r)$ depending only on $r$ and $p$.

Fix $J\times K$  a compact subset of $\Omega_T$. Then by our previous analysis and the compactness of $K$ there exists a constant $C>0$  depending on $K$ and 
${\rm dist}(K,\partial \Omega)$  and a compact subset $K\Subset L \Subset \Omega$ such that 
\[
\left(\int_{K} |\nabla_z \varphi |^p dV\right)^{1 \slash p} \leq  C \int_{L} |\varphi| dV,
\] 
for every $\varphi \in \PSH(\Omega)$.   

Now,  for each $t\in ]0,T[$ the derivative of $u$ in $z$ exists and belongs to $L^p_{\loc}(\Omega)$ for any $p<2$ (with uniform bound). Since $u$ is locally uniformly Lipschitz in $t$ it follows that $\partial_t u(t,z)$ is bounded in $J\times K$ and  $u\in L^1(J \times L, dt dV)$. Altogether we obtain  $u\in W^{1,1}_{\loc} (\Omega_T)$ as desired. 
\end{proof}

\section{Parabolic Monge-Amp\`ere operators}  \label{sec: MAP operator}

\subsection{Parabolic Chern-Levine-Nirenberg inequalities}

We assume here that $\f \in \mathcal P (\Omega_T) \cap L^{\infty}_{\loc} (\Omega_T)$. For all $t \in ]0,T[$, the  function 
$$
\Omega \ni z \mapsto \f_t(z) = \f (t,z) \in \R
$$
 is psh and locally bounded, hence the  Monge-Amp\`ere measures $(dd^c \varphi_t)^n$
are well defined  Borel measures in the sense of Bedford and Taylor \cite{Bedford_Taylor_1976Dirichlet}.
  
 We now show that this family depends continuously on $t$ :

 \begin{lem}  \label{lem: continuity of MA integral against test function}  
 Fix $\f \in \mathcal P (\Omega_T)\cap L^{\infty}_{\loc} (\Omega_T)$ and   
 $\chi $  a continuous test function in $\Omega_T$. Then the function
$$
\Gamma_{\chi} : t \longmapsto \int_{\Omega} \chi (t,\cdot) (dd^c \f_t)^n
$$
is  continuous  in  $]0,T[$. 
Moreover if  ${\rm Supp} (\chi) \Subset E_1 \Subset E_2 \Subset \Omega_T$, then 
\begin{equation} \label{eq: uniform}
\sup_{ 0 \leq t < T} \left \vert \int_{\Omega} \chi (t,\cdot)(dd^c \varphi_t)^n \right \vert 
\leq  C \max_{\Omega_T} |\chi| (\max_{ E} \vert \varphi \vert)^n,
\end{equation}
 where $C > 0$ is a constant depending only on $(E_1,E_2,\Omega_T)$.
 \end{lem}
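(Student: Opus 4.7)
\medskip

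My plan is to treat the two assertions separately, starting with the uniform bound \eqref{eq: uniform} and then deducing continuity.

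For the uniform bound, I would simply reduce to the classical Chern-Levine-Nirenberg inequality applied slice-wise. Since ${\rm Supp}(\chi) \Subset E_1 \Subset E_2 \Subset \Omega_T$, the projections onto $\Omega$ of the $t$-slices $E_1^t$ and $E_2^t$ are contained in fixed compact sets $K_1 \Subset K_2 \Subset \Omega$ (independent of $t$), and the projections onto time are contained in a compact subinterval $J \Subset \,]0,T[$. For each $t \in J$ the function $\varphi_t$ is a bounded psh function on $\Omega$, so by the classical CLN inequality (see e.g.\ \cite[Proposition~3.11]{GZbook})
\[
\int_{K_1} (dd^c \varphi_t)^n \leq C_0 \bigl(\sup_{K_2} |\varphi_t|\bigr)^n
\]
with $C_0 = C_0(K_1, K_2)$ independent of $t$. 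Since $\sup_{K_2} |\varphi_t| \leq \max_{J \times K_2}|\varphi| \leq \max_{E_2}|\varphi|$, the desired estimate follows by bounding $|\chi|$ pointwise.

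For continuity of $\Gamma_\chi$, I would fix $t_0 \in \,]0,T[$ and write
\[
\Gamma_\chi(t) - \Gamma_\chi(t_0) = \int_\Omega \bigl[\chi(t,\cdot) - \chi(t_0,\cdot)\bigr] (dd^c \varphi_t)^n + \int_\Omega \chi(t_0,\cdot) \bigl[(dd^c \varphi_t)^n - (dd^c \varphi_{t_0})^n\bigr].
\]
The first term tends to $0$ as $t \to t_0$ by uniform continuity of $\chi$ on a compact neighborhood of ${\rm Supp}(\chi)$ together with the uniform mass bound \eqref{eq: uniform} just established (with $|\chi(t,\cdot) - \chi(t_0,\cdot)|$ replacing $|\chi|$ in the estimate).

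The second term is where the essential content lies. Here I would exploit the defining Lipschitz property of parabolic potentials: on the compact time interval $J$ there exists $\kappa > 0$ with
\[
|\varphi_t(z) - \varphi_{t_0}(z)| \leq \kappa |t - t_0|, \qquad \forall z \in \Omega,
\]
so $\varphi_t \to \varphi_{t_0}$ uniformly on $\Omega$ as $t \to t_0$. Since the $\varphi_t$ are uniformly bounded on neighborhoods of ${\rm Supp}\chi(t_0,\cdot)$ and the convergence is uniform, the Bedford-Taylor convergence theorem for Monge--Amp\`ere operators (uniform convergence of locally bounded psh functions implies weak convergence of the associated MA measures, cf.\ \cite[Corollary~3.6]{GZbook}) yields
\[
(dd^c \varphi_t)^n \longrightarrow (dd^c \varphi_{t_0})^n
\]
weakly on $\Omega$ as $t \to t_0$. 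Testing against the continuous compactly supported function $\chi(t_0,\cdot)$ shows that the second term also tends to $0$, completing the proof.

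The main obstacle is making sure the Bedford-Taylor convergence is applied correctly: one needs local uniform boundedness of the $\varphi_t$ (which is automatic from $\varphi \in L^\infty_{\loc}(\Omega_T)$ after restricting to $t$ near $t_0$) and the uniform convergence on compact subsets of $\Omega$ (which is automatic from the Lipschitz-in-$t$ condition). Once these are in place, both pieces of the argument are clean and the uniformity of the bound follows without extra work.
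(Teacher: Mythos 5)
Your proof is correct and follows essentially the same route as the paper: the uniform bound is obtained by slice-wise Chern--Levine--Nirenberg after restricting to $J \times K$, and continuity at $t_0$ is obtained from the identical two-term decomposition, with the first term controlled by \eqref{eq: uniform} and uniform continuity of $\chi$, and the second by weak convergence of $(dd^c\varphi_t)^n \to (dd^c\varphi_{t_0})^n$. You make explicit the justification of the second limit (uniform Lipschitz-in-$t$ implies uniform convergence $\varphi_t \to \varphi_{t_0}$, hence Bedford--Taylor weak convergence of the Monge--Amp\`ere measures), which the paper leaves implicit.
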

 
In particular,  $t \longmapsto (dd^c  \f_t)^n$ is continuous,
 as a map from $]0,T[$ to the space of positive Radon measures in $\Omega$ endowed with the weak$^*$-topology.
 
 \begin{proof}
 We can reduce to the case when the support of $\chi$ is contained in a product of compact subsets
$J \times K \subset E^{\circ} \subset ]0,T[ \times X$.

We first prove \eqref{eq: uniform}. For any fixed $t \in ]0,T[$,  
$$
\left \vert \int_{\Omega} \chi (t,\cdot) (dd^c \f_t)^n \right \vert \leq  \max_{\Omega_T} \vert \chi  \vert  \int_{K} (dd^c \f_t)^n.
$$ 
Let $L \Subset \Omega$ be a compact subset such that $K \Subset L^{\circ}$ and $J \times L \subset E$. 
The classical Chern-Levine-Nirenberg inequalities (see \cite[Theorem 3.9]{GZbook}) ensure that
there exists a constant $C = C (K,L) > 0$ such that 
$$
 \int_{\Omega} \chi_t  (dd^c \f_t)^n \leq C \max_{\Omega}|\chi|  (\max_{L} \vert \f_t \vert)^n
  \leq  C \max_{\Omega_T} |\chi|  (\max_E \vert \f \vert)^n,
 $$
 where $C > $ depends only on $(K, E,\Omega_T)$. This yields \eqref{eq: uniform}.
 \smallskip

 We now prove that $\Gamma_{\chi}$ is continous in $]0,T[$. Fix compact sets $J\Subset ]0,T[, K\Subset \Omega$ such that ${\rm Supp}(\chi) \subset J \times K$. The continuity of $\Gamma_{\chi}$ on $]0,T[\setminus J$ is clear. Fix now $t_0\in J$. By the CLN inequality (see \cite[Theorem 3.9]{GZbook}),
 $$
\lim_{t\to 0} \int_{\Omega} |\chi(t,\cdot)-\chi(t_0,\cdot)|(dd^c \varphi_t)^n =0. 
 $$
 Since $\chi$ is  a continuous  test function we also have 
 $$
 \lim_{t\to t_0} \int_{\Omega} \chi(t_0,\cdot) (dd^c \varphi_t)^n = \int_{\Omega} \chi(t_0,\cdot) (dd^c \varphi_{t_0})^n.
 $$
 This proves the continuity of $\Gamma_{\chi}$ at $t_0$, finishing the proof.
\end{proof}

 \begin{defi}  
 Fix $\f \in \mathcal P (\Omega)\cap L^{\infty}_{\loc} (\Omega_T)$. 
The map
 \begin{equation*} 
\chi \mapsto  \int_{\Omega_T} \chi d t\wedge (dd^c \f)^n  := \int_0^T d t \left(\int_{\Omega} \chi (t,\cdot)  ( dd^c \f_t)^n\right).
 \end{equation*}
defines a positive distribution in $\Omega_T$ denoted by $ d t \wedge  (dd^c \f)^n$, which
can be  identified with  a positive Radon measure in $\Omega_T$.
 \end{defi}

 \begin{pro}   \label{prop: dt MA converge}
 Fix $\f \in \mathcal P (\Omega_T)\cap L^{\infty}_{\loc} (\Omega_T)$ and let 
 $(\f^j)$ be a monotone sequence of  functions  in $\mathcal P (\Omega_T)\cap L^{\infty}_{\loc} (\Omega_T)$  
converging to $\f$ almost everywhere in $\Omega_T$. Then 
$$
d t \wedge  (dd^c\f^j)^n \to d t \wedge (dd^c \f)^n
$$
in the weak sense of measures in $\Omega_T$. 
\end{pro}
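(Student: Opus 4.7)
The plan is to test the putative weak limit identity against an arbitrary continuous compactly supported $\chi \in \mathcal{C}^0_c(\Omega_T)$. Writing
\[
\bigl\langle dt \wedge (dd^c \f^j)^n , \chi \bigr\rangle = \int_0^T \Gamma^j(t) \, dt , \qquad \Gamma^j(t) := \int_\Omega \chi(t,\cdot) \, (dd^c \f^j_t)^n ,
\]
and defining $\Gamma(t)$ analogously with $\f$ in place of $\f^j$, I reduce the problem to proving $\int_0^T \Gamma^j \, dt \to \int_0^T \Gamma \, dt$. The strategy is to combine slicewise convergence in $t$ (via the classical Bedford--Taylor monotone convergence theorem for locally bounded psh functions) with the Chern--Levine--Nirenberg type uniform bound \eqref{eq: uniform} of Lemma \ref{lem: continuity of MA integral against test function}, and to conclude by Lebesgue's dominated convergence theorem.

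For the pointwise convergence of the slice integrals $\Gamma^j(t)$, I would first invoke Fubini: since $(\f^j)$ is monotone in $\Omega_T$ and converges to $\f$ $\ell \otimes dV$-almost everywhere, there is a full $\ell$-measure set of $t \in ]0,T[$ on which $\f^j_t \to \f_t$ monotonely, $dV$-almost everywhere on $\Omega$. For such a $t$ the slices $\f^j_t$ and $\f_t$ all lie in $\PSH(\Omega) \cap L^\infty_{\loc}(\Omega)$, and uniqueness of psh functions up to their upper semicontinuous regularization forces the monotone limit of $\f^j_t$ to coincide with $\f_t$ everywhere (directly in the decreasing case; after usc regularization in the increasing case, which does not affect the Monge--Amp\`ere measure). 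The Bedford--Taylor continuity theorem for $(dd^c \cdot)^n$ along monotone sequences of locally bounded psh functions then yields $(dd^c \f^j_t)^n \to (dd^c \f_t)^n$ weakly as Radon measures on $\Omega$. Testing against $\chi(t,\cdot)$, which is continuous with compact support in $\Omega$ for each $t$, gives $\Gamma^j(t) \to \Gamma(t)$ for $\ell$-almost every $t$.

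For the uniform domination needed to apply Lebesgue's theorem, I pick compact sets $\mathrm{Supp}(\chi) \subset E_1 \Subset E_2 \Subset \Omega_T$ and use \eqref{eq: uniform} to obtain
\[
|\Gamma^j(t)| \leq C \max_{\Omega_T} |\chi| \cdot \bigl( \max_{E_2} |\f^j| \bigr)^n \quad \text{for every } t.
\]
Monotonicity of $(\f^j)$ combined with local boundedness of $\f$ and $\f^1$ bounds $\max_{E_2} |\f^j|$ uniformly in $j$, supplying a constant $L^\infty$-majorant in $t$ supported on the compact $t$-projection of $\mathrm{Supp}(\chi)$. Dominated convergence then delivers $\int_0^T \Gamma^j \, dt \to \int_0^T \Gamma \, dt$, completing the proof of weak convergence. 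The only delicate step is the Fubini--Bedford--Taylor passage from global a.e.\ convergence to slicewise weak convergence of the Monge--Amp\`ere measures; however, it is handled cleanly by the psh regularity of the slices together with uniqueness of psh representatives, so I do not expect a genuine obstacle.
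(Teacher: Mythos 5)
Your proof is correct and follows essentially the same route as the paper's: reduce to slicewise weak convergence of $(dd^c\f^j_t)^n$ via Bedford--Taylor's monotone convergence theorem, use the Chern--Levine--Nirenberg estimate of Lemma~\ref{lem: continuity of MA integral against test function} to furnish a uniform $L^\infty$-majorant in $t$, and conclude by dominated convergence. If anything, your explicit Fubini step isolating the full-measure set of $t$ on which $\f^j_t\to\f_t$ a.e.\ in $\Omega$ (and the remark that the limit agrees with $\f_t$ after usc-regularization without changing the Monge--Amp\`ere measure) is a slight refinement of the paper's terser presentation.
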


\begin{proof} 
Let $\chi$ be a continuous test function in $\Omega_T$. By definition,
$$
 \int_{\Omega_T} \chi d t \wedge  (dd ^c \f^j)^n = \int_0^T d t \left(\int_{\Omega} \chi (t,\cdot)  (dd^c \f^j(t,\cdot))^n \right)=: \int_0^T F_j(t)dt.
$$

It follows from \cite[Theorem 2.1 and Proposition 5.2]{Bedford_Taylor_1982Capacity} that $F_j$ converges to $F$ pointwise in $]0,T[$.  Lemma \ref{lem: continuity of MA integral against test function} ensures that $F_j$ is uniformly bounded hence the conclusion follows from Lebesgue  convergence theorem. 
\end{proof}

\begin{remark}\label{rem: dt MA converge}
	The conclusion of Proposition \ref{prop: dt MA converge} also holds if the sequence $(\varphi^j)$ uniformly converges to $\varphi\in \mathcal{P}(\Omega_T)$.
\end{remark}

\subsection{Semi-continuity properties}

It is difficult to pass to the limit in the parabolic equation, due to the time derivative.
We have the following  general semi-continuity property :

\begin{lem}  \label{lem: semicontinuity}  
   Let $(\nu_j)$ be positive Borel measures on a topological manifold $Y$ which converge weakly to $\nu$ in the sense of Radon measures on $Y$.    
   Let $v_j : Y \longrightarrow \R$ be a locally uniformly bounded  sequence of measurable functions  which  weakly converge to a measurable function $v$ 
   in $L^2 (Y,\nu)$.
 \begin{enumerate}
 \item If $\Vert \nu_j - \nu \Vert \to 0$ (total variation) then $\lim_j \int_Y v_j  \nu_j  = \int_Y v  \nu$ and 
 $$
 \liminf_{j \to + \infty} e^{v_j} \nu_j \geq e^v \nu
 $$
 in the weak sense of Radon measures in $Y$.
 
 \item If  $v_j \to v$ $\nu$-a.e. in $Y$  and $\mathcal M := \{\nu_j ; j \in \N \} \cup \{\nu \}$ 
 is uniformly absolutely continuous with respect to a fixed positive Borel measure $\tilde \nu$ on $Y$, 
 then for any continuous function $\theta : \R \to \R$,
 $$
  \theta (v_j) \nu_j \longrightarrow  \theta (v) \nu 
 $$
as $j \rightarrow +\infty$, in the weak sense of Radon measures in $Y$.
 \end{enumerate}
 \end{lem}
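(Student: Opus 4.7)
The plan is to test both assertions against a non-negative $\chi\in C_c(Y)$ and reduce to scalar limits.

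For the first identity in Part~(1) I decompose
\[
\int_Y \chi v_j\,d\nu_j - \int_Y \chi v\,d\nu \;=\; \int_Y \chi v_j\,d(\nu_j-\nu) + \int_Y \chi(v_j-v)\,d\nu.
\]
The first summand is bounded by $\|\chi v_j\|_\infty\|\nu_j-\nu\|$ and vanishes by local uniform boundedness of $(v_j)$ together with $\|\nu_j-\nu\|\to 0$; the second vanishes because $\chi\in L^2(Y,\nu)$ and $v_j\rightharpoonup v$ weakly in $L^2(Y,\nu)$.

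For the $\liminf$ inequality I exploit convexity of the exponential: for every bounded measurable $\alpha:Y\to\R$ one has the pointwise bound $e^{v_j}\geq e^{\alpha}+e^{\alpha}(v_j-\alpha)$, so integrating against $\chi\,d\nu_j$,
\[
\int \chi e^{v_j}\,d\nu_j \;\geq\; \int \chi e^{\alpha}\,d\nu_j + \int \chi e^{\alpha}v_j\,d\nu_j - \int \chi\alpha e^{\alpha}\,d\nu_j.
\]
The two $\alpha$-only terms converge by total variation convergence (valid against bounded measurable integrands), while the middle term converges to $\int \chi e^{\alpha}v\,d\nu$ by running the decomposition of the first identity with $\chi e^{\alpha}$ in place of $\chi$. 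Passing to the $\liminf$ and then letting simple functions $\alpha_n$ tend pointwise to $v$, dominated convergence together with the pointwise identity $e^{v}=\sup_{\alpha}e^{\alpha}(1+v-\alpha)$ yields $\liminf_j \int \chi e^{v_j}\,d\nu_j \geq \int \chi e^{v}\,d\nu$.

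For Part~(2) I would first establish an auxiliary fact: weak convergence $\nu_j\to\nu$ together with uniform absolute continuity relative to $\tilde\nu$ implies $\int \psi\,d\nu_j\to\int \psi\,d\nu$ for every bounded, compactly supported Borel $\psi$. The proof is Vitali-like: approximate $\psi$ in $L^1(\tilde\nu)$ by $\psi_\eta\in C_c(Y)$ with $\|\psi_\eta\|_\infty\leq\|\psi\|_\infty$, bound $\tilde\nu(\{|\psi-\psi_\eta|>s\})$ by $\|\psi-\psi_\eta\|_{L^1(\tilde\nu)}/s$ via Chebyshev, and convert this into a uniform bound on the $\nu_j$-mass of those level sets via the modulus supplied by uniform absolute continuity. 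Applied to $\psi=\chi\theta(v)$, this handles the second piece of the decomposition
\[
\int \chi\theta(v_j)\,d\nu_j - \int \chi\theta(v)\,d\nu \;=\; \int \chi[\theta(v_j)-\theta(v)]\,d\nu_j + \int \chi\theta(v)\,d(\nu_j-\nu).
\]
For the first piece, I invoke Egorov on the compact support $K$ of $\chi$: for any $\delta>0$ there is a Borel $A\subset K$ with $\tilde\nu(A)<\delta$ outside of which $v_j\to v$ uniformly, so $\theta(v_j)\to\theta(v)$ uniformly by continuity of $\theta$; uniform absolute continuity then makes $\nu_j(A)$ uniformly $O(\delta)$, and the splitting $K=A\cup(K\setminus A)$ concludes.

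The delicate point is the compatibility between the two notions of \emph{almost everywhere} in Part~(2): Egorov as used above prefers convergence $\tilde\nu$-a.e., whereas the hypothesis supplies it only $\nu$-a.e. I expect to resolve this by enlarging the $\nu$-null exceptional set into a Borel set whose $\nu_j$-mass is uniformly small, again exploiting the modulus furnished by uniform absolute continuity. Once this bookkeeping is settled, the rest is a routine Vitali-type argument.
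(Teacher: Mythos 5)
Your proof follows essentially the same route as the paper's: the same two decompositions, the same use of the convexity of the exponential for the $\liminf$ inequality in Part~(1), and Egorov together with a Vitali/Lusin-type argument in Part~(2). If anything you are more careful than the printed argument in two places. In Part~(1) the paper writes Young's formula $e^t=\sup_{s>0}(st-s\log s+s)$ with a constant parameter $s$ and asserts the $\liminf$ inequality follows; as written this only yields a lower bound by $\sup_{s>0}\int\chi\,(sv-s\log s+s)\,d\nu$, not by $\int\chi\,e^{v}\,d\nu$, since the optimal $s$ must depend on the point. Your version, taking the supporting point $\alpha$ to be a bounded measurable function, approximating by simple functions, and using that total-variation convergence allows one to integrate bounded Borel functions against $\nu_j-\nu$, closes that gap. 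In Part~(2) you are also right to flag the passage that applies Egorov with respect to $\tilde\nu$ to a sequence known to converge only $\nu$-a.e.: a $\nu$-null set need not be $\tilde\nu$-null, so this step needs justification. However, your sketched repair is not quite right as stated: the UAC modulus controls $\nu_j$-mass only through $\tilde\nu$-mass, so it cannot by itself make the $\nu_j$-mass of a $\nu$-null set small. The correct fix is already contained in the auxiliary Vitali-type fact you prove: under UAC, weak convergence $\nu_j\to\nu$ upgrades to convergence against bounded compactly supported Borel integrands, whence $\nu_j(N\cap K)\to\nu(N\cap K)=0$ (eventually small, not uniformly small) for the exceptional Borel set $N$ and any compact $K\supset\mathrm{supp}\,\chi$; one then discards $N$ and applies Egorov for $\tilde\nu$ on $K\setminus N$. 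With that adjustment your argument is complete.
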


 Recall that a set $\mathcal M$ of positive Borel measures  is uniformly absolutely continuous with respect to a  positive Borel measure 
 $\tilde \nu$ on $Y$ if for any $\delta > 0$ there exists $\alpha > 0$ such that  $\sup_{\sigma \in \mathcal{M}} \sigma (B) \leq \delta$
whenever  $B \subset Y$ is a Borel subset with $\tilde \nu (B) \leq \alpha$
 
 A typical example is when  $\sigma = f_{\sigma} \tilde  \nu$, where $\sup_{\sigma \in \mathcal{M}} f_{\sigma} $ is $\tilde \nu$-integrable.
When $\Vert \nu_j - \nu \Vert \to 0$ in the sense of total variation, then the set 
$\mathcal{M} := \{ \nu_j \setdef j \in \N \} \cup \{ \nu \}$ is uniformly absolutely continuous with respect to $\nu=\tilde  \nu$.
 
\begin{proof} 
We first prove (1). Recall  Young's formula which states that 
 $$
 e^t = \sup_{s >0} \{ s t - s \log s + s \}
 $$
 for all $t \in \R$.
It therefore suffices to prove that for all $s  > 0$,
 $$
 \liminf_{j \to + \infty} e^{v_j} \nu_j \geq (s v - s \log s + s) \nu
 $$
 in the weak sense of Radon measures on $Y$. Now  for all $s > 0$ 
  $$
   e^{v_j} \nu_j =\sup_{s >0} \{ (s v_j - s \log s + s) \nu_j  \},
  $$
 so it suffices to prove that $\liminf_j v_j \nu_j \geq v \nu$ in the sense of Radon measures.
 
  Let $\chi$ be a  test function on $Y$.  
 Observe that
  $$
\int_Y  \chi v_j d \nu_j - \int_Y \chi  v d \nu = \int_Y \chi (v_j - v) d \nu + \int_Y \chi v_j d ( \nu_j - \nu).
  $$
The  first term converges to zero by weak convergence.
Since $\chi v_j$ is uniformly bounded by a constant $M$ the absolute value of the second term is less than $M  \Vert \nu_j - \nu\Vert_{{\rm Supp} (\chi)}$, which converges to $0$.
 
\smallskip

We now prove (2). Set $f_j := \theta (v_j)$ and $f:=  \theta (v)$ and write
  $$
\int_Y  \chi f_j d \nu_j - \int_Y \chi  f d \nu = \int_Y \chi (f_j - f) d \nu_j + \int_Y \chi f d ( \nu_j - \nu).
  $$
Observe that $g_j := \chi (f - f_j) \to 0$ $\nu$ a.e. in $Y$ 
since $v_j \to v$ $\nu$-a.e. in $Y$. It follows from Egorov's theorem that the sequence $(f_j)$ converges $\tilde{\nu}$-quasi uniformly to $f$. Since the sequence $(\nu_j)$ is uniformly absolutely continuous with respect to $\tilde{\nu}$ it follows that the first term above converges to $0$ as $j\to +\infty$.  By Lusin's theorem, the function  $f$ is $\tilde{\nu}$-quasi continuous in $Y$, hence the second term also converges to $0$ as $j\to +\infty$, completing the proof of the lemma. 
 \end{proof}

\begin{pro}  \label{prop: semi continuity}
 Let $J \subset \R$ be a bounded open interval, $D$ be a bounded open set in $\mathbb{R}^m$, $m\in \N^*$, and $0 \leq g \in L^p (D)$ with $p> 1$.  Let  $(\psi_j)$ be a sequence of Borel functions in $J\times D$ such that $(e^{\psi_j}g)$ is uniformly bounded in $L^1(J\times D, dt d V)$. Assume that there exists $E\subset D$ with zero Lebesgue measure such that for all $z\in D\setminus E$,  $\psi_j(\cdot, z)$ converge to a bounded Borel function $\psi(\cdot,z)$   in the sense of distributions on $J$ and 
 \begin{equation}
 	\label{eq: uniform bound psij}
 	\sup_{j\in \mathbb{N},z \in D\setminus E} \left |\int_J \chi(t,z) \psi_j(t,z) dt \right|  <+\infty, \ \ \text{for all}\ \chi \in \mathcal{C}^{\infty}_0(J\times D). 
 \end{equation}  
Then for any positive smooth test function $\chi \in \mathcal{C}^{\infty}_0(J\times D)$, 
\begin{equation}
	\label{eq: semi-continuity}
\int_{J\times D} \chi(t,z) e^{\psi(t,z)} g(z) dt d V  \leq \liminf_{j \to + \infty} \int_{J\times D} \chi(t,z)e^{\psi_j(t,z)} g(z) dt d V.
\end{equation}
\end{pro}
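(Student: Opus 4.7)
\emph{Proof strategy.} The idea is to dualize the convex function $v\mapsto e^v$ via Young's formula
$$e^v = \sup_{s>0}(sv - s\log s + s), \quad v\in\mathbb{R},$$
thereby replacing the pointwise nonlinearity by a supremum of affine expressions in $v$, to which the slicewise distributional convergence $\psi_j(\cdot,z)\to\psi(\cdot,z)$ can be applied.

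\emph{Pointwise Young and $\liminf$ against smooth $s$.} For any smooth positive $s\in \mathcal{C}^{\infty}(J\times D)$ one has pointwise $e^{\psi_j}\geq s\psi_j + s - s\log s$. Multiplying by $\chi g \geq 0$ and integrating,
$$\int \chi e^{\psi_j} g \,dt\,dV \geq \int \chi s \psi_j g \,dt\,dV + \int \chi s(1-\log s) g \,dt\,dV.$$
Since $\chi s\in \mathcal{C}^{\infty}_0(J\times D)$, the slicewise distributional convergence yields, for a.e.~$z\in D\setminus E$,
$$\int_J(\chi s)(t,z)\psi_j(t,z)\,dt \longrightarrow \int_J(\chi s)(t,z)\psi(t,z)\,dt,$$
and the integrands are uniformly bounded in $(j,z)$ by \eqref{eq: uniform bound psij}. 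As $g\in L^1(D)$ (because $D$ is bounded and $g\in L^p$ with $p>1$), Lebesgue dominated convergence in $z$ gives
$$\liminf_{j\to\infty}\int \chi e^{\psi_j} g \,dt\,dV \geq \int \chi\bigl[s\psi + s - s\log s\bigr] g \,dt\,dV =: I(s).$$

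\emph{Optimizing $s$.} It remains to show that $\sup_{s} I(s) = \int \chi e^\psi g\,dt\,dV$, the supremum being taken over smooth positive $s$. The inequality $I(s)\leq \int \chi e^\psi g$ is Young pointwise; for the reverse one approximates the optimal (formal) choice $s=e^\psi$. Truncate $\psi_N:=\max(-N,\min(\psi,N))$, so that $\psi_N\to \psi$ a.e.~(since $\psi(\cdot,z)$ is bounded for a.e.~$z$), mollify to obtain smooth $\psi_N^\alpha$ with $\|\psi_N^\alpha\|_\infty\leq N$ and $\psi_N^\alpha\to\psi_N$ a.e.~as $\alpha\to 0$, and set $s_{N,\alpha}:=e^{\psi_N^\alpha}\in [e^{-N},e^{N}]$. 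Substituting in the inequality above and letting first $\alpha\to 0$ (bounded convergence on each slice) and then $N\to\infty$ (monotone convergence of $e^{\psi_N}(\psi - \psi_N + 1)$ towards $e^\psi$, after separating the signs of $\psi-\psi_N$) produces $I(s_{N,\alpha})\to \int\chi e^\psi g$ and hence the desired inequality.

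\emph{Main obstacle.} The delicate point is the double limit in the optimization step: since $\psi$ is bounded only slicewise rather than uniformly in $z$, the product $\chi|\psi|g$ need not lie in $L^1(J\times D)$. The argument must therefore be organized as an iterated integral $\int_D g(z)\bigl(\int_J \cdots dt\bigr) dV$, invoking the uniform slicewise bound \eqref{eq: uniform bound psij} applied to the varying test function $\chi s_{N,\alpha}$ (which gives a $z$-bound of the form $C\,e^N$ independent of $\alpha$ once $\|s_{N,\alpha}\|_\infty\leq e^N$ is fixed), together with $g\in L^1(D)$, to keep every iterated integral absolutely convergent and to justify the successive passages to the limit.
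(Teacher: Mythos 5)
You take a genuinely different route from the paper. The paper truncates the $\psi_j$ from below, shows the truncations are bounded in $L^2(g\,dt\,dV)$, applies the Banach--Saks theorem to produce Cesàro means $\Psi_N$ converging a.e.\ and in $L^2$ to some $\Psi$, checks $\Psi\geq\psi$ $g\,dt\,dV$-a.e.\ by comparing iterated integrals, and then concludes with Fatou's lemma together with convexity of the exponential. You instead dualize $v\mapsto e^v$ by Young's formula $e^v=\sup_{s>0}(sv-s\log s+s)$ and reduce the problem to the convergence of $\int_J(\chi s)\psi_j\,dt$ against smooth positive multipliers $s$, which is closer in spirit to part~(1) of Lemma~\ref{lem: semicontinuity}. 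Your first step---obtaining $\liminf_j\int\chi e^{\psi_j}g\geq I(s)$ for each smooth positive $s$ from the slicewise distributional convergence, from \eqref{eq: uniform bound psij} applied to $\chi s$, and from $g\in L^1(D)$---is correct.

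The optimization step, however, contains a real gap. When letting $\alpha\to 0$ at fixed $N$, you invoke \eqref{eq: uniform bound psij} with the varying test function $\chi s_{N,\alpha}$ and assert that it yields a $z$-uniform bound of the form $C\,e^N$ independent of $\alpha$. Condition \eqref{eq: uniform bound psij} asserts only that the supremum is finite for each \emph{fixed} test function; it gives no quantitative control of that bound in terms of $\|\chi\|_\infty$. A uniform-boundedness argument would at best produce a bound in some $\mathcal{C}^k$-seminorm of the test function, and $\|\psi_N^\alpha\|_{\mathcal{C}^1}$ blows up like $N/\alpha$, so no $\alpha$-independent domination of the outer $z$-integral emerges. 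Likewise, the $N\to\infty$ passage by monotone convergence of $f_N:=e^{\psi_N}(\psi+1-\psi_N)\uparrow e^\psi$ requires $\chi|\psi|g\in L^1(J\times D)$ to get started, which is not supplied by the hypothesis that $\psi(\cdot,z)$ is merely bounded for each $z$.

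Note, though, that the paper's own proof opens with ``Fix $C>0$ such that $|\psi|\leq C$ in $X$,'' i.e.\ it implicitly works under a \emph{global} bound on $\psi$, which is indeed what holds in every application of this proposition (there $\psi=\partial_t u+F(t,\cdot,u)$ with $u$ locally uniformly Lipschitz in $t$ and $\chi$ compactly supported). Under that same hypothesis your optimization closes without trouble: take $N\geq C$ so the truncation is the identity, mollify $\psi$ directly, and apply dominated convergence with the integrable majorant $e^C(2C+1)\chi g$. Under that reading your proof is a valid, and arguably more transparent, alternative to the Banach--Saks argument; as a proof of the statement literally as phrased, the domination in the optimization step is missing.
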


\begin{proof}
Fix $C>0$ such that $|\psi| \leq C$ in $X$.  
Set $\varphi_j:= \max(\psi_j,-C)$, $j\in \mathbb{N}$.  Then $e^{\varphi_j}$ is uniformly bounded in $L^1(J\times D,gdtdV)$. 
It follows that $(\varphi_j)$ is bounded in $L^2 (J\times D,gdtdV)$. 
Up to extracting and relabelling, it follows from Banach-Saks theorem that 
the arithmetic  mean sequence
$$
\Psi_N :=\frac{1}{N}\sum_{j=0}^N \varphi_j
$$
converges almost everywhere and in $L^2(J\times D,g dtdV)$ towards a function $\Psi \in L^2 (J\times D, g dtdV)$.

Condition \eqref{eq: uniform bound psij} and Lebesgue's theorem ensure that $\psi_j g$ converges in the sense of distributions on $J\times D$ to $\psi g$. This together with the convergence of $\Psi_N$ towards $\Psi$ ensure that for any  positive smooth test function $\chi$ in $J\times D$, 
\begin{flalign*}
	\int_{J\times D}\chi \Psi gdt dV & =  \lim_{N\to +\infty} \int_{J\times D} \chi \Psi_N gdt dV
	\\
	&=  \lim_{N\to +\infty} \frac{1}{N} \sum_{j=1}^N\int_{J\times D} \chi \max(\psi_j,-C) gdtdV\\
	&\geq   \lim_{N\to +\infty} \frac{1}{N} \sum_{j=1}^N\int_{J\times D} \chi \psi_j gdt dV 
	= \int_{J\times D}  \chi \psi g dt dV.
\end{flalign*}
This implies that   $\Psi g \geq \psi g$ in $L^1 (J\times D)$, hence $e^{\Psi} g \geq e^{\psi} g$ in $L^1(J\times D)$.  

It thus follows from Fatou's lemma that 
$$
\liminf_{N\to +\infty} \int_{J\times D} e^{\Psi_N} \chi gdtdV \geq \int_{J\times D} e^{\Psi} \chi gdtdV \geq \int_{J\times D} e^{\psi} \chi gdtdV.
$$
It follows now from the convexity of the exponential   that  
\begin{flalign*}
	\int_{X} e^{\Psi_{N}}  \chi g dt d V  & \leq  \frac{1}{N} \sum_{j=1}^N \int_{X} e^{\varphi_{j}} \chi g d\mu\\
	&  \leq \frac{1}{N} \sum_{j=1}^N \int_{X} e^{\psi_j} \chi g d\mu +   \int_X e^{-C} \chi g dt dV, 
\end{flalign*}
hence letting $N\to +\infty$ we get
$$
\int e^{\Psi}  \chi g dtdV  \leq \liminf_{j \to + \infty} \int_{X} \chi  e^{\psi_j} gdt dV + e^{-C}\int_X\chi g ddt dV.
$$
Letting $C\to +\infty$ we obtain \eqref{eq: semi-continuity}.
\end{proof}

\subsection{Semi-concavity and convergence}

In the sequel we need more precise convergence results
which require stronger assumptions :
 
 \begin{defi}  
 A function $\gamma : I \rightarrow \R$ is  $\kappa$-concave if $t \mapsto \gamma (t) - \kappa t^2$ is concave. It is called locally semi-concave in $I$ if for any subinterval $J \subset I$, there exists $\kappa = \kappa (J,\gamma) > 0$ such that $\gamma$ is $\kappa$-concave in $J$.
  \end{defi}
  
  A family $\mathcal A$ of semi-concave  functions in some interval $I \subset \R$  is called {\it locally uniformly semi-concave}  if for any compact subinterval $J \Subset I$,  there exists a constant  
  $\kappa = \kappa (J,\mathcal A) > 0$ such that any $\gamma \in \mathcal A$   is  $\kappa$-concave in $J$. 

\smallskip
 
 The following elementary lemma is useful :

 \begin{lem} \label{lem: convergence derivative of concave functions} 
 Let $(\gamma_j)$ be  a  sequence of uniformly  semi-concave functions in an interval $I \subset \R$ 
 which converges  pointwise to a function $\gamma$. 
   Then there exists a countable subset $S \subset I$ such that for all $t \in  I \setminus S$, 
 the derivatives $\dot{\gamma}_j (t), \dot{\gamma} (t)$ exist  and $ \lim_{j \to + \infty} \dot{\gamma}_j (t) = \dot{\gamma} (t)$.
 Moreover if  $\dot{\gamma} (t_0)$ exists then
 $$
  \lim_{j \to + \infty} \partial_t^- \gamma_j (t_0) =  \lim_{j \to + \infty} \partial_t^+\gamma_j (t_0) = \dot{\gamma} (t_0).
 $$
 \end{lem}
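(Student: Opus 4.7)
The plan is to reduce to the concave case and then squeeze the derivatives using standard monotonicity of slopes for concave functions.

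First, I will reduce to the concave case. By the uniform semi-concavity hypothesis, for any compact subinterval $J \Subset I$ there exists $\kappa = \kappa(J)>0$ such that each $\tilde\gamma_j(t) := \gamma_j(t) - \kappa t^2$ is concave on $J$. Since pointwise limits of concave functions are concave, $\tilde\gamma(t) := \gamma(t) - \kappa t^2$ is also concave on $J$. The derivatives of $\gamma_j$ and $\gamma$ exist at $t$ if and only if the derivatives of $\tilde\gamma_j$ and $\tilde\gamma$ exist there, and $\dot\gamma_j(t) = \dot{\tilde\gamma}_j(t) + 2\kappa t$, and similarly for $\gamma$. Hence it suffices to prove the lemma assuming each $\gamma_j$ (and therefore the limit $\gamma$) is concave on $J$; the global statement then follows by exhausting $I$ by a countable union of such $J$'s.

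Second, I will use the classical fact that a concave function admits finite left and right derivatives $\partial_t^-$ and $\partial_t^+$ at every interior point of its domain, and is differentiable there except on an at most countable set. Let $S_j$ be the set of non-differentiability points of $\gamma_j$, $S_\gamma$ that of $\gamma$, and set $S := S_\gamma \cup \bigcup_{j\in \N} S_j$, which is countable.

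Third, fix $t_0\in I$ at which $\dot\gamma(t_0)$ exists (this handles both the main assertion with $t_0 \in I\setminus S$ and the ``moreover'' statement). For $h>0$ small, concavity of $\gamma_j$ gives the slope inequalities
\[
\frac{\gamma_j(t_0+h)-\gamma_j(t_0)}{h} \leq \partial_t^+ \gamma_j(t_0) \leq \partial_t^- \gamma_j(t_0) \leq \frac{\gamma_j(t_0)-\gamma_j(t_0-h)}{h}.
\]
Passing to $\liminf$ and $\limsup$ in $j$ using the pointwise convergence $\gamma_j \to \gamma$ at the three points $t_0-h, t_0, t_0+h$ yields
\[
\frac{\gamma(t_0+h)-\gamma(t_0)}{h} \leq \liminf_{j\to \infty} \partial_t^+ \gamma_j(t_0) \leq \limsup_{j\to \infty}\partial_t^- \gamma_j(t_0) \leq \frac{\gamma(t_0)-\gamma(t_0-h)}{h}.
\]
Letting $h\to 0^+$ and invoking differentiability of $\gamma$ at $t_0$, both outer quantities converge to $\dot\gamma(t_0)$, so $\partial_t^+ \gamma_j(t_0), \partial_t^- \gamma_j(t_0) \to \dot\gamma(t_0)$. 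When $t_0\notin S$, each $\gamma_j$ is differentiable at $t_0$ so $\dot\gamma_j(t_0) = \partial_t^+\gamma_j(t_0) = \partial_t^-\gamma_j(t_0)$, giving $\dot\gamma_j(t_0)\to \dot\gamma(t_0)$.

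There is no serious obstacle here: everything follows from the monotonicity of slopes of concave functions and a sandwich argument. The only mildly delicate point is the reduction step, where one must observe that uniform semi-concavity allows a common subtraction of $\kappa t^2$ on each compact subinterval so that one really is working with a sequence of \emph{concave} functions, and that the countable set $S$ can be taken to be the union of the countably many countable sets $S_j$ together with $S_\gamma$.
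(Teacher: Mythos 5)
Your proposal is correct and takes essentially the same route as the paper: reduce to concave functions via subtracting $\kappa t^2$, use the monotonicity of difference quotients of concave functions to sandwich $\liminf_j\partial_t^+\gamma_j(t_0)$ and $\limsup_j\partial_t^-\gamma_j(t_0)$ between slopes of $\gamma$, and let $h\to 0^+$. The only (cosmetic) difference is that you build the countable set $S$ explicitly as the union of the non-differentiability sets of $\gamma$ and all the $\gamma_j$, while the paper invokes monotonicity of $\partial_t^\pm$ together with Lemma \ref{lem: left and right derivative} to reach the same conclusion.
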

 
 We  include a proof for the reader's convenience. 
 
 \begin{proof} 
 We can assume that $\gamma_j$ is concave in $I$ for all $j$ and $t_0 = 0$.
 Thus for all $j \in \N$ and $t < 0$, 
 $$
   t \partial_t^-\gamma_j (0) \geq \gamma_j (t)-\gamma_j(0).
 $$
 
Dividing by $t < 0$ and taking limits (first $j \to + \infty$, then $t \to 0^-$), we obtain 
 $
 \partial_t^-\gamma (0) \geq \limsup_{j \to + \infty} \partial_t^-\gamma_j (0).
 $
Similarly
 $
  \liminf_{j \to + \infty} \partial_t^+\gamma_j (0) \geq \partial_t^+\gamma (0).
 $
 Since $\partial_t^-\gamma_j (0) \geq \partial_t^+\gamma_j (0)$ we conclude that
 $$
 \partial_t^-\gamma (0) \geq \limsup_{j \to + \infty} \partial_t^-\gamma_j (0) \geq \liminf_{j \to + \infty} \partial_t^+\gamma_j (0) \geq  \partial_t^+\gamma (0).
 $$
 
 If $\dot{\gamma}(0)$ exists, $\partial_t^-\gamma (0) = \dot{\gamma}(0) = \partial_t^+\gamma (0)$, hence
 $$
  \lim_{j \to + \infty} \partial_t^-\gamma_j (0) =  \lim_{j \to + \infty} \partial_t^+\gamma_j (0) = \dot{\gamma}(0).
 $$
 
 Observe now that   the derivatives of a concave function
 $\partial_t^{\pm} \gamma (t)$ are monotone decreasing, hence continuous outside a countable subset of $I$. 
 Since $\partial_t^{+} \gamma (t) = \partial_t^{-} \gamma (t)$ almost everywhere by Lemma \ref{lem: left and right derivative}, 
 it follows that they are equal outside a countable set in $I$.  
 \end{proof}
 
 We now prove a convergence result that will play a key role in the sequel.
 We fix $\mu$ a  positive Borel measure on $\Omega$ and let $\ell$ denote the Lebesgue measure on $\R$. 
 
\begin{pro} \label{pro: convergence semiconcave} 
Let $(f_j)$ be  a sequence of positive functions 
converging to  $f$ in $L^1 (\Omega_T,\ell \otimes \mu)$. 
 Let $(\f^j)$ be  a sequence of functions in ${\mathcal P} (\Omega_T)$ which
 \begin{itemize}
 \item converges $\ell \otimes \mu$-almost everywhere in $\Omega_T$ 
 to a function $\f \in  {\mathcal P} (\Omega_T)$;
 \item   is locally uniformly semi-concave in $]0,T[$. 
 \end{itemize}
Then $\lim_{j \to + \infty} \dot{\f}^j (t,x) = \dot{\f} (t,x)$ for $\ell \otimes \mu$-almost any $(t,x)  \in \Omega_T$, and 
 $$
  \theta (\dot{\f}^j) \, f_j \, \ell \otimes \mu \to \theta (\dot{\f}) \, f \, \ell \otimes \mu, 
 $$
 in the weak sense of Radon measures in $\Omega_T$,
 for all $\theta \in {\mathcal C}^0(\R,\R)$.
\end{pro}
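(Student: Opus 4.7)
The plan is to split the argument into two parts: first establishing the pointwise $\ell \otimes \mu$-almost everywhere convergence $\dot\varphi^j \to \dot\varphi$, and then passing to the limit in the measure convergence by applying Lemma~\ref{lem: semicontinuity}(2). The first part is essentially a one-dimensional statement about the time slices $t \mapsto \varphi^j(t,x)$, to which Lemma~\ref{lem: convergence derivative of concave functions} will eventually be applied.

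First I would invoke Fubini on the $\ell \otimes \mu$-null set where $\varphi^j$ fails to converge to $\varphi$, obtaining a $\mu$-null set $N_0 \subset \Omega$ such that for every $x \in \Omega \setminus N_0$ the sequence $t \mapsto \varphi^j(t,x)$ converges to $t \mapsto \varphi(t,x)$ for $\ell$-a.e. $t \in\, ]0,T[$. Fix such an $x$ and a compact subinterval $J \Subset\, ]0,T[$. By the locally uniform semi-concavity assumption, the functions $t \mapsto \varphi^j(t,x) - \kappa_J t^2$ are all concave in $J$ with $\kappa_J$ independent of $j$ and $x$. A standard four-point chord argument for concave functions converging on a dense set to a continuous limit (here $\varphi(\cdot,x)$, which is even Lipschitz as a slice of $\varphi \in \mathcal P(\Omega_T)$) upgrades this $\ell$-a.e. convergence to pointwise convergence on all of $J$, and simultaneously yields a local uniform bound on $\dot\varphi^j(\cdot,x)$ in terms of the oscillation of $\varphi(\cdot,x)$ on a slightly larger subinterval. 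Lemma~\ref{lem: convergence derivative of concave functions} then produces a countable set $S_x \subset J$ outside of which $\dot\varphi^j(t,x) \to \dot\varphi(t,x)$; exhausting $]0,T[$ by such subintervals and applying Fubini a second time delivers $\dot\varphi^j \to \dot\varphi$ outside an $\ell \otimes \mu$-null set in $\Omega_T$.

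For the measure convergence I would apply Lemma~\ref{lem: semicontinuity}(2) with $\nu_j := f_j \, \ell \otimes \mu$, $\nu := f\, \ell \otimes \mu$ and $\tilde\nu := \ell \otimes \mu$. Since $f_j \to f$ in $L^1(\Omega_T, \ell \otimes \mu)$, a standard Vitali--Hahn--Saks type argument ensures that the family $\{\nu_j\} \cup \{\nu\}$ is uniformly absolutely continuous with respect to $\tilde\nu$. The sequence $v_j := \dot\varphi^j$ is locally uniformly bounded in $\Omega_T$ by the semi-concavity estimate established above, and converges $\nu$-almost everywhere to $v := \dot\varphi$ (since $\nu \ll \tilde\nu$), so that $\theta(v_j) \to \theta(v)$ $\nu$-a.e.\ for every continuous $\theta$. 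Lemma~\ref{lem: semicontinuity}(2) then yields $\theta(\dot\varphi^j)\, f_j\, \ell \otimes \mu \to \theta(\dot\varphi)\, f\, \ell \otimes \mu$ in the weak sense of Radon measures. The main obstacle is the upgrading step: passing from the mere $\ell$-a.e.\ slicewise convergence supplied by Fubini to the genuine pointwise convergence demanded by Lemma~\ref{lem: convergence derivative of concave functions}; this is precisely where the uniform semi-concavity is indispensable, as it simultaneously secures the local uniform $L^\infty$ bound on $\dot\varphi^j$ needed to invoke Lemma~\ref{lem: semicontinuity}(2).
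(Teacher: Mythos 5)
Your proposal is correct and follows essentially the same skeleton as the paper's proof — uniform local semi-concavity reduces matters to concave one-variable functions, Lemma~\ref{lem: left and right derivative} and Lemma~\ref{lem: convergence derivative of concave functions} supply the $\ell\otimes\mu$-a.e. convergence of the time derivatives, and a dominated-convergence argument finishes. Two small differences are worth noting. First, you insert an explicit ``upgrade'' step: Fubini gives only $\ell$-a.e.\ (in $t$) convergence of the slices $\varphi^j(\cdot,x)$, whereas Lemma~\ref{lem: convergence derivative of concave functions} requires pointwise convergence on the whole interval; you fill this with the fact that equi-Lipschitz (or equi-concave) functions converging on a dense set converge everywhere, which the paper leaves implicit by simply asserting that $u^j(t,x)\to u(t,x)$ off an $\ell\otimes\mu$-null set and then invoking the lemma slice by slice. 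Your version is more careful on this point. Second, for the measure convergence the paper extracts a dominating $g\in L^1(\ell\otimes\mu)$ with $0\le f_j\le g$ (which strictly speaking requires passing to a subsequence, absorbed by the usual sub-subsequence trick for weak convergence) and concludes by Lebesgue's theorem, whereas you appeal directly to Lemma~\ref{lem: semicontinuity}(2) with $\tilde\nu=\ell\otimes\mu$, using the uniform integrability of $(f_j)$ implied by $L^1$-convergence. Both routes are valid; your invocation of Vitali--Hahn--Saks is a little heavier than needed (uniform integrability is the easy direction of Vitali's convergence theorem), but it does the job, and using the already-proved Lemma~\ref{lem: semicontinuity}(2) is a clean way to avoid re-deriving the Egorov/Lusin argument.
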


\begin{proof}
Fix a compact subinterval $J\Subset ]0,T[$. By definition there exists a constant $\kappa > 0$ such that all the functions 
$t \longmapsto u^j (t,x) := \f^j (t ,x) - \kappa t^2$ are concave in $J$. By our hypothesis there exists a $\mu$-negligible subset 
$E_1 \subset \Omega_T$ such that for  any $(t,x) \notin E_1$, the sequence $u^j (t,x)$ converges to $u (t,x) := \f (t,x) - \kappa t^2 $. 
It follows from Lemma \ref{lem: left and right derivative} and Lemma \ref{lem: convergence derivative of concave functions} that there exists a 
$\ell \otimes \mu$-negligeable subset $E_2 \subset \Omega_T$ containing $E_1$ such that
$\dot{\f}^j (t,x)$ and $\dot{\f}(t,x)$ are well defined for all $j$ and all $(t,x) \notin E_2$, with
$$
\lim_{j\to +\infty}\dot{\f}^j (t,x)= \dot{\f} (t,x).
$$
Since $f_j \to f$ in $L^ 1 (\Omega_T,\ell \otimes \mu)$ we can find $g \in L^ 1 (\Omega_T,\ell \otimes \mu)$ such that 
$0  \leq f_j \leq g$ in $\Omega_T$. The  measures $(f_j \ell \otimes \mu)$ are thus
uniformly absolutely  continuous with respect to the positive Borel measure $g \ \ell \times \mu$.
The conclusion of the theorem follows therefore from Lebesgue's theorem. 
\end{proof}

\subsection{Elliptic tools}
 \begin{lem}\label{lem: mixed MA}
 Let $u,v$ be bounded psh functions in $\Omega$ such that 
 $$
 (dd^c u)^n \geq e^{f_1} \mu \  \text{and} \ \ (dd^c v)^n \geq e^{f_2} \mu,
 $$	
 where $f_1,f_2$ are bounded Borel functions in $\Omega$ and $\mu$ is a positive Radon measure with $L^1$ density with respect to Lebesgue measure. Then
 $$
 (dd^c (\lambda u+(1-\lambda)v))^n \geq e^{\lambda f_1+ (1-\lambda) f_2} \mu, \ \text{for all}\ ,\lambda\in [0,1]. 
 $$
 \end{lem}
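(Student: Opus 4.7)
The plan is to combine the multilinearity of the Bedford--Taylor Monge-Amp\`ere operator on bounded psh functions with a mixed Monge-Amp\`ere inequality due to Dinew (building on Ko{\l}odziej) and the weighted AM-GM inequality. The argument is essentially pointwise and reduces to an elementary algebraic manipulation once the pluripotential-theoretic input is in place.

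First, I would use multilinearity to expand
\[
(dd^c(\lambda u + (1-\lambda)v))^n = \sum_{k=0}^n \binom{n}{k} \lambda^k (1-\lambda)^{n-k}\, (dd^c u)^k \wedge (dd^c v)^{n-k},
\]
where each mixed wedge is a well-defined positive Radon measure since $u,v$ are bounded psh. I would then invoke the mixed Monge-Amp\`ere inequality: if $w_1,\dots,w_n$ are bounded psh in $\Omega$ and $(dd^c w_j)^n \ge h_j\, \mu$ for some measure $\mu = \rho\, dV$ with $\rho \in L^1_\loc$, then
\[
dd^c w_1 \wedge \cdots \wedge dd^c w_n \;\ge\; (h_1 \cdots h_n)^{1/n}\, \mu.
\]
For $\mu = dV$ this is Dinew's inequality; the general $L^1$-density case follows by writing $\mu = \rho\, dV$ and applying the Lebesgue version to the densities $h_j \rho$. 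Taking $k$ copies of $u$ and $n-k$ copies of $v$ yields
\[
(dd^c u)^k \wedge (dd^c v)^{n-k} \;\ge\; e^{(k f_1 + (n-k) f_2)/n}\, \mu.
\]

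Finally, set $a := e^{f_1/n}$ and $b := e^{f_2/n}$. Combining the previous bounds with the binomial theorem gives
\[
(dd^c(\lambda u+(1-\lambda)v))^n \;\ge\; \Bigl(\sum_{k=0}^n \binom{n}{k} \lambda^k (1-\lambda)^{n-k} a^k b^{n-k}\Bigr) \mu \;=\; (\lambda a + (1-\lambda) b)^n\, \mu,
\]
and the weighted AM-GM inequality $\lambda a + (1-\lambda) b \ge a^\lambda b^{1-\lambda}$ upgrades this lower bound to $a^{n\lambda} b^{n(1-\lambda)}\, \mu = e^{\lambda f_1 + (1-\lambda) f_2}\, \mu$, which is the desired estimate. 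The main obstacle is the reliance on Dinew's mixed Monge-Amp\`ere inequality, a nontrivial pluripotential-theoretic result; once it is available, the proof is a one-line application of the binomial expansion together with AM-GM.
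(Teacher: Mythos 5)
Your proposal is correct and follows essentially the same path as the paper: expand $(dd^c(\lambda u+(1-\lambda)v))^n$ by multilinearity, apply the mixed Monge-Amp\`ere inequality of Ko{\l}odziej/Dinew to each mixed term to get the bound $e^{(kf_1+(n-k)f_2)/n}\mu$, and then combine using convexity of the exponential. The only cosmetic difference is in the last step: the paper invokes Jensen's inequality directly on the convex combination of exponentials, whereas you re-sum via the binomial theorem to $(\lambda a+(1-\lambda)b)^n\mu$ and then apply weighted AM-GM --- two equivalent phrasings of the same convexity fact.
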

 
 \begin{proof}
 	Observe first that
 	\[
 	 (dd^c (\lambda u+(1-\lambda)v))^n = \sum_{k=0}^n a_k (dd^c u)^k \wedge (dd^c v)^{n-k},
 	\]
 	where $a_k\in (0,1)$, for all $k$ and $\sum_{k=0}^n a_k=1$. 
 	It follows from the mixed Monge-Amp\`ere inequalities \cite{Kol03} (see also \cite{Dinew_2009_Mixed_Inequality}) that 
 	for all $ k=0,\cdots n$,
 	\[
 	(dd^c u)^k \wedge (dd^c v)^{n-k} \geq e^{(kf_1 + (n-k)f_2)\slash n} \mu.
 	\]
 	Summing up the above inequalities and using the convexity of the exponential yields the desired inequality.
 \end{proof}

  	
  	\begin{lemma}
  		\label{lem: continuous approximation}
  		Let $u$ be a psh function in $\Omega$ such that $\lim_{z\to \zeta} u(z)=\phi(\zeta)$ where $\phi$ is a continuous function on $\partial \Omega$. There exists  a decreasing sequence $(u_j)$ of  
  		 plurisubharmonic functions which are continuous  on $\bar{\Omega}$ and such that 
  		 $u_j=\phi$ on $\partial \Omega$ and $u_j\searrow u$ in $\Omega$. 
  	\end{lemma}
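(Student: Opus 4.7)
My plan is to build $u_j$ as a Perron envelope of plurisubharmonic functions bounded above by a smooth interior regularization of $u$ and admitting $\phi$ as boundary value, then verify monotone convergence together with continuity up to $\bar{\Omega}$.

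First I would produce a continuous plurisubharmonic function $H$ on $\bar{\Omega}$ with $H|_{\partial \Omega} = \phi$, via the classical Bremermann-Walsh-Bedford-Taylor theorem which applies in any strictly pseudoconvex domain with continuous boundary data. The maximum principle applied to the upper semi-continuous function $u - H$, which extends to $\bar{\Omega}$ by $0$ on $\partial \Omega$, then yields $u \leq H$ in $\Omega$. Next I fix a sequence $\e_j \searrow 0$ and let $u^{(\e_j)} := u \ast \chi_{\e_j}$ denote the standard convolution regularization, smooth and plurisubharmonic on $\Omega_{\e_j} := \{ z \in \Omega : d(z, \partial \Omega) > \e_j \}$ and decreasing pointwise to $u$.

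The main construction is the envelope
\[
u_j(z) := \sup \bigl\{ v(z) : v \in \PSH(\Omega),\ v \leq u^{(\e_j)} + \tfrac{1}{j}\ \text{on } \Omega_{\e_j},\ (\limsup_{z \to \zeta} v)(\zeta) \leq \phi(\zeta)\ \text{for all } \zeta \in \partial \Omega \bigr\}.
\]
The sanity checks are straightforward: $u$ itself is a candidate, whence $u_j \geq u$; since $u^{(\e_{j+1})} + 1/(j+1) \leq u^{(\e_j)} + 1/j$ on $\Omega_{\e_j}$ the family of candidates shrinks and $u_{j+1} \leq u_j$; the sandwich $u(z) \leq u_j(z) \leq u^{(\e_j)}(z) + 1/j$ on $\Omega_{\e_j}$ forces $u_j(z) \to u(z)$ pointwise in $\Omega$. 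Standard Perron-envelope theory combined with the continuity of the obstacle $u^{(\e_j)}$ gives $u_j^* \in \PSH(\Omega)$ and shows that $u_j^*$ is itself a candidate, hence $u_j = u_j^*$ is plurisubharmonic.

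The main obstacle, and the technical heart of the proof, is the continuity of $u_j$ on $\bar{\Omega}$ together with $u_j|_{\partial \Omega} = \phi$. The upper bound $\limsup_{z \to \zeta} u_j(z) \leq \phi(\zeta)$ follows because every candidate is dominated by $H$. For the matching lower bound I would exploit the strict pseudoconvexity of $\Omega$: at each boundary point $\zeta_0$ one constructs a local plurisubharmonic subbarrier of the form $\phi(\zeta_0) - \omega(|z - \zeta_0|) + A \rho(z)$, where $\omega$ is a modulus of continuity of $\phi$ and $\rho$ is a strictly plurisubharmonic defining function of $\Omega$; for $A$ large enough this is $\leq u^{(\e_j)} + 1/j$ on $\Omega_{\e_j}$ and hence is a candidate, giving $\liminf_{z \to \zeta_0} u_j(z) \geq \phi(\zeta_0)$. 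Interior continuity of $u_j$ is obtained from the continuity of the obstacle combined with the Bedford-Taylor continuity theorem for Perron envelopes with continuous data on strongly pseudoconvex domains; together these yield $u_j \in \mathcal{C}(\bar{\Omega})$ with the prescribed boundary values, completing the construction.
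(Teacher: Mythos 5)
Your proposal takes a genuinely different route from the paper's. The paper first produces a decreasing sequence $f_j \in \mathcal{C}(\bar{\Omega})$ with $f_j \searrow u$ on $\bar{\Omega}$ and $f_j|_{\partial\Omega}=\phi$: one starts with any continuous decreasing approximation of the usc function $u$ and replaces $f_j$ by $\min(f_j,\Phi)$, where $\Phi$ is the harmonic extension of $\phi$ (so that $u\leq\Phi$ guarantees nothing is lost). It then sets $u_j := P(f_j) = \sup\{v\in\PSH(\Omega)\ :\ v^* \leq f_j \text{ on } \bar{\Omega}\}$. Since $u$ itself is a candidate and $u\leq u_j\leq f_j$, the boundary values of $u_j$ are pinned to $\phi$ with no local barrier construction, $u_j\searrow u$ because $f_j\searrow u$, and Walsh's lemma applies directly because the obstacle $f_j$ is continuous on all of $\bar{\Omega}$. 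Your construction instead convolves $u$, constrains the envelope only on the interior set $\Omega_{\e_j}$, and builds local subbarriers at the boundary; the sandwich, monotonicity, psh-ness of $u_j^*$, and the barrier computation at $\partial\Omega$ are all correct as you describe them.

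The gap is precisely in the step you flag as the technical heart. Your obstacle is the function equal to $u^{(\e_j)}+1/j$ on $\Omega_{\e_j}$ and $+\infty$ on $\Omega\setminus\Omega_{\e_j}$; this is upper but not lower semi-continuous across the inner ring $\partial\Omega_{\e_j}$. The Bedford--Taylor/Walsh continuity theorem you invoke concerns Perron envelopes whose obstacle is continuous on $\bar{\Omega}$, and its translation proof relies on showing that, for a candidate $v$ and a small vector $a$, the function $v(\cdot-a)$ minus a modulus-of-continuity penalty is again a candidate. For your envelope this breaks near $\partial\Omega_{\e_j}$: if $z\in\Omega_{\e_j}$ is close to that ring and $z-a\notin\Omega_{\e_j}$, then $v(z-a)$ is subject to no pointwise constraint beyond $v\leq H$, so $v(z-a)$ can exceed $u^{(\e_j)}(z)+1/j$ by an amount that does not tend to $0$ with $|a|$. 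Thus the translate fails to be a candidate and the appeal to ``continuity of the obstacle'' is not justified. Repairing this would require extending the obstacle continuously to $\bar{\Omega}$ in a way that matches $\phi$ at the boundary and leaves the envelope unchanged — which is exactly what the paper's choice of $f_j$ does in one stroke, making the convolution regularization and the interior constraint set unnecessary.
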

  
  This result is classical but we include a proof for the reader's convenience.
  
  	\begin{proof}
  		It follows from the strictly pseudoconvex assumption on $\Omega$ that there exists a harmonic function $\Phi$ in $\Omega$ with boundary value $\phi$. 
  		We first take a sequence of continuous functions $\{f_j\} \subset \mathcal{C}(\bar{\Omega})$ which decreases pointwise to $u$ in $\bar{\Omega}$. By considering $\min(f_j,\Phi)$ we can assume that  $f_j=\phi$ on $\partial \Omega$.  For each $j$, consider  the psh envelope
	$$
	u_j := P(f_j):= \sup\{v\in \PSH(\Omega) \setdef  v^* \leq f_j \text{ in} \ \bar{\Omega}\}. 
	$$
	Then $u \leq u_j \leq f_j$ and $u_j\downarrow u$. Hence  $(u_j)_*=(u_j)^* = \phi$ on $\partial \Omega$. It thus follows from \cite[Lemma 1]{Walsh_1969Envelopes} (see also \cite[Proposition 3.2]{Blo05}) that $u_j$ is continuous in $\bar{\Omega}$.
  	\end{proof}

 \section{Boundary behavior of parabolic  envelopes}
 \label{sect: Perron envelope boundary value}

Our aim is to solve the Cauchy-Dirichlet problem for \eqref{eq: CMAF} with compatible boundary data $h$ using the Perron method of upper envelopes.  In this section we prove that, under natural assumptions, the parabolic Perron envelope has the right boundary values.  We assume  $T<+\infty$.

\subsection{Parabolic pluripotential subsolutions}
Recall that for   $u \in  \mathcal P (\Omega_T)$ the time derivative $\partial_t u$ 
exists a.e. in $\Omega_T$ and satisfies the local uniform bound  $\vert \partial_t  u \vert \leq \kappa_J (u)$  in $J \times \Omega$, for each  $J\Subset ]0,T[$ (see Lemma \ref{lem:Radmacher}). 

 \begin{defi} \label{def: subsolution}
 Fix $u \in \mathcal P (\Omega_T) \cap L^{\infty}(\Omega_T)$. The function $u$ is called a pluripotential subsolution to  \eqref{eq: CMAF} if it satisfies the inequality
 \begin{equation*}
 d t \wedge (dd^c u)^n   \geq e^{\dot{u} + F (t,x,u) } g  d t \wedge d V  
 \end{equation*}
 in  the sense of measures in $\Omega_T$. It is called a pluripotential supersolution to \eqref{eq: CMAF} if the reverse inequality holds in the sense of measures in $\Omega_T$. 
 \end{defi} 
 
  If moreover $u^* \leq h$ in $\partial_0 \Omega_T$, we say that $u$ is a 
  {\it pluripotential subsolution to the Cauchy-Dirichlet problem}
   for the parabolic complex Monge-Amp\`ere equation \eqref{eq: CMAF} with boundary data $h$. Here 
   $$
   u^*(\tau,\zeta): = \limsup_{\Omega_T\ni (t,z) \to (\tau,\zeta)} u(t,z), \, \, \, \, (\tau,\zeta) \in  \partial_0 \Omega_T.
   $$

\begin{pro} \label{pro: subsolution slice}
Fix $u \in \mathcal P (\Omega_T) \cap L^{\infty}_{\loc} (\Omega_T)$.

1) $u$ is a pluripotential subsolution to \eqref{eq: CMAF} if and only if for a.e. $t$,
\begin{equation} \label{eq:subsolslide}
(dd^c u_t)^n \geq e^{\partial_t u (t,\cdot)  + F (t,\cdot,u_t) } g d V,
\end{equation}
in the  sense of measures in $\Omega$. 

2) If $u$ is moreover locally semi-concave in $t$, it is  a
pluripotential subsolution to \eqref{eq: CMAF} if and only if {\it for all} $t$,
$$
(dd^c u_t)^n \geq e^{\partial_t^+ u (t,\cdot) + F (t,\cdot ,u_t) } g  d V, 
$$
in the  sense of measures in $\Omega$.
\end{pro}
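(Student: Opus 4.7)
The plan is to test the defining parabolic inequality against product test functions $\chi(t,z) = \eta(t)\psi(z)$ and use Fubini to move between the global inequality and its slicewise version. For the ``if'' direction of (1), assuming \eqref{eq:subsolslide} a.e.\ in $t$, I integrate against any non-negative $\chi \in \mathcal{C}^\infty_c(\Omega_T)$; Fubini applies to the right-hand side because $\partial_t u$ is locally bounded (Lemma~\ref{lem:Radmacher}) and $e^{F(t,\cdot,u)}g$ is locally integrable, giving the parabolic inequality. For the converse, fix $\psi \in \mathcal{C}_c^+(\Omega)$ and set
\[
h_\psi(t) := \int_\Omega \psi\, (dd^c u_t)^n - \int_\Omega \psi\, e^{\partial_t u(t,\cdot) + F(t,\cdot,u_t)} g\, dV,
\]
which is measurable and well-defined for a.e.\ $t$. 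Testing against $\eta(t)\psi(z)$ with $\eta \geq 0$ smooth and compactly supported in $]0,T[$ produces $\int_0^T \eta h_\psi\, dt \geq 0$, so varying $\eta$ yields $h_\psi \geq 0$ a.e.. Choosing a countable family $\{\psi_k\}$ dense (for uniform convergence on compacts) in $\mathcal{C}_c^+(\Omega)$ then gives a common full-measure set $T_0 \subset ]0,T[$ on which $h_{\psi_k} \geq 0$ for every $k$, and density upgrades this to the inequality of measures for $t \in T_0$.

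For part (2), the ``if'' direction is immediate: by Lemma~\ref{lem: left and right derivative}, semi-concavity forces $\partial_t u = \partial_t^+ u$ a.e., so the hypothesis implies the slicewise inequality with $\partial_t u$ for a.e.\ $t$, and part~(1) applies. The ``only if'' direction is the heart of the matter. Starting from part~(1) and the same a.e.\ equality, the slicewise inequality with $\partial_t^+ u$ holds for every $t$ in some full measure set $T_0$. To reach every $t_0 \in ]0,T[$, I pick a sequence $t_j \searrow t_0$ with $t_j \in T_0$ and combine three facts: (i) the weak convergence $(dd^c u_{t_j})^n \to (dd^c u_{t_0})^n$ supplied by Lemma~\ref{lem: continuity of MA integral against test function}; (ii) the lower semi-continuity of $(t,z) \mapsto \partial_t^+ u(t,z)$ from Lemma~\ref{lem: left and right derivative}, giving $\liminf_j \partial_t^+ u(t_j, z) \geq \partial_t^+ u(t_0,z)$; (iii) the continuity of $F$ together with continuity of $u$ in $(t,z)$ (the latter from uniform Lipschitz in $t$ and upper semi-continuity of the slices), yielding $F(t_j,z,u_{t_j}(z)) \to F(t_0,z,u_{t_0}(z))$ pointwise. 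Testing against $\psi \in \mathcal{C}_c^+(\Omega)$ and applying Fatou's lemma on the right---justified by the uniform $L^\infty$ bound on $\partial_t u$ near $t_0$, which dominates the exponential---produces
\[
\int_\Omega \psi\, (dd^c u_{t_0})^n \;\geq\; \int_\Omega \psi\, e^{\partial_t^+ u(t_0,\cdot) + F(t_0,\cdot,u_{t_0})}g\, dV,
\]
which is the desired slicewise inequality at $t_0$.

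The main technical obstacle lies in this last step: one must reconcile the weak convergence of Monge-Amp\`ere measures (duality against bounded continuous test functions of compact support) with a pointwise lower semi-continuous control of the exponential density on the right-hand side. What makes the argument close is that $g \in L^p(\Omega)$, together with the uniform Lipschitz and semi-concavity bounds, places the densities $e^{\partial_t^+ u(t_j,\cdot) + F(t_j,\cdot,u_{t_j})} g$ in a dominated family on compact time intervals, so Fatou applies cleanly. Throughout, the choice of countable dense test families in $\mathcal{C}_c^+(\Omega)$ is what allows a single full-measure set of ``good'' times to simultaneously handle all test functions.
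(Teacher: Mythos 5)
Your proposal is correct and follows essentially the same route as the paper: product test functions and Fubini for part (1), and the interplay between continuity of $t\mapsto\int\psi\,(dd^c u_t)^n$ (Lemma~\ref{lem: continuity of MA integral against test function}) and lower semi-continuity of $\partial_t^+u$ (Lemma~\ref{lem: left and right derivative}) via Fatou for part (2). The only cosmetic difference is that you phrase the upgrade ``a.e.\ $t$ $\Rightarrow$ every $t$'' through an explicit sequence $t_j\to t_0$ rather than through semi-continuity of the map $t\mapsto\int\psi\,e^{\partial_t^+u+F}g\,dV$ directly, and your remark that a dominating bound is needed for Fatou is superfluous (Fatou for a $\liminf$ of non-negative integrands needs no domination), but neither affects correctness.
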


 \begin{proof}
 Recall that $\partial_t u$  makes sense almost everywhere and, in case $u$ is semi-concave, coincides with $\partial_t^+u$ which is well defined at every point.

 Assume first that \eqref{eq:subsolslide} holds for a.e. $t$. Let $\chi \in {\mathcal C}^{\infty}_0(\Omega_T)$ be a nonnegative test function. Multiplying \eqref{eq:subsolslide} by $\chi$ and integrating in $x$ we obtain
 $$
\int_{\Omega} \chi(t,x)  (dd^c u_t)^n \geq \int_{\Omega} \chi(t,x) e^{\partial_t u + F (t,x,u_t) } g(x) d V (x).
 $$
 Integrating with respect to $t$, we infer
 $$
\int_{\Omega_T} \chi(t,x)  (dd^c u_t)^n \wedge dt \geq 
\int_{\Omega_T} \chi(t,x) e^{\partial_t u + F (t,x,u_t) } g(x) d V (x) \wedge dt,
 $$
 i.e. $u$ is a pluripotential subsolution to \eqref{eq: CMAF}.
 
 Assume now that $u$ is a pluripotential subsolution to \eqref{eq: CMAF}. We consider  product
 of nonnegative test functions
 $$
 \chi(t,x)=\alpha(t) \theta_j(x),
 $$
 where $(\theta_j)$ is a  sequence of test functions in $\Omega$ which generates a dense subspace of the space of test functions (for the ${\mathcal C}^0$-topology). It follows from Fubini theorem that
 $$
\int_0^T \left\{ \int_{\Omega} \theta_j(x)  (dd^c u_t)^n \right\} \alpha(t) dt
\geq \left\{  \int_{\Omega} \theta_j(x) e^{\partial_t u + F (t,x,u_t) } g(x) d V (x) \right\} \alpha(t) dt.
 $$
 
 We  infer that for all $t \in B_j \subset [0,T[$,
 
 $$
  \int_{\Omega} \theta_j(x)  (dd^c u_t)^n  
\geq    \int_{\Omega} \theta_j(x) e^{\partial_t u + F (t,x,u_t) } g(x) d V (x),
 $$
 where $B_j$ has full measure in $[0,T[$. The set $B=\cap_j B_j \subset [0,T[$ has full measure
 and the previous inequality holds for all $t \in B$ and for all $j \in \N$. Approximating
 an arbitrary nonnegative test function $\theta \in \mathcal{C}^{0}(\Omega)$  by convex combinations of the $\theta_j$'s, we infer
 that for almost every $t$,
 $$
 (dd^c u_t)^n \geq e^{\partial_t u (t,\cdot) + F (t,\cdot,u_t) } g d V.
 $$

 When $u$ is moreover locally semi-concave in $t$ the function $\partial_t^+ u$ is lower semi-continuous
 (see Lemma \ref{lem: left and right derivative}), hence
 $$
 t \mapsto \int_{\Omega} \chi(x) e^{\partial_t^+u (t,x) + F (t,x,u_t(x)) } g(x) d V (x)
 $$
 is lower semi-continuous by Fatou's lemma. Since
 $
 t \mapsto  \int_{\Omega} \chi  (dd^c u_t)^n
 $
 is continuous (by Lemma \ref{lem: continuity of MA integral against test function}), we infer that
 \eqref{eq:subsolslide} holds for almost every $t$ if and only if it holds for every $t$.
 \end{proof}

 \begin{remark}\label{rem: supersolution slice}
 A similar result holds in this case, using
the partial derivative  $\partial_t^- u$ which is upper semi-continuous when $u$ is locally semi-concave
(by Lemma \ref{lem: left and right derivative} again). As a consequence, if $u\in \mathcal{P}(\Omega_T)\cap L^{\infty}_{\loc}(\Omega_T)$ solves \eqref{eq: CMAF} and $u$ is locally uniformly semi-concave in $t\in ]0,T[$ then for almost all $t\in ]0,T[$,
$$
(dd^c u_t)^n =e^{\partial_t u_t +F(t,\cdot,u_t)}gdV.
$$
 \end{remark}

\begin{lem} \label{lem: time derivarive of max} 
For any  $u, v \in \mathcal P (\Omega_T) \cap L^ {\infty}_{\loc} (\Omega_T)$, we have
$$
 {\bf 1}_{\{u \geq v\}} \partial_t \max (u,v) = {\bf 1}_{\{u \geq v\}} \partial_t u \ \text{and}\  {\bf 1}_{\{u > v\}} \partial_t \max (u,v) = {\bf 1}_{\{u > v\}} \partial_t u
$$  
almost everywhere in $\Omega_T$ and
$$
 (dd^c \max (u,v) )^n \wedge d t \geq {\bf 1}_{\{u > v\}}  (dd^c u)^n \wedge dt  + {\bf 1}_{\{u \leq v\}} (dd^c v)^n \wedge d t.
 $$ 
In particular the maximum of two  subsolutions is again a   subsolution.
\end{lem}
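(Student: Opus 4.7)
The plan is to break the lemma into three steps: two almost-everywhere identifications of the time derivatives on the sets $\{u\geq v\}$ and $\{u>v\}$; the slice-wise Bedford--Taylor inequality for the maximum of two bounded plurisubharmonic functions; and then a combination of these to propagate the subsolution property.

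\textbf{Step 1 (time-derivative identities).} Since $u,v$ are locally uniformly Lipschitz in $t$, so is $w:=\max(u,v)$, and all three have well-defined time derivatives $\ell\otimes\mu$-almost everywhere (Lemma \ref{lem:Radmacher}). Fix $z\in\Omega$; then $t\mapsto(u-v)(t,z)$ is a locally Lipschitz real-valued function on $]0,T[$. The classical Stampacchia-type lemma asserts that the derivative of a locally Lipschitz function vanishes $\ell$-almost everywhere on the preimage of any single value, hence $\partial_t u(\cdot,z)=\partial_t v(\cdot,z)$ for $\ell$-a.e. $t$ in $\{t:u(t,z)=v(t,z)\}$. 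Fubini with respect to $\ell\otimes\mu$ then gives $\partial_t u=\partial_t v$ a.e. on $\{u=v\}$. Next, fixing $z$, continuity in $t$ of $(u-v)(\cdot,z)$ makes $\{t:u(t,z)>v(t,z)\}$ an open subset of $]0,T[$ on which $w(\cdot,z)=u(\cdot,z)$ locally; differentiating in $t$ and applying Fubini in $(t,z)$ yields $\mathbf{1}_{\{u>v\}}\partial_t w=\mathbf{1}_{\{u>v\}}\partial_t u$ a.e. Finally, at any point of $\{u=v\}$ where $\partial_t u$ and $\partial_t v$ both exist and coincide, $\partial_t w$ exists and equals $\partial_t u$; this promotes the previous identity to $\mathbf{1}_{\{u\geq v\}}\partial_t w=\mathbf{1}_{\{u\geq v\}}\partial_t u$ a.e.

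\textbf{Step 2 (slice-wise Monge--Amp\`ere inequality).} For each fixed $t\in\,]0,T[$, the bounded plurisubharmonic functions $u_t,v_t$ on $\Omega$ satisfy the standard Bedford--Taylor estimate
\[
(dd^c\max(u_t,v_t))^n\geq \mathbf{1}_{\{u_t>v_t\}}(dd^c u_t)^n+\mathbf{1}_{\{u_t\leq v_t\}}(dd^c v_t)^n
\]
as Borel measures on $\Omega$. Since $w\in\mathcal{P}(\Omega_T)\cap L^\infty_{\loc}(\Omega_T)$, each side wedges against $dt$ to give a well-defined Radon measure on $\Omega_T$ (Lemma \ref{lem: continuity of MA integral against test function}); integrating the slice inequality against an arbitrary positive continuous test function on $\Omega_T$ via Fubini yields the stated inequality of measures on $\Omega_T$.

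\textbf{Step 3 (stability of subsolutions).} Assume now that $u,v$ are subsolutions. Combining Steps 1 and 2, and using that $F(t,z,u)=F(t,z,v)$ on $\{u=v\}$, one verifies that, $\ell\otimes\mu$-a.e.,
\[
\mathbf{1}_{\{u>v\}}e^{\partial_t u+F(t,z,u)}+\mathbf{1}_{\{u\leq v\}}e^{\partial_t v+F(t,z,v)}=e^{\partial_t w+F(t,z,w)}.
\]
The subsolution inequalities for $u$ and $v$ combined with the Monge--Amp\`ere inequality of Step 2 then give $dt\wedge(dd^c w)^n\geq e^{\partial_t w+F(t,z,w)}g\,dt\wedge dV$, so $w$ is a subsolution.

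The main obstacle I anticipate is the contact-set analysis in Step 1: the identification $\partial_t u=\partial_t v$ on $\{u=v\}$ must be carried out slice-wise in $t$ and then upgraded by Fubini with respect to the measure $g\,dt\wedge dV$ appearing in the definition of subsolution, so one must ensure that the exceptional null set does not interfere. Joint Borel measurability of $\{u=v\}$ and $\{u>v\}$ also requires a short argument using the upper semi-continuous extension of parabolic potentials provided by Lemma \ref{lem: usc}.
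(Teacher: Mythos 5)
Your proof is correct and runs largely parallel to the paper's, so I'll just flag the one place where your route diverges and what it gains. The paper establishes the two time-derivative identities by first invoking Lemma \ref{lem: Parabolic potentials are in W11loc} ($\mathcal P(\Omega_T)\subset W^{1,1}_{\loc}(\Omega_T)$) and then citing the Sobolev chain-rule for positive parts (Gilbarg--Trudinger, Lemma 7.6), which gives $\partial_t\max(u,v)=\mathbf{1}_{\{u>v\}}\partial_t u+\mathbf{1}_{\{u\leq v\}}\partial_t v$ a.e. You instead prove the identities directly: slice in $z$, apply the one-variable Stampacchia-type lemma to the locally Lipschitz function $(u-v)(\cdot,z)$ on its zero set, observe that on the open set $\{u(\cdot,z)>v(\cdot,z)\}$ the max coincides locally with $u$, and then patch via Fubini; the pointwise check that $\partial_t\max(u,v)$ exists and equals the common value when $u=v$ and $\partial_t u=\partial_t v$ is a short difference-quotient computation which you correctly identify as the glue. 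This is essentially the scalar version of the Sobolev argument, but it has the small advantage of using only the Lipschitz-in-$t$ structure of parabolic potentials and not the full $W^{1,1}_{\loc}$ embedding (which also controls the spatial gradient). For the second inequality both you and the paper invoke the Bedford--Taylor/Demailly maximum-principle inequality slice-wise; your form $(dd^c\max(u_t,v_t))^n\geq\mathbf{1}_{\{u_t>v_t\}}(dd^c u_t)^n+\mathbf{1}_{\{u_t\leq v_t\}}(dd^c v_t)^n$ is a legitimate variant of \cite[Corollary 3.28]{GZbook} obtained by swapping the roles of $u$ and $v$. Your Step 3 (the explicit verification that the subsolution inequality propagates) spells out a computation the paper leaves implicit, and it is correct: the a.e. identity $\mathbf{1}_{\{u>v\}}e^{\partial_t u+F(\cdot,u)}+\mathbf{1}_{\{u\leq v\}}e^{\partial_t v+F(\cdot,v)}=e^{\partial_t w+F(\cdot,w)}$ uses exactly the two derivative identities established in Step 1 together with $F(t,z,u)=F(t,z,v)$ on $\{u=v\}$. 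One minor technical point worth noting but not a gap: integrating the slice inequality against the Borel (rather than continuous) density $\mathbf{1}_{\{u>v\}}$ requires extending the Fubini-type disintegration of $dt\wedge(dd^c u)^n$ from continuous to bounded Borel integrands, which follows by a monotone class argument from Lemma \ref{lem: continuity of MA integral against test function}; you are right that joint Borel measurability of $\{u=v\}$, $\{u>v\}$ is immediate because parabolic potentials are upper semi-continuous in $(t,z)$ by Lemma \ref{lem: partial continuity}.
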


\begin{proof} 
It follows from Lemma \ref{lem: Parabolic potentials are in W11loc} that $\mathcal P (\Omega_T)  \subset W^{1,1}_{\loc} (\Omega_T)$.   
The first identity is then a classical result in the theory of Sobolev spaces  (see e.g. \cite[Lemma 7.6 page 152]{GT01}). The second inequality is a consequence of the elliptic maximum principle 
for psh functions (see  e.g. \cite[Corollary 3.28]{GZbook}). 
\end{proof}

 It is therefore natural to consider the Perron envelope of  subsolutions :
 
 \begin{defi} \label{def: envelope of subsolutions}  
  We let  $\mathcal S_{h,g,F}(\Omega_T)$ denote the set of  $u \in \mathcal P (\Omega_T)$ such that 
  \begin{enumerate}
  	\item $u$ is a pluripotential subsolution to \eqref{eq: CMAF}  in $\Omega_T$;
  	\item $u^*\leq h$ on $\partial_0 \Omega_T$, i.e.  for all $(s,\zeta) \in \partial_0 \Omega_T$,
   $$
  \limsup_{\Omega_T \ni (t,z) \to (s,\zeta)} u(t,z) \leq h(s,\zeta).
  $$
  \end{enumerate}
  We let
 $$
   U = U_{h,g,F,\Omega_T} :=  \sup \{ u \setdef  u \in \mathcal S_{h,g,F}(\Omega_T) \}
 $$
denote the upper envelope of all  subsolutions.
\end{defi}

\begin{lem}  
\label{lem: uniform bound}
The set $\mathcal S_{h,g,F}(\Omega_T)$ is not empty, uniformly bounded in $\Omega_T$, stable under finite  maxima. 
The envelope  $U := U_{h,g,F,\Omega_T}$ and its upper semi-continuous regularization $U^*$
 satisfy  for all $(t,z) \in \Omega_T$,
 $$
 B \rho (z) - M_h \leq U (t,z) \leq U^*(t,z) \leq M_h, 
 $$
 where $B = e^{M_F \slash n}$.
 In particular  
 \begin{equation} \label{eq: uniform bound}
 \Vert U \Vert_{L^{\infty} (\Omega_T)} \leq M_U:= M_h  + c_n e^{M_F} \Vert g \Vert^{1 \slash n}_{L^p (\Omega)}.
 \end{equation}
 \end{lem}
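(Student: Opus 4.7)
The plan is to proceed in three steps.

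\textbf{Step 1 (Non-emptiness and lower bound).} Define the time-independent function
\[
v_0(t,z) := B\rho(z) - M_h, \qquad B := e^{M_F/n},
\]
where $\rho$ is the function from \eqref{eq: rho}. Then $v_0 \in \mathcal{P}(\Omega_T)$ (it is constant in $t$, psh in $z$). Since $\rho \le 0$ in $\Omega$ and $\rho = 0$ on $\partial \Omega$, we have $v_0 \le -M_h \le h$ on $\partial_0 \Omega_T$, in particular the boundary inequality (2) of Definition~\ref{def: envelope of subsolutions} holds. Using $(dd^c \rho)^n = g\,dV$ we compute $(dd^c v_0)^n = B^n g\,dV = e^{M_F} g\,dV$; and since $F$ is increasing in $r$ and $v_0 \le M_h$, we get $F(t,z,v_0) \le F(t,z,M_h) \le M_F$, whence
\[
(dd^c v_0)^n \ge e^{F(t,z,v_0)} g\,dV = e^{\partial_t v_0 + F(t,z,v_0)} g\,dV,
\]
so $v_0$ is a pluripotential subsolution. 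Therefore $v_0 \in \mathcal S_{h,g,F}(\Omega_T)$, which implies $\mathcal S_{h,g,F}(\Omega_T) \neq \emptyset$ and $U(t,z) \ge B\rho(z) - M_h$ everywhere.

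\textbf{Step 2 (Upper bound).} Let $u \in \mathcal S_{h,g,F}(\Omega_T)$. For each fixed $t \in ]0,T[$ the slice $u_t$ is plurisubharmonic in $\Omega$, and Lemma~\ref{lem: usc} yields an upper semicontinuous extension of $u$ to $[0,T[ \times \Omega$. By condition (2) of Definition~\ref{def: envelope of subsolutions},
\[
\limsup_{\Omega \ni z \to \zeta} u_t(z) \le u^*(t,\zeta) \le h(t,\zeta) \le M_h, \qquad \zeta \in \partial \Omega.
\]
The classical maximum principle for psh functions on the strictly pseudoconvex domain $\Omega$ gives $u_t \le M_h$ on $\Omega$. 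As this holds for every $t \in ]0,T[$, we conclude $u \le M_h$ on $\Omega_T$. Taking the supremum over $u \in \mathcal S_{h,g,F}$ yields $U \le M_h$, and since the constant $M_h$ is upper semicontinuous we also get $U^* \le M_h$.

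\textbf{Step 3 (Stability under maxima, and the $L^\infty$ bound).} If $u,v \in \mathcal S_{h,g,F}(\Omega_T)$ then $\max(u,v) \in \mathcal P(\Omega_T) \cap L^\infty_{\loc}(\Omega_T)$ and Lemma~\ref{lem: time derivarive of max} shows that $\max(u,v)$ is again a pluripotential subsolution to \eqref{eq: CMAF}; the boundary condition $(\max(u,v))^* \le \max(u^*,v^*) \le h$ is automatic. So $\max(u,v) \in \mathcal S_{h,g,F}(\Omega_T)$. Combining the lower bound of Step~1, the upper bound of Step~2 and the a~priori estimate $\|\rho\|_{L^\infty(\Omega)} \le c_n \|g\|_{L^p(\Omega)}^{1/n}$ from \eqref{eq: rho}, we obtain
\[
|U(t,z)| \le M_h + B\,\|\rho\|_{L^\infty(\Omega)} \le M_h + c_n e^{M_F}\,\|g\|_{L^p(\Omega)}^{1/n} = M_U,
\]
proving \eqref{eq: uniform bound}.

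The only non-routine point is Step~2: one has to be careful that the \emph{upper semi-continuous extension} provided by Lemma~\ref{lem: usc} is what enters the boundary inequality $u^* \le h$, so that the slice-wise psh maximum principle applies with the correct boundary control. Once this is handled, Steps~1 and~3 are direct applications of the Kolodziej solution $\rho$ and of Lemma~\ref{lem: time derivarive of max} respectively.
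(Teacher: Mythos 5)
Your proposal is correct and follows essentially the same route as the paper: you exhibit $B\rho - M_h$ as a member of $\mathcal S_{h,g,F}(\Omega_T)$ (where the paper simply asserts membership, you spell out the verification that $(dd^c v_0)^n = e^{M_F} g\,dV \ge e^{\partial_t v_0 + F(\cdot,\cdot,v_0)} g\,dV$ using monotonicity of $F$ in $r$ and $v_0 \le M_h$), apply the slice-wise psh maximum principle via the boundary condition $u^* \le h \le M_h$ for the upper bound, invoke Lemma~\ref{lem: time derivarive of max} for stability under finite maxima, and combine with Ko{\l}odziej's $L^\infty$ bound on $\rho$ to get \eqref{eq: uniform bound}. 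The only thing worth flagging is that your chain $\limsup_{\Omega\ni z\to\zeta} u_t(z) \le u^*(t,\zeta) \le h(t,\zeta)$ correctly uses that the slice limsup is dominated by the joint $(t,z)$-limsup appearing in Definition~\ref{def: envelope of subsolutions}; this is exactly the point the paper's proof also uses implicitly.
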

 Recall that 
 $$
 M_h:= \sup_{\partial_0 \Omega_T} |h| \ , \  M_F:= \sup_{\Omega_T} F(\cdot,\cdot, M_h).
 $$
 \begin{proof}
Fix $B = e^{M_F/n}$. Since $g d V = (dd^c \rho)^n$ we obtain
 $$
 e^{ M_F} g dV \leq B^n  (dd^c \rho)^n.
 $$
 
Set, for $(t,z) \in \Omega_T$,
 $$
 \underline{u} (t,z) := B \rho (z) - M_h.
 $$
Then $\underline{u} \in \mathcal{S}_{h,g,F}(\Omega_T)$, hence $\underline{u} \leq U_{h,g,F,\Omega_T}$. 
 
 Fix $u \in \mathcal S_{h,g,F}(\Omega_T)$ and fix $t\in ]0,T[$. Then $\limsup_{z\to \zeta} u(t,z) \leq h(t,\zeta)$, for every $\zeta\in \partial \Omega$. It thus follows from the classical maximum principle for plurisubharmonic functions that $u(t,z) \leq M_h$ for every $z\in \Omega$. Thus $ U (t,\cdot) \leq M_h$ for any $t \in ]0,T[$.
 
  Therefore, the upper envelope $U$ is well defined and satisfies the uniform estimates
  $
   \underline{u}   \leq U \leq M_h, \, \, \, \text{in} \, \, \, \,  \Omega_T,
  $
hence
 $$
  U (t,z) := \sup \{ u (t,z)  \setdef    u \in \mathcal S_{h,g,F}(\Omega_T),\; \underline{u} \leq u \leq M_h\}.
 $$
The stability under finite maxima follows from Lemma \ref{lem: time derivarive of max}. 
\end{proof}

\subsection{Construction of sub-barriers} 
The family $t \longmapsto h (t,z)$ ($ z \in \partial \Omega)$ is  uniformly Lipschitz in $]0,T[$ if there exists a constant $\kappa (h) >0$ such that 
  \begin{equation}
  	\label{eq: h is uniformly Lipschitz t}
  	|h(t,z)-h(s,z)| \leq \kappa (h) \,  |t-s| ,\ \forall (t,s) \in [0,T[^2, \forall z\in \partial\Omega. 
  \end{equation}

 The parabolic boundary of $\Omega_T$ consists in two different types of points.
 We provide barriers for each type.

 \subsubsection{Sub-barriers at boundary points of Dirichlet type}
 
 We first construct subbarriers at Dirichlet boundary points in $[0,T[ \times \partial \Omega$.
 
\begin{lem} \label{lem: sub-barriers Dirichlet} Assume $h$ satisfies \eqref{eq: h is uniformly Lipschitz t}. Then there exists  $u \in \mathcal S_{h,g,F}(\Omega_T)$ such that $u (\cdot,z)$ ($z \in \Omega$) is  uniformly Lipschitz in $[0,T[$ and satisfies :  for any $(s,\zeta) \in [0,T[ \times \partial \Omega$,
 $$
 \lim_{(t,z) \to (s,\zeta)} u (t,z) = h (s,\zeta).  
 $$ 
If $h_0$ is continuous  on $\bar{\Omega}$  then $u$ can be chosen to be continuous in $[0,T[ \times \bar{\Omega}$.
\end{lem}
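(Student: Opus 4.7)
The plan is to exhibit an explicit subbarrier of the form
\[
u(t,z) := h_0(z) + H(t,z) + A\,\rho(z),
\]
where $\rho$ is the defining function from \eqref{eq: rho}, $H$ is a time-dependent Perron--Bremermann correction matching $h-h(0,\cdot)$ on the lateral boundary, and $A>0$ is a large constant to be tuned from the data.

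First I would fix, for each $t\in[0,T[$,
\[
H(t,z):=\sup\bigl\{ v(z)\setdef v\in\PSH(\Omega),\ \limsup_{z\to\zeta} v(z)\leq h(t,\zeta)-h(0,\zeta)\ \forall\,\zeta\in\partial\Omega\bigr\}.
\]
The strict pseudoconvexity of $\Omega$ together with continuity of $h(t,\cdot)-h(0,\cdot)$ on $\partial\Omega$ gives, through the classical Bremermann--Walsh theory (in the vein of Lemma \ref{lem: continuous approximation}), that $H(t,\cdot)$ is continuous on $\bar\Omega$ and plurisubharmonic on $\Omega$, with $H(t,\zeta)=h(t,\zeta)-h(0,\zeta)$ on $\partial\Omega$ and in particular $H(0,\cdot)\equiv 0$. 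Uniform Lipschitz regularity in $t$ would then follow by a standard envelope comparison: $H(t,\cdot)-\kappa(h)|t-s|$ is plurisubharmonic with lateral boundary value dominated by $h(s,\cdot)-h(0,\cdot)$, hence is dominated by $H(s,\cdot)$; symmetrising gives $|H(t,z)-H(s,z)|\leq \kappa(h)|t-s|$ on $\bar\Omega$.

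Once $H$ is in hand, I would set $u:=h_0+H+A\rho$. As a sum of three plurisubharmonic functions in $z$, $u_t\in\PSH(\Omega)$, and the previous step puts $u\in\mathcal{P}(\Omega_T)\cap L^\infty(\Omega_T)$ with Lipschitz constant $\kappa(h)$ in $t$. The lateral boundary condition is immediate from the explicit formula: $u(t,\zeta)=h_0(\zeta)+[h(t,\zeta)-h(0,\zeta)]+0=h(t,\zeta)$; on the initial slice, $H(0,\cdot)\equiv 0$ and $A\rho\leq 0$ give $u(0,z)\leq h_0(z)$, so $u^*\leq h$ on $\partial_0\Omega_T$. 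For the subsolution inequality, multilinearity of the complex Monge--Amp\`ere operator on bounded psh functions combined with \eqref{eq: rho} yields $(dd^c u_t)^n\geq (A\,dd^c\rho)^n=A^n g\,dV$; since $|\partial_t u|=|\partial_t H|\leq\kappa(h)$ almost everywhere, and since $u\leq 3M_h$ (because $|H|\leq 2M_h$ by the maximum principle and $A\rho\leq 0$) forces $F(t,z,u)\leq M_F':=\sup_{\Omega_T}F(\cdot,\cdot,3M_h)<\infty$, the choice $A:=e^{(\kappa(h)+M_F')/n}$ makes $u$ a pluripotential subsolution, i.e.\ $u\in\mathcal{S}_{h,g,F}(\Omega_T)$. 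Joint continuity of the three summands at lateral points gives $\lim_{(t,z)\to(s,\zeta)}u(t,z)=h(s,\zeta)$, and if $h_0$ is continuous on $\bar\Omega$ the same formula shows $u\in\mathcal{C}^0([0,T[\times\bar\Omega)$.

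The main obstacle is a circular dependence between $A$ and the effective sup of $F$ entering the subsolution inequality: enlarging $A$ drives $u$ more negative via the term $A\rho\leq 0$, but because $F$ is monotone in its last argument, the one-sided a priori bound $u\leq 3M_h$ (independent of $A$) decouples the two sides of the inequality and allows a uniform admissible choice. A secondary technical point is that the Perron--Bremermann envelope $H(t,\cdot)$, a priori only upper semicontinuous, is genuinely continuous up to $\partial\Omega$; this relies on strong pseudoconvexity of $\Omega$ and is what makes the three-term formula continuous up to the lateral boundary, securing both the subbarrier property and the convergence $u\to h$ there.
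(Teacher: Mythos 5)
Your proposal is correct and matches the paper's proof essentially verbatim: the paper sets $u(t,z):=\phi_t(z)+h_0(z)+A\rho(z)$, where $\phi_t$ is the maximal plurisubharmonic function in $\Omega$ with boundary values $h_t-h_0$ (the solution of the homogeneous complex Monge--Amp\`ere Dirichlet problem, which is precisely your Perron--Bremermann envelope $H(t,\cdot)$), gets the uniform Lipschitz bound in $t$ by the same envelope comparison, and picks $A$ with $n\log A\geq \kappa(h)+M_F$ exactly as you do. The only cosmetic difference is that the paper observes $\phi_t+h_0$ is psh with boundary values $h_t\leq M_h$, hence $u\leq M_h$ by the maximum principle, allowing the slightly sharper constant $M_F=\sup F(\cdot,\cdot,M_h)$ in place of your $M_F'=\sup F(\cdot,\cdot,3M_h)$; both choices of $A$ work.
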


  \begin{proof}
 Fix $t\in [0,T[$ and set $h_t:= h(t,\cdot)\in \mathcal{C}(\partial\Omega)$. 
 Let $\phi_t$ be the unique continuous plurisubharmonic function in $\Omega$ such that 
  \begin{equation}\label{eq: maximal psh}
  	\begin{cases} 
  	  	(dd^c \phi_t)^n=0\  \textrm{in}\ \Omega, \\
  	\lim_{z\to \zeta} \phi_t(z)=h_t(\zeta)-h_0(\zeta),\ \forall \zeta\in \partial \Omega.
  \end{cases}
  \end{equation}
  The existence  and continuity of $\phi_t$ on $\bar{\Omega}$ follows from classical results in pluripotential theory (see \cite{Bedford_Taylor_1976Dirichlet, Bedford_Taylor_1982Capacity}, \cite[Theorem 5.12]{GZbook}). 
Moreover, $\phi_t$ can be characterized as the supremum of all subsolutions to \eqref{eq: maximal psh}. Since $t \longmapsto h (t,z)$ ($ z \in \partial \Omega$) is uniformly Lipschitz in $[0,T[$, the tautological maximum principle reveals that the family of functions  $t \longmapsto  \phi(t,z):=\phi_t(z)$ $(z \in \Omega$) is  uniformly Lipschitz in $[0,T[$ with a Lipschitz constant $\kappa({\phi})\leq \kappa (h)$.  By Lemma \ref{lem: partial continuity}, $(t,z) \longmapsto \phi_t (z)$ is continuous in $[0,T[ \times \bar \Omega$.
  Consider now, for $(t,z) \in \Omega_T$,
  $$
   u(t,z) := \phi_t(z) + h_0(z) + A \rho (z),
  $$
  where $A > 0 $ is a large constant to be chosen later, and $\rho$ is defined in \eqref{eq: rho}. Observe that  $u \in \mathcal P (\Omega_T)$ and $u^* \leq h$ in $\partial_0 {\Omega}_T$.  It is clear that $t \longmapsto u (t,z)$ ($z \in  \Omega$) is  uniformly Lipschitz in $ [0,T[$ with $\kappa (u) \leq \kappa (h)$.  Moreover 
  $$
  dt \wedge (dd^c u)^n  \geq A^n dt \wedge  (dd^c \rho)^n \geq  A^n dt \wedge g d V
  $$
 in the weak sense of measures in $\Omega_T$. We choose $A>0$ so that $n \log A  \geq \kappa (h) + M_F $.  It is then clear that $u \in \mathcal S_{h,g,F} (\Omega_T)$.
  By definition,  $u$ is continuous in $[0,T[ \times \Omega $ provided that $h_0$ is continuous on $\bar{\Omega}$.
 \end{proof}

 \subsubsection{Sub-barriers at boundary points of Cauchy type}
  
   We  now construct sub-barriers at  boundary points in $\{0\} \times {\Omega}$.

  \begin{lem} \label{lem: sub-barriers Cauchy} Assume $h$ satisfies \eqref{eq: h is uniformly Lipschitz t}. 
   Then there exists $v \in \mathcal S_{h,g,F}(\Omega_T)$  such that  for all   $\zeta \in \bar{\Omega}$, 
   $$
   \limsup_{\Omega_T \ni (t,z) \to (0,\zeta)}  v (t,z) = h_0 (\zeta), \ \text{and}\ \lim_{t\to 0^+} v(t,\zeta) =h_0(\zeta). 
   $$
   If $h_0$ is continuous on $\bar{\Omega}$ then $v$ can be chosen to be continuous on $[0,T[\times \bar{\Omega}$.
  \end{lem}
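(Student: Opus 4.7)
The plan is to exhibit an explicit sub-barrier of the simple form
$$
v(t,z) := h_0(z) + \alpha \, t\, \rho(z) - K(t),
$$
where $\rho$ is the function from \eqref{eq: rho}, $\alpha > 0$ is a small parameter, and $K(t) := C t - \beta\, t \log t$ for constants $\beta > n$ and $C$ large to be chosen. The two new terms both vanish as $t\to 0^+$, so the candidate naturally tends to $h_0$; the role of $\alpha t \rho$ is to inject some Monge--Amp\`ere mass for $t > 0$, while the logarithmic part of $K(t)$ produces a time-derivative that diverges to $-\infty$ as $t\to 0^+$, compensating the degeneracy $(\alpha t)^n \to 0$ of the Monge--Amp\`ere lower bound.

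Class membership is immediate: $v(t,\cdot)$ is plurisubharmonic as a sum of plurisubharmonic functions, and $\partial_t v = \alpha \rho(z) + \beta \log t + \beta - C$ is bounded on any compact subinterval of $]0,T[$, so $v \in \mathcal{P}(\Omega_T) \cap L^{\infty}(\Omega_T)$. For the subsolution inequality I would apply Lemma~\ref{lem: mixed MA} (or the elementary binomial expansion) to get $(dd^c v_t)^n \geq (\alpha t)^n (dd^c \rho)^n = (\alpha t)^n g\, dV$. Taking logarithms, after using that $\alpha \rho \leq 0$ and $F(t,z,v)$ is bounded on the range of $v$, the condition reduces to the scalar inequality
$$
(n - \beta) \log t + n \log \alpha + C \;\geq\; \beta + M_F,
$$
which, since $\beta > n$, has left-hand side tending to $+\infty$ as $t \to 0^+$ and minimum on $(0,T]$ equal to $(n-\beta)\log T + n\log \alpha + C$; choosing $C$ large (depending on $T, n, \beta, \alpha, M_F$) makes it hold on all of $(0,T]$.

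For the boundary behaviour I would check $v^*\leq h$ on $\partial_0 \Omega_T$ in two parts. On $\{0\}\times \Omega$ the formula gives $v(0,z)=h_0(z)$, so $v^*(0,\cdot) \leq h_0$ by upper semi-continuity of $h_0$. On $[0,T[\times\partial\Omega$, the compatibility condition \eqref{eq: weak compatibility condition} provides $h_0(z)\to h_0(\zeta)$ as $z\to\zeta \in \partial\Omega$, so $v^*(t,\zeta)=h_0(\zeta)-K(t)$; taking $C$ also larger than $\kappa(h) + \beta |\log T|$ gives $K(t) \geq \kappa(h)\, t$ on $[0,T]$, and then $v^*(t,\zeta) \leq h_0(\zeta) - \kappa(h) t \leq h(t,\zeta)$ by \eqref{eq: h is uniformly Lipschitz t}. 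Hence $v \in \mathcal{S}_{h,g,F}(\Omega_T)$.

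For the Cauchy limit at an arbitrary $\zeta \in \bar\Omega$, the pointwise statement $\lim_{t\to 0^+} v(t,\zeta) = h_0(\zeta)$ is immediate since $\alpha t\rho(\zeta)\to 0$ and $K(t)\to 0$; combined with upper semi-continuity of $h_0$, this gives $\limsup_{(t,z)\to(0,\zeta)} v(t,z) \leq h_0(\zeta)$ and the pointwise approach along $z=\zeta$ gives the matching $\geq$. Continuity on $[0,T[\times\bar\Omega$ when $h_0\in\mathcal{C}(\bar\Omega)$ falls out of the explicit formula since $\rho$ is continuous on $\bar\Omega$ and $K\in\mathcal{C}([0,T[)$ with $K(0)=0$. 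The main obstacle in the argument is the choice of the logarithmic correction $K(t)$: its derivative must blow up fast enough at $t=0^+$ (coefficient $\beta > n$) to offset the vanishing of $(\alpha t)^n$ in the Monge--Amp\`ere lower bound, while $K(t)$ itself must still tend to zero and additionally dominate $\kappa(h)\, t$ uniformly on $[0,T]$ so that the lateral boundary inequality survives; that these three demands can be met simultaneously is precisely the content of the constants $\alpha$ small, $\beta > n$, $C$ large.
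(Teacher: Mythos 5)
Your construction is correct and is essentially the paper's: the paper likewise takes $v(t,z)=h_0(z)+t(\rho(z)-C)+n\big[(t/T)\log(t/T)-t/T\big]$, i.e. the same form $h_0+t\rho+(\text{an explicit } t\log t\text{ correction})$, with the coefficient of $t\log t$ chosen so that the divergence of $\partial_t v$ as $t\to 0^+$ dominates the $t^n$ vanishing of the Monge--Amp\`ere lower bound. Your use of $\beta>n$ (rather than the paper's implicit $\beta=n$, which cancels the $\log t$ terms exactly) and the extra scaling parameter $\alpha$ are cosmetic variations on the same argument.
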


 \begin{proof}
By assumption on $h$ we have, for all $(t,z) \in [0,T[ \times \partial \Omega$,
  $$
  h (0,z) \leq h (t,z) + \kappa t.
  $$
Set, for $ (t,z)\in \Omega_T$,
 $$
  v (t,z) := h_0(z) + t(\rho (z) - C) + n [(t\slash T) \log (t\slash T) - t\slash T)], 
 $$
 where $C:=  \kappa_h+ M_F-\min(n\log T,0)$. Then $v\in  \mathcal{S}_{h,g,F}(\Omega_T)$ and $v$ is continuous on $[0,T[\times \bar{\Omega}$ if $h_0$ is continuous on $\bar\Omega$. 
\end{proof}

 \subsection{Super-barriers} \label{sect: superbarrier}
 
 \subsubsection{Super-barriers at boundary points of Dirichlet type}  
 \label{subsect: Dirichlet boundary superbarrier}

For each $t\in [0,T[$, we let $H_t$ be the unique harmonic function in $\Omega$ with boundary value $h_t$ on $\partial \Omega$
and set $H(t,z):=H_t(z)$ (the existence of $H_t$ is a classical fact;  see e.g. \cite[Theorem 2.14]{GT01}). Recall that $H_t$ can be defined as the upper envelope of all subharmonic functions  in $\Omega$ with boundary values $\leq h_t$.  
Observe that $h_0 \leq H (0,\cdot)$ in $\Omega$, with equality at the boundary.


\begin{lem} \label{lem: super-barrier Dirichlet} 
For all $(t,z) \in [0,T[\times {\Omega}$ we have 
$U^*(t,z) \leq H(t,z)$. In particular, for all $(s,\zeta)\in  [0,T[ \times \partial \Omega$, 
$$
\limsup_{(t,z)\to (s,\zeta)} U^*(t,z) \leq h(s,\zeta). 
$$
\end{lem}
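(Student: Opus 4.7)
The plan is to compare each subsolution slice-by-slice with the harmonic extension, then pass to the supremum and regularize.

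First, I would fix an arbitrary $u \in \mathcal{S}_{h,g,F}(\Omega_T)$ and a time $t \in ]0,T[$. By Definition~\ref{defi: parabolic potential} the slice $u_t$ is plurisubharmonic on $\Omega$, hence in particular subharmonic. The boundary control $u^* \leq h$ on $\partial_0 \Omega_T$ gives, for every $\zeta \in \partial \Omega$,
\[
\limsup_{\Omega \ni z \to \zeta} u_t(z) \;\leq\; \limsup_{\Omega_T \ni (s,z) \to (t,\zeta)} u(s,z) \;\leq\; h(t,\zeta) \;=\; h_t(\zeta).
\]
Since $H_t$ is by construction the Perron envelope for the classical Dirichlet problem with boundary data $h_t$, it dominates every subharmonic function in $\Omega$ whose boundary limsup is at most $h_t$; therefore $u_t \leq H_t$ pointwise on $\Omega$. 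Taking the supremum over $u \in \mathcal{S}_{h,g,F}(\Omega_T)$ yields $U(t,z) \leq H(t,z)$ for every $(t,z) \in \Omega_T$.

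Next I would verify that $H$ is continuous on $[0,T[ \times \bar{\Omega}$. Each $H_t$ is continuous on $\bar{\Omega}$ because $\Omega$ is strictly pseudoconvex (so all boundary points are regular for the Laplacian) and $h_t$ is continuous on $\partial \Omega$. Continuity in $t$ follows from the uniform Lipschitz assumption \eqref{eq: h is uniformly Lipschitz t}: for $s,t$ in a compact subinterval $J \Subset [0,T[$, the function $H_t - H_s$ is harmonic with boundary values $h_t - h_s$, so by the classical maximum principle $|H_t(z) - H_s(z)| \leq \kappa(h)|t-s|$ on $\bar\Omega$. Combined with the continuity of each $H_t$ on $\bar\Omega$, this yields joint continuity of $H$ on $[0,T[ \times \bar{\Omega}$.

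Since $H$ is continuous (hence upper semi-continuous) on $[0,T[ \times \bar\Omega$ and $U \leq H$ on $\Omega_T$, passing to the usc regularization gives
\[
U^*(t,z) \;\leq\; H(t,z) \qquad \text{for all } (t,z) \in [0,T[\times \Omega.
\]
Finally, for $(s,\zeta) \in [0,T[ \times \partial \Omega$, the continuity of $H$ up to the boundary together with $H(s,\zeta) = h_s(\zeta) = h(s,\zeta)$ yields
\[
\limsup_{(t,z) \to (s,\zeta)} U^*(t,z) \;\leq\; \lim_{(t,z) \to (s,\zeta)} H(t,z) \;=\; h(s,\zeta),
\]
as required. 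There is no real obstacle here; the only step deserving care is the joint continuity of $H$, but this reduces to a standard maximum-principle argument thanks to the Lipschitz-in-$t$ hypothesis on $h$ and the regularity of $\partial \Omega$.
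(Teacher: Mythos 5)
Your proof is correct and follows essentially the same route as the paper's: maximum principle on each $t$-slice to get $U_t \le H_t$, joint continuity of $H$ on $[0,T[\times\bar\Omega$ (the paper phrases this as equicontinuity of $\{H(\cdot,z)\}$ and invokes Lemma~\ref{lem: partial continuity}, while you argue via a direct Lipschitz bound on $H_t-H_s$), and then pass to the usc regularization of $U$. No gaps.
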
 

\begin{proof}
It follows directly  from the maximum principle for subharmonic functions that $U_t\leq H_t$, for all $t\in [0,T[$. Fix $S\in ]0,T[$. Since the family $\{h (\cdot,z) \setdef  z  \in  \partial \Omega\}$ is  equicontinuous in $[0,S]$, it follows by definition   that  the family $\{H (\cdot,z) \setdef z  \in   \bar \Omega\}$ is equicontinuous in $[0,S]$, hence the function $H$ is continuous in $[0,T[ \times \bar{\Omega}$, by Lemma \ref{lem: partial continuity}.   Then $ U^*(t,z) \leq H (t,z)$ for any $(t,z) \in \Omega_T$. From the continuity of $H$, it follows that $U^*\leq H$ in $[0,T[\times \bar{\Omega}$.
\end{proof}

\subsubsection{Boundary behaviour at Cauchy boundary points}  

So far we have constructed enough barriers to ensure that the envelope of subsolutions either matches the boundary data (at Dirichlet points), or stays below it. The following average argument will allow us to conclude that it also coincides with the boundary data at Cauchy points :

\begin{lem} \label{lem: monotonicity of mean values}
Let $\varphi \in \mathcal{P}(\Omega_T)\cap L^{\infty}(\Omega_T)$ be a subsolution  to  \eqref{eq: CMAF}  such that $\int_{D} (dd^c \varphi_t)^n \leq C$, for every $t\in [0,T[$, 
 for some  $C>0$, where $D$ is an open set in $\Omega$. 
Then, for each positive continuous test function $\chi$ in $D$,  there exists $A>0$ such that
$$
t \mapsto \int_{D} \chi \f_t gdV-At
$$
is decreasing in $]0,T[$.
\end{lem}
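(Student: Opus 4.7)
The plan is to show that $t \mapsto \int_D \chi \varphi_t g \, dV$ is locally Lipschitz on $]0,T[$ and that its derivative is uniformly bounded above by some constant $A$; then $t \mapsto \int_D \chi \varphi_t g \, dV - At$ is decreasing. The argument combines the slicewise subsolution inequality from Proposition \ref{pro: subsolution slice}, the elementary inequality $x \leq e^{x}$, and the hypothesis that $\int_D (dd^c \varphi_t)^n$ is uniformly bounded.

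First I would observe that $\varphi \in \mathcal{P}(\Omega_T)\cap L^{\infty}(\Omega_T)$ is locally uniformly Lipschitz in $t$; pick any $J \Subset ]0,T[$ and let $\kappa_J$ denote its Lipschitz constant. Since $\chi g \in L^{1}(D)$ with $\chi \geq 0$, for $s,t \in J$,
$$
\left|\int_D \chi \varphi_t \, g\, dV - \int_D \chi \varphi_s \, g\, dV\right| \leq \kappa_J |t-s| \int_D \chi g \, dV,
$$
so the function is Lipschitz on $J$. By Lemma \ref{lem:Radmacher}, $\partial_t \varphi$ exists a.e.\ in $\Omega_T$ and is locally bounded; the Lipschitz bound lets me differentiate under the integral (dominated convergence applied to $(\varphi(t+s,z)-\varphi(t,z))/s$) to obtain, for a.e.\ $t \in ]0,T[$,
$$
\frac{d}{dt}\int_D \chi \varphi_t \, g\, dV \;=\; \int_D \chi \, \partial_t \varphi(t,\cdot) \, g\, dV.
$$

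Next I would use Proposition \ref{pro: subsolution slice}: for a.e.\ $t \in ]0,T[$,
$$
e^{\partial_t \varphi(t,\cdot)+F(t,\cdot,\varphi_t)} g\, dV \;\leq\; (dd^c \varphi_t)^n
$$
as measures on $\Omega$. Since $\|\varphi\|_{L^{\infty}(\Omega_T)} < \infty$ and $F$ is bounded on $[0,T[\times \Omega \times J'$ for $J' \Subset \mathbb{R}$ containing the range of $\varphi$, there exists $M>0$ with $e^{-F(t,\cdot,\varphi_t)} \leq e^{M}$. Using the elementary inequality $x \leq e^{x}$ (valid for all $x \in \mathbb{R}$), I can estimate
$$
\int_D \chi \,\partial_t \varphi_t \, g\, dV \;\leq\; \int_D \chi\, e^{\partial_t \varphi_t} g\, dV \;=\; \int_D \chi\, e^{-F(t,\cdot,\varphi_t)} e^{\partial_t \varphi_t + F(t,\cdot,\varphi_t)} g\, dV,
$$
and then, combining with the subsolution inequality and the hypothesis $\int_D (dd^c \varphi_t)^n \leq C$,
$$
\int_D \chi \,\partial_t \varphi_t \, g\, dV \;\leq\; e^{M}\int_D \chi\, (dd^c \varphi_t)^n \;\leq\; e^{M}\, C\, \|\chi\|_{L^\infty(D)} \;=:\; A.
$$

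Finally, since $t \mapsto \int_D \chi \varphi_t \, g\, dV$ is absolutely continuous (even Lipschitz) on any $J \Subset ]0,T[$ with derivative $\leq A$ a.e., the fundamental theorem of calculus gives, for $s < t$ in $]0,T[$,
$$
\int_D \chi \varphi_t \, g\, dV - \int_D \chi \varphi_s \, g\, dV \;\leq\; A(t-s),
$$
which is exactly the desired monotonicity of $t \mapsto \int_D \chi \varphi_t \, g\, dV - At$. The only mildly delicate step is the slicewise interpretation of the subsolution inequality, which is why Proposition \ref{pro: subsolution slice} is the key input; everything else is a routine Leibniz rule together with the trivial convex inequality $x \leq e^{x}$ and the $L^\infty$-control on $F(t,\cdot,\varphi_t)$.
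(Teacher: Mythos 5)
Your proof is correct and follows essentially the same route as the paper's: pass to the slicewise subsolution inequality via Proposition \ref{pro: subsolution slice}, use the hypothesis $\int_D (dd^c \varphi_t)^n \le C$ together with the $L^\infty$-bound on $F$ along $\varphi$ to control $\int_D \chi\, e^{\dot\varphi_t} g\, dV$, then convert this exponential bound into a linear bound on $\int_D \chi\, \dot\varphi_t\, g\, dV$ and integrate in $t$. The only cosmetic difference is that the paper deduces the linear bound from Jensen's inequality applied to the probability measure $\chi g\, dV/\int \chi g\, dV$, whereas you use the pointwise inequality $x \le e^x$; both yield a uniform constant $A$, so the arguments are interchangeable.
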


\begin{proof}
Since $\f$ is a subsolution to \eqref{eq: CMAF} we  obtain for a.e. $t\in ]0,T[$, 
 \[
  	\int_{D} \chi e^{\dot{\varphi}_t+ m_F} gdV \leq \int_{D} \chi e^{\dot{\varphi}_t+ F} gdV \leq \int_{D} \chi (dd^c \varphi_t)^n \leq C,
\]
  where $m_F:=\inf_{[0,T[\times \bar{\Omega} \times [-M_U,M_U]} F$. 
  It follows from Jensen's inequality that 
  \[
  \int_{D} \chi\dot{\varphi}_t gdV \leq C_2, 
  \]
 for a.e. $ t\in ]0,T[$, where $C_2>0$ is a uniform constant. We then infer that the function $t\mapsto \int_{D} \chi \varphi_tgdV-C_2t$ is decreasing in $]0,T[$.
\end{proof}

\begin{cor}
\label{cor: stability of boundary inequality}
Assume  $\{u^j\}\subset \mathcal{S}_{h,g,F} (\Omega_T)$ is a bounded sequence which is locally uniformly Lipschitz in $]0,T[$ (with Lipschitz constant independing of $j$). If $\{u^j\}$ converges in $L^{1}_{\loc}(\Omega_T)$ to $u\in \mathcal{P}(\Omega_T)$ then 
$$
\limsup_{(t,z)\to (s,\zeta)} u(t,z) \leq h(s,\zeta), \ \forall (s,\zeta) \in \partial_0 \Omega_T.
$$
\end{cor}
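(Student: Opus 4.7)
The parabolic boundary decomposes as $\partial_0 \Omega_T = ([0,T[\times \partial \Omega) \cup (\{0\}\times \Omega)$, and the plan is to treat the two strata separately. At a Dirichlet point $(s,\zeta)\in [0,T[\times \partial \Omega$, the parabolic harmonic super-barrier $H$ from Lemma \ref{lem: super-barrier Dirichlet} dominates each $u^j$ pointwise on $\Omega_T$ and is continuous on $[0,T[\times \bar \Omega$ with $H=h$ on $[0,T[\times \partial \Omega$. Passing to the $L^1_{\loc}$ limit gives $u \leq H$ almost everywhere in $\Omega_T$; since $u\in \mathcal{P}(\Omega_T)$ is uniformly bounded and hence upper semi-continuous on $\Omega_T$ (Lemmas \ref{lem: usc} and \ref{lem: partial continuity}), the inequality promotes to a pointwise one, so $\limsup_{(t,z)\to(s,\zeta)} u(t,z) \leq H(s,\zeta) = h(s,\zeta)$.

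The Cauchy case $(0,z_0)$ with $z_0 \in \Omega$ is more delicate and pivots on Lemma \ref{lem: monotonicity of mean values}. Fix a neighbourhood $D$ with $z_0 \in D \Subset \Omega$. Uniform boundedness of $\{u^j\}$ together with the Chern--Levine--Nirenberg inequalities yields $\sup_{j,t}\int_D (dd^c u^j_t)^n \leq C$, so for every nonnegative continuous $\chi$ supported in $D$, Lemma \ref{lem: monotonicity of mean values} provides a constant $A>0$ \emph{independent of $j$} such that $t\mapsto \int_D \chi\, u^j_t\, g\, dV - At$ is decreasing on $]0,T[$. Because $u^j \in \mathcal{S}_{h,g,F}(\Omega_T)$ gives $\limsup_{t\to 0^+} u^j_t(z)\leq h_0(z)$ for every $z\in \Omega$, the reverse Fatou lemma (the integrands $\chi u^j_t g$ are uniformly dominated by an integrable function) forces $\limsup_{t\to 0^+}\int_D \chi\, u^j_t\, g\, dV \leq \int_D \chi\, h_0\, g\, dV$, which combined with the monotonicity gives
\[
\int_D \chi\, u^j_t\, g\, dV \leq A t + \int_D \chi\, h_0\, g\, dV, \qquad t\in\,]0,T[,\ j \in \mathbb{N}.
\]

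I then transport the inequality to $u$. Lemma \ref{lem:L1Slice-L1} upgrades global $L^1_{\loc}(\Omega_T)$ convergence of the uniformly Lipschitz-in-$t$ sequence to slice-wise $L^1(D)$ convergence for every fixed $t$; uniform boundedness improves this to $L^{p'}(D)$ convergence, and since $\chi g \in L^p(D)$, H\"older gives $\int_D \chi\, u^j_t\, g\, dV \to \int_D \chi\, u_t\, g\, dV$, so the bound persists for $u$. Local Lipschitz continuity of $u$ in $t$ ensures $\lim_{t\to 0^+} u_t(z)$ exists pointwise and coincides almost everywhere with the plurisubharmonic function $u_0:=(\limsup_{t\to 0^+} u_t)^*$, so dominated convergence yields $\int_D \chi\, u_0\, g\, dV \leq \int_D \chi\, h_0\, g\, dV$ for every admissible $\chi$. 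Since $g>0$ almost everywhere and $\chi$ is arbitrary, this forces $u_0 \leq h_0$ almost everywhere on $D$.

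The last step upgrades this a.e.\ inequality to the pointwise bound $u_0(z_0) \leq h_0(z_0)$. Both $u_0$ and $h_0$ are plurisubharmonic, so the sub-mean value property of $u_0$ and the convergence to pointwise values of spherical averages for plurisubharmonic functions give
\[
u_0(z_0) \leq \lim_{r\to 0^+}\frac{1}{\mathrm{Vol}(B(z_0,r))}\int_{B(z_0,r)} u_0\, dV \leq \lim_{r\to 0^+}\frac{1}{\mathrm{Vol}(B(z_0,r))}\int_{B(z_0,r)} h_0\, dV = h_0(z_0).
\]
Upper semi-continuity of the extension of $u$ to $[0,T[\times \Omega$ (Lemma \ref{lem: usc}) identifies $\limsup_{(t,z)\to(0,z_0)}u(t,z)$ with $u_0(z_0)$, concluding the Cauchy case. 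The principal obstacle is precisely this transition from a $g\, dV$-weighted a.e.\ inequality to a pointwise one at $z_0$: because $g$ may vanish and is not bounded below, no direct pointwise comparison is available, and only the plurisubharmonicity of both $u_0$ and $h_0$ combined with the sub-mean inequality rescues the argument.
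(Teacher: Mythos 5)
Your decomposition and main tools match the paper's exactly: the harmonic super-barrier handles $[0,T[\times\partial\Omega$, while Lemma \ref{lem: monotonicity of mean values} combined with the slice-convergence Lemma \ref{lem:L1Slice-L1} handles $\{0\}\times\Omega$, and the derivation of the integral bound $\int_D\chi\, u_t\, g\, dV \leq \int_D\chi\, h_0\, g\, dV + At$ is sound (your reverse-Fatou step to pin down the limit as $t\to 0^+$, left implicit in the paper, is a welcome addition). The gap is the assertion that ``local Lipschitz continuity of $u$ in $t$ ensures $\lim_{t\to 0^+}u_t(z)$ exists pointwise'': the hypothesis only gives uniform Lipschitz control on compact subintervals of $]0,T[$, and for the envelopes of interest the Lipschitz constant behaves like $C/t$ as $t\to 0^+$ (cf. Theorem \ref{thm: U is local Lip in t}), which is not integrable, so the pointwise limit need not exist. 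Without it the dominated-convergence step does not apply, and replacing $\lim$ with $\limsup$ does not rescue it because reverse Fatou yields $\int_D\chi\, u_0\, g\, dV \geq \limsup_{t\to 0^+}\int_D\chi\, u_t\, g\, dV$, which is the wrong direction for the desired upper bound on $\int_D\chi\, u_0\, g\, dV$.

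The paper circumvents this via $L^1_{\loc}$ cluster points: pick a sequence $t_k\to 0^+$ along which the uniformly bounded slices $u_{t_k}$ converge in $L^1_{\loc}(\Omega)$ to a plurisubharmonic $v$; your $L^{p'}$-interpolation plus H\"older argument then gives
$\int_D\chi\, v\, g\, dV = \lim_k\int_D\chi\, u_{t_k}\, g\, dV \leq \int_D\chi\, h_0\, g\, dV$
for every nonnegative $\chi$, hence $v\leq h_0$ a.e.\ with respect to $g\,dV$, hence (as $g>0$ a.e.) a.e.\ with respect to $dV$, and finally pointwise by plurisubharmonicity, exactly as in your last paragraph. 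Since every $L^1_{\loc}$ cluster point lies below $h_0$, Lelong's negligibility theorem together with Hartogs' lemma yields $u_0 := (\limsup_{t\to 0^+}u_t)^*\leq h_0$, and Lemma \ref{lem: usc} converts this to the desired boundary $\limsup$. Substituting this cluster-point step for your pointwise-limit claim makes the proof correct; everything else you wrote can stay as is.
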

\begin{proof}
For $(s,\zeta)\in ]0,T[\times \partial \Omega$ the desired inequality holds thanks to Lemma \ref{lem: super-barrier Dirichlet}. Fix $D\Subset \Omega$ and let $\chi$ be a positive continuous test function in $\Omega$. It follows from the Chern-Levine-Nirenberg inequality \cite[Theorem 3.9]{GZbook} that $\int_D (dd^c u^j_t)^n$ is uniformly bounded.  Lemma \ref{lem: monotonicity of mean values} therefore provides us with a uniform constant $A>0$ such that 
$$
\int_D \chi u^j_t gdV \leq \int_D \chi h_0 gdV + At, \ \forall t\in ]0,T[, \ \forall j.
$$
Letting $j\to +\infty$, Lemma \ref{lem:L1Slice-L1} ensures that
$$
\int_D \chi u_t gdV \leq \int_D \chi h_0 gdV + At, \ \forall t\in ]0,T[.  
$$ 
If $v$ is a cluster point of $u_t$ as $t\to 0$ then the above estimate yields $v\leq h_0$ on $D$. Since $D$ was chosen arbitrarily, $v\leq h_0$ on $\Omega$.  The conclusion thus follows from Lemma \ref{lem: usc}.
\end{proof}

\subsection{Boundary behavior of the Perron envelope}

 \begin{thm} \label{thm: boundary value of U} 
 Assume $h$ satisfies \eqref{eq: h is uniformly Lipschitz t}. Then 
  the upper semi-continuous regularization 
  of the envelope $U = U_{h,g,F,\Omega_T}$ 
  satisfies

\smallskip

$(i)$ for any $(s,\zeta) \in [0,T[ \times \partial \Omega$,
$
 \lim_{\Omega_{T} \ni (t,z) \to (s,\zeta)} U^* (t,z) = h (s,\zeta).
 $
 
 \smallskip
 
 $(ii)$ for any $z_0 \in \Omega$,
 $$
 \lim_{ t \to 0^+} U^*(t,z_0) = h (0,z_0),  \, \, \, \,  \mathrm{and} \, \, \, \,
\limsup_{\Omega_{T} \ni (t,z) \to (0, z_0)} U^* (t,z) = h (0,z_0).
 $$
 \end{thm}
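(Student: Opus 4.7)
I would split the proof into three pieces.

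\emph{Part (i) and the lower bound of (ii).} For $(s,\zeta)\in[0,T[\times\partial\Omega$, the Dirichlet sub-barrier $\underline u\in\mathcal S_{h,g,F}$ of Lemma~\ref{lem: sub-barriers Dirichlet} satisfies $\lim_{(t,z)\to(s,\zeta)}\underline u(t,z)=h(s,\zeta)$, so $\liminf_{(t,z)\to(s,\zeta)}U^*(t,z)\geq h(s,\zeta)$; the super-barrier bound $U^*\leq H$ of Lemma~\ref{lem: super-barrier Dirichlet} supplies the matching upper bound since $H$ is continuous on $[0,T[\times\bar\Omega$ with $H=h$ on $[0,T[\times\partial\Omega$. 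For the lower part of (ii), the Cauchy sub-barrier $v$ of Lemma~\ref{lem: sub-barriers Cauchy} gives $\lim_{t\to 0^+}v(t,z_0)=h(0,z_0)$, whence $\liminf_{t\to 0^+}U^*(t,z_0)\geq h(0,z_0)$; approaching $(0,z_0)$ along the line $z=z_0$ also yields the $\geq$ direction of the second equality.

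\emph{Upper bound in (ii): integral estimate.} The main step is to show $\limsup_{(t,z)\to(0,z_0)}U^*(t,z)\leq h(0,z_0)$. By Lemma~\ref{lem: uniform bound} every $u\in\mathcal S_{h,g,F}$ satisfies $|u|\leq M_U$, and the Chern--Levine--Nirenberg inequality bounds $\int_D(dd^c u_t)^n$ uniformly in $u$ and $t$ for $D\Subset\Omega$. Lemma~\ref{lem: monotonicity of mean values} therefore applies with a constant $A>0$ independent of $u$: for any positive continuous $\chi$ supported in $D$, the function $t\mapsto\int_D\chi u(t,\cdot)g\,dV-At$ is decreasing on $]0,T[$. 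Since $u^*(0,\cdot)\leq h_0$ and $u$ is uniformly bounded above, reverse Fatou yields $\lim_{s\to 0^+}\int_D\chi u(s,\cdot)g\,dV\leq\int_D\chi h_0\, g\,dV$, whence
\[
\int_D\chi\, u(t,\cdot)\, g\,dV\;\leq\;\int_D\chi\, h_0\, g\,dV+At,\qquad \forall\,u\in\mathcal S_{h,g,F}.
\]
For fixed $t$, Choquet's lemma applied to the psh family $\{u_t:u\in\mathcal S_{h,g,F}\}$, together with stability under finite maxima (Lemma~\ref{lem: uniform bound}), yields a countable $(u^k)\subset\mathcal S_{h,g,F}$ with $\sup_k u^k_t=(U_t)^*$ almost everywhere. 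Monotone convergence then upgrades the above to
\[
\int_D\chi\, (U_t)^*\, g\,dV\;\leq\;\int_D\chi\, h_0\, g\,dV+At.
\]

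\emph{From integral to pointwise: the main obstacle.} For any sequence $(t_k,z_k)\to(0,z_0)$ in $\Omega_T$, the functions $(U_{t_k})^*$ are uniformly bounded in $\PSH(\Omega)$ and admit a subsequence converging in $L^1_{\loc}(\Omega)$ to some psh $W_\infty$. Passing to the limit in the integral estimate (using $g\,dV\ll dV$) gives $\int_D\chi W_\infty g\,dV\leq\int_D\chi h_0\, g\,dV$ for every nonnegative $\chi$; since $g>0$ a.e.\ and both $W_\infty,h_0$ are psh (hence usc), this forces $W_\infty\leq h_0$ on $\Omega$. Hartogs' lemma applied to the subsequence, combined with upper-semicontinuity of $W_\infty$ at $z_0$, then gives $\limsup_k(U_{t_k})^*(z_k)\leq W_\infty(z_0)\leq h_0(z_0)$. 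As this bound holds along every extracted subsequence, $\limsup_{(t,z)\to(0,z_0)}(U_t)^*(z)\leq h_0(z_0)$. Finally, $\limsup_{(t,z)\to(0,z_0)}U^*(t,z)=\limsup_{(t,z)\to(0,z_0)}U(t,z)$ by definition of $U^*$ as upper regularization, and since $U(t,z)\leq(U_t)^*(z)$ pointwise, we conclude $\limsup_{(t,z)\to(0,z_0)}U^*(t,z)\leq h_0(z_0)$. The principal obstacle is precisely this integral-to-pointwise conversion, which relies on the psh compactness from Hartogs' lemma together with the a.e.\ positivity of $g$.
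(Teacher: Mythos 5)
Your proof is correct, and it shares the paper's core technique for the hard part (the integral estimate built from the Chern--Levine--Nirenberg bound, Lemma~\ref{lem: monotonicity of mean values}, and the Choquet/Lelong passage to $(U_t)^*$), but you take a genuinely different route for the final integral-to-pointwise conversion. The paper first invokes Theorem~\ref{thm: U is local Lip in t} --- a forward reference to Section~\ref{sect: time regularity} --- to establish that $U$ is locally uniformly Lipschitz in $t$, deduces from Lemma~\ref{lem: negligible} the identity $U^*(t,\cdot)=(U_t)^*$, and then reduces the joint limsup in $(t,z)$ to the slice-wise statement $\limsup_{t\to 0}(U_t)^*(z_0)\leq h_0(z_0)$ via Lemma~\ref{lem: usc}. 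You never prove $U^*(t,\cdot)=(U_t)^*$ at all: you argue directly with the slice regularizations $(U_t)^*$, extract a PSH-compact subsequence along any $(t_k,z_k)\to(0,z_0)$, apply Hartogs' lemma to get $\limsup_k(U_{t_k})^*(z_k)\leq W_\infty(z_0)\leq h_0(z_0)$, and then close with the elementary observations that $\limsup_{(t,z)\to(0,z_0)}U^*=\limsup_{(t,z)\to(0,z_0)}U$ and $U(t,z)\leq(U_t)^*(z)$ pointwise. What this buys you is self-containment: your proof of this theorem does not depend on the Lipschitz-in-$t$ regularity of the envelope, so it can be read in Section~\ref{sect: Perron envelope boundary value} without the forward dependence on Section~\ref{sect: time regularity}. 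What the paper's route buys instead is the structural identity $U^*(t,\cdot)=(U_t)^*$, which is needed later in Theorem~\ref{thm: U is subsolution} anyway, so the two presentations are roughly equal in overall economy.
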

 
 Here $U^*$ denotes the u.s.c. regularization of $U$ in the  variable $(t,z)$ in $\Omega_T$.
 
 \begin{proof} 
Fix $(s,\zeta)\in [0,T[\times \partial \Omega$.
Lemma \ref{lem: sub-barriers Dirichlet} and Lemma \ref{lem: super-barrier Dirichlet} yield $(i)$.  
 
In view of Lemma \ref{lem: sub-barriers Cauchy}  it remains to prove that  for all $z_0 \in \Omega$,
$$
\limsup_{\Omega_{T} \ni (t,z)\to (0,z_0)} U^* (t,z) \leq h_0 (z_0).
$$
The envelope $U$ is locally uniformly Lipschitz in $]0,T[$,
as  follows from Theorem \ref{thm: U is local Lip in t}. 
We can thus apply Lemma \ref{lem: negligible} to conclude that $U^* (t,\cdot) = U_t^*$ in $\Omega$ for any $t \in ]0,T[$, where $U_t^* = (U_t)^*$ is the u.s.c.  regularization of the function $U_t$ ($t $ fixed) in $\Omega$. Using Lemma \ref{lem: usc} it is then enough to show that 
$$
\limsup_{t\to 0} U^*_t (z_0) \leq h_0 (z_0), \forall z_0 \in \Omega.
$$
Observe that $U$ can be seen as the upper envelope of all  $\varphi\in \mathcal{S}_{h,g,F} (\Omega_T)$ such that  $\sup_{\Omega_T} |\varphi | \leq M_U$, where $M_U$ is given in Lemma \ref{lem: uniform bound}. 

Fix $\chi$ a continuous positive test function in $\Omega$. We claim that there exists a constant $C>0$ such that for all $t\in ]0,T[$,
\begin{equation}
\label{eq: Cauchy boundary 1}
\int_{\Omega_1} \chi U_t^*  gdV \leq  \int_{\Omega_1} \chi h_0 gdV + C t.
\end{equation}

Indeed, fix $t_0\in ]0,T[$. Since the set of subsolutions is stable under maximum, by Choquet's lemma, $U_{t_0}^{*} = (\lim_{j\to +\infty} \varphi^j_{t_0})^{*}$ in $\Omega$, where $\{\varphi^j\}$ is an increasing sequence  in $\mathcal S_{h,g,F} (\Omega_T)$ with $|\varphi^j|\leq M_U$.  
The sequence $\{\varphi^j\}$ depends on $t_0$ but, as will be shown later, the constant $C$ does not depend on $t_0$.  
Now fix $j \in \N$,  $\Omega_1\Subset \Omega_2\Subset \Omega$ compact subsets of $\Omega$. 
  It follows from the Chern-Levine-Nirenberg inequality \cite[Theorem 3.9]{GZbook} that 
\begin{equation*}
	\int_{\Omega_1} \chi (dd^c \varphi_t^j)^n \leq C_1, \ \text{for all } \ t\in ]0,T[,
\end{equation*}
where $C_1$ depends only on $\Omega_1, \Omega_2, \chi$ and $M_U$. 
It thus follows from  Lemma \ref{lem: monotonicity of mean values} that
  $$
  \int_{\Omega_1} \chi \varphi^j_t g dV \leq  \int_{\Omega_1} \chi h_0 gdV + C_2 t, \ \text{for all} \ t\in ]0,T[,
  $$
  for a uniform constant $C_2>0$. 
A classical theorem of Lelong (see \cite[Proposition1.40]{GZbook}) ensures that  
 $$
 \left \{ z\in \Omega \setdef  \lim_{j \to + \infty} \varphi^j_{t_0} (z) < (U_{t_0})^* (z) \right \}
 $$
 has volume zero in $\Omega$. Therefore taking the limit as $j\to +\infty$ in the previous inequality for $t=t_0$, we deduce that
  \[
  \int_{\Omega_1} \chi U_{t_0}^*  gdV \leq  \int_{\Omega_1} \chi h_0 gdV + C_2 t_0.
  \]
 Since $C_2$ does not depend on $t_0$,  the claim is proved. 
 
  Let $w_0\in \PSH(\Omega)$ be any cluster point of $U_t^*$ as $t\to 0^+$. 
  We can assume that $U_t^*$ converge to $w_0$ in $L^q(\Omega)$ for any $q>1$. Then $U_t^* g$ converge to $w_0g$ in $L^1(\Omega)$. 
  Thus, by \eqref{eq: Cauchy boundary 1}, $\int_{\Omega_1} \chi w_0gdV \leq \int_{\Omega_1} \chi h_0gdV$. Since $\chi\geq 0$ was chosen arbitrarily, we infer that $w_0\leq h_0$ almost everywhere in $\Omega_1$ with respect to $gdV$. 
  The assumption on $g$ finally yields $w_0\leq h_0$ on $\Omega_1$. By letting $\Omega_1\to \Omega$ we can then conclude that $\limsup_{t\to 0}U_t^* \leq h_0$ in $\Omega$.  
   \end{proof}

 \begin{lemma}
 	\label{lem: uniform convergence}
 	If  $h_0$ is continuous on $\bar{\Omega}$ then $U^*(t,\cdot)$ uniformly converges to $h_0$ as $t\to 0^+$. 
 \end{lemma}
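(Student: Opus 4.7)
The strategy is to establish the two one-sided uniform estimates
$\limsup_{t\to 0^+}\sup_{z\in\Omega}(U^*(t,z)-h_0(z))\le 0$ and $\liminf_{t\to 0^+}\inf_{z\in\Omega}(U^*(t,z)-h_0(z))\ge 0$ separately, exploiting the pointwise boundary information already obtained in Theorem~\ref{thm: boundary value of U}, together with the continuous Cauchy sub-barrier from Lemma~\ref{lem: sub-barriers Cauchy} and the compactness of $\bar\Omega$.

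\textbf{Uniform lower bound.} Under the assumption that $h_0\in \mathcal C(\bar\Omega)$, Lemma~\ref{lem: sub-barriers Cauchy} produces $v\in\mathcal S_{h,g,F}(\Omega_T)$ which is continuous on $[0,T[\times\bar\Omega$, satisfies $v(0,\cdot)=h_0$, and is explicitly given by $v(t,z)=h_0(z)+t(\rho(z)-C)+n[(t/T)\log(t/T)-t/T]$. Since $\rho$ is bounded on $\bar\Omega$, an inspection of this formula shows $v(t,\cdot)\to h_0$ uniformly on $\bar\Omega$. The envelope inequality $v\le U\le U^*$ on $\Omega_T$ then yields the desired uniform lower bound.

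\textbf{Uniform upper bound by compactness.} Theorem~\ref{thm: boundary value of U} furnishes, for every $z_0\in\bar\Omega$, the estimate
$\limsup_{\Omega_T\ni (t,z)\to(0,z_0)}U^*(t,z)\le h_0(z_0)$: at interior $z_0$ this is part~(ii), and at boundary $z_0$ it is part~(i) with $s=0$ combined with the compatibility $h(0,z_0)=h_0(z_0)$. Fix $\varepsilon>0$ and let $\delta>0$ be a modulus of uniform continuity for $h_0$ on $\bar\Omega$. For each $z_0\in\bar\Omega$ the previous limsup gives $t_{z_0}, r_{z_0}>0$ with $r_{z_0}<\delta$ such that $U^*(t,z)<h_0(z_0)+\varepsilon$ whenever $(t,z)\in (0,t_{z_0})\times B(z_0,r_{z_0})\cap \Omega_T$. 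Compactness of $\bar\Omega$ provides a finite subcover $\{B(z_k,r_{z_k})\}_{k=1}^N$; setting $t^*:=\min_k t_{z_k}>0$ and using uniform continuity of $h_0$ in the form $h_0(z_k)\le h_0(z)+\varepsilon$ for $z\in B(z_k,r_{z_k})$, one gets $U^*(t,z)\le h_0(z)+2\varepsilon$ for all $t\in(0,t^*)$ and $z\in\Omega$.

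\textbf{Main point of vigilance.} The proof itself is routine once the earlier work is in place; the only mild subtlety is that one must read off from Theorem~\ref{thm: boundary value of U} a genuine \emph{two-variable} limsup bound at each Cauchy point $(0,z_0)$, since that is what produces the parabolic neighborhoods required for the finite covering step. Both parts~(i) and~(ii) of that theorem are precisely formulated in those terms, so no additional argument is needed. Combining the two one-sided estimates then gives $\sup_{z\in\Omega}|U^*(t,z)-h_0(z)|\to 0$ as $t\to 0^+$.
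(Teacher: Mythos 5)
Your argument is correct, but it proves a strictly weaker statement than the lemma claims. Both Lemma~\ref{lem: sub-barriers Cauchy} and Theorem~\ref{thm: boundary value of U} — the two results you invoke — are stated under the hypothesis~\eqref{eq: h is uniformly Lipschitz t} that the family $\{h(\cdot,z): z\in\partial\Omega\}$ is \emph{uniformly} Lipschitz on all of $[0,T[$, Lipschitz constant included at $t=0$. The remark immediately following the lemma statement, however, insists that the lemma is meant to hold under the weaker standing assumption that $h$ is merely \emph{locally} uniformly Lipschitz on $]0,T[$, and this generality is genuinely used later (e.g. in the proof of Proposition~\ref{pro: U is solution h control 0 and T}). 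The paper bridges the gap with a second step: shift time by $\varepsilon$, replacing $h$ on $\partial_0\Omega_S$ by $h^\varepsilon(t,\zeta)=h(t+\varepsilon,\zeta)$ together with a maximal psh correction $\psi^\varepsilon$ at $t=0$; then $h^\varepsilon$ is uniformly Lipschitz up to $t=0$, so the special case applies, $h^\varepsilon\to h$ uniformly on $\partial_0\Omega_S$, the corresponding envelopes $U^\varepsilon$ converge uniformly to $U$ (using monotonicity of $F$ in $r$ and Proposition~\ref{pro: identity principle}), and the uniform convergence at $t=0$ passes to the limit. You would need to add this approximation step to close the proof.

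Within the uniformly Lipschitz case, your proof does follow the paper for the lower bound (the explicit Cauchy sub-barrier $v$), but your upper bound is a genuinely different argument. The paper converts the $L^1$-convergence $U^*_t\to h_0$ supplied by Theorem~\ref{thm: boundary value of U} into uniform convergence on compacts via Hartog's lemma, and then controls a boundary collar using the harmonic super-barrier $H$ through $U^*(t,z)-h_0(z)\le H_0(z)-h_0(z)+\kappa_h t$ together with the fact that $H_0-h_0$ is small near $\partial\Omega$. You instead turn the pointwise two-variable limsup bound from Theorem~\ref{thm: boundary value of U} into a finite parabolic cover of $\bar\Omega$ and invoke the uniform continuity of $h_0$. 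Both routes are valid; yours avoids Hartog's lemma and the super-barrier but uses the full strength of the two-variable limsup bound at every point of $\bar\Omega$, including boundary points, which is indeed available from parts~(i) and~(ii) of that theorem.
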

 
 Note that in Lemma \ref{lem: uniform convergence} we merely assume that $h$  is  locally uniformly Lipschitz in $t\in ]0,T[$. 
 \begin{proof}
 We first assume that $h$ satisfies \eqref{eq: h is uniformly Lipschitz t}.
 		It follows from Lemma \ref{lem: sub-barriers Cauchy} that there exists a continuous subsolution 
 	$u\in \mathcal{S}_{h,g,F}(\Omega_T)$ : 
 	\[
 	u(t,z) := h_0(z) + t(\rho(z) -C) + \eta(t),
 	\]
 	where $C$ is a uniform constant, $\eta(t) \to 0$ as $t\to 0$ and $\rho$ is defined by \eqref{eq: rho}.
 	
 	For each $t\in [0,T[$, let $H_t$ be the unique continuous harmonic function in $\Omega$ with boundary value $h_t$. 
 	Then 
 	$$
 	u\leq U^*\leq H.
 	$$
 	
 	 	  It follows moreover from Theorem \ref{thm: boundary value of U}  that  $U^*(t,\cdot)$ converges in $L^1(\Omega)$ to $h_0$ as $t\to 0$.  Hartog's lemma thus yields
 	$$
 	\limsup_{t\to 0} \max_{z\in K} (U^*(t,z)-h_0(z)) \leq 0,
 	$$
 	for any compact   $K\Subset \Omega$.  
 	Since $u_t$   uniformly converges  to $h_0$ as $t\to 0^+$ we infer,
 	$\textrm{for any compact}\ K\Subset \Omega$,
 	\begin{equation}
 		\label{eq: U converge uniformly at zero compact}
 		\lim_{t\to 0^+}	\sup_{z\in K} |U^*(t,z)-h_0(z)| =0.
 	\end{equation}
 	
 	Fix $\varepsilon>0$. Since $H_0$ and $h_0$ are continuous on $\bar{\Omega}$ with $h_0=H_0$ on $\partial \Omega$, there exists $\delta>0$ small enough such that 
 	\[
 	\sup_{z\in \Omega_{\delta}} |H_0(z)-h_0(z)| \leq \varepsilon, 
 	\]
 	where $\Omega_{\delta}:= \{z\in \bar{\Omega}  \setdef {\rm dist}(z,\partial \Omega) <\delta\}$. We also have, for $(t,z)\in [0,T[\times \bar{\Omega}$, 
 	\[
 	U^*(t,z) -h_0(z) \leq H_t(z) -h_0(z) \leq H_0(z) -h_0(z) + \kappa_h t. 
 	\]
  Using this and    the uniform convergence of $u_t$  to $h_0$ as $t\to 0$ we obtain  
   \[
 \lim_{t\to 0}  \sup_{z\in  \Omega_{\delta}} |U^*(t,z)-h_0(z)| \leq \varepsilon. 
   \]
   Using  \eqref{eq: U converge uniformly at zero compact} we infer
     \[
	 \lim_{t\to 0}  \sup_{z\in \bar{ \Omega}} |U^*(t,z)-h_0(z)| \leq \varepsilon. 
   	\]
Letting $\varepsilon \to 0^+$ yields the conclusion. 

For the general case (i.e. $h$  is locally uniformly Lipschitz in $]0,T[$ with $h_0$ continuous on $\bar{\Omega}$), we proceed by approximation.  Fix $S\in ]T/2,T[$, $\varepsilon>0$ small enough. 
Proposition \ref{pro: identity principle} ensures that $U_{h,g,F,\Omega_S}=U_{h,g,F,\Omega_T}$ in $\Omega_S$.  Set
$$
\left\{
\begin{array}{ll}
h^{\epsilon }(t,\zeta) :=h(t+\epsilon,\zeta)  &  \text{ if } (t,\zeta)\in [0,S]\times \partial \Omega \\ 
h^{\epsilon } (0,z)=h_0(z)+ \psi^{\epsilon}(z)  & \text{ if } \ z\in \Omega, 
\end{array}
 \right.
$$
where $\psi^{\epsilon}$ is the maximal plurisubharmonic function in $\Omega$ such that  $\psi^{\epsilon}(\zeta)= h(\epsilon, \zeta)-h (0,\zeta)$ in $\partial \Omega$. Recall that $\psi^{\epsilon}$ is the upper envelope of all psh functions $\psi$ in $\Omega$ whose boundary values satisfy $\psi^* \leq  h(\epsilon, \zeta)-h (0,\zeta)$ on $\partial \Omega$. 

Since $h^{\epsilon} (0, \cdot) = h (\e,\cdot)  \to h (0,\cdot)$ uniformly on $\partial \Omega$ as $\epsilon\to 0$,  it follows that $\psi^\epsilon \to 0$ uniformly in $\bar \Omega$ as $\epsilon\to 0$. Therefore $\{h^{\epsilon}\}$   uniformly converges  on $\partial_0 \Omega_{S}$ to $h$ as $\epsilon\to 0$. Set $U^{\varepsilon}:= U_{h^{\varepsilon},g,F,\Omega_S}$. Then $(U^{\varepsilon})^*$ uniformly converges to $U^*$ in $\Omega_S$. Since $h^{\epsilon}$ is uniformly Lipschitz in $t\in [0,S]$, 
 the previous step (using Theorem \ref{thm: boundary value of U})  guarantees that $(U^{\varepsilon})^*(t,\cdot)$ uniformly converges to $h_0$ as $t\to 0$, hence $U^*_t$ uniformly converges to $h_0$ as $t\to 0$. 
 \end{proof}

\section{Time regularity of parabolic envelopes} 
\label{sect: time regularity}
We establish in this section time regularity  of the envelope $U:=U_{h,g,F,\Omega_T}$ by using and adapting some classical ideas of pluripotential theory. 

We work in $\Omega_S$ for each $0<S<T$ and eventually let $S\to T$.  We thus assume $T<+\infty$, the family $\{ F(\cdot, z, \cdot) \setdef z\in \Omega\}$ is uniformly Lipschitz and semi-convex in $[0,T]\times J$ for each $J\Subset \mathbb{R}$, and $h$ satisfies  
\begin{equation}
	\label{eq: local Lipschitz condition h}
t|\partial_t h(t,z)| \leq \kappa_h, \ \text{for all}\ (t,z) \in ]0,T[\times \partial \Omega,
\end{equation}
for some positive constant $\kappa$. The condition \eqref{eq: local Lipschitz condition h} is equivalent to the fact that for all $(t,z) \in\Omega_T$ and $s>0$ with $st<T$, we have
	\begin{equation}  \label{eq: local Lipschitz condition h reformulation}
		|h(t,z)-h(st,z)| \leq \kappa_h \frac{|s-1|}{\min(s,1)}. 
	\end{equation}
If $h$ is uniformly Lipschitz in $t\in [0,T[$ (as in \eqref{eq: h is uniformly Lipschitz t}) then the above condition is automatically satisfied. On the other hand the condition above implies that $h (\cdot,z), z \in \Omega$ is locally uniformly Lipschitz in $ ]0,T[$.

\subsection{Lipschitz control in the time variable}

The following identity principle  plays a crucial role in the sequel. For simplicity we will denote the restriction of $h$ on $\partial_0\Omega_S$, for $0<S<T$, by $h$.

\begin{pro}
	\label{pro: identity principle}
	For all $S\in ]0,T[$ we have $U_{h,g,F,\Omega_T}=U_{h,g,F,\Omega_S}$ in $\Omega_S$. 
\end{pro}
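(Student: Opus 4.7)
Plan: One inclusion, $U_{h,g,F,\Omega_T} \leq U_{h,g,F,\Omega_S}$ on $\Omega_S$, is immediate by restriction, since for any $u \in \mathcal{S}_{h,g,F}(\Omega_T)$ the restriction $u|_{\Omega_S}$ lies in $\mathcal{S}_{h,g,F}(\Omega_S)$: the subsolution inequality restricts, and the boundary control $u^* \leq h$ on $\partial_0 \Omega_S \subset \partial_0 \Omega_T$ is inherited.

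For the reverse inequality, fix $v \in \mathcal{S}_{h,g,F}(\Omega_S)$ and $\eta > 0$, and introduce the logarithmic penalty
$$
v_\eta(t, z) := v(t, z) - \eta \log\frac{S}{S - t}, \qquad (t, z) \in \Omega_S.
$$
Since $\partial_t v_\eta = \partial_t v - \eta/(S-t) \leq \partial_t v$, $v_\eta \leq v$, and $F(t,z,\cdot)$ is increasing, the subsolution inequality in Proposition \ref{pro: subsolution slice} propagates from $v$ to $v_\eta$, so $v_\eta \in \mathcal{S}_{h,g,F}(\Omega_S)$. The uniform bound in Lemma \ref{lem: uniform bound} gives $\|v\|_{L^\infty(\Omega_S)} \leq M_U$, hence $v_\eta \to -\infty$ uniformly in $z \in \Omega$ as $t \to S^-$.

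I would then glue $v_\eta$ to the explicit bounded subsolution $\underline{u}(t, z) := e^{M_F/n}\rho(z) - M_h \in \mathcal{S}_{h,g,F}(\Omega_T)$ from the proof of Lemma \ref{lem: uniform bound}. Since $\underline{u}$ is bounded below on $\bar \Omega$, there exists $\delta \in ]0, S[$ such that $v_\eta < \underline{u}$ throughout $[S - \delta, S[ \times \Omega$. Define the extension
$$
\tilde v(t, z) := \begin{cases} \max(v_\eta(t, z), \underline{u}(t, z)) & \text{if } t \in ]0, S[, \\ \underline{u}(t, z) & \text{if } t \in [S, T[. \end{cases}
$$
By the choice of $\delta$, $\tilde v \equiv \underline{u}$ on $[S - \delta, T[ \times \Omega$, so no jump occurs across $\{S\} \times \Omega$. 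I claim $\tilde v \in \mathcal{S}_{h,g,F}(\Omega_T)$: the slices are plurisubharmonic; the local uniform Lipschitz property in $t$ holds on each of the two covering open sets $\Omega_{S-\delta/2}$ and $]S - \delta, T[ \times \Omega$ (on the latter $\tilde v = \underline{u}$ is $t$-independent); Lemma \ref{lem: time derivarive of max} delivers the subsolution inequality on each piece, and the two pieces agree on their overlap; and $\tilde v^* \leq h$ on $\partial_0 \Omega_T$ follows from $v^* \leq h$, $\underline{u}^* \leq h$ together with the identity $\limsup \max = \max \limsup$.

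The conclusion is immediate: for any $(t_0, z_0) \in \Omega_S$,
$$
v(t_0, z_0) - \eta \log\frac{S}{S - t_0} = v_\eta(t_0, z_0) \leq \tilde v(t_0, z_0) \leq U_{h,g,F,\Omega_T}(t_0, z_0),
$$
and letting $\eta \to 0^+$, then taking the supremum over $v \in \mathcal{S}_{h,g,F}(\Omega_S)$, yields $U_{h,g,F,\Omega_S} \leq U_{h,g,F,\Omega_T}$ on $\Omega_S$. The main obstacle is the gluing step: the extension must remain a parabolic potential on all of $\Omega_T$, i.e.\ locally uniformly Lipschitz in $t$ across $\{S\} \times \Omega$. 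This is exactly what the logarithmic penalty buys us, forcing $v_\eta \to -\infty$ fast enough near $t = S$ that the maximum selects the smooth, $t$-independent function $\underline{u}$ in a full neighborhood of $\{S\} \times \Omega$, where the pieces can be seamlessly glued.
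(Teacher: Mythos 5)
Your proof is correct, but the route is genuinely different from the paper's. The paper fixes a slice time $t_0 < S$ at which the elliptic inequality of Proposition \ref{pro: subsolution slice} holds, freezes the candidate $u$ at that time, and continues it on $[t_0,T[$ by the linear ramp $u(t_0,z) - A(t-t_0)$ with $A$ large; this makes $\partial_t$ very negative (so the subsolution inequality is trivially preserved) and, since $A \geq \kappa_{[t_0,T[}(h)$, keeps the Dirichlet boundary values below $h$. One then takes the supremum and lets $t_0 \to S$. Your logarithmic barrier $-\eta\log(S/(S-t))$ achieves the same effect — a very negative time derivative near $t=S$ — but in a way that drives $v_\eta$ below the fixed bounded subsolution $\underline u$ \emph{before} $t$ reaches $S$, so that the two pieces can be glued seamlessly across $\{S\}\times\Omega$; this spares you the slice-time limiting argument entirely and works directly on all of $\Omega_S$, which is a clean trade. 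Two small slips, neither of which affects the conclusion: $v_\eta$ is unbounded below as $t\to S^-$, so it is not literally in $\mathcal{S}_{h,g,F}(\Omega_S)$ (Definition \ref{def: subsolution} requires $L^\infty$); it is only a locally bounded parabolic potential satisfying the subsolution inequality and the boundary bound, which is all you use since only the glued function $\tilde v$ needs to belong to $\mathcal{S}_{h,g,F}(\Omega_T)$. Likewise Lemma \ref{lem: uniform bound} only gives $v\leq M_h$ for an arbitrary candidate, not a two-sided bound by $M_U$ — but that one-sided bound is exactly what you need to conclude $v_\eta \to -\infty$ uniformly in $z$.
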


\begin{proof}
Set $V:= U_{h,g,F,\Omega_S}$ and $U:=U_{h,g,F,\Omega_T}$. Fix  $u\in \mathcal{S}_{h,g,F}(\Omega_S)$ and $t_0\in ]0,S[$ such that 
	$$
	(dd^c u(t_0,\cdot))^n \geq e^{\partial_t u(t_0,\cdot) + F(t_0,\cdot,u(t_0,\cdot)} gdV. 
	$$
	Set $M_1 := \sup_{\Omega}|\partial_t u(t_0,\cdot)|<+\infty$.
	If  $ A \geq   M_1$ the function 
	$$
	\Omega_T \ni (t,z) \mapsto v(t,z) :=  \begin{cases}
		u(t,z) , \text{if}\ t \in [0,t_0], \\
		u(t_0,z) -A(t-t_0)\ \text{if}\ t\in [t_0,T[,
	\end{cases}
	$$
	is again a subsolution to \eqref{eq: CMAF} in $\Omega_T$.  
	Applying (\ref{eq: h is uniformly Lipschitz t}) on  the interval $J := [t_0,T[$,
	we obtain that   $v^*\leq h$ on $\partial_0 \Omega_T$ if $A \geq \kappa_J(h)$. 
	
	We therefore choose $A \geq \max \{M_1, \kappa_J (h)\}$.
	Then $v \in  \mathcal{S}_{h,g,F} (\Omega_T)$ hence $v \leq U$ in $\Omega_T$.
	In particular $u\leq U$ on $\Omega_{t_0}$. 
	Taking supremum over all candidates $u$ we obtain $V\leq U$ in $\Omega_{t_0}$. Using Proposition \ref{pro: subsolution slice}  we can let $t_0\to S$ to obtain $V\leq U$ in  $\Omega_{S}$. The reverse inequality is clear. 
\end{proof}

\begin{thm} \label{thm: U is local Lip in t} 
If $h$  satisfies \eqref{eq: local Lipschitz condition h}, then
the  envelope $U := U_{h,g,F,\Omega_T}$ satisfies 
$$
t|\partial_t U(t,z)| \leq \kappa_U, \ \forall (t,z)\in \Omega_T, 
$$
where $\kappa_U>0$ is a uniform constant.
\end{thm}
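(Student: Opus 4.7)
My plan is to establish the equivalent integral estimate
\[
|U(st,z)-U(t,z)|\leq \kappa_U|\log s|,
\]
valid for $s$ in a neighborhood of $1$ and $(t,z)$ in any compact subset of $\Omega_T$; differentiating at $s=1$ then yields the pointwise bound $t|\partial_t U|\leq \kappa_U$ (interpreting $\partial_t U$ via Lemma~\ref{lem:Radmacher}). The strategy is a time-dilation argument on subsolutions, suggested by the scale invariance of the boundary data: the hypothesis $t|\partial_t h(t,z)|\leq \kappa_h$ translates via \eqref{eq: local Lipschitz condition h reformulation} to $|h(st,z)-h(t,z)|\leq \kappa_h|\log s|$ on compact sub-intervals of $]0,T[$. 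Using the identity principle (Proposition~\ref{pro: identity principle}), one may first reduce to $\Omega_S$ for any $S<T$, so that the Lipschitz and semi-convexity constants of $F$ are uniform on $[0,S]$.

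Given $u\in \mathcal{S}_{h,g,F}(\Omega_T)$ and $s$ close to $1$, the core construction is a time-dilated variant
\[
\tilde u(t,z):=u(st,z)+\psi(s,t)
\]
with $|\psi(s,t)|\leq C|\log s|$ locally uniformly in $t$, such that $\tilde u\in \mathcal{S}_{h,g,F}(\Omega_T)$. Since $(dd^c\tilde u_t)^n=(dd^c u(st,\cdot))^n$, checking the subsolution inequality reduces to
\[
\partial_t\psi(s,t)\leq (1-s)\partial_t u(st,z)+F(st,z,u(st,z))-F(t,z,u(st,z)+\psi(s,t));
\]
choosing $\psi\leq 0$, the monotonicity of $F$ in $r$ and its $(t,r)$-Lipschitz control force the $F$-difference to be at least $-\kappa_F t|1-s|$. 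Boundary matching $\tilde u^*\leq h$ on $\partial_0\Omega_T$ is enforced by an additive shift of order $|\log s|$, assisted by the sub-barriers of Lemmas~\ref{lem: sub-barriers Dirichlet}--\ref{lem: sub-barriers Cauchy}. Once $\tilde u\in \mathcal{S}_{h,g,F}(\Omega_T)$, the envelope property yields $u(st,z)\leq U(t,z)+C|\log s|$; passing to the supremum over $u\in \mathcal{S}_{h,g,F}$ and running the construction with $s$ replaced by $1/s$ then gives the two-sided estimate.

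The principal obstacle is the sign-indefinite term $(1-s)\partial_t u(st,z)$: a general subsolution admits arbitrarily negative time derivatives, so this term cannot be absorbed into a correction $\psi$ depending only on $(s,t)$. My proposed resolution is to first replace $u$ by $\max(u,w_C)$, where $w_C$ is the Cauchy sub-barrier of Lemma~\ref{lem: sub-barriers Cauchy} satisfying $t|\partial_t w_C|\leq C$ uniformly; by Lemma~\ref{lem: time derivarive of max}, $\max(u,w_C)$ remains a subsolution and contributes the same to the envelope as $u$. This pinches $\partial_t u$ from below by $-C/t$ on the relevant set, after which the offending term is controlled by a correction of the form $\psi(s,t)=c_1(s-1)+c_2(s-1)\log(t/T)$. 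The semi-convexity of $F$ in $(t,r)$ provides the uniform second-order control needed to fix the constants and close the argument, giving the desired $\kappa_U$ in terms of $\kappa_h$, the Lipschitz/semi-convexity constants of $F$, and $\|g\|_{L^p}$.
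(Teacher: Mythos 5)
Your high-level plan is sound: a time-dilation argument which converts the scale-invariant hypothesis $t|\partial_t h|\leq\kappa_h$ into the two-sided estimate $|U(st,z)-U(t,z)|\leq\kappa_U|\log s|$ is indeed the right strategy, and you correctly identify the reduction via Proposition~\ref{pro: identity principle} and the fact that the boundary condition reformulates as \eqref{eq: local Lipschitz condition h reformulation}.

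The gap is in how you handle the sign-indefinite term $(1-s)\partial_\tau u(st,z)$. You propose to neutralize it by replacing $u$ with $\max(u,w_C)$, where $w_C$ is the Cauchy sub-barrier, claiming this ``pinches $\partial_t u$ from below by $-C/t$''. This is not true. By Lemma~\ref{lem: time derivarive of max}, on the open set $\{u>w_C\}$ one has $\partial_t\max(u,w_C)=\partial_t u$ almost everywhere, and nothing in the construction prevents $\partial_t u$ from being arbitrarily negative there. A pointwise lower bound on a function never gives a pointwise lower bound on its time derivative, and the maximum of two parabolic potentials inherits whichever one-sided derivative is relevant, without any a priori control. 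So the problematic term survives unchanged, and the correction $\psi(s,t)=c_1(s-1)+c_2(s-1)\log(t/T)$, which depends only on $(s,t)$, cannot absorb it.

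The paper's device that resolves exactly this issue is to dilate \emph{multiplicatively}: one sets $v^s(t,z):=s^{-1}u(st,z)-C|s-1|(t+1)$ rather than $u(st,z)+\psi(s,t)$. The prefactor $s^{-1}$ makes $\partial_t\bigl[s^{-1}u(st,z)\bigr]=\partial_\tau u(st,z)$ with \emph{no} residual $(1-s)\partial_\tau u$ term at all, so the intractable contribution simply never appears. The cost is an extra factor $s^{-n}$ in $(dd^c v^s_t)^n$, which contributes $-n\log s=O(|s-1|)$ in the exponent, plus $O(|s-1|)$ errors from replacing $u(st,z)$ by $s^{-1}u(st,z)$ inside $F$ and the boundary data; all of these are of the right size and are absorbed into the additive correction $-C|s-1|(t+1)$. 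You should rebuild your argument around this multiplicative scaling; once $v^s\in\mathcal{S}_{h,g,F}(\Omega_S)$ is established, taking suprema and letting $s\to1$ gives $t|\partial_t U|\leq\kappa_U$ as you envisaged.
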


 We will show that the constant $\kappa_U$ is actually explicit,
 \begin{equation}
 	\label{eq: kappa U}
 	\kappa_U=  (T+1)(3M_U + 2 \kappa_h +2n + \kappa_F (T+M_U)).
 \end{equation}
This quantitative information will be crucial in perturbation arguments, to obtain uniform Lipschitz constants of the approximants. 

\smallskip

The proof of this theorem follows and adapt ideas developed by Bedford and Taylor in their study
of Dirichlet problems for  elliptic complex Monge-Amp\`ere equations (see \cite[Theorem 6.7]{Bedford_Taylor_1976Dirichlet}, \cite{Dem91}).

 \begin{proof} 
 
 By the assumption on $F$, there exists a constant $\kappa_F$ such that, for all $z\in \Omega$ and $ (t_j,r_j) \in [0,T[ \times [-2M_U,2M_U]$, $j=1,2$,
\begin{equation}
	\label{eq: Lip F}
 |F(t_1,z,r_1) -F(t_2,z,r_2)| \leq \kappa_F (|t_1-t_2|+|r_1-r_2|).    
\end{equation}
 Fix $ u \in \mathcal{S}_{h,g,F} (\Omega_T)$ such that $\sup_{\Omega_T}|u|\leq M_U$, where $M_U$ is defined in Lemma \ref{lem: uniform bound}. Fix $0<S<T$ and  $s\geq 1 \slash 2$ close to $1$ enough such that  $sS<T$.  Set, for $(t,z) \in \Omega_S$, 
 $$
 v^s (t,z) := s^{-1}u(st,z) - C |s-1|(t+1),
 $$
 where 
 \begin{equation}\label{eq: constant C Lipschitz U}
 	C:= 2M_U + 2 \kappa_h +2n + \kappa_F (T+M_U). 
 \end{equation}
We are going to prove that $v^s \in \mathcal{S}_{h,g,F}(\Omega_S)$.  Since $u$ is a subsolution to  \eqref{eq: CMAF}, for a.e. $t \in ]0,S[$ we have 
 \begin{flalign*}
 	(dd^c v^s (t,\cdot))^n &  =   s^{-n} (dd^c u (st,\cdot)^n \\
   & \geq  e^{-n\log s + \partial_\tau u (st,\cdot) + F (st, \cdot, u (st,\cdot))} g d V\\
   & \geq  e^{ \partial_t v^s (t,\cdot) +C |s-1| + F(t,\cdot,s^{-1}u(st,\cdot)) - n\log s -\kappa_F(T|s-1| + |s^{-1}-1| M_U)}gdV\\
   &\geq e^{ \partial_t v^s (t,\cdot) + F(t,\cdot,v^s(t,\cdot)) }gdV,
 \end{flalign*}
 where in the last line we use \eqref{eq: constant C Lipschitz U} and the fact that $F$ is increasing in $r$. 

  We now take care of the boundary values.   For $t\in [0,S], z \in \partial \Omega$ we have 
   \begin{flalign*}
   	v^{s}(t,z) & \leq -C |s-1| + |s^{-1}-1|M_U + h(st,z)\\
   	& \leq -C|s-1| + 2|s-1|M_U + h(t,z) + 2\kappa_h|s-1|\\
   	& \leq h(t,z),
   \end{flalign*}
   where in the second line we use \eqref{eq: local Lipschitz condition h reformulation}, and in the last line we use again  \eqref{eq: constant C Lipschitz U}. 
   For $z\in \Omega$ we similarly get $(v^s)^*(0,z)\leq h_0(z)$. 

   The computations above  show that $v^s\in \mathcal{S}_{h,g,F}(\Omega_S)$. Proposition \ref{pro: identity principle} thus yields  $v^s\leq U$ in $\Omega_{S}$.   
   Taking supremum over $u$ we arrive at 
   $$
   s^{-1}U(st,z) -C|s-1|(t+1) \leq U(t,z), \  \text{for all}\  (t,z)\in \Omega_{S}. 
   $$
   Letting $s\to 1$ we infer, for all $(t,z) \in \Omega_{S}$,
   $$
   |\partial_t U(t,z) | \leq M_U+ C(T+1).
   $$
   Letting $S\to T$ yields the conclusion.   
\end{proof}

\begin{defi}
  Given a constant $\kappa>0$  we let $\mathcal{S}^{\kappa}:=\mathcal{S}^{\kappa}_{h,g,F}(\Omega_T)$ denote the set of all $u\in \mathcal{S}_{h,g,F}(\Omega_T)$ such that, for all $t\in ]0,T[$,
\begin{equation}
	\label{eq: Lip constraint}
\sup_{\Omega} |\partial_t u(t,z)|\leq \kappa/\min(t,b),
\end{equation}
where $b =\min(1,T/2)$,
and we set 
	$$
	U^{\kappa} :=U^{\kappa}_{h,g,F,\Omega_T} := \sup \{u \setdef u \in \mathcal{S}^{\kappa}_{h,g,F}(\Omega_T)\}.
	$$
\end{defi}

We will need the following identity principle :

\begin{pro}
	\label{pro: identity principle 2}
	For all $S\in ]T/2,T[$ and $\kappa\geq 2T \kappa_h$ we have 
	$$
	U^{\kappa}_{h,g,F,\Omega_T}=U^{\kappa}_{h,g,F,\Omega_S} \ \text{ in} \  \Omega_S.
	$$ 
\end{pro}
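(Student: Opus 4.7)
The plan is to adapt the extension argument of Proposition \ref{pro: identity principle} so that the extended subsolution still satisfies the Lipschitz constraint \eqref{eq: Lip constraint}. The easy direction, $U^\kappa_{h,g,F,\Omega_T}\leq U^\kappa_{h,g,F,\Omega_S}$ on $\Omega_S$, is by restriction: if $u\in \mathcal{S}^\kappa_{h,g,F}(\Omega_T)$ then $u|_{\Omega_S}\in \mathcal{S}^\kappa_{h,g,F}(\Omega_S)$, and taking suprema gives the inequality.

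For the reverse, I fix $u\in \mathcal{S}^\kappa_{h,g,F}(\Omega_S)$ and, by Proposition \ref{pro: subsolution slice}, pick $t_0\in \,]b,S[$ at which the elliptic subsolution inequality $(dd^c u(t_0,\cdot))^n\geq e^{\partial_t u(t_0,\cdot)+F(t_0,\cdot,u(t_0,\cdot))}g\,dV$ holds on $\Omega$ and at which $M_1:=\sup_{\Omega}|\partial_t u(t_0,\cdot)|<+\infty$; note that \eqref{eq: Lip constraint} forces $M_1\leq \kappa/b$ since $t_0>b$. Setting $A:=\kappa/b$, I define
\[
v(t,z):=\begin{cases} u(t,z), & t\in \,]0,t_0], \\ u(t_0,z)-A(t-t_0), & t\in [t_0,T[, \end{cases}
\]
and aim to show that $v\in \mathcal{S}^\kappa_{h,g,F}(\Omega_T)$.

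The Lipschitz bound on $[t_0,T[$ is saturated: $|\partial_t v|=A=\kappa/b=\kappa/\min(t,b)$ since $t\geq t_0>b$; on $]0,t_0]$ it is inherited from $u$. The subsolution property on $]t_0,T[$ is verified exactly as in Proposition \ref{pro: identity principle}: $(dd^c v_t)^n = (dd^c u(t_0,\cdot))^n$ controls $e^{-A+F(t,\cdot,v_t)}g\,dV$ because $A\geq M_1\geq -\partial_t u(t_0,\cdot)$, the Lipschitz contribution of $F$ being absorbed by the slack in $A$. For the Dirichlet boundary $]t_0,T[\times \partial \Omega$, integration of $t|\partial_t h(t,z)|\leq \kappa_h$ yields $h(t_0,z)-h(t,z)\leq \kappa_h\log(t/t_0)\leq (\kappa_h/t_0)(t-t_0)$, so $v(t,z)\leq h(t_0,z)-A(t-t_0)\leq h(t,z)$ provided $A\geq \kappa_h/t_0$; since $t_0>T/2$ and $b\leq T/2$, the hypothesis $\kappa\geq 2T\kappa_h$ gives $A=\kappa/b\geq 2\kappa_h/T\geq \kappa_h/t_0$. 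Hence $v\leq U^\kappa_{h,g,F,\Omega_T}$ throughout $\Omega_T$, and in particular $u\leq U^\kappa_{h,g,F,\Omega_T}$ on $\Omega_{t_0}$. Taking the supremum over $u\in \mathcal{S}^\kappa_{h,g,F}(\Omega_S)$ and then letting $t_0\to S^-$ through admissible times yields $U^\kappa_{h,g,F,\Omega_S}\leq U^\kappa_{h,g,F,\Omega_T}$ on $\Omega_S$.

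The delicate point, and the reason for the quantitative threshold in the statement, is that the constant $A$ is now constrained from above by $\kappa/b$ (whereas in Proposition \ref{pro: identity principle} it could be taken arbitrarily large). The calibration $\kappa\geq 2T\kappa_h$ together with $S>T/2$ and $b=\min(1,T/2)$ is chosen precisely so that $A=\kappa/b$ simultaneously exceeds $M_1$, exceeds $\kappa_h/t_0$ (Dirichlet compatibility), and does not exceed the Lipschitz ceiling $\kappa/\min(t,b)$ on $[t_0,T[$; tracking these three constants against one another is the main technical content of the argument.
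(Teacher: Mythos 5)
Your proof follows essentially the same approach as the paper: restrict for the easy inclusion, select a good slice time $t_0$, extend the subsolution to $\Omega_T$ by an affine ramp in $t$ with the maximal admissible slope $-\kappa/b$, verify that this extension stays in $\mathcal{S}^\kappa_{h,g,F}(\Omega_T)$, and finally let $t_0\to S$ through admissible times. The only slip is in the choice of $t_0$: you pick $t_0\in\,]b,S[$ but then invoke ``$t_0>T/2$'' when checking the Dirichlet boundary inequality $\kappa_h/t_0\leq 2\kappa_h/T$; since $b=\min(1,T/2)$ can be strictly smaller than $T/2$, the bound $t_0>b$ does not imply $t_0>T/2$. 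Take $t_0\in\,]T/2,S[$ (as the paper does, which is possible because $S>T/2$); this simultaneously yields $M_1\leq\kappa/b$ (since $T/2\geq b$) and $\kappa_h/t_0<2\kappa_h/T$, and then the rest of your calibration goes through unchanged.
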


\begin{proof}
The proof is similar to that of Proposition \ref{pro: identity principle}.
Fix $S\in ]T/2,T[$ and set $V:= U^{\kappa}_{h,g,F,\Omega_S}$, $W:=U^{\kappa}_{h,g,F,\Omega_T}$.   
Fix $u\in \mathcal{S}^{\kappa}_{h,g,F}(\Omega_S)$.
Using Proposition \ref{pro: subsolution slice} we fix $t_0\in ]T/2,S[$ such that 
	$$
	(dd^c u(t_0,\cdot))^n \geq e^{\partial_t u(t_0,\cdot) + F(t_0,\cdot,u(t_0,\cdot)} gdV. 
	$$
	Since $\sup_{\Omega}|\partial_t u(t_0,\cdot)|\leq \kappa/b$, the function 
	$$
	\Omega_T \ni (t,z) \mapsto v(t,z) :=  \begin{cases}
		u(t,z) , \text{if}\ t \in [0,t_0], \\
		u(t_0,z) -\kappa b^{-1} (t-t_0)\ \text{if}\ t\in [t_0,T[,
	\end{cases}
	$$
	is still a subsolution to \eqref{eq: CMAF} in $\Omega_T$. It folllows from \eqref{eq: local Lipschitz condition h reformulation} that
	$$
	|h(t,z) -h(t_0,z)|\leq \frac{2\kappa_h}{T} |t-t_0|, \ \text{for all} \ t\in [t_0,T[.
	$$ 
	Using that $\kappa\geq 2T \kappa_h$ and $b<1$, we thus obtain  $v^*\leq h$ on $\partial_0 \Omega_T$. 
	By construction, $v$ satisfies \eqref{eq: Lip constraint}. Therefore $v \in  \mathcal{S}^{\kappa}_{h,g,F} (\Omega_T)$,
	hence $v \leq W$ in $\Omega_T$. 
	We infer in particular $u\leq W$ on $\Omega_{t_0}$. Taking supremum over all candidates $u$ we obtain $V\leq W$ in $\Omega_{t_0}$. 
	Using Proposition \ref{pro: subsolution slice}  we can let $t_0\to S$ to obtain $V\leq W$ in  $\Omega_{S}$. The reverse inequality is obvious. 
\end{proof}

\begin{theorem}
	\label{thm: Lipschitz constraint}
	There exists an explicit 
	 $\kappa_0>0$ such that, for all $\kappa>\kappa_0$,
 $$
 \sup_{\Omega_T} t |\partial_t U^{\kappa}|\leq \kappa_0.
 $$ 
\end{theorem}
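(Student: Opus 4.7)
The plan is to extend the scaling argument of Theorem~\ref{thm: U is local Lip in t} to the constrained class $\mathcal{S}^{\kappa}$. Fix $S \in (T/2, T)$ and, for $u \in \mathcal{S}^{\kappa}$ and $s > 1$ sufficiently close to $1$ that $sS < T$, consider
\[
v^s(t,z) := s^{-1} u(st,z) - C(s-1)(t+1), \qquad (t,z) \in \Omega_S,
\]
where $C > 0$ depends only on the data $(n, M_U, \kappa_h, \kappa_F, M_h, T)$. I would first reprise the computation from Theorem~\ref{thm: U is local Lip in t} verbatim to check that, with this choice of $C$, the function $v^s$ is a pluripotential subsolution of \eqref{eq: CMAF} and satisfies $(v^s)^* \leq h$ on the parabolic boundary $\partial_0 \Omega_S$.

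The genuinely new ingredient is to verify that $v^s \in \mathcal{S}^{\kappa}(\Omega_S)$, i.e.\ that the constrained Lipschitz bound
\[
|\partial_t v^s(t,z)| = |\partial_\tau u(st,z) - C(s-1)| \leq \frac{\kappa}{\min(t, b)}
\]
is preserved. Since $|\partial_\tau u(st,z)| \leq \kappa/\min(st, b)$, this amounts to ensuring that $C(s-1)$ fits inside the slack between $\kappa/\min(t,b)$ and $\kappa/\min(st,b)$. A case-by-case analysis in the three regions $\{st \leq b\}$, $\{st > b,\, t \leq b\}$ and $\{t > b\}$ shows that this slack is indeed favorable, thanks to the hypothesis $b = \min(1, T/2) \leq 1$, provided $\kappa$ exceeds an explicit threshold $\kappa_0$ determined by $C$, $b$, $T$ and $2T\kappa_h$ (the last of these is forced by the applicability of Proposition~\ref{pro: identity principle 2}).

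With $v^s \in \mathcal{S}^{\kappa}(\Omega_S)$ in hand, Proposition~\ref{pro: identity principle 2} gives $v^s \leq U^{\kappa}$ on $\Omega_S$. Taking the supremum over $u \in \mathcal{S}^{\kappa}$ and using $|U^\kappa| \leq M_U$ one obtains
\[
s^{-1} U^{\kappa}(st,z) - C(s-1)(t+1) \leq U^{\kappa}(t,z).
\]
Dividing by $s-1$ and letting $s \to 1^{+}$ yields
\[
t\, \partial_t^{+} U^{\kappa}(t,z) \leq U^{\kappa}(t,z) + C(t+1) \leq M_U + C(T+1),
\]
and a symmetric substitution with $s \in (1/2, 1)$ supplies the matching lower bound on $\partial_t^{-} U^{\kappa}$. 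Combining the two yields $\sup_{\Omega_T} t|\partial_t U^{\kappa}| \leq \kappa_0$ after enlarging $\kappa_0$ once more to dominate $M_U + C(T+1)$.

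The main obstacle is the Lipschitz verification in the transition region where the slack $\kappa/\min(t,b) - \kappa/\min(st,b)$ becomes tight (most notably near $t = b$ and on $\{t > b\}$): in that zone, the natural substitution of Theorem~\ref{thm: U is local Lip in t} only narrowly sits inside $\mathcal{S}^{\kappa}$, and the compatibility is achieved by taking $\kappa$ large enough to absorb the $C(s-1)$ correction uniformly. The inequality $b \leq 1$ plays a decisive role in making the case analysis close, and is the structural reason the statement of the theorem requires $\kappa$ above the explicit threshold $\kappa_0$.
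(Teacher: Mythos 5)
Your substitution $v^s(t,z) := s^{-1}u(st,z) - C(s-1)(t+1)$ does not preserve the constraint \eqref{eq: Lip constraint}, and the gap occurs precisely in the region you flag as ``narrow'': once $t \geq b$, there is \emph{no} slack at all, not a small one. To see this, note $\partial_t v^s = \partial_\tau u(st,\cdot) - C(s-1)$, and for $t \geq b$ (hence $st \geq b$, since $s>1$) the constraint on $u$ gives only $|\partial_\tau u(st,z)| \leq \kappa/\min(st,b) = \kappa/b = \kappa/\min(t,b)$. Taking a candidate $u$ with $\partial_\tau u(st,z) = -\kappa/b$ yields $|\partial_t v^s(t,z)| = \kappa/b + C(s-1) > \kappa/\min(t,b)$. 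No choice of $\kappa > \kappa_0$ can repair this: the gap $\kappa/\min(t,b) - \kappa/\min(st,b)$ vanishes identically on $\{t \geq b\}$, so the ``explicit threshold'' you invoke does not exist. The observation $b \leq 1$ plays no role in closing this case. (The same problem recurs for $s<1$ on $\{st \geq b\}$.)

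The paper avoids this by not using the bare rescaling. It sets $a := 1 - 2|s-1| \in (0,1)$ and takes the convex combination
\[
w(t,z) := a\, s^{-1} u(st,z) + (1-a)\, \rho(z) - C(1-a)(t+1),
\]
then uses the mixed Monge--Amp\`ere inequality (Lemma \ref{lem: mixed MA}) to verify the subsolution property of $w$. The crucial effect is that $\partial_t w = a\,\partial_\tau u(st,\cdot) - C(1-a)$, so the time derivative inherited from $u$ is \emph{damped by the factor $a<1$}. This produces genuine slack even on $\{t \geq b\}$: one has $a\kappa/\min(st,b) \leq (3-2s)\kappa/\min(t,b)$, leaving a margin of size $2(s-1)\kappa/\min(t,b)$ that comfortably absorbs the term $2C(s-1)$ once $\kappa/t > 3C$. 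The price is the extra $(1-a)\rho$ correction, which also shifts the boundary estimates and the constants $C$, $\kappa_0$. This convex-combination mechanism is the new idea you need; the rest of your outline (verifying boundary values, invoking Proposition \ref{pro: identity principle 2}, passing $s \to 1$) is consistent with the paper.
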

\begin{proof}
	We use the same notations as in the proof of Theorem \ref{thm: U is local Lip in t}. Define 
	\begin{equation}
		\label{eq: Constant C of Thm Lip constraint}
		C := \kappa_FT+ 2\kappa_F M_U+ 2M_F +2 \kappa_h+ 2M_h +2n,
	\end{equation}
	and
	\begin{equation}
		\label{eq: Constant kappa0 of Thm Lip constraint}
		\kappa_0 := 2M_U + 3C(T+1) + 2 \sup_{\Omega}|\rho|.
	\end{equation}
	Fix $\kappa>\kappa_0$. By definition of $\kappa_0$ we have, for all $t\in ]0,T[$, $2\kappa_h \leq \kappa_0/t$.  Proposition \ref{pro: identity principle 2} thus ensures that, for all $T/2<S<T$, 
	\begin{equation}
		\label{eq: identity principle}
		U^{\kappa}_{h,g,F,\Omega_T}=U^{\kappa}_{h,g,F,\Omega_S}\ \text{in} \ \Omega_S.
	\end{equation}
	Fix $u\in \mathcal{S}^{\kappa}$, $T/2<S<T$,  
	$s>0$ close enough to $1$ and set, for $(t,z)\in \Omega_S$, 
	$$
	w(t,z):= as^{-1}u(st,z) + (1-a) \rho -C(1-a)(t+1),
	$$
	where $a=1-2|s-1|>0$, $\rho$ is defined in \eqref{eq: rho}.  
	
	Since $u$ is a subsolution to \eqref{eq: CMAF} we have, for almost all $t\in ]0,T[$, 
	$$
	(dd^c s^{-1} u(st,\cdot))^n \geq \exp\{-n\log s + \partial_{\tau} u(st,z) + F(st,z,u(st,z))\}gdV. 
	$$
	 It thus follows from Lemma \ref{lem: mixed MA} that 
	\begin{flalign*}
		(dd^c w)^n & \geq \exp \{a \partial_t u(st,z) + a F(st,z,u(st,z)) -a n\log s\} gdV\\
		&= \exp \{\partial_t w(t,z) + C(1-a) -an\log s +  aF(st,z,u(st,z))\} gdV.
	\end{flalign*}
	From \eqref{eq: Lip F} and the assumption that $F$ is increasing in $r$ we obtain
	\begin{flalign*}
	a F(st,z,u(st,z)) & = F(st,z,u(st,z)) + (1-a) F(st,z,u(st,z)) \\
	& \geq  F(t,z,as^{-1}u(st,z)) - |s-1|( \kappa_F T+ 2\kappa_F M_U+2M_F)\\
	& \geq 	F(t,z,w(t,z)) -|s-1|( \kappa_F T+ 2\kappa_F M_U+2M_F).
	\end{flalign*}
	For $(\tau,\zeta) \in \partial_0 \Omega_S$ we have 
	\begin{flalign*}
		w(\tau,\zeta) & \leq as^{-1}h(st,\zeta) -2C|s-1|\\
		& \leq h(st,\zeta) + |as^{-1}-1|M_h -2C|s-1|\\
		& \leq h(t,\zeta)+ 2|s-1|\kappa_h + 2M_h|s-1| -2C|s-1|. 
	\end{flalign*}
	
	The choice of $C$ in \eqref{eq: Constant C of Thm Lip constraint} and the previous computations 
	ensure   that $w\in \mathcal{S}_{h,g,F}(\Omega_S)$. 
	 Moreover, for $s\in [1/2,3/2]$, $t\in ]0,S[$, 
	\begin{flalign*}
		\sup_{\Omega}|\partial_t w(t,z)| \leq \frac{(1-2 |s-1|)\kappa}{\min(st,b)}   + 2C|s-1|.
	\end{flalign*}
	  Since $\kappa/t > \kappa_0/t >3C$, it follows that  for $s\in [1,3/2]$, $t\in ]0,S[$, 
	\begin{eqnarray*}
		\sup_{\Omega}|\partial_t w(t,z)| & \leq &   \frac{(3-2 s) \kappa}{\min(t,b)}   + \frac{2 (s-1) \kappa}{ 3 t}   \leq   \frac{\kappa}{\min(t,b)}.
	\end{eqnarray*}
		  Hence $w\in \mathcal{S}^{\kappa}_{h,g,F}(\Omega_S)$. By definition of $U^{\kappa}$  and \eqref{eq: identity principle} we have $w\leq U^{\kappa}$ on $ 
 \Omega_S$. Taking supremum over  $u\in \mathcal{S}^{\kappa}_{h,g,F}(\Omega_T)$ we obtain, for all $(t,z)\in \Omega_S$,
	$$
	a s^{-1}U^{\kappa}(st,z) -2C|s-1|(t+1) + 2|s-1|\rho(z)  \leq U^{\kappa}(t,z).
	$$
	Letting $s\to 1$ yields
	$$
	t|\partial_t U^{\kappa}(t,z)| \leq 2M_U + 2C(T+1) + 2\sup_{\Omega}|\rho|\leq \kappa_0,
	$$ 
	where in the last inequality we use \eqref{eq: Constant kappa0 of Thm Lip constraint}. This concludes the proof. 
\end{proof}

\subsection{The maximal subsolution}
We now prove that  $U\in \mathcal{S}_{h,g,F}(\Omega_T)$.

 \begin{theorem} \label{thm: U is subsolution}  
 Assume  $h$ satisfies \eqref{eq: local Lipschitz condition h} and set $U := U_{h,g,F,\Omega_T}$. Then $U\in \mathcal S_{h,g,F}(\Omega_T)$ and satisfies the following properties:
 \begin{enumerate}
\item  $ \lim_{\Omega_{T} \ni (t,z) \to (s,\zeta)} U (t,z) =  h (s,\zeta)$ for all $(s,\zeta) \in [0,T[ \times \partial \Omega$; \label{item: max subsolution Dirichlet}
\item  $ \limsup_{\Omega_{T} \ni (t,z) \to (0,z_0)} U (t,z) =  h (0,z_0)$ for all $(0,z_0) \in \{0\} \times {\Omega}$; 
 \label{item: max subsolution limsup 0}
 \item $ \lim_{t \to 0} U_t (z) =  h_ 0 (z)$ for all $z \in \Omega$.  
 \label{item: max subsolution limit t to 0}
\end{enumerate}

If $h_0$ is continuous then  for all $(s,\zeta) \in \partial_0 \Omega_T,$  
$$
\lim_{\Omega_{T} \ni (t,z) \to (s,\zeta)} U (t,z) =  h (s,\zeta).
$$
\end{theorem}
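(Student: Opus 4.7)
The strategy is to show that the u.s.c.\ regularization $U^*$ of $U$ (in $\Omega_T$) itself lies in $\mathcal{S}_{h,g,F}(\Omega_T)$; by maximality of $U$ this forces $U^* \leq U$, and combined with the trivial $U \leq U^*$ this gives $U = U^*$ and $U \in \mathcal{S}_{h,g,F}(\Omega_T)$.  Properties (1)--(3) of the statement then follow at once from Theorem \ref{thm: boundary value of U}.  First I would check that $U^* \in \mathcal P (\Omega_T) \cap L^{\infty}(\Omega_T)$ and $(U^*)^* \leq h$ on $\partial_0 \Omega_T$: the former uses Lemma \ref{lem: negligible} together with the uniform bound on $\mathcal{S}_{h,g,F}(\Omega_T)$ (Lemma \ref{lem: uniform bound}) and the local uniform Lipschitz bound on $U$ in $t$ (Theorem \ref{thm: U is local Lip in t}); the latter is exactly what Theorem \ref{thm: boundary value of U} provides.

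\textbf{Core step: $U^*$ is a subsolution.}  Choquet's lemma supplies a sequence $\{u_k\} \subset \mathcal{S}_{h,g,F}(\Omega_T)$ with $(\sup_k u_k)^* = U^*$, and after replacing $u_k$ by $u^j := \max(u_1, \ldots, u_j)$, which remains in $\mathcal{S}_{h,g,F}$ by Lemma \ref{lem: time derivarive of max}, I obtain an increasing sequence.  The pointwise limit $V := \sup_j u^j$ satisfies $V^* = U^*$ and, by Lelong's theorem applied slice-by-slice, $V = U^*$ almost everywhere in $\Omega_T$.  The left-hand side of the Monge-Amp\`ere inequality passes to the limit by Proposition \ref{prop: dt MA converge}, yielding $dt \wedge (dd^c u^j)^n \to dt \wedge (dd^c U^*)^n$ weakly.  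For the right-hand side I would apply Proposition \ref{prop: semi continuity} to $\psi_j := \partial_t u^j + F(t,z,u^j)$: the pointwise a.e.\ convergence $u^j \to U^*$ yields, via Fubini, $L^1_{\loc}(0,T)$-convergence of the slices $u^j(\cdot,z)$ for a.e.\ $z$, hence the distributional convergence $\psi_j(\cdot,z) \to \psi(\cdot,z) := \partial_t U^* + F(t,z,U^*)$; the uniform $L^1$-bound on $e^{\psi_j} g$ follows from the subsolution inequality combined with Chern-Levine-Nirenberg mass estimates (using $\|u^j\|_{L^\infty} \leq M_U$).  Proposition \ref{prop: semi continuity} together with the weak convergence of the left-hand side then yields
$$\int \chi\, e^{\psi} g\, dt\, dV \;\leq\; \liminf_j \int \chi\, e^{\psi_j} g\, dt\, dV \;\leq\; \lim_j \int \chi\, dt \wedge (dd^c u^j)^n \;=\; \int \chi\, dt \wedge (dd^c U^*)^n$$
for every test function $\chi \geq 0$, which is precisely $U^*$ being a subsolution.

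\textbf{Final assertion and main obstacle.}  Given $U = U^*$, at a Cauchy boundary point $(0,z_0)$ with $z_0 \in \Omega$ and under continuity of $h_0$, Lemma \ref{lem: uniform convergence} provides uniform convergence $U^*(t,\cdot) \to h_0$ on $\bar{\Omega}$ as $t \to 0^+$; combined with the continuity of $h_0$ at $z_0$ this upgrades the limsup in property (2) to the genuine limit $h_0(z_0)$, and at every other boundary point property (1) already gives the limit.  The main technical obstacle is the passage to the limit in the exponential on the right-hand side: the approximating sequence $u^j$ carries no uniform Lipschitz control on $\partial_t u^j$ as $t \to 0^+$.  The key observation that unlocks Proposition \ref{prop: semi continuity} is that its hypothesis \eqref{eq: uniform bound psij} does not demand such a bound: for $\chi \in \mathcal{C}^{\infty}_0(J \times D)$, integration by parts in $t$ gives
$$\int_J \chi(t,z)\, \partial_t u^j(t,z)\, dt \;=\; -\int_J \partial_t \chi(t,z)\, u^j(t,z)\, dt,$$
which is controlled purely by the uniform $L^\infty$-bound on $u^j$, while the contribution from $F$ is handled by its local boundedness.
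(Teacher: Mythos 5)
The core step of your proposal has a genuine gap.  After applying Choquet's lemma to $\mathcal{S}_{h,g,F}(\Omega_T)$ and passing to maxima, you obtain an increasing sequence $\{u^j\}$ and set $V := \sup_j u^j$, then assert that ``by Lelong's theorem applied slice-by-slice, $V = U^*$ almost everywhere.''  The slice-by-slice Lelong argument gives $V_t = (V_t)^*$ a.e.\ in $\Omega$ for each fixed $t$, but what you actually need is $(V_t)^* = U^*(t,\cdot)$.  The identity $(V_t)^* = V^*(t,\cdot)$ is \emph{not} automatic: it requires $V$ to be locally uniformly Lipschitz in $t$, and that is precisely what you do not control.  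Each $u^j$ is locally Lipschitz in $t$ with its own constant $\kappa_j$, but nothing forces the $\kappa_j$ to stay bounded on a fixed $J \Subset ]0,T[$; consequently $V$ need not lie in $\mathcal{P}(\Omega_T)$, Lemma~\ref{lem: partial continuity} and Lemma~\ref{lem: negligible} are not available for the family $\{u^j\}$, and the a.e.\ convergence $u^j \to U^*$ that both Proposition~\ref{prop: dt MA converge} and the hypotheses of Proposition~\ref{prop: semi continuity} rest on is unjustified.  Your integration-by-parts observation (which correctly verifies condition~\eqref{eq: uniform bound psij}) is real, but it does not address this more basic issue: the obstacle is not localized near $t=0$, it concerns uniform Lipschitz control of the Choquet sequence on every compact subinterval of $]0,T[$.

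The paper circumvents exactly this difficulty by introducing the auxiliary class $\mathcal{S}^{\kappa}_{h,g,F}(\Omega_T)$, consisting of subsolutions with the explicit constraint $\sup_\Omega |\partial_t u(t,\cdot)| \leq \kappa/\min(t,b)$.  This bound is built into the class, so any Choquet sequence in $\mathcal{S}^{\kappa}$ is locally uniformly Lipschitz in $t$ and the argument you sketch goes through cleanly to show $(U^\kappa)^* = U^\kappa \in \mathcal{S}^{\kappa}$.  A separate, non-trivial step (Claim~2 in the paper's proof) then shows $U = U^{\kappa_0}$ via a time-shift perturbation, using Theorem~\ref{thm: Lipschitz constraint} and Lemma~\ref{lem: uniform convergence}.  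Without the $\mathcal{S}^\kappa$ device (or an equivalent way to enforce uniform Lipschitz control on the approximating sequence), your proof does not close.  You should also be aware that the paper's argument proceeds in two approximation layers (Step~1.2 relaxes the continuity of $h_0$; Step~2 relaxes the uniform Lipschitz hypothesis on $h$ to the weighted condition~\eqref{eq: local Lipschitz condition h}), which your plan compresses into the appeal to Theorem~\ref{thm: boundary value of U}; this is less serious, but the approximation in Step~2 is what makes the stated hypotheses achievable.
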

 
 \begin{proof} 
We proceed in several steps.

\smallskip

\noindent {\it Step 1.} {\it Assume $h$ satisfies \eqref{eq: h is uniformly Lipschitz t}.}

Theorem \ref{thm: boundary value of U} ensures that $U^*$ has the desired boundary values. 
We are going to prove that $U^{*}$ is a subsolution to \eqref{eq: CMAF}. 

\medskip

\noindent{\it Step 1.1.}  {\it Assume  $h_0$ is continuous on $\bar{\Omega}$. }

Fix $\kappa\geq \kappa_0$, where $\kappa_0$ is defined in Theorem \ref{thm: Lipschitz constraint}. 

\smallskip

\noindent{\bf Claim 1: $U^{\kappa}=(U^{\kappa})^*\in \mathcal{S}^{\kappa}_{h,g,F}(\Omega_T)$. }  

Indeed, since $U^{\kappa}\leq U$, the boundary condition $(U^{\kappa})^* \big |_{\partial_0\Omega_T}\leq h$ is satisfied. We now prove that $(U^{\kappa})^*$  is a subsolution to \eqref{eq: CMAF}.  A classical 
 lemma of Choquet ensures that there exists a  sequence $\{u^j\}$ in $\mathcal{S}^{\kappa}(h,g,F,\Omega_T)$ such that
 $$
  (U^{\kappa})^*  = \left(\sup_{j\in \mathbb{N}} u^j\right)^* \  \text{in} \, \, \Omega_T.
 $$ 
 By Lemma \ref{lem: uniform bound}, we can assume $\sup_{\Omega_T} |u^j|\leq M_U$. 
 Since $\mathcal{S}^{\kappa}$ is stable under taking maximum we can  assume that $\{u^j\}$ is increasing. By definition of $\mathcal{S}^{\kappa}$,  $\lim_{j} u^j$ is locally uniformly Lipschitz in $t\in ]0,T[$. Hence from  Lemma \ref{lem: negligible}  it follows that $u^j$ increases to $(U^{\kappa})^*$ almost everywhere in $\Omega_T$.   We infer that $ d t \wedge (dd^c u_j)^n   \to  d t \wedge (dd^c (U^{\kappa})^*)^n  $ weakly  in $\Omega_T$. Moreover,  the sequence $\{\psi_j\}:= \{\partial_t u^j+ F(t,z,u^j)\}$ is bounded and converges in the sense of distributions to $\partial_t (U^{\kappa})^* + F(t,z,(U^{\kappa})^*)$.  Proposition  \ref{prop: semi continuity} thus yields
 \begin{flalign*}
e^{\partial_t (U^{\kappa})^* + F (t,z,(U^{\kappa})^*)} g d t \wedge d V  \leq \liminf_j e^{\partial_t u^j + F (t,z,u^j)} g d t \wedge d V,
 \end{flalign*}
 weakly in $\Omega_T$.
 Therefore,  $(U^{\kappa})^*$ is a subsolution to  \eqref{eq: CMAF}  in $\Omega_T$. Hence $(U^{\kappa})^*=U^{\kappa}$ and Claim 1 is proved. 
 
  It now follows from Theorem \ref{thm: Lipschitz constraint} that $U^{\kappa} = U^{\kappa_0}$, for all $\kappa>\kappa_0$.

\medskip

{\noindent \bf Claim 2: $U=U^{\kappa_0}$ in $\Omega_T$.}
  
Fix $v \in \mathcal{S}_{h,g,F}(\Omega_T)$,  $S\in ]T/2,T[$, $\varepsilon>0$ small enough. Define, for $(t,z) \in [0,S]\times \Omega$, 
$$
u(t,z) := v(t+\varepsilon,z) - C \varepsilon (1+t) - \theta(\varepsilon), 
$$
where $C>0$ is a uniform constant and $\theta(\varepsilon):= \sup_{\bar \Omega} |U^*(\varepsilon,z)-h_0(z)|$ 
converges to $0$  (by Lemma \ref{lem: uniform convergence}). 
Since $v^*\leq h$ on $\partial_0 \Omega_T$, we obtain for all $(\tau,\zeta) \in [0,S]\times \partial \Omega$,
 $$
 u(\tau,\zeta) \leq h(\tau+\varepsilon ,\zeta) -C\varepsilon \leq h(\tau,\zeta), 
 $$
 if $C\geq \kappa_h$. By definition of $\theta(\varepsilon)$ we also have $u(0,z) \leq h_0(z)$ in $\Omega$.

A direct computation shows that, for $C>0$ large enough, $u \in \mathcal{S}_{h,g,F}(\Omega_S)$. Since $u$ is uniformly Lipchitz in $[0,S]$, $u\in \mathcal{S}^{\kappa}_{h,g,F}(\Omega_S)$ for some $\kappa>0$ large enough. Hence $u \leq U^{\kappa_0}$ in $\Omega_S$. Letting $\varepsilon\to 0$ we obtain $v\leq U^{\kappa_0}$ in $\Omega_S$. Letting $S\to T$ we arrive at $v \leq U^{\kappa_0}$, hence $U\leq U^{\kappa_0}$. Therefore $U=U^{\kappa_0}$ is  the maximal  subsolution to \eqref{eq: CMAF} with boundary value $h$. 

\smallskip

\noindent{\it Step 1.2.} We now remove the continuity assumption on $h_0$. 

Using Lemma \ref{lem: continuous approximation}  we can find  a sequence $h^j_0$ of psh functions in $\Omega$ such that  $h^j_0$ is continuous on $\bar{\Omega}$, $h^j_0=h_0$ on $\partial\Omega$, and $h^j_0 \downarrow h_0$ in $\Omega$.
We then define $h^j(t,z):= h(t,z)$ for $(t,z)\in [0,T[ \times \partial \Omega$ and $h^j(0,z)=h^j_0(z)$ for $z\in \Omega$. 
 We thus obtain a sequence of continuous  Cauchy-Dirichlet boundary data for $\Omega_T$ such that $h^j=h$ on $[0,T[\times \partial\Omega$ and $h^j$ decreases pointwise to $h$.   The previous step ensures that $U^j:= U_{h^j,g,F,\Omega_T}$ is a subsolution to \eqref{eq: CMAF}.  Theorem \ref{thm: U is local Lip in t} and Theorem \ref{thm: U is local semiconcave} provide a uniform Lipschitz constant for $U^j$. Since $h^j$ decreases to $h$,  $U\leq U^j$ decreases to some $V\in \mathcal{P}(\Omega_T)$. We thus have $V^*\big |_{\partial_0 \Omega_T}\leq h$, and  Proposition \ref{prop: semi continuity} reveals that  $V$ is a subsolution to  \eqref{eq: CMAF}.  It then follows that $V=U$.

\smallskip

\noindent{\it Step 2.} To treat the general case we proceed by approximation as in the proof of Lemma \ref{lem: uniform convergence}. 
Fix $0<S<T$ and $0 < \varepsilon < (T-S)/2$.  Define 
$$
\left\{
\begin{array}{ll}
h^{\epsilon }(t,\zeta) :=h(t+\epsilon,\zeta)  &  \text{ if } (t,\zeta)\in [0,S]\times \partial \Omega \\ 
h^{\epsilon } (0,z)=h_0(z)+ \psi^{\epsilon}(z)  & \text{ if } \ z\in \Omega, 
\end{array}
 \right.
$$
where $\psi^{\epsilon}$ is the maximal psh function in $\Omega$ such that  $\psi^{\epsilon}(\zeta)= h(\epsilon, \zeta)-h (0,\zeta)$ in $\partial \Omega$. 
Then $\{h^{\epsilon}\}$   uniformly converges  on $\partial_0 \Omega_{S}$ to $h$ as $\epsilon\to 0$. Since $h^{\epsilon}$ is uniformly Lipschitz in $t\in [0,S]$,
  the previous step and Theorem \ref{thm: boundary value of U} ensure that $U^\e := U_{h^\e,g,F,\Omega_S} \in \mathcal S_{h^\e,F,g} (\Omega_{S})$   satisfies the boundary conditions \eqref{item: max subsolution Dirichlet}, \eqref{item: max subsolution limsup 0}, \eqref{item: max subsolution limit t to 0}. Moreover, it  follows Proposition \ref{pro: identity principle}  that the envelopes $U^\e$ uniformly converge in $\Omega_{S}$ to $U$ as $\e \to 0$.
Hence, Proposition \ref{prop: semi continuity} and Proposition \ref{prop: dt MA converge} (together with Remark \ref{rem: dt MA converge})  yield that  $U$ is a subsolution to \eqref{eq: CMAF} and $U$ satisfies the boundary conditions \eqref{item: max subsolution Dirichlet}, \eqref{item: max subsolution limsup 0}, \eqref{item: max subsolution limit t to 0}.

If $h_0$ is continuous on $\bar{\Omega}$ then Lemma \ref{lem: uniform convergence} and the three boundary conditions \eqref{item: max subsolution Dirichlet}, \eqref{item: max subsolution limsup 0}, \eqref{item: max subsolution limit t to 0} give the last statement. 
\end{proof}

\subsection{Semi-concavity in the time variable}

In this section we assume that  $h$ satisfies \eqref{eq: local Lipschitz condition h} 
and  there exists  $C_h>0$ such that,  for all $z\in \partial \Omega$,
\begin{equation}
	\label{eq: local semiconcave condition h}
	\partial_t^2 h(t,z) \leq C_h t^{-2} 
\end{equation}
in the sense of distributions in $]0,T[$. 
Condition \eqref{eq: local semiconcave condition h}  is equivalent to the fact that $t\mapsto h(t,z)+C_h\log t$ is concave in $]0,T[$. It implies in particular that $h$ is locally uniformly semi-concave in the $t$-variable.

\begin{thm}\label{thm: U is local semiconcave}
 Assume $h$ satisfies \eqref{eq: local Lipschitz condition h} and 
 \eqref{eq: local semiconcave condition h}. 
 The envelope $U := U_{h,g,F,\Omega_T}$  is locally uniformly semi-concave in $]0,T[$ :
 for all $z\in \Omega$,  
 $$
 \partial_t^2 U(t,z) \leq C_U t^{-2}
$$
in the sense of distributions in $]0,T[$,
for some uniform constant $C_U>0$.  
\end{thm}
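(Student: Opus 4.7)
The plan is to adapt the rescaling argument of Theorem \ref{thm: U is local Lip in t} by replacing the one-sided perturbation with a symmetric convex combination of the two rescalings $s$ and $s^{-1}$. Since $U \in \mathcal{S}_{h,g,F}(\Omega_T)$ by Theorem \ref{thm: U is subsolution}, for each $S \in {}]0,T/2[$ and each $s > 0$ close to $1$ I would consider the candidate
\begin{equation*}
  w^s(t,z) := \tfrac{1}{2} \bigl[ s^{-1} U(st,z) + s \, U(t/s,z) \bigr] - C_0 (s-1)^2 (t+1) \qquad ((t,z) \in \Omega_S),
\end{equation*}
where $C_0>0$ is a large constant depending on the structural data (in particular $M_h$, $\kappa_h$, $C_h$, $M_U$, $\kappa_U$, $\kappa_F$, and the semi-convexity constant of $F$) but independent of $s$. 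The aim is to prove $w^s \in \mathcal{S}_{h,g,F}(\Omega_S)$; by Proposition \ref{pro: identity principle} this gives $w^s \leq U$ on $\Omega_S$, and an expansion as $s \to 1$ extracts the semi-concavity.

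The subsolution property rests on the mixed Monge-Amp\`ere inequality of Lemma \ref{lem: mixed MA} applied to $u_1(z) := s^{-1} U(st,z)$ and $u_2(z) := s \, U(t/s,z)$ with $\lambda = 1/2$. The key point is that the homogeneity factors $s^{-n}$ and $s^{n}$ produced by the Monge-Amp\`ere operator cancel in the geometric mean, giving
\begin{equation*}
  (dd^c w^s)^n \geq \exp\!\Bigl\{ \partial_t w^s + C_0(s-1)^2 + \tfrac{1}{2}\bigl[ F(st,z,U(st,z)) + F(t/s,z,U(t/s,z)) \bigr] \Bigr\} g \, dV.
\end{equation*}
It remains to check that the $F$-average dominates $F(t,z,w^s)$ modulo an error of size $C(s-1)^2$. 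The key ingredients are: (i) the semi-convexity of $F$ in $(t,r)$, which gives the midpoint inequality up to a quadratic error in $(st - t/s)$ and $(U(st,z) - U(t/s,z))$, both of size $O(|s-1|)$ thanks to the Lipschitz estimate $|\partial_t U(t,z)| \leq \kappa_U/\min(t,b)$ from Theorem \ref{thm: U is local Lip in t}; (ii) the $t$-Lipschitz continuity of $F$, which controls the difference between $F$ at the midpoint $\tfrac{1}{2}(st+t/s) = t + O((s-1)^2)$ and at $t$; and (iii) the monotonicity of $F$ in $r$, after verifying $\tfrac{1}{2}[U(st,z) + U(t/s,z)] - w^s \geq -C(s-1)^2$. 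Each error term is then $O((s-1)^2)$ and can be absorbed into $C_0(s-1)^2$ for $C_0$ large enough.

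The boundary conditions are where assumption \eqref{eq: local semiconcave condition h} enters in an essential way: it says precisely that $\tau \mapsto h(e^\tau,z)$ is semi-concave in $\tau = \log t$, uniformly in $z \in \partial \Omega$. Combined with the Lipschitz bound \eqref{eq: local Lipschitz condition h} and $|h| \leq M_h$, this yields
\begin{equation*}
  \tfrac{1}{2} \bigl[ s^{-1} h(st,z) + s \, h(t/s, z) \bigr] - h(t,z) \leq C_7 (s-1)^2 \qquad (z \in \partial \Omega)
\end{equation*}
for $s$ near $1$, with $C_7$ depending on $M_h$, $\kappa_h$, $C_h$. On $\{0\} \times \Omega$ the limsup of $w^s$ is bounded by $\tfrac{1}{2}(s + s^{-1}) h_0(z) - C_0(s-1)^2$, which is $\leq h_0(z)$ once $C_0 \geq M_h$. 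Choosing $C_0$ at least $\max(C_7, M_h)$ (together with the earlier interior constraints) secures $(w^s)^* \leq h$ on $\partial_0 \Omega_S$.

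Once $w^s \leq U$ on $\Omega_S$ has been established, I would substitute $\tau = \log t$, $\sigma = \log s$, and set $V(\tau,z) := U(e^\tau,z)$. Using $|\partial_\tau V| = |t \, \partial_t U| \leq \kappa_U$ from Theorem \ref{thm: U is local Lip in t}, a routine expansion of $e^{\pm \sigma} V(\tau \pm \sigma, z)$ converts the inequality into
\begin{equation*}
  \tfrac{1}{2} \bigl[ V(\tau+\sigma, z) + V(\tau - \sigma, z) \bigr] - V(\tau, z) \leq C' \sigma^2,
\end{equation*}
where $C'$ depends only on $C_0, M_U, \kappa_U, T$. Hence $V$ is uniformly semi-concave in $\tau$, and since $\partial_\tau^2 V = t^2 \partial_t^2 U + t \partial_t U$, the Lipschitz bound $|t \, \partial_t U| \leq \kappa_U$ delivers $t^2 \partial_t^2 U \leq C_U$ in the distributional sense, with $C_U := 2C' + \kappa_U$. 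The principal technical obstacle will be the bookkeeping needed to verify that every error term produced by the mixed Monge-Amp\`ere calculation, by the semi-convexity of $F$, and by the weighted boundary comparison, is genuinely of order $(s-1)^2$ with a constant independent of $s$, so that a single $C_0$ works uniformly as $s \to 1$.
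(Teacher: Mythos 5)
Your proposal reproduces the paper's proof essentially verbatim: the same symmetric candidate $\tfrac12\bigl[s^{-1}U(st,\cdot)+sU(t/s,\cdot)\bigr]-C(s-1)^2(t+1)$, the same appeal to Lemma~\ref{lem: mixed MA} for the Monge--Amp\`ere inequality, the same use of the semi-convexity and $r$-monotonicity of $F$ plus the Lipschitz bound $t|\partial_t U|\le\kappa_U$ to absorb the interior errors, the same exploitation of \eqref{eq: local semiconcave condition h} for the boundary comparison, and the same conclusion via Proposition~\ref{pro: identity principle}. The only cosmetic deviation is the final step (extracting $t^2\partial_t^2 U\le C_U$ from $w^s\le U$ via the change of variable $\tau=\log t$ rather than by direct Taylor expansion), but this is an equivalent computation.
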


We will show that the constant $C_U$ is actually explicit, 
 \begin{equation}
 	\label{eq: constant C U}
 	C_U:= C_h  + 2M_h +8 \kappa_h +  (2\kappa_F+3) (M_U+5\kappa_U+ 1 + C_F T^2 + 16 \kappa_U^2).
 \end{equation}
 This quantitative information is important in perturbation arguments, to obtain uniform semi-concavity constants of the approximants. 
 
By the assumption on $F$, there is a constant $C_F>0$ such that for all $z\in \Omega$, the function 
\begin{equation} \label{eq: F local semi-convex}
	(t,r) \mapsto F(t,z,r) + C_F (t^2+r^2) \text{ is convex in}\  [0,T] \times [-2M_U,2M_U]. 
\end{equation}

\begin{proof} 
It follows from Theorem \ref{thm: U is subsolution} that $U \in \mathcal{S}_{h,g,F}(\Omega_T)$. Fix $0<S<T$, and $s>1/2$ close to $1$ enough such that $sS<T$. 
Set, for $(t,z) \in \Omega_S$,
$$
v^s (t,z) := \frac{s^{-1}U(st,z) + s U(s^{-1}t,z)}{2} -C (t +1)(s-1)^2,
$$
where $C>0$ is defined as 
\begin{equation}
	\label{eq: constant C concave U}
	C:= C_h +1 + 2M_h +8 \kappa_h +  2\kappa_F (M_U+4\kappa_U+ T + C_F T^2 + 16 \kappa_U^2).
\end{equation}
We are going to prove that $v^s \in \mathcal{S}_{h,g,F}(\Omega_S)$. 

\smallskip

\noindent {\it Boundary values of $v^s$.}  It follows from \eqref{eq: local semiconcave condition h} that 
for all $z\in \partial \Omega, t \in ]0,S[$,  
\begin{flalign*}
\frac{ h(st,z) +  h(s^{-1}t,z)}{2} & \leq h\left( \frac{(s+s^{-1})t}{2},z\right ) + C_h\log \left ( \frac{s+s^{-1}}{2} \right )\\	 
& \leq h\left( \frac{(s+s^{-1})t}{2},z\right )  + C_h (s-1)^2 \\
& \leq h(t,z) + (C_h+1)(s-1)^2,
\end{flalign*}
where in the last line we use \eqref{eq: local Lipschitz condition h reformulation}. We claim that for all $(t,z) \in ]0,S[ \times \partial \Omega$,   
\begin{flalign*}
	  s^{-1}h(st,z)+sh(s^{-1}t,z) \leq h(st,z) +h(s^{-1}t,z) +  (2M_h +3 \kappa_h)(s-1)^2.
	\end{flalign*}
	Indeed, write $s = 1 - \sigma$ and observe that $s^{-1} = 1 + \sigma + O (\sigma^2),$ where $\vert O (\sigma^2) \vert \leq 2 \sigma^2$ for $\vert \sigma \vert\leq  1 \slash 2$.
Thus for all $(t,z) \in ]0,S[ \times \partial \Omega$, 
\begin{flalign*}
s^{-1}h(st,z)&+sh(s^{-1}t,z)  \leq  (1 + \sigma) h (st,z) + (1-\sigma) h (s^{-1} t,z) + 2 M_h \sigma^2 \\
& \leq  h (st,z) + h (s^{-1}t,z) + \sigma (h (st,z) - h (s^{-1}t,z)) + 2 M_h \sigma^2.
\end{flalign*}
Using   \eqref{eq: local Lipschitz condition h reformulation}, we obtain  
\begin{flalign*}
s^{-1}h(st,z)+sh(s^{-1}t,z)  \leq   h (st,z) + h (s^{-1}t,z) +  (2 M_h + 4 \kappa_h) (s-1)^2,
\end{flalign*}
which proves   the claim.

Since $U^*\big |_{\partial_0 \Omega_T}\leq h$, the above estimate implies that   $(v^s)^* \leq h$ on $\partial_0 \Omega_S$. 
Using  similarly the estimate in Theorem \ref{thm: U is local Lip in t},  we obtain the following estimate which will be useful later:  for all $(t,z) \in ]0,S[ \times \bar{\Omega}$, 
\begin{eqnarray}	\label{eq: local Lipschitz U reformulation}
	 \vert (U(st,z) +U(s^{-1}t,z)) & - & (s^{-1}U(st,z)+sU(s^{-1}t,z)) \vert \\
	  & \leq & (2M_U+ 4\kappa_U)(s-1)^2. \nonumber
\end{eqnarray}

\smallskip

\noindent {\it Estimating the Monge-Amp\`ere measure of $v^s$.} 
It follows from Proposition \ref{pro: subsolution slice} that for almost all $t\in ]0,S[$, 
$$
(dd^c s^{-1}U(st,\cdot))^n \geq e^{n \log s^{-1} + \partial_{\tau} U(st,\cdot) + F(st, \cdot, U(st,\cdot))} gdV.
$$
Using Lemma \ref{lem: mixed MA} we infer
\begin{flalign*}
 (dd^c v^s(t,\cdot))^n \geq e^{a(s) + a(s^{-1})} g dV,	
\end{flalign*}
where 
$$
a(s) = \frac{1}{2} \left ( \partial_{\tau} U(st,\cdot) + F(st, \cdot, U(st,\cdot)) \right ).
$$
 By the semi-convexity assumption  \eqref{eq: F local semi-convex} on $F$, for  $\lambda \in ]0,1[$, $t_1,t_2 \in [0,T]$, $r_1,r_2 \in [-2M_U, 2M_U]$ we have 
\begin{flalign*}
	F(\lambda(t_1,r_1) + (1-\lambda)(t_2,r_2)) & \leq \lambda F(t_1,r_1) + (1-\lambda) F(t_2,r_2) \\
	&+ C_F \lambda(1-\lambda) \left ( (t_1-t_2)^2 +(r_1-r_2)^2 \right ). 
\end{flalign*}
Applying this for $(t,r) \mapsto F(t,z,r)$, $z\in \Omega$,  $\lambda = 1/2$, $t_1=st$, $t_2=s^{-1}t$, $r_1= U(st,z)$, $r_2=U(s^{-1}t,z)$,  we obtain
\begin{flalign*}
	& \; \;  \frac{1}{2} F(st, z, U(st,z)) + \frac{1}{2} F(s^{-1}t,z, U(s^{-1}t,z)) \geq  \\ 
	 & F\left ( \frac{(s+s^{-1})t}{2}, z,  (U(st,z)+U(s^{-1}t,z))/2\right )     \\
	 & - \frac{C_F}{4} \left (  t^2(s-s^{-1})^2 + (U(st,\cdot) -U(s^{-1}t,\cdot))^2 \right ). 
\end{flalign*} 
Using \eqref{eq: Lip F}, \eqref{eq: local Lipschitz U reformulation}, and the fact that $F$ is increasing in $r$, 
we thus get
\begin{flalign*}
	& \; \;  \frac{1}{2} F(st, z, U(st,z)) + \frac{1}{2} F(s^{-1}t,z, U(s^{-1}t,z)) \geq  \\ 
	 & F (t, z,  v^s(t,z) )   - \kappa_F (M_U+ 2\kappa_U+t) (s-1)^2 \\
	 & - \frac{C_F}{4} \left (  t^2(s-s^{-1})^2 + (U(st,\cdot) -U(s^{-1}t,\cdot))^2 \right )\\
	 &\geq F(t,z,v^s(t,z)) - (\kappa_F (M_U+2\kappa_U+ T)  + 2 C_F( T^2 + 2 \kappa_U^2)) (s-1)^2.  
\end{flalign*}
The choice of $C$ \eqref{eq: constant C concave U} ensures that
\begin{flalign*}
	a(s) + a(s^{-1}) & \geq  \partial_t v^s(t,\cdot) +  F(t,\cdot, v^s(t,\cdot)). 
\end{flalign*}

Altogether we conclude that $v^s\in \mathcal{S}_{h,g,F}(\Omega_S)$. Using Proposition \ref{pro: identity principle} we 
infer $v^s \leq U$ in $\Omega_S$. From this and \eqref{eq: local Lipschitz U reformulation} we obtain 
that for all $(t,z)\in \Omega_S$,
$$
\frac{U(st,z) + U(s^{-1}t,z)}{2} -U(t,z) \leq (C+2M_U +8\kappa_U)(s-1)^2.
$$
An elementary computation then yields (letting $s\to 1$) that $\forall (t,z) \in \Omega_S$,
$$
t^2 \partial^2_t U(t,z) \leq (9\kappa_U+2M_U +C) .
$$
We finally let $S\to T$ to conclude the proof. 
 \end{proof}

\section{Space regularity of parabolic envelopes}

We establish the first steps of a balayage process by studying solutions constructed in small balls, and
establishing space regularity of $U_{h,g,F, \mathbb{B}_T}$ :
assuming adequate regularity conditions on the data we   prove
that $U_{h,g,F, \mathbb{B}_T}$ is locally $\mathcal{C}^{1,1}$ in $z\in \mathbb{B}$. 
 
 We assume that $T<+\infty$, and $h$ satisfies \eqref{eq: local Lipschitz condition h} 
 and \eqref{eq: local semiconcave condition h}.

\subsection{Continuity in the space variable}\label{subsect: U is Lipschitz z}

Let $(Y,d_Y)$ be a metric space.
The  uniform partial modulus of continuity in the space variable $y \in Y$ of a function
$u : [0,T[ \times Y \longrightarrow \R$ is
$$
\eta (u,\delta) := \sup \{\vert u (t,y_1) - u (t,y_2)\vert \setdef  t\in [0,T[, \ y_1, y_2 \in Y,\  d_Y (y_1,y_2) \leq \delta\}.
$$

In particular, the uniform partial  modulus of continuity of $F$ is defined as above with $Y:=\Omega \times \mathbb{R}$. 

\begin{thm} \label{thm: U is Lip z}  
Assume the following conditions :
\begin{itemize}
	\item $G:= \log g$ is continuous in $\Omega$ ; 
	\item there exists  $u\in \mathcal{S}_{h,g,F}(\Omega_T)\cap \mathcal{C}([0,T[\times \bar{\Omega})$,  such that $u=h$ on $\partial_0 \Omega_T$.
\end{itemize}

 Then  $U := U_{h,g,F,\Omega_T}$ is continuous on $[0,T[ \times \bar {\Omega}$ and  
\begin{equation}
	\label{eq: modulus of continuity of U}
	 \eta (U,\delta) \leq  \eta (u,\delta) + \eta (H,\delta) +  (\eta (F,\delta) + \eta(G,\delta)) T.
\end{equation}
 \end{thm}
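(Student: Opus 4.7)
The plan is a translation plus balayage argument modeled on Bedford--Taylor's treatment of the elliptic Dirichlet problem. Fix $\zeta \in \C^n$ with $|\zeta| = \delta$, set $\Omega_\zeta := \{z \in \Omega \setdef z+\zeta \in \Omega\}$, and choose correction constants $A := \eta(u,\delta) + \eta(H,\delta)$ and $B := \eta(F,\delta) + \eta(G,\delta)$, where $H$ is the Dirichlet harmonic extension of Section \ref{subsect: Dirichlet boundary superbarrier}. The first step is to show that the shifted potential
\[
V_\zeta(t,z) := U(t,z+\zeta) - A - B t, \qquad (t,z) \in\, ]0,T[ \times \Omega_\zeta,
\]
is a pluripotential subsolution to \eqref{eq: CMAF} on $]0,T[ \times \Omega_\zeta$. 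Since $U\in \mathcal S_{h,g,F}(\Omega_T)$ by Theorem~\ref{thm: U is subsolution},
\[
(dd^c_z V_\zeta(t,\cdot))^n = (dd^c U(t,\cdot+\zeta))^n \geq e^{\partial_t U(t,z+\zeta) + F(t,z+\zeta,U(t,z+\zeta))} g(z+\zeta)\,dV;
\]
the monotonicity of $F$ in $r$ (since $V_\zeta \leq U(\cdot,\cdot+\zeta)$) together with the moduli $\eta(F,\delta)$ and $\eta(G,\delta)$ then give the required inequality for $V_\zeta$ at $(t,z)$, the shift $-Bt$ in the time derivative exactly absorbing the deficit thanks to $B = \eta(F,\delta)+\eta(G,\delta)$.

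The next step is to glue $V_\zeta$ with a shift of $u$ by setting
\[
\phi(t,z) := \begin{cases}
\max\bigl( V_\zeta(t,z),\, u(t,z) - B t \bigr), & z \in \Omega_\zeta,\\
u(t,z) - B t, & z \in \Omega \setminus \Omega_\zeta.
\end{cases}
\]
The critical verification is that as $z' \to z$ with $z+\zeta \in \partial\Omega$ the limit of $V_\zeta(t,z')$ equals $h(t,z+\zeta) - A - Bt = u(t,z+\zeta) - A - Bt \leq u(t,z) - Bt$, using the Dirichlet boundary behavior of $U$ (Theorem~\ref{thm: U is subsolution}), the identification $u=h$ on $\partial_0\Omega_T$, and $A \geq \eta(u,\delta)$. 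This ensures that $\phi$ is continuous across the interface $\partial\Omega_\zeta \cap \Omega$, lies in $\mathcal P(\Omega_T) \cap L^\infty(\Omega_T)$, and---by Lemma~\ref{lem: time derivarive of max} together with the elementary observation that $u - Bt$ is itself a subsolution (using $F$ increasing in $r$)---is a pluripotential subsolution on the whole of $\Omega_T$.

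The boundary checks $\phi^* \leq h$ on $\partial_0 \Omega_T$ then proceed as follows: at $(t,z_0) \in [0,T[ \times \partial \Omega$ the $u-Bt$ part limits to $h(t,z_0) - Bt \leq h(t,z_0)$, while the $V_\zeta$ part satisfies $\limsup V_\zeta \leq H(t,z_0+\zeta) - A - Bt \leq h(t,z_0) + \eta(H,\delta) - A \leq h(t,z_0)$, using $U \leq H$ in $\Omega_T$ and $A \geq \eta(H,\delta)$; at $(0, z_0) \in \{0\}\times\Omega$ the Cauchy boundary behavior of $U$ (Theorem~\ref{thm: U is subsolution}) and continuity of $h_0 = u(0,\cdot)$ give $\limsup V_\zeta \leq h_0(z_0+\zeta) - A \leq h_0(z_0)$. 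Hence $\phi \in \mathcal S_{h,g,F}(\Omega_T)$, so by maximality $\phi \leq U$, whence $U(t,z+\zeta) - U(t,z) \leq A + BT$ on $\Omega_\zeta$. Swapping $\zeta \mapsto -\zeta$ yields the two-sided estimate \eqref{eq: modulus of continuity of U}. Continuity of $U$ on $[0,T[ \times \bar\Omega$ then follows by combining this uniform spatial modulus with the time Lipschitz estimate of Theorem~\ref{thm: U is local Lip in t}, the Dirichlet boundary behavior of Theorem~\ref{thm: U is subsolution}, and the uniform convergence at $t=0$ from Lemma~\ref{lem: uniform convergence}.

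The main obstacle is the gluing at the inner interface: one has to verify that the translate $V_\zeta$ is dominated by $u - Bt$ precisely where $z+\zeta$ hits $\partial\Omega$, which is exactly what the continuity assumption on $u$ (and the modulus $\eta(u,\delta)$) provides. The precise constants $A$ and $B$ are engineered to make the interior subsolution inequality, the inner-interface inequality, and both outer boundary inequalities (Dirichlet and Cauchy) hold simultaneously with the correction $A + BT$ matching the target estimate.
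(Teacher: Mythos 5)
Your proof is correct and follows essentially the same Walsh perturbation strategy as the paper. The only substantive difference is cosmetic: you glue the shifted-and-corrected translate $V_\zeta$ against the sub-barrier $u - Bt$, with the constants $A$ and $B$ baked in from the start, whereas the paper glues $U(\cdot,\cdot+\xi) - \eta(u,\delta)$ against $U$ itself on the outer region and subtracts the boundary and time corrections $\eta(H,\delta)$ and $b(\delta)t$ only at the end; the interior subsolution computation, the use of $H$ as the Dirichlet super-barrier, the inner-interface verification via the modulus of $u$, and the final estimate $\eta(U,\delta)\leq \eta(u,\delta)+\eta(H,\delta)+(\eta(F,\delta)+\eta(G,\delta))T$ are all the same.
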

 Recall that $H_t$ is the unique harmonic function in $\Omega$ with $H_t=h_t$ on $\partial\Omega$. 
 
\smallskip 
 
 A continuous subsolution which agrees with $h$ on $\partial_0 \Omega$ is called a {\it subbarrier}.
Such  a subbarrier (for the whole boundary $\partial_0 \Omega_T$ as required in the Theorem)
 exists when $h$ is uniformly Lipschitz in $[0,T[$ and continuous on $\partial_0 \Omega_T$ by Lemma \ref{lem: sub-barriers Dirichlet}, Lemma \ref{lem: sub-barriers Cauchy} and Lemma \ref{lem: super-barrier Dirichlet}.

\begin{proof} 
It follows from  Theorem \ref{thm: boundary value of U} that $U$ continuously extends to the boundary $\partial_0 \Omega_T$ so that $U=h$ on $\partial_0  \Omega_T$. We use  the perturbation method of Walsh \cite{Walsh_1969Envelopes} to extend this property to the interior and prove that $U$ is continuous on $[0,T[\times \bar{\Omega}$.

Fix  $\delta > 0$ small enough.
Since $u  = h = U$ in $[0,T[ \times \partial \Omega$, 
we infer that for all $t\in [0,T[$, $z \in {\Omega}, \zeta \in \partial \Omega$ with  $\vert z -  \zeta\vert \leq \delta$,
\begin{equation}
	\label{eq: Lip ineq of U}
 U (t,\zeta) = u (t,\zeta) \leq u (t,z) + \eta (u,\delta) \leq U (t,z) + \eta (u,\delta).
\end{equation}
 Fix $\xi \in \C^n$   such that $\vert \xi\vert \leq \delta$ and set $\Omega_\xi := \Omega - \xi$ and  consider
\[
W(t,z) := 
\begin{cases}
	U  (t,z), \, \,   \text{ if } t \in [0,T[, z \in \Omega \setminus \Omega_\xi,\\
	 \max \{ U (t,z), U (t , z + \xi) -  \eta (u,\delta)\},   \text{ if }  t \in [0,T[, z \in \Omega \cap \Omega_\xi.
\end{cases} 
\]

By \eqref{eq: Lip ineq of U} the two definitions coincide when $(t ,z) \in [0, T[ \times \Omega$ and $z + \xi \in \partial \Omega$. Therefore  $W\in \mathcal{P}(\Omega_T)$. We are going to prove that $W-O(\delta)(t+1) \in \mathcal{S}_{h,g,F}(\Omega_T)$ for some small error term $O(\delta)$.
 
 \smallskip
 
 \noindent {\it The subsolution property.} 
 By Lemma \ref{lem: time derivarive of max}, for a.e. $(t,z) \in [0, T[ \times (\Omega \cap \Omega_\xi)$,
 \begin{eqnarray*}
 \partial_t W (t,z)  = {\mathbbm{1}}_{\{U  (t,z)<  \tilde U (t,z)\}} \partial_t \tilde U (t, z) 
  +  {\mathbbm{ 1}}_{\{U  (t,z)\geq \tilde U (t,z) \}} \partial _t U (t,z),
 \end{eqnarray*}
 where
 $$
 \tilde U (t,z) :=  U (t , z + \xi) - \eta(u,\delta) \, \, \, \text{in} \, \, \Omega \cap \Omega_\xi.
 $$
Moreover
 \begin{eqnarray*}
 && e^{\partial _t \tilde U (t, z) + F (t, z, \tilde U (t,z)) + G (z)}  d t \wedge d V (z) \\
  &&\leq  e^{\partial_t U (t, z + \xi) + F (t, z + \xi, U (t,z + \xi)) + \eta (F,\delta) + G (z + \xi) + \eta(G,\delta)} d t \wedge d V (z) \\
  && \leq e^{\eta (F,\delta) + \eta(G,\delta)} d t \wedge (dd^c \tilde U)^n, 
\end{eqnarray*}
in the weak sense on $]0,T[ \times (\Omega \cap \Omega_\xi)$.
We thus obtain  
$$
e^{\partial_t W (t,z) +  F (t, z, W (t,z)) + G(z)}  d t \wedge d V (z)   \leq e^{b(\delta)} d t \wedge (dd^c W)^n,
$$ 
i.e. the function defined on $[0,T[ \times \Omega$ by  
$
W_1 (t, z) :=  W (t,z) - b(\delta) t,
$
is a subsolution to \eqref{eq: CMAF} in $]0,T[ \times \Omega$. 
Here $b(\delta):=\eta (F,\delta) + \eta(G,\delta)$.

\smallskip

\noindent {\it Estimating boundary values.}    It follows from Theorem \ref{thm: boundary value of U} that  
\[
\lim_{(t,z')\to (0, z)} U(t,z') =  h_0(z) , \ z \in \Omega.
\] 
By definition of $W$ and the assumption that $h_0=u$ on $\{0\}\times \Omega$, we obtain 
$$
\lim_{(t,z')\to (0, z)} W(t,z') \leq h_0(z), \ \text{for all}\  z\in \Omega.
$$
Fix  $(\tau,\zeta) \in [0,T[ \times \partial  \Omega$. 

Since $U \leq H$ in $\Omega_T$ and $U = h$ in $[0,T[\times \partial \Omega$, we infer
\begin{eqnarray*}
	\lim_{]0,T[ \times (\Omega \cap \Omega_{\xi}) \ni (t,z) \to (\tau,\zeta)} W(t,z) \leq \max(h(\tau,\zeta), H (\tau,\zeta+\xi)) \leq h(\tau,\zeta) +  \eta (H,\delta),
\end{eqnarray*}
and 
$$
\lim_{ ]0,T[ \times (\Omega \setminus \Omega_{\xi}) \ni (t, z) \to (\tau,\zeta)} W(t,z) = \lim_{ ]0,T[ \times(\Omega \setminus \Omega_{\xi}) \ni (t,z) \to (\tau,\zeta)} U(t,\zeta)=h(\tau,\zeta).
$$

From the computations above we conclude that  $W_1- \eta (H,\delta) \in \mathcal{S}_{h,g,F}(\Omega_T)$. Thus $W_1 - \eta (H,\delta) \leq U$ in $\Omega_T$, hence 
 \begin{equation*} 
 U (t, z + \xi)  -\eta(u,\delta)- \eta (H,\delta) - (\eta (F,\delta) + \eta (G,\delta)) t  \leq U (t,z),
 \end{equation*}
 for $ (t,z) \in [0,T[ \times (\Omega \cap \Omega_\xi)$  and  $\xi \in \C^n$ with   $\vert \xi\vert \leq \delta$. This gives \eqref{eq: modulus of continuity of U}.
 
 \smallskip
 
 The continuity of $U$ on $[0,T[\times \bar{\Omega}$ follows from Theorem \ref{thm: U is subsolution}, the continuity of $h_0$, the continuity of each slice $U(t,\cdot)$ on $\bar{\Omega}$ and the fact that $U$ is locally uniformly Lipschitz in $t\in ]0,T[$. 
 \end{proof}
 
 \begin{cor} \label{cor: U is Lip z}
 Assume that
 \begin{enumerate}
 \item  $G:=\log g$ is Lipschitz in $\Omega$;  \label{cor U Lip z G}
 \item  the family   $\{h(\cdot, z) \setdef  z\in \partial \Omega\}$ is  uniformly Lipschitz in $[0,T[$;  \label{cor U Lip z h t}
 \item  $h_0$ is Lipschitz on $\bar{\Omega}$; \label{cor U Lip z h 0}
 \item the family $\{h(t,\cdot) \setdef t\in ]0,T[\}$ is uniformly $\mathcal{C}^{1,1}$ on $\partial \Omega$; \label{cor U Lip z h C11}
 \item  the function $F$ is Lipschitz on $[0,T[ \times \Omega \times J$, for each $J\Subset \mathbb{R}$. \label{cor U Lip z F}
 \end{enumerate}
 
  Then the family $ \{ U(t,\cdot) \setdef  t \in [0,T[\}$ is uniformly Lipschitz on $\bar \Omega$. 
 \end{cor}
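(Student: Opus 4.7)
The plan is to reduce everything to the modulus-of-continuity estimate \eqref{eq: modulus of continuity of U} from Theorem \ref{thm: U is Lip z}. It suffices to show that each of the four quantities $\eta(u,\delta)$, $\eta(H,\delta)$, $\eta(F,\delta)$, $\eta(G,\delta)$ appearing on the right-hand side is $O(\delta)$, since this yields $\eta(U,\delta) = O(\delta)$ and hence uniform Lipschitz regularity of the family $\{U(t,\cdot)\}$ on $\bar\Omega$.

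Assumptions \eqref{cor U Lip z G} and \eqref{cor U Lip z F} immediately give $\eta(G,\delta), \eta(F,\delta) = O(\delta)$. For the harmonic extension $H$: since the boundary datum $h(t,\cdot)|_{\partial\Omega}$ is uniformly $\mathcal{C}^{1,1}$ in $z$ by \eqref{cor U Lip z h C11}, classical boundary regularity for the Laplacian on the smooth strictly pseudoconvex domain $\Omega$ gives $H(t,\cdot)$ uniformly Lipschitz (in fact $\mathcal{C}^{1,\alpha}$) on $\bar\Omega$, which takes care of $\eta(H,\delta)$.

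The main work is to produce a continuous subbarrier $u \in \mathcal{S}_{h,g,F}(\Omega_T) \cap \mathcal{C}([0,T[\times\bar\Omega)$ with $u = h$ on $\partial_0\Omega_T$ that is uniformly Lipschitz in $z$. I would take $u := \max(u_1, u_2)$, where $u_1 := \varphi_t + h_0 + A\rho$ is the Dirichlet subbarrier of Lemma \ref{lem: sub-barriers Dirichlet} and $u_2 := h_0 + t(\rho - C) + n[(t/T)\log(t/T) - t/T]$ is the Cauchy subbarrier of Lemma \ref{lem: sub-barriers Cauchy}, for sufficiently large constants $A, C > 0$. Both lie in $\mathcal{S}_{h,g,F}(\Omega_T)$, and a direct comparison shows $\max(u_1,u_2) = h$ on $\partial_0\Omega_T$: on $[0,T[\times\partial\Omega$ one has $u_1 \equiv h$ while $u_2 \leq h$, and on $\{0\}\times\Omega$ one has $u_2 \equiv h_0$ while $u_1 \leq h_0$. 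The uniform-in-$t$ Lipschitz-in-$z$ regularity of $u$ then reduces to three ingredients: Lipschitz regularity of $h_0$ on $\bar\Omega$ by hypothesis \eqref{cor U Lip z h 0}; Lipschitz regularity of $\rho$ defined by \eqref{eq: rho}, which follows from $g = e^G$ being Lipschitz and bounded above and away from zero on $\bar\Omega$ via classical boundary regularity for the complex Monge-Amp\`ere equation on smooth strictly pseudoconvex domains \cite{Bedford_Taylor_1976Dirichlet}; and, most delicately, uniform-in-$t$ Lipschitz regularity of $\varphi_t$, which by Bedford-Taylor's regularity theorem \cite{Bedford_Taylor_1976Dirichlet} follows from its boundary datum $h_t - h_0$ on $\partial\Omega$ being uniformly Lipschitz (as the difference of a uniformly $\mathcal{C}^{1,1}$ function on $\partial\Omega$ and a Lipschitz function on $\bar\Omega$).

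Combining these estimates yields $\eta(u,\delta) = O(\delta)$, and then $\eta(U,\delta) = O(\delta)$ via \eqref{eq: modulus of continuity of U}, proving the corollary. The principal obstacle is the uniform-in-$t$ Lipschitz control of $\varphi_t$: the key point is that Bedford-Taylor's regularity constant depends only on the boundary Lipschitz norm of the datum and on the fixed geometry of $\Omega$, both of which are controlled uniformly in $t$ under hypotheses \eqref{cor U Lip z h 0} and \eqref{cor U Lip z h C11}.
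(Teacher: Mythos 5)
Your proposal is correct and follows essentially the same route as the paper: reduce to the modulus-of-continuity estimate \eqref{eq: modulus of continuity of U} of Theorem \ref{thm: U is Lip z}, then supply a Lipschitz subbarrier by combining the Dirichlet subbarrier of Lemma \ref{lem: sub-barriers Dirichlet} with the Cauchy subbarrier of Lemma \ref{lem: sub-barriers Cauchy}, and control the $\eta(H,\delta)$, $\eta(F,\delta)$, $\eta(G,\delta)$ terms directly from the hypotheses. The paper's one-line proof cites \cite[Theorem 5.2]{GZbook} for the Lipschitz regularity of the elliptic solutions $\varphi_t$ and $\rho$, whereas you go back to Bedford--Taylor \cite{Bedford_Taylor_1976Dirichlet}; these are the same classical result. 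Your explicit construction $u=\max(u_1,u_2)$, your observation that hypothesis~(4) (not merely Lipschitz boundary data) is needed to make the harmonic extension $H$ uniformly Lipschitz, and your remark that the Lipschitz constant of $\varphi_t$ is governed by the Lipschitz norm of $h_t-h_0$ on $\partial\Omega$ (hence uniform in $t$) are exactly the points the paper leaves to the reader.
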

 
 \begin{proof}
 	It follows from Lemma \ref{lem: sub-barriers Dirichlet}, Lemma \ref{lem: sub-barriers Cauchy} 
 	and assumption \eqref{cor U Lip z h t} that there exists $u\in \mathcal{S}_{g,h,F}(\Omega_T) \cap \mathcal{C}([0,T[\times \bar{\Omega})$ with  $u\big |_{\partial_0 \Omega_T} =h$. \cite[Theorem 5.2]{GZbook} and  \eqref{cor U Lip z h 0}, \eqref{cor U Lip z h C11} ensure that the family $\{u(t,\cdot) \setdef t\in [0,T[\}$ is uniformly Lipschitz on $\bar{\Omega}$.  We now invoke Theorem \ref{thm: U is Lip z} to finish the proof.
 \end{proof}

\subsection{ $\mathcal{C}^{1,1}$-regularity in the space variable} \label{subsect: U is locally C1,1}

We prove the following regularity result:

\begin{theo}\label{thm: U is C1,1}
Assume $ \Omega = \B $ is the  unit ball, $T<+\infty$,  and
\begin{enumerate}
 \item  $G:=\log g \in C^{1,1} (\bar{\B})$;
 \item  $h$ satisfies the assumptions of Corollary \ref{cor: U is Lip z};
 \item  $F$ is Lipschitz and  semi-convex in $[0,T[ \times \bar{\B} \times J$, for each $J\Subset \mathbb{R}$. \label{item F  U C11}
\end{enumerate}
	 
 Then the  envelope $U_{h,g,F,\B_T}$ is locally uniformly  ${\mathcal C}^{1,1}$   in $z\in \B$. 

\end{theo}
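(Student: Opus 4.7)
The proof adapts Bedford--Taylor's classical trick for the elliptic Dirichlet problem on the ball: one produces a Kiselman-type symmetric perturbation of $U$ via the holomorphic automorphisms of $\B$, verifies it is a subsolution with correct boundary values, and reads off a semi-concavity estimate in $z$ that upgrades to $\mathcal C^{1,1}$ using plurisubharmonicity.

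For $a\in\C^n$ with $|a|$ small, let $\phi_a$ denote the standard holomorphic automorphism of $\B$ sending $0$ to $a$; it preserves $\partial\B$, depends smoothly on $a$, and satisfies the symmetric expansions $\phi_a(z)+\phi_{-a}(z)=2z+O(|a|^2)$ and $\log|\det\phi_a'(z)\cdot\det\phi_{-a}'(z)|=O(|a|^2)$ locally on $\B$. The plan is to consider the competitor
$$v(t,z):=\tfrac12\bigl(U(t,\phi_a(z))+U(t,\phi_{-a}(z))\bigr)-C|a|^2(1+t),$$
with $C>0$ to be chosen depending on the $\mathcal C^{1,1}$-norms of $G:=\log g$, of $h(t,\cdot)|_{\partial\B}$ and $h_0$, on the Lipschitz and semi-convexity constants of $F$, and on the Lipschitz constant of $U$ in $z$ furnished by Corollary \ref{cor: U is Lip z}. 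Each slice $v_t$ is plurisubharmonic and, by Lemma \ref{lem: mixed MA} applied with $\lambda=1/2$ to the pullbacks $U\circ\phi_{\pm a}$, one obtains a lower bound for $(dd^c v_t)^n$ by the exponential of the arithmetic mean of the two exponents. Using the joint semi-convexity of $F$ in $(t,z,r)$, the $\mathcal C^{1,1}$ regularity of $G$, and the Lipschitz control of $U$ to bound $|u_a-u_{-a}|=O(|a|)$, all first-order errors in $a$ cancel by symmetry, leaving only $O(|a|^2)$ residuals that the quadratic subtraction $-C|a|^2(1+t)$ absorbs for $C$ large enough; hence $v$ is a pluripotential subsolution in $\B_T$.

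For boundary values, $\zeta\in\partial\B$ implies $\phi_{\pm a}(\zeta)\in\partial\B$ and $U(t,\phi_{\pm a}(\zeta))=h(t,\phi_{\pm a}(\zeta))$ by Theorem \ref{thm: U is subsolution}; the $\mathcal C^{1,1}$ estimate on $h(t,\cdot)$ gives $\tfrac12(h(t,\phi_a(\zeta))+h(t,\phi_{-a}(\zeta)))\le h(t,\zeta)+O(|a|^2)$, absorbed by the correction, and the bound at $t=0$ uses $\mathcal C^{1,1}$ regularity of $h_0$. Thus $v\in\mathcal S_{h,g,F}(\B_T)$ and maximality of $U$ yields
$$U(t,\phi_a(z))+U(t,\phi_{-a}(z))-2U(t,z)\le 2C|a|^2(1+t).$$
Fix $K\Sub\B$. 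For $z\in K$ and small $\xi$, the inverse function theorem solves $\phi_a(z)=z+\xi$ with $|a|\le C_K|\xi|$, and the symmetric expansion gives $\phi_{-a}(z)=z-\xi+O(|\xi|^2)$; Lipschitz continuity of $U$ in $z$ converts the preceding inequality into the uniform semi-concavity estimate
$$U(t,z+\xi)+U(t,z-\xi)-2U(t,z)\le C_K'|\xi|^2,\qquad z\in K,\ t\in[0,T[.$$
Combined with the plurisubharmonicity of $U(t,\cdot)$, which forces $\partial_{x_j}^2 U+\partial_{y_j}^2 U\ge 0$, the two-sided bound on each real second derivative (upper by semi-concavity, lower by psh and semi-concavity in the complementary real coordinate) yields $U(t,\cdot)\in W^{2,\infty}_{\loc}(\B)=\mathcal C^{1,1}_{\loc}(\B)$ with bounds uniform in $t$.

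The principal obstacle is the subsolution verification: the composition with $\phi_{\pm a}$ introduces first-order errors in $a$ from the shift inside $F$, from the shift inside $G$, and from the Jacobian $\log|\det\phi_{\pm a}'|$, and the whole method succeeds only because the symmetric pairing $\tfrac12(\cdot_a+\cdot_{-a})$ cancels them all up to $O(|a|^2)$, relying precisely on the joint semi-convexity of $F$ and the $\mathcal C^{1,1}$ regularity of $G$, $h(t,\cdot)$, and $h_0$ assumed in the theorem; any weakening of these would produce an irreducible $O(|a|)$ term the quadratic correction cannot absorb.
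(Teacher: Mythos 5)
Your proof is correct and follows essentially the same route as the paper's: the same Kiselman-type competitor $V_a = \tfrac12(U\circ\mathcal T_a + U\circ\mathcal T_{-a}) - C|a|^2(t+1)$, verified to be a subsolution via Lemma~\ref{lem: mixed MA}, the symmetric cancellation of $O(|a|)$ Jacobian and shift errors, and the semi-convexity of $F$, followed by the standard semi-concavity-plus-plurisubharmonicity argument to upgrade to $W^{2,\infty}_{\loc}$ (the step the paper delegates to \cite[p.~146--147]{GZbook}). The only thing worth flagging is that you correctly observe the Cauchy-slice boundary estimate at $\{0\}\times\B$ genuinely requires $h_0\in\mathcal C^{1,1}(\bar\B)$ and not merely the Lipschitz bound that item~(3) of Corollary~\ref{cor: U is Lip z} imports; the paper's statement is silent on this, but the stronger hypothesis is present wherever the theorem is invoked (e.g.\ Theorem~\ref{thm: U is solution ball reg}).
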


By scaling and translating, the result still holds for any ball $B(z_0,r) \Subset \mathbb{C}^n$. 
In the proof below we use $C$ to denote various uniform constants which may be different from place to place.

\begin{proof}
The proof   is a parabolic analogue of   
\cite[Theorem 6.7]{Bedford_Taylor_1976Dirichlet}.  We follow closely the presentation of \cite[Theorem 5.3.1]{GZbook}. 
Recall from Corollary  \ref{cor: U is Lip z} that the family $\{U (t,\cdot) \setdef  t \in [0,T[\}$ is uniformly Lipschitz on $\bar{\B}$.

\smallskip
 
\noindent {\it Automorphisms of the ball $\B$.}
 For $a \in \B$, we set
$$
 \mathcal T_a (z) = \frac{P_a(z) -a + \sqrt{1-| a |^2 }(z-P_a(z))}{1-\langle z, a \rangle } \;\;\; ;\;\; P_a(z) = \frac{\langle z,a \rangle}{| a |^2} a 
 $$
where $ \langle \cdot, \cdot  \rangle $ denote the Hermitian product in  $\C^n$. 
It is well known (see \cite[Lemma 4.3.1]{Klimek_1991_Pluripotential}) that
$ \mathcal T_a $ is a holomorphic automorphism of the unit ball  
such that $\mathcal T_a (a) = 0$ and $\mathcal{T}_a(\partial \mathbb{B}) = \partial \mathbb{B}$. 
Note that $\mathcal T_0$ is the identity.
We set
$$
  \xi = \xi (a,z) := a - \langle z,a \rangle z.
$$
Observe that $\xi (- a,z) = - \xi (a,z)$. If   $\vert a\vert \leq 1 \slash 2$ then
$$
 \mathcal T_a (z)= z-\xi + O(\vert a \vert^2) ,
 $$
  where $ O(\vert a \vert^2) \leq C_0 \vert a\vert^2$, with $C_0$ a numerical constant independent of $z \in \B$ when $\vert a \vert \leq 1 \slash 2$.
Thus $\mathcal T_{\pm a}$ is the translation by $\mp \xi$ up  to small
second order terms, when $\vert a\vert$ is small enough.

We set, for $(t,z) \in \B_T$,
$$
 V_a (t,z) := \frac{1}{2} (U (t,\mathcal T_a  (z)) +  U(t,T_{- a} (z)). 
$$
We are going to prove that, for a uniform constant  $C>0$, the function $V_a-C|a|^2(t+1)$ belongs to $\mathcal{S}_{h,g,F}(\B_T)$. We proceed in two steps. 

\smallskip
 
\noindent {\it Step 1:  Boundary values of $V_a$}.
%
If $q$ is $\mathcal{C}^{1,1}(\bar{\B})$ then, as in \cite[Page 145]{GZbook}, 
\begin{equation} \label{eq: C11}
\left | q  (\mathcal T_a  (z)) + q (\mathcal T_{- a} (z))- 2 q (z) \right | \leq  2 C (q) \vert a\vert^2,
\end{equation}
where $C (q) > 0$ depends on the uniform $\mathcal{C}^{1,1}$-norm of $q$ on $\bar{\B}$. 

Since the family $\{h(t,\cdot)\setdef t\in [0,T[\}$ is uniformly Lipschitz in $\partial \B$,
applying  (\ref{eq: C11}) yields
\begin{equation*}
 h (t,\mathcal T_a  (z)) + h(t,\mathcal T_{-a}  (z)) \leq 2 h (t,z) + 2 C (h) | a |^2, 
\end{equation*}
for $z\in \partial \B$,  $|a|$ small enough, 
where $C (h) > 0$ depends on the uniform $\mathcal{C}^{1,1}$-bound of $h (t,\cdot)$ in a neighborhood of $\partial \B$. 
We infer, for all $(t,\zeta) \in \partial_0 \B_T$,
$$
 V_a (t,\zeta) \leq  h (t,\zeta) + C(h) | a |^2.
$$

\smallskip

\noindent {\it Step 2: Estimating the Monge-Amp\`ere measure of $V_a$.} 
Since $U$ is a subsolution to \eqref{eq: CMAF} a direct computation  shows that 
\begin{flalign*}
	(dd^c & U\circ \mathcal{T}_a)^n = |\det \mathcal{T}_a'|^2 (dd^c U)^n \circ \mathcal{T}_a\\
	&\geq  |\det \mathcal{T}_a'|^2 \exp \left( \partial_t U(t,\mathcal{T}_a(z)) + F(t,\mathcal{T}_a(z),U(t,\mathcal{T}_a(z)) +G(\mathcal{T}_a(z))  \right).  
\end{flalign*}
Since the function $(a,z) \mapsto \theta(0,z):= \log |\det \mathcal{T}_a'(z)|^2 + \log |\det \mathcal{T}_{-a}'(z)|^2$ is smooth in $\B_{1/2}\times \bar{\B}$ and  $\theta(0,z)=0$, the Taylor expansion yields
$$
\theta(a,z)+ \theta(-a,z) = O(|a|^2). 
$$ 

The assumption \eqref{item F U C11} provides us with a uniform constant $C$ such that 
\begin{flalign*}
	& \frac{1}{2}\left \{ F(t,\mathcal T_a(z),U(t, \mathcal{T}_{a}(z)) +  F(t, \Tc_{-a}(z), U(t,\Tc_{-a}(z)) \right \} \\
	& \geq F\left (t, \frac{\Tc_a(z)+ \Tc_{-a}(z)}{2}, V_a(t,z)\right ) \\
	& - C (\|\Tc_a(z)-\Tc_{-a}(z)\|^2+  (U(t,\Tc_{a}(z)-U(t,\Tc_{-a}(z))^2)\\
	& \geq F(t,z,V_a(t,z)) - C |a|^2,
\end{flalign*}
where in the last inequality we have used $\Tc_a(z)+\Tc_{-a}(z)-2z = O(|a|^2)$, $\Tc_a(z)-\Tc_{-a}(z)=O(|a|)$, and the Lipschitz regularity (in $z\in \bar{\B}$) of $U$.
Using this, and applying Lemma \ref{lem: mixed MA} and the uniform estimate \eqref{eq: C11} to the function $G$ we obtain 
\begin{eqnarray*}
 (dd^c V_a (t, \cdot))^n
\geq  \exp \left \{ \partial_t V_a + F(t,z,V_a(t,z)) +G(z) -C |a|^2 \right \}.
\end{eqnarray*}
By the computations above we conclude that the function 
 $$ 
 \B_T \ni (t,z) \mapsto W_a (t,z) =  V_a (t,z) - C|a|^2(t+1)
 $$
 belongs to $\mathcal{S}_{h,g,F}(\B_T)$. 
Therefore, for all $(t,z)\in \B_T$,
 $$ 
  V_a (t,z) - (T+1) C | a |^2  \leq   U (t,z). 
 $$

\medskip

%
%
%
%

From this estimate, we proceed as in \cite[page 146-147]{GZbook} 
to prove that the second order partial derivatives (in $z$)  of $U$ are locally bounded in $\B$.
\end{proof}

We  now show  that $U$ admits a Taylor expansion up to order $(1,2)$ :
 
 \begin{lem} \label{lem: Taylor expansion 1} Assume $(h,g,F,\B_T)$ is as in Theorem \ref{thm: U is C1,1}. Then
 the envelope  $U$ admits the following Taylor expansion at almost every point 
 $(t_0,z_0) \in \B_T$,
 \begin{flalign*}
 U(t,z) &= U(t_0,z_0)  +  (t-t_0)\partial_t U(t_0,z_0) + \Re P(z-z_0) + L(z-z_0) \\
	 & + o(|t-t_0|+|z-z_0|^2),   	
 \end{flalign*}
 where $P$ is a polynomial of degree $2$ and $L$ is the Levi form of $U(t_0,z)$ at $z_0$. 
 \end{lem}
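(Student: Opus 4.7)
The plan is to combine the spatial $\mathcal{C}^{1,1}$-regularity of $U$ (Theorem \ref{thm: U is C1,1}) with its Lipschitz and semi-concave behavior in time (Theorems \ref{thm: U is local Lip in t} and \ref{thm: U is local semiconcave}), and then split the increment $U(t,z)-U(t_0,z_0)$ into spatial and temporal parts. First, since $U(t_0,\cdot)$ is locally $\mathcal{C}^{1,1}$ on $\B$, Rademacher's theorem applied to the Lipschitz gradient $\nabla_z U(t_0,\cdot)$ yields, for every fixed $t_0$, a full measure set $E_{t_0}\subset \B$ on which $U(t_0,\cdot)$ admits a second-order Taylor expansion
\[
U(t_0,z) = U(t_0,z_0) + \Re P(z-z_0) + L(z-z_0) + o(|z-z_0|^2),
\]
where $P(w)=2\langle\partial_z U(t_0,z_0),w\rangle + \sum_{j,k}\partial_j\partial_k U(t_0,z_0)w_jw_k$ is the holomorphic quadratic polynomial and $L(w)=\sum_{j,k}\partial_j\bar\partial_k U(t_0,z_0)w_j\overline{w_k}$ is the Levi form of $U(t_0,\cdot)$ at $z_0$. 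Second, $U$ is locally Lipschitz in $t$, so $\partial_t U$ is defined $dt\otimes dV$-almost everywhere; by Fubini, the set of $(t_0,z_0)\in \B_T$ at which both properties hold has full Lebesgue measure.

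Next I will exploit local semi-concavity of $U$ in $t$. By Lemma \ref{lem: left and right derivative}, the one-sided derivatives $\partial_t^-U$ and $\partial_t^+U$ are respectively upper and lower semi-continuous jointly on $\B_T$, with $\partial_t^+ U\leq \partial_t^- U$, and they coincide at every point where $\partial_t U$ exists. At a generic good point $(t_0,z_0)$ this forces, for every $\varepsilon>0$, a $\delta>0$ such that
\[
\partial_t U(t_0,z_0)-\varepsilon \leq \partial_t^+ U(s,z)\leq \partial_t^- U(s,z)\leq \partial_t U(t_0,z_0)+\varepsilon
\]
whenever $|s-t_0|+|z-z_0|<\delta$; in particular $|\partial_s U(s,z)-\partial_t U(t_0,z_0)|\leq \varepsilon$ at every $(s,z)$ in this neighborhood where $\partial_s U(s,z)$ is defined. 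Since $U(\cdot,z)$ is Lipschitz for every $z$, one has $U(t,z)-U(t_0,z)=\int_{t_0}^t \partial_s U(s,z)\,ds$, and the uniform estimate above gives
\[
|U(t,z)-U(t_0,z)-(t-t_0)\partial_t U(t_0,z_0)|\leq \varepsilon|t-t_0|.
\]

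Combining the spatial expansion on $E_{t_0}$ with the above temporal estimate through the decomposition $U(t,z)-U(t_0,z_0)=[U(t,z)-U(t_0,z)]+[U(t_0,z)-U(t_0,z_0)]$ gives
\[
U(t,z)-U(t_0,z_0)-(t-t_0)\partial_t U(t_0,z_0)-\Re P(z-z_0)-L(z-z_0) = O(\varepsilon|t-t_0|)+o(|z-z_0|^2),
\]
and since $\varepsilon$ can be taken arbitrarily small after shrinking the neighborhood, the remainder is $o(|t-t_0|+|z-z_0|^2)$, as required. The main obstacle is to ensure that the continuity of $\partial_t U$ at $(t_0,z_0)$ gives control that is uniform in $z$ over a full spatial neighborhood, rather than only along the vertical line $z=z_0$; this uniformity is exactly what the joint semi-continuity of $\partial_t^{\pm}U$ in Lemma \ref{lem: left and right derivative} delivers, and it fundamentally relies on the semi-concavity in $t$ furnished by Theorem \ref{thm: U is local semiconcave}.
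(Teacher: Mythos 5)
Your argument is correct and follows essentially the same route as the paper's proof: decompose $U(t,z)-U(t_0,z_0)$ into a temporal increment $U(t,z)-U(t_0,z)$ and a spatial increment $U(t_0,z)-U(t_0,z_0)$, handle the spatial part by a.e. second-order differentiability of the $\mathcal{C}^{1,1}$ slice, and handle the temporal part by integrating $\partial_s U(s,z)$ and using the joint lower/upper semi-continuity of $\partial_t^{\pm}U$ from Lemma \ref{lem: left and right derivative}, which is exactly where the semi-concavity in $t$ enters. The only cosmetic difference is that the paper writes the temporal integrand explicitly with $\partial_\tau^{\pm}U$ rather than $\partial_s U$, but since these coincide a.e.\ in $\tau$ the two formulations are interchangeable.
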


 \begin{proof}
 It follows from Theorem \ref{thm: U is local semiconcave} that $U$ is locally uniformly semi-concave in $t\in ]0,T[$. Theorem \ref{thm: U is C1,1} ensures that $U$ is locally uniformly Lipschitz in $z\in \B$, hence, for all $t\in ]0,T[$, $U(t,\cdot)$ is twice differentiable at a.e. $z\in \B$. 
 
 Let $A_1$ be the set of points $(t_0,z_0) \in \Omega_T$ such that $U (\cdot,z_0)$ is not differentiable at $t_0$
and  $A_2$ be the set of points $(t_0,z_0) \in \Omega_T$ such that $U (t_0,\cdot)$ is not twice differentiable at $z_0$.
It follows from Fubini's Theorem that  the set $A := A_1 \cup A_2$ is of Lebesgue measure zero in $\Omega_T$.

We show that  the Taylor expansion  holds at any point $(t_0,z_0) \notin A$.
Fix $\e > 0$ and $(t_0,z_0) \notin A$. 
We first write for $(t,z) \in \Omega_T$,
$$
U (t,z) - U (t_0,z_0) = U (t,z) - U (t_0,z) + U (t_0,z) - U(t_0,z_0).
$$

Since $(t_0,z_0) \notin A_1$, the function $U(t_0,\cdot)$ is twice differentiable at $z_0$. 
Thus there exists $r > 0$ such that  for $\vert z - z_0\vert < r$,
\begin{equation} \label{eq:Asymp1}
\vert U (t_0,z) - U(t_0,z_0) -  \Re P (z -z_0) -  L (z-z_0) \vert \leq \e \vert z-z_0\vert^2.
\end{equation}

On the other hand since $(t_0,z_0) \notin A_2$, $\partial_t U(t_0,z_0)$ exists and we have
\begin{eqnarray*}
U (t,z) - U(t_0,z) - (t-t_0) \partial_t U (t_0,z_0) &=&  \int_{t_0}^t (\partial_{\tau}^+ U (\tau,z) - \partial_t U (t_0,z_0)) d \tau \\
 &=& \int_{t_0}^t (\partial_{\tau}^- U (\tau,z) - \partial_t U (t_0,z_0)) d \tau.
\end{eqnarray*}

 Since $\partial_{\tau}^+ U $ is lsc and $\partial_{\tau}^- U $ is usc,
  we can choose $r$ so small that for $\vert t - t_0\vert + \vert z- z_0\vert < r$,
\begin{equation} \label{eq:Asymp2}
| U (t,z) - U(t_0,z) - (t-t_0) \partial_t U (t_0,z_0) \vert \leq \e \vert t-t_0\vert.
\end{equation}
The Taylor expansion thus follows from (\ref{eq:Asymp1}) and (\ref{eq:Asymp2}).
  \end{proof}

 \section{Pluripotential solutions} \label{sect: pluripotential solutions}

 We finally prove in this section that  $U_{h,g,F,\Omega_T}$ is the unique pluripotential solution to the Cauchy-Dirichlet problem for \eqref{eq: CMAF} which is locally uniformly semi-concave.

 \subsection{The case of Euclidean  balls}
  We first treat the case when $\Omega$ is a euclidean ball in $\mathbb{C}^n$. 
  By scaling and translating, it suffices to treat the case of the unit ball.  
   
 \begin{theorem} \label{thm: U is solution ball reg}
   Let $\Omega=\mathbb{B}$ be the unit ball in $\mathbb{C}^n$, $T<+\infty$, and assume that
   \begin{enumerate}
   	\item $G := \log g$ is $\mathcal{C}^{1,1}$ in $\bar{\B}$; 
   	\item $h$ is  uniformly  $\mathcal{C}^{1,1}$  in $z\in \partial \B$,  $h_0$ is $\mathcal{C}^{1,1}$ in $\bar{\B}$;
   	\item $h$ is uniformly Lipschitz in $t\in [0,T[$ and  $\partial^2_t h \leq Ct^{- 2}$ on $]0,T[\times \partial \B$; 
   	\item $F$ is 
   	Lipschitz and
   	 semi-convex in $(t,z,r) \in [0,T[\times \bar{\B} \times J$ for each $J\Subset \mathbb{R}$.  
   \end{enumerate}
    Then 
for almost every $(t,z) \in \B_T$,  
 $$
 \mathrm{det} \left( \frac{\partial^2 U}{\partial z_j \partial \bar  z_k}  (t,z) \right) = e^{\dot U (t,z) + F (t,z,U(t,z)) + G(z)}.
 $$

In particular $U$ is a pluripotential solution to the Cauchy-Dirichlet problem for the parabolic equation \eqref{eq: CMAF} with boundary data $h$.
 \end{theorem}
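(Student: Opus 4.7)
The plan is to establish the two inequalities in the Monge--Amp\`ere identity separately. The ``$\geq$'' direction is essentially immediate from the earlier results: by Theorem~\ref{thm: U is subsolution}, $U$ is a pluripotential subsolution, so by Proposition~\ref{pro: subsolution slice} we have $(dd^c U_t)^n \geq e^{\dot U + F(t,\cdot,U_t)}\, g\,dV$ as measures for a.e.\ $t$. Since Theorem~\ref{thm: U is C1,1} gives $U(t,\cdot) \in \mathcal{C}^{1,1}(\bar{\B})$ for every $t$, the Bedford--Taylor measure $(dd^c U_t)^n$ coincides with the absolutely continuous measure $\det(\partial_j\bar\partial_k U_t)\,dV$. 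Combining, the pointwise inequality $\det(\partial_j\bar\partial_k U)(t,z) \geq e^{\dot U(t,z) + F(t,z,U(t,z)) + G(z)}$ holds for a.e.\ $(t,z) \in \B_T$.

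For the ``$\leq$'' direction, the plan is to use the maximality of $U$ as Perron envelope, via a Bedford--Taylor-style local perturbation. Suppose by contradiction that the set
$$ E := \bigl\{(t,z) \in \B_T : \det(\partial_j\bar\partial_k U)(t,z) > e^{\dot U(t,z) + F(t,z,U(t,z)) + G(z)}\bigr\} $$
has positive Lebesgue measure. Fix a Lebesgue density point $(t_0,z_0) \in E$ at which the Taylor expansion of Lemma~\ref{lem: Taylor expansion 1} holds; the complex Hessian $H := (\partial_j\bar\partial_k U(t_0,z_0))_{jk}$ is then positive definite. Writing $q(t,z)$ for the Taylor polynomial of $U$ at $(t_0,z_0)$, one has $U - q = o(|t-t_0| + |z-z_0|^2)$ as $(t,z) \to (t_0,z_0)$.

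I would then define, on a small parabolic cylinder $Q = (t_0, t_0+\tau) \times B(z_0,r) \Subset \B_T$,
$$ \phi_\varepsilon(t,z) := q(t,z) + \varepsilon\bigl[(t-t_0) - c|z-z_0|^2\bigr],$$
with $\varepsilon c$ chosen strictly below the smallest eigenvalue of $H$, so that $\phi_\varepsilon$ is smooth and plurisubharmonic in $z$ with constant complex Hessian $H - \varepsilon c I > 0$. The strict inequality satisfied by $U$ at $(t_0,z_0)$, together with the Lipschitz regularity of $F$ and the $\mathcal{C}^{1,1}$ regularity of $G$, ensures that for $\varepsilon, \tau, r$ sufficiently small $\phi_\varepsilon$ is a \emph{classical} subsolution of \eqref{eq: CMAF} on $Q$. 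Choosing the parameters with the parabolic scaling $\tau \ll r^2$ and using the uniform smallness of the Taylor remainder in a shrinking neighbourhood, one arranges $\phi_\varepsilon \leq U$ on the parabolic boundary of $Q$ while $\phi_\varepsilon(t,z_0) > U(t,z_0)$ for some $t \in (t_0,t_0+\tau)$. Extending $\phi_\varepsilon$ by $-\infty$ outside $Q$ and pasting, the function $\tilde U := \max(U, \phi_\varepsilon)$ belongs to $\mathcal{S}_{h,g,F}(\B_T)$ and is strictly larger than $U$ at some interior point, contradicting $U = U_{h,g,F,\B_T}$.

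The main obstacle is the careful control of the pasting: to keep $\tilde U$ in $\mathcal{P}(\B_T)$ (in particular locally uniformly Lipschitz in $t$), one must ensure $\phi_\varepsilon \leq U$ not only on the lateral side $[t_0,t_0+\tau] \times \partial B(z_0,r)$ but also on the top slice $\{t_0+\tau\} \times \bar B(z_0,r)$, even though $\phi_\varepsilon$ is designed to exceed $U$ near $z_0$. Balancing the parabolic geometry of $Q$ against the quantitative form of the Taylor remainder in Lemma~\ref{lem: Taylor expansion 1}---and verifying that the strict classical subsolution property of $\phi_\varepsilon$ survives the oscillations of $F$, $G$, and $U$ over $Q$---is the technical heart of the argument, and is where the semi-convexity of $F$, the $\mathcal{C}^{1,1}$ regularity of $G$, and the Lipschitz/semi-concavity of $U$ in $t$ all enter in an essential way.
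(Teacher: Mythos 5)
Your ``$\geq$'' direction is exactly the paper's argument (Theorem \ref{thm: U is subsolution}, Proposition \ref{pro: subsolution slice}, and Theorem \ref{thm: U is C1,1} to identify $(dd^c U_t)^n$ with the pointwise determinant), and your overall strategy for ``$\leq$'' --- contradiction at a point where the Taylor expansion of Lemma \ref{lem: Taylor expansion 1} holds, followed by a Bedford--Taylor bump and pasting into the Perron envelope --- is the same as the paper's. The gap is in the design of the bump, and it is the very tension you flag at the end.

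Your $\phi_\varepsilon = q + \varepsilon[(t-t_0) - c|z-z_0|^2]$ has no constant uplift: it equals $U$ at $(t_0,z_0)$ and is \emph{increasing} in $t$, so $\phi_\varepsilon(\cdot,z_0) - U(\cdot,z_0) = \varepsilon(t-t_0) + o(|t-t_0|)$ stays strictly positive on the entire interval $(t_0,t_0+\tau]$ once $\tau$ is small enough for the remainder to be dominated. In particular $\phi_\varepsilon(t_0+\tau,z_0) > U(t_0+\tau,z_0)$, which is incompatible with $\phi_\varepsilon \le U$ on the top slice $\{t_0+\tau\}\times \bar B(z_0,r)$. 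But that top inequality is not optional: if it fails, $\tilde U := \max(U,\phi_\varepsilon)$ has a downward jump in $t$ at $t_0+\tau$ near $z_0$ and so is not locally uniformly Lipschitz in $t$, i.e.\ $\tilde U \notin \mathcal P(\B_T)$, and the Perron comparison cannot be invoked. There is no choice of $\tau \ll r^2$ that fixes this, because the obstruction is at $z = z_0$ itself and is monotone in $t$.

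The paper resolves this by a symmetric-in-time design: it uses the region $D_r = \{|t-t_0|+|z-z_0|^2 < r\}$ (compactly contained in $\B_T$, so there is no ``top'') and the bump
$$
w(t,z) = U(t_0,z_0) + \partial_t U(t_0,z_0)(t-t_0) + \Re P(z-z_0) + L(z-z_0) + \delta - \gamma\bigl(|z-z_0|^2 + |t-t_0|\bigr),
$$
with $\delta = \gamma r/4$. The constant $+\delta$ lifts $w(t_0,z_0)$ strictly above $U(t_0,z_0)$, while the $-\gamma|t-t_0|$ term makes $w$ decay in both time directions, so $U \ge w$ on all of $D_r \setminus D_{r/2}$ once $r$ is small (and the Taylor remainder is $o(r)$). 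The time derivative $\partial_t w = \partial_t U(t_0,z_0) \mp \gamma$ is then only Lipschitz, not $\mathcal C^1$, at $t = t_0$, but that is harmless for a parabolic potential, and the worst case $+\gamma < \varepsilon$ is absorbed by the strict gap $\varepsilon$ in the contradiction hypothesis. If you replace your one-sided cylinder and growing-in-$t$ term by this symmetric $\delta$-lifted, $|t-t_0|$-penalized bump, the pasting is clean and your argument closes; as written, the construction does not produce an admissible competitor.
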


 \begin{proof} 
Theorem \ref{thm: U is subsolution}  and the Lipschitz assumption on $h$ ensure that $U$ is a subsolution to  \eqref{eq: CMAF} with 
  $U=h$ on $\partial_0\Omega_T$. 
It  follows from Corollary \ref{cor: U is Lip z} and Theorem \ref{thm: U is C1,1} that $U$ is uniformly Lipschitz
 in $z\in \bar{\mathbb{B}}$ and locally $\mathcal{C}^{1,1}$ in $\mathbb{B}$. 
In particular  $U$ is  twice differentiable in $z$ 
 almost everywhere in $\Omega_T$, hence
 $$
 (dd^c U )^n =  \text{det} (U_{j,\bar k} (t,z)) d V (z).
 $$
 
As $U$ is also almost everywhere differentiable in $t$  
and a subsolution to the parabolic equation \eqref{eq: CMAF}, we infer by Proposition \ref{pro: subsolution slice},
 \begin{equation} \label{eq:Ineg-ponctuel}
 \text{det} (U_{j,\bar k} (t,z)) \geq  e^{\partial_tU (t,z) + F (t,z,U(t,z)) + G (z)},
 \end{equation}
almost everywhere in $\B_T$. 

We want to prove that equality holds in (\ref{eq:Ineg-ponctuel}).
We use the notation of the proof of Lemma \ref{lem: Taylor expansion 1} and set $E=\B_T \setminus A$.
Arguing by contradiction we assume that   
 $$
 \text{det} \left(U_{j,\bar k} (t_0,z_0) - \e I_n\right) > e^{\dot U (t_0,z_0) + F (t_0,z_0,U(t_0,z_0)) + G (z_0)+\varepsilon},
 $$
 at some point $(t_0,z_0)  \in E$,
 for a small constant $\varepsilon > 0$.
 
 We use a bump construction to produce a subsolution $v\in \mathcal{S}_{h,g,F}(\B_T)$ which satisfies $v (t_0,z_0) > U (t_0,z_0)$ providing a contradiction.
It follows from Lemma \ref{lem: Taylor expansion 1} that 
\begin{flalign}
		 U(t,z) -U(t_0,z_0)  &= (t-t_0)\partial_t U(t_0,z_0) + \Re P(z-z_0) + L(z-z_0)\nonumber \\
	&  + o(|t-t_0|+|z-z_0|^2). \label{eq: Taylor expansion 1}
\end{flalign}

Set $D_r := \{(t,z) \setdef  \vert t - t_0\vert + \vert z - z_0\vert^2  < r\}$ and define  
\begin{flalign*}
w (t,z) & :=  U (t_0,z_0) + \partial_t U (t_0,z_0) (t-t_0) + \Re P(z-z_0) \\
&+ L (z-z_0) + \delta  - \gamma (\vert z - z_0\vert^2+ |t-t_0|),
\end{flalign*}
where  $\delta ,\gamma >0$ are constants to be specified later. 
Note that if $\gamma$ is small enough then $w\in \mathcal{P}(D_r)$. 
For any $(t,z) \in D_r$, the Taylor expansion \eqref{eq: Taylor expansion 1} ensures that
$$
U (t,z) \geq w (t,z) + \gamma (\vert t - t_0\vert) + \vert z - z_0\vert^2) - \delta + o (r).
$$
Hence for any $(t,z) \in D_r \setminus D_{r \slash 2}$,
$$
U (t,z) \geq w (t,z) + \gamma r/2-  \delta + o (r) > w (t,z),
$$
if $\delta = \gamma r \slash 4$, and $r > 0$ is small enough. On the other hand for $(t,z) \in D_r$,  
\[
(dd^c w)^n = (dd^c U -\gamma |z-z_0|^2)^n (t_0,z_0),
\]
and for $(t,z) \in D_r$, $t \neq t_0$,
$$
\partial_t w (t,z) = \partial_t U(t_0,z_0) - \gamma (t-t_0) \slash \vert t - t_0\vert.
$$
Thus if $\gamma < \e$,   we obtain for any $(t,z) \in D_r$,
\begin{eqnarray*}
(dd^c  w (t,z))^n  &\geq & e^{\partial_t w (t,z) + \gamma (t-t_0) \slash \vert t - t_0\vert))  +  F (t_0,z_0,U(t_0,z_0))+ G(z_0)+\varepsilon} dV \\
&\geq & e^{\partial_t w(t,z) -\gamma + F(t,z,w(t,z))  +G(z)+ R(t,z) + \varepsilon} dV,
\end{eqnarray*}
where
$$
  R (t,z) := F (t_0,z_0,U(t_0,z_0)) -  F (t,z,w(t,z)) +  (G (z_0) - G (z)).
$$

Since $U$ and $F$  are locally Lipschitz, there exists   $A > 0$ such that for $r > 0$ small enough  and $(t,z) \in D_r$, 
$$
R (t,z) \geq -  A\sqrt{ r} \geq \gamma-\varepsilon.
$$ 
The function $ w $ is therefore a subsolution to \eqref{eq: CMAF} in $D_r$.

The previous estimates ensure that the function  
$$
v (t,z) := \left\{ 
\begin{array}{lcr}
\max \{U (t,z) , w (t,z)\} & \text{ if }  & (t,z) \in D_r \\
U (t,z) & \text{ if } & (t,z) \in \B_T \setminus D_r
\end{array} 
\right.
$$
belongs to $\mathcal{S}_{h,g,F}(\B_T)$, hence $v\leq U$ in $\B_T$. In particular, $w \leq U$ in $D_r$ which is a contradiction since $w (t_0,z_0) = U (t_0,z_0) + \delta > U (t_0,z_0)$.
 \end{proof}

We now relax the regularity assumptions in Theorem \ref{thm: U is solution ball reg}. 

\begin{pro}
	\label{pro: U is solution ball}
	Assume $\Omega=\mathbb{B}$ is the unit ball in $\mathbb{C}^n$, $T<+\infty$, and
	\begin{itemize}
	\item $G:=\log g$ is continuous in $\bar{\mathbb{B}}$; 
	\item $h$ is   continuous on $\partial_0 \mathbb{B}_T$ and  satisfies \eqref{eq: local Lipschitz condition h} and \eqref{eq: local semiconcave condition h};
		\item $F$ extends as a continuous function on $[0,T[\times \bar{\B} \times \mathbb{R}$ which is uniformly Lipschitz and  
		 uniformly semi-convex in  $(t,r) \in [0,T[ \times J$ for each $J\Subset \mathbb{R}$. 
	\end{itemize}
			Then $U_{h,g,F,\B_T}$ is a continuous solution to \eqref{eq: CMAF} with boundary values $h$. 
\end{pro}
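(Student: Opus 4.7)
The plan is to approximate the data $(h,g,F)$ by smoother triples satisfying the hypotheses of Theorem \ref{thm: U is solution ball reg}, apply that theorem to obtain pluripotential solutions $U^j$, and pass to the limit using the convergence tools of Section \ref{sec: MAP operator}.

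Concretely, I would first mollify $G=\log g$ in space to obtain $G^j\in \mathcal{C}^{1,1}(\bar{\B})$ with $G^j \to G$ uniformly on $\bar{\B}$, and set $g^j := e^{G^j}$. Similarly I would mollify $F$ in $(t,z)$, after a suitable extension, to produce $F^j$ that is smooth, increasing in $r$, and Lipschitz and semi-convex in $(t,r)$ with uniform constants, converging locally uniformly to $F$. For $h$, I would first reduce to the uniformly Lipschitz case by the time-shift device used in Step 2 of the proof of Theorem \ref{thm: U is subsolution}: for small $\epsilon>0$, replace $h$ by $h^\epsilon$ equal to $h(t+\epsilon,\zeta)$ on $[0,T-\epsilon[\,\times\partial\B$ and to $h_0+\psi^\epsilon$ on $\{0\}\times\B$, where $\psi^\epsilon$ is the maximal psh function in $\B$ with the appropriate boundary values. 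I would then approximate $h^\epsilon$ by $h^{\epsilon,j}$ that is uniformly $\mathcal{C}^{1,1}$ in $z$, convolving the boundary trace in $z$ and approximating $h_0^\epsilon$ by a decreasing sequence of $\mathcal{C}^{1,1}$ psh functions via Lemma \ref{lem: continuous approximation} followed by spatial mollification, while preserving the Lipschitz and semi-concavity constants of $h^\epsilon$ in $t$.

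For each $\epsilon$ and $j$, Theorem \ref{thm: U is solution ball reg} yields a pluripotential solution $U^{\epsilon,j}:=U_{h^{\epsilon,j},g^j,F^j,\B_{T-\epsilon}}$. The crucial point is that the explicit constants in \eqref{eq: kappa U} and \eqref{eq: constant C U} depend only on the uniform bounds $M_{h^{\epsilon,j}}, M_{F^j}, \kappa_{h^{\epsilon,j}}, \kappa_{F^j}, C_{h^{\epsilon,j}}, C_{F^j}$, so $\{U^{\epsilon,j}\}_j$ is locally uniformly Lipschitz and locally uniformly semi-concave in $t$. Similarly the modulus of continuity estimate \eqref{eq: modulus of continuity of U} from Theorem \ref{thm: U is Lip z} makes $\{U^{\epsilon,j}\}_j$ equicontinuous in $z$ on compact subsets. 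By Arzel\`a--Ascoli I can extract a subsequence converging uniformly on compact subsets of $\B_{T-\epsilon}$ to a continuous function $U^\epsilon$, and by envelope stability (comparing upper envelopes with uniformly close data via the subbarriers of Section \ref{sect: Perron envelope boundary value}), $U^\epsilon$ coincides with $U_{h^\epsilon,g,F,\B_{T-\epsilon}}$.

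The hard part will be passing to the limit in the equation. Since $U^{\epsilon,j}_t \to U^\epsilon_t$ uniformly and $g^j\to g$ uniformly, Proposition \ref{prop: dt MA converge} together with Remark \ref{rem: dt MA converge} give $dt\wedge(dd^c U^{\epsilon,j})^n \to dt\wedge(dd^c U^\epsilon)^n$ weakly on $\B_{T-\epsilon}$. The delicate point is the convergence of the right-hand side, which involves $\partial_t U^{\epsilon,j}$: this is precisely where the uniform semi-concavity in $t$ is indispensable, as it allows Proposition \ref{pro: convergence semiconcave} to give $\partial_t U^{\epsilon,j}\to \partial_t U^\epsilon$ almost everywhere and
\[
e^{\partial_t U^{\epsilon,j}+F^j(\cdot,\cdot,U^{\epsilon,j})}g^j\,dt\wedge dV \longrightarrow e^{\partial_t U^\epsilon+F(\cdot,\cdot,U^\epsilon)}g\,dt\wedge dV
\]
in the weak sense. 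Thus $U^\epsilon$ solves \eqref{eq: CMAF} on $\B_{T-\epsilon}$. Finally, letting $\epsilon\to 0$, using Proposition \ref{pro: identity principle} and one last application of Proposition \ref{pro: convergence semiconcave}, one obtains that $U:=U_{h,g,F,\B_T}$ is a pluripotential solution; its continuity up to the parabolic boundary follows from Theorem \ref{thm: U is subsolution} and Theorem \ref{thm: U is Lip z} together with the continuity of $h$.
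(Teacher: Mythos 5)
Your overall strategy mirrors the paper's proof: regularize $(h,g,F)$, apply Theorem~\ref{thm: U is solution ball reg}, exploit the uniform Lipschitz and semi-concavity constants made explicit in \eqref{eq: kappa U} and \eqref{eq: constant C U}, and pass to the limit via Propositions~\ref{prop: dt MA converge} and \ref{pro: convergence semiconcave}. You reverse the order of the two regularizations (time-shift first, then spatial mollification; the paper does the opposite, handling the $j$-approximation in Step~1 on $\B_S$ and the $\varepsilon$-shift in Step~2), and your Arzel\`a--Ascoli step is superfluous once one has the envelope stability you invoke, which follows from the by-now standard perturbation ``subtract $\delta_j\,t$ from a subsolution'' and not really from the subbarriers of Section~\ref{sect: Perron envelope boundary value}. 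These differences are harmless.

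There is, however, a concrete gap in the step that produces a $\mathcal{C}^{1,1}$ initial slice $h^{\epsilon,j}_0$. Lemma~\ref{lem: continuous approximation} yields only \emph{continuous} psh approximants with the prescribed boundary trace, and spatial mollification does not upgrade them to $\mathcal{C}^{1,1}$ on $\bar{\B}$: the convolution of a psh function is psh only on a shrunken subdomain, is not defined up to $\partial\B$, and in any case its behaviour near the boundary has nothing to do with the prescribed trace, so the compatibility condition \eqref{eq: weak compatibility condition} is destroyed. This matters because the proof of Theorem~\ref{thm: U is C1,1} genuinely uses the $\mathcal{C}^{1,1}$-bound of $h_0$ up to $\bar{\B}$ to control $V_a$ on the Cauchy part $\{0\}\times\B$ of $\partial_0\B_T$. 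The right device is the $\mathcal{C}^{1,1}$-regularity of psh envelopes on the ball (Bedford--Taylor): pick a decreasing sequence $f_j\in\mathcal{C}^{1,1}(\bar\B)$ with $f_j\searrow h^\epsilon_0$ and $f_j|_{\partial\B}=\hat h^{\epsilon,j}(0,\cdot)$, and set $h^{\epsilon,j}_0:=P(f_j)$; these are $\mathcal{C}^{1,1}$, psh, have the correct boundary trace, and decrease to $h^\epsilon_0$, which is exactly what the limiting argument needs. (For what it is worth, the paper's own Step~1 simply writes $h^j_0=h_0+H^j$ and asserts the hypotheses of Theorem~\ref{thm: U is solution ball reg} without arranging $\mathcal{C}^{1,1}$-regularity of $h_0$ itself, so this detail is also glossed over there.)
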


\begin{proof} 
It follows from Theorem \ref{thm: U is subsolution} that $U \in \mathcal{S}_{h,g,F} (\B_T)$ satisfies the boundary conditions \eqref{eq: Dirichlet condition}, \eqref{eq: Cauchy condition}. 
 It remains to prove that
 $U$ is continuous on $[0,T[\times \bar{\Omega}$ and solves  \eqref{eq: CMAF} in $\Omega_T$. By Proposition \ref{pro: identity principle} it suffices to prove these statements in $\B_{S}$ for each fixed $S<T$.   We  proceed in several steps. 
 

\smallskip

\noindent{\it Step 1.} Assume that $h(\cdot,z)$  
is uniformly Lipschitz in $t\in [0,T[$. It follows from Theorem \ref{thm: U is Lip z} that $U$ is continuous  on $[0,T[\times \bar{\Omega}$. The goal is to prove that $U$ solves \eqref{eq: CMAF} in $\B_S$. We proceed by approximation as follows.

Let $(G_j)=(\log g_j)$ be a sequence of smooth functions uniformly converging to $G$ on $\bar{\mathbb{B}}$.  
Extending $F$ continuously in an open neighborhood of $[0,S]\times \bar{\B}\times \mathbb{R}$ and taking convolution  in  $(t,z,r)$  we can find a sequence $F_j : [0,S] \times \bar{B} \times \mathbb{R}$ of  functions which are smooth in $(t,z,r)$ and
\begin{itemize}
	\item Lipschitz   and 
	  semi-convex in $[0,S] \times \bar{\B}\times  J$ for each $J\Subset \mathbb{R}$; 
	\item uniformly converges   to $F$ on $[0,S] \times \bar{\B} \times J$, for each $J\Subset \mathbb{R}$.
\end{itemize}

We extend $h$ as a continuous function in $[0,T[ \times \{|z|\geq 1/4\}$ by setting
\[
h(t,z) := h\left (t,\frac{z}{|z|}\right) , z\in \mathbb{C}^n , |z| \geq 1/4; 
\]

The extension $h$ satisfies \eqref{eq: local Lipschitz condition h} and \eqref{eq: local semiconcave condition h} for all $|z|\geq 1/4$ (with the same constants $\kappa_h, C_h$ as the original function $h$ defined on $\partial_0 \B_T$).  
Taking convolution  in the $z$ variable
 we can find a sequence  $(\hat{h}^j)$ of functions in $[0,T[\times \{|z|>1/3\}$ which
 are smooth in $z$ and
\begin{itemize}
	\item are uniformly Lipschitz in $t$; 
	\item satisfy \eqref{eq: local semiconcave condition h} with the same uniform constant $C_h$;
	\item uniformly converge  to $h$ on $[0,S] \times \partial \B$.
\end{itemize}
Fix $j\in \mathbb{N}$ and define $h^j$ by
$$
\left\{
\begin{array}{cl}
h^j(t,z):=\hat{h}_j(t,z)  &  \text{ if } (t,z)\in ]0,T[\times \partial \mathbb{B} \\
h^j(0,z)=h_0+H^j & \text{ if } (t,z) \in \{ 0 \} \times  \mathbb{B},
\end{array}
 \right.
 $$
 where $H^j$ is the maximal plurisubharmonic function in $\mathbb{B}$ with boundary values $\hat{h}^j(0,\cdot)-h_0$. Observe that $h^j$ is a Cauchy-Dirichlet boundary data on $\B_T$ which satisfies the assumptions of Theorem \ref{thm: U is solution ball reg}.  Note also that $h^j$ uniformly converges  to $h$ on $\partial_0 \mathbb{B}_T$,
 since $H^j$ uniformly converges  to $0$. 

Set $U^j:= U_{h^j,g_j,F_j}(\B_{S})$, $j\in \mathbb{N}$.  Theorem \ref{thm: U is solution ball reg} ensures that $U^j$ is a pluripotential solution to the equation \eqref{eq: CMAF} and $U^j=h^j$ on $\partial_0 \B_{S}$. It also follows from Theorem \ref{thm: U is local Lip in t} and Theorem \ref{thm: U is local semiconcave} that $U^j$ is 
 locally uniformly semi-concave in $t\in ]0,S]$. Moreover, \eqref{eq: constant C Lipschitz U} and \eqref{eq: constant C concave U} ensure that the Lipschitz and semi-concave constants of $U^j$ are uniform.  
By definition of the envelope, $U^j$ uniformly converges  to $U$ as $j\to +\infty$. It thus follows from Proposition \ref{pro: convergence semiconcave}, Proposition \ref{prop: dt MA converge}, Remark \ref{rem: dt MA converge} and Lemma \ref{lem: convergence derivative of concave functions} that $U$ is a pluripotential solution to  \eqref{eq: CMAF} in $\B_S$. 

\smallskip

\noindent {\it Step 2.}  To treat the general case, we approximate $h$ by a family of functions $h^{\varepsilon}$ which are Lipschitz up to zero, in such a way that they satisfy \eqref{eq: local Lipschitz condition h} and \eqref{eq: local semiconcave condition h} with  constants independent of $\varepsilon$. 

We proceed as in the proof of Theorem \ref{thm: U is subsolution}.  
Fix $S>0$ and $\varepsilon>0$ such that $S+\varepsilon<T$, and define 
$$
\left\{
\begin{array}{ll}
h^{\varepsilon}(t,\zeta) =h(t+\varepsilon,\zeta)  &  \text{ if } (t,\zeta)\in [0,S]\times \partial \mathbb{B} \\
h^{\varepsilon}(0,z)=h_0(z)+ \phi_{\varepsilon}(z)  & \text{ if } \ z\in \mathbb{B},
\end{array}
 \right.
$$ 
where $\phi_{\varepsilon}$ is the maximal plurisubharmonic function in $\mathbb{B}$ such that  $\phi_{\varepsilon}(\zeta)= h(\varepsilon, \zeta)-h_0(\zeta)$ on $\partial \mathbb{B}$. Then $h^{\varepsilon}$ uniformly converges to $h$ on $\partial_0 \B_S$.

Observe that  $h^{\varepsilon}$ is a Cauchy-Dirichlet boundary data satisfying \eqref{eq: local Lipschitz condition h} and \eqref{eq: local semiconcave condition h} with  constants independent of $\varepsilon$. 
By construction $h^{\varepsilon}$ is uniformly Lipschitz in $t\in [0,S]$.
 The previous step shows that $U^{\varepsilon}:=U_{h^{\varepsilon},g,F,\mathbb{B}_{S}}$ is a continuous pluripotential solution to \eqref{eq: CMAF} with boundary data $h^{\varepsilon}$. 
 By Proposition \ref{pro: identity principle}, $U^{\varepsilon}$ 
  uniformly converges to $U$ on $\mathbb{B}_S$.  The continuity of $U^{\varepsilon}$  ensures that $U$ is continuous in $\B_S$. Since $h_0$ is continuous, Theorem \ref{thm: boundary value of U} ensures that $U$ is continuous in $[0,S[\times \bar{\B}$.  
It  follows from Theorem \ref{thm: U is local Lip in t} and Theorem 
\ref{thm: U is local semiconcave} that the family $U^{\varepsilon}$
 is locally uniformly  
 semi-concave in $t\in ]0,S[$ with constants independent of $\varepsilon$; see \eqref{eq: kappa U} and \eqref{eq: constant C  U}.  
 Arguing as in the last part of Step 1 we conclude  that $U$ solves 
  \eqref{eq: CMAF} in $\mathbb{B}_S$.  
\end{proof}

 \subsection{The case of bounded  strictly pseudoconvex domains}
 \label{subsect: pseudoconvex existence}
 
 We now consider the case of a smooth bounded  strictly pseudoconvex domain.

  We first prove the existence result in a particular case.

  \begin{pro} \label{pro: U is solution h control 0 and T}   Assume $T<+\infty$,  $h$ satisfies \eqref{eq: local Lipschitz condition h} and \eqref{eq: local semiconcave condition h}.   
  Then $U_{h,g,F} $  is a pluripotential solution to the Cauchy-Dirichlet problem for the parabolic equation \eqref{eq: CMAF} in $\Omega_T$ with boundary conditions \eqref{eq: Dirichlet condition} and \eqref{eq: Cauchy condition}.  
 \end{pro}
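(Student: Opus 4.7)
The plan is to combine a balayage argument on interior balls with an approximation scheme reducing to the continuous-data case of Proposition~\ref{pro: U is solution ball}.

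\textbf{Step 1 (Regularization).} Regularize the data so that the hypotheses of Proposition~\ref{pro: U is solution ball} can be used on every ball while preserving \eqref{eq: local Lipschitz condition h} and \eqref{eq: local semiconcave condition h} uniformly in $\e$. Via Lemma~\ref{lem: continuous approximation}, take continuous plurisubharmonic $h_0^\e \downarrow h_0$ on $\bar\Omega$ with $h_0^\e = h_0$ on $\partial\Omega$. Smooth $h$ in $t$ on $[0,T[\times\partial\Omega$ to a uniformly Lipschitz $h^\e$ still satisfying \eqref{eq: local Lipschitz condition h}, \eqref{eq: local semiconcave condition h} with constants independent of $\e$; take positive continuous $g^\e\to g$ in $L^p(\Omega)$; take $F^\e$ smooth with the same uniform Lipschitz and semi-convex bounds. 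Set $U^\e := U_{h^\e,g^\e,F^\e,\Omega_T}$. By Theorem~\ref{thm: U is subsolution}, $U^\e$ is a subsolution matching $h^\e$ continuously on $\partial_0\Omega_T$; by Theorems~\ref{thm: U is local Lip in t} and \ref{thm: U is local semiconcave} (with the explicit constants \eqref{eq: kappa U}, \eqref{eq: constant C U}), $U^\e$ enjoys uniform (in $\e$) local time-Lipschitz and semi-concavity estimates. A further space smoothing of $h^\e$ and $g^\e$ combined with Corollary~\ref{cor: U is Lip z} ensures that $U^\e$ is continuous on $[0,T[\times\bar\Omega$.

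\textbf{Step 2 (Balayage on a ball).} Fix a ball $B\Subset\Omega$ and consider the Cauchy-Dirichlet problem on $B_T$ with datum $\phi^\e := U^\e|_{\partial_0 B_T}$: this datum is continuous; on $[0,T[\times\partial B$ it inherits \eqref{eq: local Lipschitz condition h} and \eqref{eq: local semiconcave condition h} from $U^\e$; its initial slice $h_0^\e|_B$ is continuous plurisubharmonic on $\bar B$. Proposition~\ref{pro: U is solution ball}, rescaled and translated to $B$, then produces a continuous pluripotential solution $V^\e$ on $B_T$ with $V^\e = \phi^\e$ on $\partial_0 B_T$. Since $U^\e|_{B_T}$ is a subsolution with the same boundary values, maximality of $V^\e = U_{\phi^\e,g^\e,F^\e,B_T}$ forces $U^\e \le V^\e$ on $B_T$. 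Define $\tilde U^\e := V^\e$ on $B_T$ and $\tilde U^\e := U^\e$ elsewhere. Continuity of both sides on $[0,T[\times\partial B$ makes $\tilde U^\e$ continuous, and the slices are plurisubharmonic by the classical pasting principle (since $V^\e_t \ge U^\e_t$ on $B$ with equality on $\partial B$). One checks that $\tilde U^\e$ is a pluripotential subsolution on $\Omega_T$ with boundary data still $\le h^\e$; the upper envelope definition of $U^\e$ then yields $\tilde U^\e \le U^\e$, and combined with $V^\e \ge U^\e$ on $B_T$ this forces $U^\e = V^\e$ on $B_T$. Hence $U^\e$ is a pluripotential solution on $B_T$, and since $B \Subset \Omega$ was arbitrary, on all of $\Omega_T$.

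\textbf{Step 3 (Passage to the limit).} Extract a subsequence with $U^\e \to U$ in $L^1_{\loc}(\Omega_T)$: the uniform bounds of Step~1 and an identification argument analogous to that in the proof of Theorem~\ref{thm: U is subsolution} show the limit is indeed the envelope $U$. Proposition~\ref{pro: convergence semiconcave}, applicable thanks to the uniform semi-concavity in $t$, gives $\partial_t U^\e \to \partial_t U$ almost everywhere and weak convergence of $e^{\partial_t U^\e + F^\e(\cdot,\cdot,U^\e)} g^\e\, dt\wedge dV$ to $e^{\partial_t U + F(\cdot,\cdot,U)} g\, dt\wedge dV$. Proposition~\ref{prop: dt MA converge} together with Remark~\ref{rem: dt MA converge} yields weak convergence $dt\wedge(dd^c U^\e)^n \to dt\wedge(dd^c U)^n$. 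Passing to the limit in the pluripotential equation satisfied by $U^\e$ yields the equation for $U$; the Cauchy-Dirichlet conditions are furnished by Theorem~\ref{thm: U is subsolution}.

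The delicate point is the pasting in Step~2: proving that $\tilde U^\e$ is a pluripotential subsolution across the interface $[0,T[\times\partial B$. The plurisubharmonic pasting of the slices is classical, but the combined Monge-Amp\`ere/time-derivative inequality at $\partial B$ requires a short argument exploiting the continuity of $\tilde U^\e$ and the fact that a bounded continuous psh function's complex Monge-Amp\`ere measure carries no mass on a smooth real hypersurface. A secondary technical difficulty is arranging the approximation so that the continuity of $U^\e$ on $[0,T[\times\bar\Omega$ holds and the uniform regularity constants \eqref{eq: kappa U}, \eqref{eq: constant C U} persist as $\e\to 0$.
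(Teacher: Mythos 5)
Your overall strategy — balayage on interior balls to reduce to Proposition~\ref{pro: U is solution ball}, plus an approximation scheme to relax the regularity of the data — is the same as the paper's, but you reorganize the two ingredients in a way that creates a genuine gap. The paper first proves the result under the extra hypotheses that $h_0$ and $G=\log g$ are continuous, in which case $U$ itself is continuous on $[0,T[\times\bar\Omega$ (Theorem~\ref{thm: U is Lip z}) and the balayage argument can be applied directly to $U$ (not to a regularization). It then relaxes continuity of $G$, and finally of $h_0$, and at \emph{each} stage it supplies a tailor-made identification of the limit with the new envelope: a monotone-decreasing argument when $h_0^j\downarrow h_0$, and, crucially, a two-sided comparison for the $g_j\to g$ step which requires a Ko\l odziej-type stability argument. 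That step is nontrivial: one inequality ($V=\lim U^j\leq U$) follows from upper semi-continuity of the subsolution property, but the reverse inequality $U\leq V$ is obtained by constructing explicit subsolutions $W^{j,\varepsilon}(t,z):=U(t+\varepsilon,z)-\delta(\varepsilon)t+C(\varepsilon)\theta_j(z)$ for the approximating problem, where $\theta_j$ is the Bedford--Taylor solution of $(dd^c\theta_j)^n=|g_j-g|\,dV$ with zero boundary values. Your Step~3 compresses all the approximations into one parameter $\varepsilon$ and then simply invokes ``an identification argument analogous to that in the proof of Theorem~\ref{thm: U is subsolution}''; but that theorem only treats the time-shift $h(t+\varepsilon,\cdot)$ via the identity principle, and it does not contain the Ko\l odziej perturbation needed when $g^\varepsilon\neq g$. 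Without monotonicity in $g^\varepsilon$ the inclusion of subsolution classes goes in neither direction, so you have no a priori comparison between $U^\varepsilon$ and $U$, and the limit of a convergent subsequence is not identified as $U$ by what you write. This is the key missing step.

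Two smaller remarks. First, the delicacy you flag in the pasting across $[0,T[\times\partial B$ is not really there, and your proposed justification is in fact incorrect: the Monge--Amp\`ere measure of a bounded continuous psh function \emph{can} charge a smooth real hypersurface (e.g.\ $u(z)=\max(|z_1|^2,a)$). What actually makes the glued function $V$ a subsolution is that the right-hand side $e^{\partial_t V+F}g\,dV$ is absolutely continuous with respect to Lebesgue measure and thus places no mass on $\partial B$; combined with locality of $(dd^c\cdot)^n$ on the two open pieces, the inequality holds automatically. Second, your Step~1 regularization of $h$ in $t$ needs to preserve \eqref{eq: local Lipschitz condition h} and \eqref{eq: local semiconcave condition h} with constants uniform in $\varepsilon$; this is delicate near $t=0$ (the allowed Lipschitz constant blows up like $1/t$) and the paper handles it via the precise shift $h(t+\varepsilon,\cdot)$ together with the maximal psh extension for the initial slice, not a generic convolution. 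Spelling this out would make Step~1 watertight.
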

 
 \begin{proof}
 	It follows from Theorem \ref{thm: U is local Lip in t}, Theorem \ref{thm: U is subsolution}, Theorem \ref{thm: U is local semiconcave} that $U$ is locally 
 	uniformly semi-concave in $t\in ]0,T[$,
 	 $U\in \mathcal{S}_{h,g,F} (\Omega_T)$ and it satisfies the boundary conditions \eqref{eq: Dirichlet condition} and \eqref{eq: Cauchy condition}. 
It remains to verify that $U$ solves  \eqref{eq: CMAF}.   
 	We proceed in several steps. 
 	
 	\smallskip
 	
 	\noindent{\it Step 1.} We first assume that $h_0$  and $G:=\log g$ are continuous in $\bar{\Omega}$.  
 	Then $U$ is also continuous on $[0,T[ \times \bar{\Omega}$ thanks to Theorem \ref{thm: U is Lip z}.  
 	
 	Let $B\Subset \Omega$ be a small ball and $h_B$ denote the restriction of $U$ on the parabolic boundary of $B_T$.  
 	The boundary data $h_B$ for the Cauchy-Dirichlet problem for \eqref{eq: CMAF}  satisfies the assumption of 
 	Proposition \ref{pro: U is solution ball}. Also,  the restriction of $U$ on $[0,T[\times B$ is a continuous subsolution to the Cauchy Dirichlet problem \eqref{eq: CMAF} in $B_T$ with boundary data  $h_B$. 
 It  follows from Proposition \ref{pro: U is solution ball} that  $U_B:=U_{h_B,g,F,B_T}$ is a pluripotential solution to \eqref{eq: CMAF} with boundary data $h_B$ and $U_B \geq U$ in $B_T$. 
 	
 	The function $V$, which is defined as  $U_B$ in $B_T$ and $U$ in $\Omega_T \setminus B_T$, belongs to  $\mathcal{S}_{h,g,F} (\Omega_T)$. Hence $V=U$  is a pluripotential solution to \eqref{eq: CMAF}. 
 	
 	\smallskip

 	\noindent{\it Step 2.} We  next  assume  $h_0$ is continuous, but we merely assume $g\in L^p$. 
 	
 Let $(g_j)$ be a sequence of strictly positive continuous functions in
  $\bar{\Omega}$ that converges to $g$ in $L^p(\Omega)$. 
  Set $U^j:=U_{h,g_j,F}$ and $U:=U_{h,g,F}$.   
 		Since the $L^p$-norm of $g_j$ is uniformly bounded, 
 		Theorem \ref{thm: U is local Lip in t} and Theorem 
 		\ref{thm: U is local semiconcave} ensure that the functions $U^j$ 
 		are locally uniformly  
 		 semi-concave 
 		(with constants independent of $j$). 
 		  It thus follows from  Proposition \ref{pro: Montel property of P} 
 		  that  a subsequence of $U^j$, still denoted by $U^j$, converges almost everywhere in $\Omega_T$  to 
 		   a function $V\in \mathcal{P}(\Omega_T)$. Lemma \ref{lem:L1Slice-L1} ensures that $U^j_t$ converges in $L^1(\Omega)$ to $V_t$, for all $t\in ]0,T[$.  By Proposition \ref{pro: convergence semiconcave}, for almost all $t\in ]0,T[$, $\partial_t U^j(t,\cdot)$ converges pointwise to $\partial_t V(t,\cdot)$. Thus, for almost all $t\in ]0,T[$,  
 		 $$
 		  e^{\partial_t U^j(t,\cdot) +F(t,\cdot,U^j)}g_j
 		  \stackrel{L^p(\Omega)}{\longrightarrow}
 		  e^{\partial_t V(t,\cdot) +F(t,\cdot,V)}g.
 		  $$
 		  
A result due to Ko{\l}odziej  
(see \cite[End of the proof of Theorem 3]{Kol96}, see also \cite[Theorem 2.8]{DK14})  ensures that $U^j(t,\cdot)$ uniformly converges to $V(t,\cdot)$ and $(dd^c U^j(t,\cdot))^n$ converges  in the sense of positive measures  to $(dd^c V(t,\cdot))^n$. 
Thus $dt \wedge (dd^c U^j)^n$ weakly converges in $\Omega_T$ to $dt\wedge (dd^c V)^n$ (see the proof of Proposition \ref{prop: dt MA converge}). Hence $V$ solves \eqref{eq: CMAF}  in $\Omega_T$. Lemma \ref{lem: super-barrier Dirichlet} and Corollary \ref{cor: stability of boundary inequality} ensure that  $V^*\big|_{\partial_0 \Omega_T}\leq h$. Thus $V\leq U$. 
 		   
 		   \smallskip

 		To prove that $U\leq V$ we now use a perturbation argument 
 		following an idea of Ko{\l}odziej \cite{Kol96} 
 		(see also \cite{GLZ_stability}). 
 		For each $j$ let $\theta_j$ be the unique continuous psh  function in $\bar{\Omega}$, 
 		vanishing on $\partial {\Omega}$ such that $(dd^c \theta_j)^n=|g_j-g|dV$. It  follows from \cite{Kol98} that 
 		 \[
 		\lim_{j\to +\infty} \sup_{\bar{\Omega}} |\theta_j| =0. 
 		 \]
 Fix $0<S<T$, $\varepsilon>0$ small enough and set, for $(t,z) \in \Omega_S$,
	$$
	W^{j} (t,z):= W^{j,\varepsilon}(t,z):= U(t+\varepsilon, z)-\delta(\varepsilon)t + C(\varepsilon)  \theta_j(z), 
	$$
	where $\delta(\varepsilon)>0, C(\varepsilon)>0$ are constants   to be chosen in such a way that $\delta(\varepsilon)\to 0$ but $C(\varepsilon)$ may blow up as $\varepsilon\to 0$. The goal is to prove that $W^j\in \mathcal{S}_{h,g_j,F} (\Omega_S)$.   	It follows from Lemma \ref{lem: uniform convergence} that $U_t$ uniformly converges on $\bar{\Omega}$ to $h_0$, ensuring that
	$$
	b(\varepsilon) : = \sup_{\partial_0 \Omega_S} |U(t+\varepsilon,z) -h(t,z)|  \stackrel{\varepsilon\to 0}{\longrightarrow} 0. 
	$$
	
	A direct computation shows that 
	\begin{flalign*}
	 (dd^c W^j)^n & \geq   (dd^c U(t+\varepsilon,\cdot))^n + C(\varepsilon)^n (dd^c \theta_j)^n \\
	&\geq   e^{\partial_{t} U(t+\varepsilon,\cdot) + F(t+\varepsilon,\cdot,U(t+\varepsilon,\cdot))} g(z) dV+ C(\varepsilon)^n |g-g_j|dV.
	\end{flalign*}
	By the Lipschitz condition \eqref{eq: Lip F} on $F$ we can write
	\begin{flalign*}
			| F(t+\varepsilon,\cdot,  U(t+\varepsilon,\cdot)) -   F(t, \cdot, U(t+\varepsilon,\cdot)) | 
			  \leq  \varepsilon \kappa_F. 
	\end{flalign*}
	Since $r\mapsto F(t,z,r)$ is increasing,  
	$$
	F(t,\cdot,U(t+\varepsilon,\cdot)) \geq  F(t,\cdot, W^j(t,\cdot)) - A \varepsilon,
	$$	 
	where $A>0$ depends on $\kappa_F, M_U$. We choose  $\delta(\varepsilon):=  b(\varepsilon)+ A \varepsilon$. Then 
	\begin{flalign*}
	  (dd^c W^j)^n  & \geq 
	   e^{ \partial_{t} U(t+\varepsilon,\cdot) + F(t, \cdot, W^j(t,\cdot)) - A \varepsilon}g(z) dV + C(\varepsilon)^n |g-g_j|dV \\
	 & \geq  e^{ \partial_{t} W^j(t,\cdot) + F(t, \cdot, W^j(t,\cdot))}g(z) dV + C(\varepsilon)^n |g-g_j|dV
	\end{flalign*}
	and $W^j\big |_{\partial_0 \Omega_S} \leq h$.
We now choose  
	$$
	C(\varepsilon):=  \left( \sup_{\Omega_S} \exp\left \{  \partial_{t} U(t+\varepsilon,z) + F(t, z, U(t+\varepsilon,z)) \right \} \right)^{1/n}  <+\infty.
	$$
	Since $r\mapsto F(t,z,r)$ is increasing  we obtain
	\begin{flalign*}
	  (dd^c W^j)^n  & \geq  e^{\partial_{t} W^j(t,\cdot) + F(t, \cdot, W^j(t,\cdot) )}g dV  + e^{\partial_{t} W^j(t,\cdot) + F(t, \cdot, W^j(t,\cdot) )} |g-g_j| dV \\
	& \geq   e^{\partial_{t} W^j(t,\cdot) + F(t, \cdot, W^j(t,\cdot)) }g_j dV. 
	\end{flalign*} 
Thus $W^j \in \mathcal{S}_{h,g_j,F} (\Omega_S)$. Together with Proposition \ref{pro: identity principle} this yields
	\begin{equation}
		\label{eq: compare U_j and U 1}
		W^{j,\varepsilon}  \leq U_{h,g_j,F, \Omega_T}, \ \text{for all} \ (t,z)\in \Omega_S.
	\end{equation}
In \eqref{eq: compare U_j and U 1} we first let $j\to +\infty$ and then $\varepsilon\to 0$ to arrive at $U\leq V$. Hence $U=V$ is a pluripotential solution to the parabolic Monge-Amp\`ere equation \eqref{eq: CMAF} with boundary data $h$.  
	
	\smallskip

		\noindent{\it Step 3.} We finally remove the continuity assumption on $h_0$. 
	Using Lemma \ref{lem: continuous approximation}  we find  a sequence $h^j$ of continuous Cauchy-Dirichlet boundary data for $\Omega_T$ such that $h^j=h$ on $[0,T[\times \partial\Omega$ and $h^j$ decreases pointwise to $h$. 
	 The previous step ensures that $U^j:= U(h^j,g,F)$ solves \eqref{eq: CMAF}.  
	 Theorem \ref{thm: U is local Lip in t} and Theorem \ref{thm: U is local semiconcave} provide uniform 
	 concavity constants for $U^j$. Since $h^j$ decreases to $h$,  $U\leq U^j$ decreases to some $V\in \mathcal{P}(\Omega_T)$. We thus have $V^*\big |_{\partial_0 \Omega_T}\leq h$, and  Proposition \ref{pro: convergence semiconcave} and Proposition \ref{prop: dt MA converge} reveal that  $V$ solves  \eqref{eq: CMAF}.  
Thus $V$ is a candidate defining $U$, hence $U=V$. 
	 \end{proof}

  We are now ready to prove a general  existence result.  
  Here $T$ may take the value $+\infty$. 
   We assume that, for each $0<S<T$, there exists a constant $C(S)>0$ such that for all $(t,z) \in ]0,S] \times \partial \Omega$,
  \begin{equation}
  	\label{eq: general condition h}
   t|\partial_t h(t,z)|\leq C(S) \, \, \, ; \, \, \ t^2 \partial_t^2 h(t,z)\leq C(S).
  \end{equation}
 \begin{theorem} \label{thm: U is solution general}
 If $h$ satisfies \eqref{eq: general condition h}  then $U:=U_{h,g,F}$  is a pluripotential solution to the Cauchy-Dirichlet problem for  \eqref{eq: CMAF} in $\Omega_T$ with boundary condition $h$. Moreover, $U$ is continuous in $]0,T[\times \bar{\Omega}$ and locally uniformly semi-concave in $t\in ]0,T[$. 
 
  In particular, if $h_0$ is continuous on $\bar{\Omega}$ then $U$ is continuous on $[0,T[\times \bar{\Omega}$.
 \end{theorem}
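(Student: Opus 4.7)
The plan is to reduce Theorem \ref{thm: U is solution general} to Proposition \ref{pro: U is solution h control 0 and T} by restricting to each $\Omega_S$ with $S < T$ and then letting $S \to T$. Fix $S \in ]0,T[$. The restriction $h|_{\partial_0 \Omega_S}$ is a Cauchy-Dirichlet boundary data for $\Omega_S$ satisfying the uniform conditions \eqref{eq: local Lipschitz condition h} and \eqref{eq: local semiconcave condition h} on $\Omega_S$ with constant $\kappa_h = C_h = C(S)$. Hence Proposition \ref{pro: U is solution h control 0 and T} applies on $\Omega_S$, yielding that $V_S := U_{h,g,F,\Omega_S}$ is a pluripotential solution to the Cauchy-Dirichlet problem on $\Omega_S$, continuous in $]0,S[ \times \bar\Omega$, and locally uniformly semi-concave in $t \in ]0,S[$.

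The key identification to establish is $U|_{\Omega_S} = V_S$. The inequality $U \leq V_S$ on $\Omega_S$ is immediate: any $u \in \mathcal S_{h,g,F}(\Omega_T)$ restricts to a candidate in $\mathcal S_{h,g,F}(\Omega_S)$, since $\partial_0 \Omega_S \subset \partial_0 \Omega_T$. For the reverse inequality I adapt the argument of Proposition \ref{pro: identity principle}: given $u \in \mathcal S_{h,g,F}(\Omega_S)$ and a slice-wise subsolution point $t_0 \in ]0,S[$ provided by Proposition \ref{pro: subsolution slice}, extend $u$ to $\Omega_T$ by $v(t,z) = u(t,z)$ for $t \leq t_0$ and $v(t,z) = u(t_0,z) - A(t-t_0)$ for $t \geq t_0$. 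Unlike in Proposition \ref{pro: identity principle}, the constant $A$ cannot be chosen as a global Lipschitz constant of $h$ on $[t_0,T[$ when $T = +\infty$. Instead, I combine the local bound $|h(t_0,\zeta) - h(t,\zeta)| \leq (C(t_0+1)/t_0)(t-t_0)$ for $t\in [t_0,t_0+1]$ coming from \eqref{eq: general condition h} with the crude bound $h(t_0,\zeta) - h(t,\zeta) \leq 2M_h$ for $t \geq t_0+1$; for the subsolution property on $[t_0,T[ \times \Omega$, the cost $F(t,z,v) - F(t_0,z,u(t_0,z))$ is controlled using the global $L^\infty$ bound $M_F$ on $F$ (rather than a global Lipschitz constant), combined with monotonicity of $F$ in $r$. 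A single finite choice of $A$ (depending on $t_0$, $M_h$, $M_F$ and $\sup_\Omega|\partial_t u(t_0,\cdot)|$) makes $v\in \mathcal S_{h,g,F}(\Omega_T)$, so $u\leq U$ on $\Omega_{t_0}$. Letting $t_0\to S$ via Proposition \ref{pro: subsolution slice} yields $V_S\leq U$ on $\Omega_S$.

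Once $U = V_S$ on $\Omega_S$ is established, all remaining conclusions follow by direct application of previously proved results on each $\Omega_S$: $U$ solves \eqref{eq: CMAF} on every $\Omega_S$ and therefore on $\Omega_T$; the Cauchy-Dirichlet boundary conditions \eqref{eq: Dirichlet condition} and \eqref{eq: Cauchy condition} follow from Theorem \ref{thm: U is subsolution}; continuity of $U$ on $]0,T[ \times \bar\Omega$ is inherited from the continuity of each $V_S$ on $]0,S[\times \bar\Omega$ (Step 2 of the proof of Proposition \ref{pro: U is solution h control 0 and T}); local uniform semi-concavity in $t\in ]0,T[$ comes from Theorem \ref{thm: U is local semiconcave} applied on each $\Omega_S$, with semi-concavity constants depending on $S$ through \eqref{eq: constant C U}. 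The final clause, asserting continuity up to $t=0$ when $h_0\in \mathcal C(\bar\Omega)$, is then a direct application of Lemma \ref{lem: uniform convergence}.

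The main obstacle is the identity principle step: the original proof of Proposition \ref{pro: identity principle} relies on a global Lipschitz constant $\kappa_J(h)$ on $J=[t_0,T[$ which need not exist when $T=+\infty$. The ad hoc bookkeeping sketched above, leveraging boundedness of $h$ and $F$ on the tail $[t_0+1,T[$, replaces this global bound. Every other ingredient of the theorem reduces immediately to the finite-horizon results of Section \ref{sect: Perron envelope boundary value}, Section \ref{sect: time regularity} and the preceding Proposition \ref{pro: U is solution h control 0 and T} applied on the shrinking time intervals $[0,S]$.
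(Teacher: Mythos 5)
Your argument is correct but takes a genuinely different route from the paper at the key step, namely establishing that $U:=U_{h,g,F,\Omega_T}$ coincides with $U_{h,g,F,\Omega_S}$ on each $\Omega_S$. You correctly observe that the proof of Proposition \ref{pro: identity principle} cannot be copied verbatim here, since under \eqref{eq: general condition h} the Lipschitz constant $\kappa_{[t_0,T[}(h)$ may be infinite, and you patch this by extending subsolutions from $\Omega_S$ to $\Omega_T$ using $L^\infty$ bounds on $h$ and $F$ over the tail $[t_0+1,T[$ in place of a global Lipschitz constant. This works, and it shows a genuine understanding of where the earlier argument breaks. The paper, however, avoids the obstacle entirely: it invokes Proposition \ref{pro: identity principle} only between pairs of \emph{finite} intervals, obtaining $U^{S_1}=U^{S_2}$ on $\Omega_{S_1}$ for $0<S_1<S_2<T$ (where that proposition's hypothesis is available with constant $C(S_2)$), and then glues the $U^S$ into a single function $V$ on $\Omega_T$ solving \eqref{eq: CMAF} with the correct Cauchy-Dirichlet boundary data. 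The identity $U=V$ then follows from two one-line observations --- every candidate for $U$ restricts to a candidate for $U^S=V|_{\Omega_S}$, so $U\leq V$, and $V$ is itself a candidate for $U$, so $V\leq U$ --- without ever extending a subsolution past a finite time horizon. The paper's route is shorter and sidesteps all the ad hoc tail estimates you needed. A secondary presentational point: you credit ``Step 2 of the proof of Proposition \ref{pro: U is solution h control 0 and T}'' for continuity of $U$ on $]0,T[\times\bar\Omega$, but the statement of that proposition does not assert continuity of the solution, so you would have to extract an intermediate claim from its proof. The paper re-derives continuity directly: for a.e. $t$ the slice equation $(dd^c U_t)^n = e^{\partial_t U_t+F(t,\cdot,U_t)}g\,dV$ holds with $\partial_t U$ locally bounded, $g\in L^p$, and continuous Dirichlet data $h_t$, so Ko{\l}odziej's theorem gives $U_t\in\mathcal C(\bar\Omega)$, and local Lipschitz regularity in $t$ upgrades this to joint continuity on $]0,T[\times\bar\Omega$.
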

 
\begin{proof}
	For $S\in ]0,T[$ we define  
	$U^S:= U_{h,g,F,\Omega_S}$. Proposition 
	 \ref{pro: U is solution h control 0 and T} ensures that $U^{S}$ solves
	  \eqref{eq: CMAF} with $U^S=h$ on $\partial_0 \Omega_S$.  It follows 
	  from Proposition \ref{pro: identity principle} that, for $0<S_1<S_2<T$, 
	  $U^{S_1}=U^{S_2}$ on $\Omega_{S_1}$.  Letting $S\to T$ we obtain
	   a function $V\in \mathcal{P}(\Omega_T)$ which solves 
	   \eqref{eq: CMAF} and satisfies $V=h$ on $\partial_0 \Omega_T$. 
	   Obviously $U\leq U^S$, for all $S\in ]0,T[$, hence $U\leq V$. But $V$ 
	   is also a candidate defining $U$, hence $V\leq U$. Therefore $V=U$ solves \eqref{eq: CMAF} in $\Omega_T$. Moreover, by Theorem \ref{thm: U is local Lip in t} and Theorem \ref{thm: U is local semiconcave}, $U^S$ is locally uniformly Lipschitz and semiconcave in $t\in ]0,S[$, hence so is $U$. 	   
	    
	It follows from Proposition \ref{pro: subsolution slice} and Remark \ref{rem: supersolution slice} that 
$$
(dd^c U_t)^n =e^{\partial_t U_t + F(t,\cdot,U_t)}gdV
$$
for almost every $t\in ]0,T[$.
	Since $\partial_t U$ is locally bounded  and $h_t$ is continuous on $\partial{\Omega}$ for all $t\in ]0,T[$,   \cite{Kol98} ensures that $U_t$ is continuous on $\bar{\Omega}$ for almost all $t\in ]0,T[$. Since $U$ is locally uniformly Lipschitz in $t$ we infer that $U$ is continuous in $]0,T[\times \bar{\Omega}$. 
	
	If $h_0$ is continuous on  $\bar{\Omega}$ then Theorem \ref{thm: U is subsolution} and the continuity of $U(t,\cdot)$ 
	(for each $t\in ]0,T[$ fixed)
	ensure that $U$ is continuous on $[0,T[\times \bar{\Omega}$. 
\end{proof}

\subsection{Uniqueness}\label{sect: uniqueness}

We have proved in Section \ref{subsect: pseudoconvex existence} the existence of a pluripotential solution  to \eqref{eq: CMAF} which is 
 locally uniformly semi-concave in $t$.
Our next goal is to prove that this is the unique such solution :

\begin{theorem}
\label{thm: parabolic comparison principle}
Let $\Phi, \Psi \in \mathcal{P}(\Omega_T)\cap L^{\infty}(\Omega_T)$ with boundary data $h_{\Phi}, h_{\Psi}$. Assume that 
\begin{enumerate}
\item $\Psi$ is locally uniformly semi-concave in $t\in ]0,T[$;
\item $\Phi$ is a subsolution while $\Psi$ is a supersolution to \eqref{eq: CMAF} in $\Omega_T$;
\item $h_{\Phi}$ satisfies \eqref{eq: general condition h}. \label{item hPhi}
\end{enumerate}  
Then $h_{\Phi}\leq h_{\Psi} \Longrightarrow \Phi \leq \Psi$.
\end{theorem}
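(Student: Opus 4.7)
The strategy is contradiction by way of a scaling perturbation of $\Phi$ combined with slice-wise elliptic comparison. First I would use the hypothesis \eqref{eq: general condition h} on $h_\Phi$ to promote $\Phi$ to a family of \emph{strictly improved} subsolutions $\Phi^s$, $0 < s < 1$ close to $1$: mimicking the construction in the proof of Theorem~\ref{thm: U is local Lip in t}, set
$$
\Phi^s(t,z) := s^{-1}\Phi(st,z) - C(1-s)(t+1) + (1-s)(\rho(z) - C'),
$$
with $C, C'$ chosen (using \eqref{eq: local Lipschitz condition h reformulation} and the Lipschitz/monotonicity properties of $F$) so that $\Phi^s$ is a pluripotential subsolution on $\Omega_{S}$ for each $S < T/s$, and such that its boundary data $h_{\Phi^s}$ satisfies $h_{\Phi^s} + \eta(s) \leq h_\Psi$ on $\partial_0\Omega_{S}$ for some $\eta(s) > 0$ with $\eta(s) \to 0$. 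It will suffice to show $\Phi^s \leq \Psi$ on $\Omega_{S}$ and then send $s \to 1$ and $S \to T$.

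Assume now for contradiction that $\sup_{\Omega_{S}}(\Phi^s - \Psi) > 0$. By construction $\Phi^s < \Psi$ in a neighborhood of $\partial_0 \Omega_{S}$, so for every $t$ the open set $E_t := \{\Phi^s_t > \Psi_t\}$ is relatively compact in $\Omega$ and $\liminf_{z \to \partial\Omega}(\Psi_t - \Phi^s_t) \geq \eta(s) > 0$. For almost every $t$, Proposition \ref{pro: subsolution slice} gives the slice subsolution inequality for $\Phi^s$, while the semi-concavity of $\Psi$ in $t$ together with Remark \ref{rem: supersolution slice} gives for every $t$ the slice supersolution inequality
$$(dd^c \Psi_t)^n \leq e^{\partial_t^- \Psi(t,\cdot) + F(t,\cdot,\Psi_t)} g\, dV,$$
where $\partial_t^- \Psi$ is upper semi-continuous in view of Lemma~\ref{lem: left and right derivative}. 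Applying the Bedford--Taylor comparison principle on $E_t$ yields
$$\int_{E_t} e^{\partial_t \Phi^s + F(t,\cdot,\Phi^s)} g\, dV \leq \int_{E_t} e^{\partial_t^- \Psi + F(t,\cdot,\Psi)} g\, dV,$$
and since $F$ is increasing in $r$ and $\Phi^s > \Psi$ on $E_t$, this reduces to
$$\int_{E_t} e^{\partial_t \Phi^s} \tilde g\, dV \leq \int_{E_t} e^{\partial_t^- \Psi} \tilde g\, dV, \qquad \tilde g := e^{F(t,\cdot,\Psi)} g.$$
The convexity inequality $e^a - e^b \geq e^b(a-b)$ then gives
$$\int_{E_t} \bigl(\partial_t \Phi^s - \partial_t^- \Psi\bigr)\, d\nu_t \leq 0, \qquad d\nu_t := e^{\partial_t^- \Psi} \tilde g\, dV.$$

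The final step is to convert this slice-wise integral inequality into a Gronwall-type ODE for a scalar quantity controlling $\sup_\Omega(\Phi^s - \Psi)_+$. Following the spirit of \cite{GLZ_stability,DiNezza_Lu_2017KRflow}, I would consider the functional
$$f(t) := \int_\Omega \bigl(\Phi^s(t,\cdot) - \Psi(t,\cdot)\bigr)_+^{\,k}\, \tilde g\, dV$$
for a suitable exponent $k$, compute $f'(t)$ (using the local uniform Lipschitz bounds in $t$ coming from the hypotheses, which let us justify differentiation under the integral), and combine with the integral inequality above and a uniform lower bound on $\partial_t^- \Psi$ to derive
$f'(t) \leq A f(t)$
for a constant $A$ depending only on the data. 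Since $f(0) = 0$ by the strict boundary inequality at $t = 0$ (and $\Psi \geq h_\Psi \geq h_{\Phi^s} + \eta(s) > \Phi^s$ near $t=0$), Gronwall forces $f \equiv 0$, contradicting $\sup(\Phi^s - \Psi) > 0$. Letting $s \to 1$ then gives $\Phi \leq \Psi$.

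The main obstacle is precisely this last ODE step: extracting a usable pointwise or global inequality for $\sup_\Omega(\Phi^s - \Psi)_+$ from the Monge-Ampère integral inequality. The asymmetry of the hypotheses (only $\Psi$ is semi-concave, and only $h_\Phi$ satisfies \eqref{eq: general condition h}) is what makes the scaling perturbation on the $\Phi$-side and the use of $\partial_t^- \Psi$ on the $\Psi$-side necessary. The uniqueness statement is then immediate: if $u_1, u_2$ are two semi-concave solutions with the same data $h$, the comparison applies in both directions to give $u_1 = u_2$.
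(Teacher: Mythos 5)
Your strategy (scaling perturbation $\Phi^s$, slice-wise Bedford--Taylor comparison, then a Gronwall argument) diverges from the paper's approach, which is a pointwise maximum-point argument: the paper assumes $W = \Phi - \Psi - 2\varepsilon t$ attains an interior positive maximum at $(t_0,z_0)$, uses the one-sided time derivatives and the semi-continuity of $\partial_t^\pm\Psi$ from Lemma~\ref{lem: left and right derivative} to propagate the inequality $\partial_t\Phi \geq \partial_t^-\Psi + \varepsilon$ to a small ball around the maximum set $K$, and then combines the elliptic comparison principle with the domination principle to reach a contradiction. Before this, the paper separately regularizes $\Phi$ by convolving in the scaling parameter (using Jensen and the result of \cite{Guedj_Lu_Zeriahi_2017subsolution}) so the max-point lemma can be applied, and it establishes in Lemma~\ref{lem: CP second case} a quantitative lower bound $\Psi(t,z) \geq h_\Psi(0,z) - c(t)$ to handle boundary behavior at $t=0$.

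The gap in your proposal is exactly where you flag it, and it is not a small one. The inequality
$\int_{E_t} (\partial_t\Phi^s - \partial_t^-\Psi)\,d\nu_t \leq 0$
is weighted by $d\nu_t = e^{\partial_t^-\Psi}\tilde g\,dV$, but the natural derivative of a functional such as $f(t) := \int_\Omega (\Phi^s_t - \Psi_t)_+^k\,g\,dV$ is weighted by $g\,dV$ (or, if you absorb the weight into $f$, you must differentiate the $t$-dependent factor $e^{\partial_t^-\Psi + F(t,\cdot,\Psi)}$, which is only semi-concavity-regular and introduces uncontrolled terms). Since $\partial_t\Phi^s - \partial_t^-\Psi$ changes sign on $E_t$, knowing $\int_{E_t} (\ldots)\,d\nu_t \leq 0$ does not imply $\int_{E_t}(\ldots)\,g\,dV \leq 0$; the uniform bounds $c_1\,g\,dV \leq d\nu_t \leq c_2\,g\,dV$ only give $\int_{E_t}(\ldots)\,g\,dV \leq (c_2/c_1 - 1)\int_{E_t}(\ldots)_-\,g\,dV$, which has no sign. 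So $f' \leq A f$ does not follow from what you have derived, and there is no obvious way to fix the exponent $k$ or the weight to close the loop. This is precisely what the paper sidesteps by working pointwise at a maximum and invoking the domination principle on a small ball, where the sign issue disappears because the pointwise inequality $\partial_t\Phi \geq \partial_t^-\Psi + \varepsilon$ holds on the entire ball.

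A secondary gap: you assert $f(0) = 0$ via "$\Psi \geq h_\Psi \geq h_{\Phi^s} + \eta(s) > \Phi^s$ near $t=0$," but a pluripotential supersolution in this paper's sense need not continuously attain its Cauchy data; this is the content of Lemma~\ref{lem: CP second case}, which must be proved (and the paper does so via a time-shift and the barrier of Lemma~\ref{lem: sub-barriers Cauchy}). Finally, the smoothness needed to justify "compute $f'(t)$" is also nontrivial: $\Phi^s$ is only parabolic (Lipschitz in $t$), and $\Psi$ is only semi-concave, so the differentiation under the integral is delicate and would require the measure-theoretic machinery of Lemma~\ref{lem:Radmacher} and Lemma~\ref{lem: left and right derivative} made quantitative. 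In sum, the scaling-and-comparison setup is correct, but the ODE step, which you rightly identify as the crux, is not a detail to be filled in later — as written, it fails, and a genuinely different closing argument (such as the paper's maximum-point plus domination principle) is needed.
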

Here $h_{\Phi},h_{\Psi}$ are Cauchy Dirichlet boundary data in $\Omega_T$. In particular, $h_{\Psi}(t,\cdot)$ is continuous on $\partial \Omega$, and the supersolution property of $\Psi$ implies that $\Psi$ is continuous in $]0,T[ \times \bar{\Omega}$ (see Theorem \ref{thm: U is solution general}).

An important consequence of this comparison principle is the following uniqueness result :

\begin{cor}\label{cor: uniqueness}
	 Assume that $\Phi,\Psi\in \mathcal{P}(\Omega_T)$ are two pluripotential solutions to \eqref{eq: CMAF}  with boundary values $h$ satisfying \eqref{eq: general condition h}.
	 If $\Phi,\Psi$ are locally uniformly semi-concave in $t\in ]0,T[$ then $\Phi=\Psi$ in $\Omega_T$. 
\end{cor}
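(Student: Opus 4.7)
The plan is to deduce this uniqueness statement as an immediate consequence of the comparison principle stated in Theorem \ref{thm: parabolic comparison principle}. The key observation is that the hypotheses of the corollary are perfectly symmetric in $\Phi$ and $\Psi$: both are pluripotential solutions (hence simultaneously sub- and super-solutions) to \eqref{eq: CMAF}, both have the same boundary data $h$, which satisfies \eqref{eq: general condition h}, and both are locally uniformly semi-concave in $t\in ]0,T[$. This symmetry is what allows the comparison principle to be applied in both directions.

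First I would apply Theorem \ref{thm: parabolic comparison principle} with $\Phi$ as the subsolution and $\Psi$ as the supersolution. The required assumptions are satisfied: $\Psi$ is locally uniformly semi-concave in $t$, $h_\Phi = h$ satisfies \eqref{eq: general condition h}, and trivially $h_\Phi = h = h_\Psi$ on $\partial_0 \Omega_T$. The comparison principle then yields $\Phi \leq \Psi$ in $\Omega_T$.

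Next, I would swap the roles and apply Theorem \ref{thm: parabolic comparison principle} again, this time with $\Psi$ as the subsolution and $\Phi$ as the supersolution. The hypotheses are again met, since $\Phi$ is locally uniformly semi-concave in $t$ by assumption, $h_\Psi = h$ still satisfies \eqref{eq: general condition h}, and $h_\Psi \leq h_\Phi$ holds with equality. We conclude $\Psi \leq \Phi$ in $\Omega_T$, which combined with the first inequality gives $\Phi = \Psi$ throughout $\Omega_T$.

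There is no genuine obstacle here; the entire substance of the corollary is contained in the comparison principle itself. The only point that merits explicit verification is that condition \eqref{item hPhi} of Theorem \ref{thm: parabolic comparison principle} (namely that the subsolution's boundary data satisfy \eqref{eq: general condition h}) is met in both directions — this follows because $\Phi$ and $\Psi$ share the same boundary data $h$, which is assumed to satisfy \eqref{eq: general condition h} in the hypothesis of the corollary.
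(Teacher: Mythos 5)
Your proof is correct and rests on the same key tool as the paper, namely Theorem \ref{thm: parabolic comparison principle}; you apply it twice directly, swapping the roles of $\Phi$ and $\Psi$ by symmetry, whereas the paper routes through the Perron envelope $U = U_{h,g,F,\Omega_T}$: it first notes $\Phi,\Psi \leq U$ (since both are subsolutions with boundary data $\leq h$), then applies the comparison principle with $U$ as subsolution and each of $\Phi,\Psi$ as supersolution to get $U \leq \Phi,\Psi$. Your direct double application is slightly leaner; the paper's detour has the minor extra payoff of identifying both solutions with the envelope $U$ itself.
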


\begin{proof} 
Let $U:= U_{h,g,F,\Omega_T}$. Then Theorem \ref{thm: U is solution general} ensures that $U$ solves  \eqref{eq: CMAF} and $U,\Phi,\Psi$ are continuous on $]0,T[\times \bar{\Omega}$. By definition, $\Phi,\Psi \leq U$. It follows from Theorem \ref{thm: parabolic comparison principle} that $U\leq \Phi,\Psi$, hence equality.
\end{proof}

 We first establish Theorem \ref{thm: parabolic comparison principle} under extra assumptions :

\begin{lemma}
	\label{lem: CP first case}
	With the same assumptions as in Theorem \ref{thm: parabolic comparison principle}, assume moreover that  $\Phi$ is  $\mathcal{C}^1$ in $t$, continuous on $]0,T[\times \bar{\Omega}$, and $\Psi$ is continuous on $[0,T[ \times \bar{\Omega}$. Then 
	$h_{\Phi}\leq h_{\Psi} \Longrightarrow \Phi \leq \Psi$.
\end{lemma}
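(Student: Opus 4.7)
The plan is a proof by contradiction, following ideas from \cite{GLZ_stability, DiNezza_Lu_2017KRflow} that combine a small perturbation of the supersolution with the slice-wise Bedford--Taylor comparison principle. Suppose $\sup_{\Omega_T}(\Phi - \Psi) > 0$. Define the perturbation $\Psi_\lambda(t,z) := \Psi(t,z) + \lambda t$ for $\lambda > 0$. Since $F$ is increasing in $r$ and $\partial_t\Psi_\lambda = \partial_t\Psi + \lambda$, $\Psi_\lambda$ is still a pluripotential supersolution of \eqref{eq: CMAF}, still dominates $h_\Phi$ on $\partial_0\Omega_T$, and remains continuous and semi-concave in $(t,z)$. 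It therefore suffices to prove $\Phi \leq \Psi_\lambda$ for every $\lambda > 0$ and then send $\lambda \to 0^+$.

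Assume for contradiction that $V := \{(t,z) \in \Omega_T : \Phi(t,z) > \Psi_\lambda(t,z)\}$ has positive Lebesgue measure. The first estimate is a level-set identity. The function $\chi := (\Phi - \Psi_\lambda)^+$ extends continuously to $[0,T[\times \bar\Omega$ and vanishes on $\partial_0\Omega_T$, using the Cauchy inequality $(\limsup_{t\to 0^+}\Phi_t)^* = h_\Phi(0,\cdot) \leq h_\Psi(0,\cdot) = \Psi_\lambda(0,\cdot)$ together with $\Phi = h_\Phi \leq h_\Psi \leq \Psi_\lambda$ on $]0,T[\times\partial\Omega$. The $\mathcal{C}^1$-regularity of $\Phi$ in $t$ and semi-concavity of $\Psi$ in $t$ make $\chi$ locally Lipschitz in $t$ with $\partial_t\chi = (\partial_t\Phi - \partial_t\Psi - \lambda)\mathbbm{1}_V$ a.e. Integrating against $g(z)\,dV(z)\,dt$ over $\Omega_T$ and using $\chi(0,\cdot) = 0$ and $\chi \geq 0$ yields
\begin{equation*}
\iint_V (\partial_t\Phi - \partial_t\Psi)\, g\, dV\, dt \,\geq\, \lambda \iint_V g\, dV\, dt \,>\, 0, \tag{$\star$}
\end{equation*}
where the strict positivity on the right uses that $V$ has positive Lebesgue measure and $g > 0$ a.e.

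The second estimate comes from the Bedford--Taylor comparison applied slice-wise. For a.e.\ $t$, Proposition \ref{pro: subsolution slice} (pointwise in $t$ thanks to $\mathcal{C}^1$-regularity of $\Phi$) and the supersolution analogue in Remark \ref{rem: supersolution slice} (a.e.\ in $t$ by semi-concavity of $\Psi$) give $(dd^c\Phi_t)^n \geq e^{\partial_t\Phi + F(\Phi)}g\, dV$ and $(dd^c\Psi_t)^n \leq e^{\partial_t\Psi + F(\Psi)}g\, dV$. Since $\Phi_t \leq \Psi_{\lambda,t}$ on $\partial\Omega$, the Bedford--Taylor principle gives $\int_{V_t}(dd^c\Phi_t)^n \leq \int_{V_t}(dd^c\Psi_{\lambda,t})^n = \int_{V_t}(dd^c\Psi_t)^n$. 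Chaining these bounds, using the monotonicity $F(\Phi) \geq F(\Psi)$ on $V$, and the tangent-line inequality $e^a - e^b \geq e^b(a-b)$, one obtains
\begin{equation*}
\iint_V (\partial_t\Phi - \partial_t\Psi)\, W\, dV\, dt \,\leq\, 0, \tag{$\star\star$}
\end{equation*}
where $W := e^{\partial_t\Psi + F(\Psi)} g$ satisfies $c g \leq W \leq C g$ on $V$ for uniform constants $0 < c \leq C$ (from $L^\infty$ bounds on $\partial_t\Psi$ and $F(\Psi)$ on the relevant compact set).

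The main obstacle is that $(\star)$ and $(\star\star)$ share the integrand $(\partial_t\Phi - \partial_t\Psi)$ but weight it by $g$ and $W$, respectively, with $W/g = e^{\partial_t\Psi + F(\Psi)}$ generally non-constant on $V$; a direct combination gives only $(C/c - 1)\iint_V(\partial_t\Psi - \partial_t\Phi)^+ g\, dV\, dt \geq \lambda \iint_V g\, dV\, dt$, which is not yet a contradiction for fixed $\lambda > 0$. The plan to close the argument is by localization: replace $V$ by the level set $V_\mu := \{\Phi - \Psi_\lambda > \mu\}$ for $\mu \nearrow M_\lambda := \sup_V(\Phi - \Psi_\lambda)$ and repeat both estimates with $\chi$ replaced by $(\chi - \mu)^+$. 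As $\mu \to M_\lambda$, $V_\mu$ shrinks to a neighborhood of the maximum set of $\Phi - \Psi_\lambda$; at a maximum point where $\partial_t\Psi$ is continuous, which is generic by Lemma \ref{lem: left and right derivative}, continuity of $F$ in $r$ forces the oscillation of $W/g$ on $V_\mu$ to vanish, so the ratio $C_\mu / c_\mu \to 1$ while the strict positivity on the right of $(\star)$ for $V_\mu$ persists. In the limit, the derived inequality forces $\lambda \leq 0$, the desired contradiction. Carrying out this limit rigorously---in particular verifying the Bedford--Taylor chain with only a.e.\ time regularity, selecting a maximum point inside $\{g > 0\}$, and estimating the oscillation of $W/g$ on $V_\mu$---is the delicate technical core of the proof and mirrors the corresponding step in \cite{GLZ_stability}.
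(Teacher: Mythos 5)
Your proposal takes a genuinely different route---integral inequalities $(\star)$, $(\star\star)$ plus a localization---from the paper, which works pointwise at a single time slice. The paper's argument: at the interior maximum $(t_0,z_0)$ of $W := \Phi - \Psi - 2\varepsilon t$ on $\Omega_S$, the time-max condition gives $\partial_t\Phi(t_0,z)\geq\partial_t^-\Psi(t_0,z)+2\varepsilon$ on the compact level set $K := \{W(t_0,\cdot)=W(t_0,z_0)\}$; lower semi-continuity of $\partial_t\Phi-\partial_t^-\Psi$ (continuous minus u.s.c.) spreads this to a ball $B\supset K$; this yields $(dd^c\Phi_{t_0})^n \geq e^{\varepsilon}(dd^c\Psi_{t_0})^n$ on $B$, and the Bedford--Taylor comparison plus the domination principle at the fixed slice $t_0$ give $\Phi_{t_0}+\min_{\partial B}(\Psi_{t_0}-\Phi_{t_0})\leq\Psi_{t_0}$ in $B$, which contradicts the spatial maximality at $z_0$. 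Because no time integration is performed, the weight mismatch you run into never arises.

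The gap you flag at the end is genuine, and the proposed localization does not close it. As $\mu\nearrow M_\lambda$ the sets $V_\mu$ shrink to the compact max set $M := \{\Phi-\Psi_\lambda=M_\lambda\}$, on which the time-max condition forces $\partial_t\Psi = \partial_t\Phi-\lambda$; but $\partial_t\Phi$ is merely continuous, not constant, on $M$, so the oscillation of $W/g = e^{\partial_t\Psi+F}$ on $V_\mu$ is bounded below (up to the $F$-term) by the oscillation of $\partial_t\Phi$ on $M$---a fixed quantity that need not vanish---and your mechanism $\lambda\leq\delta_\mu L$ does not yield a contradiction. The fall-back of ``selecting a max point where $\partial_t\Psi$ is continuous'' does not rescue this: Lemma \ref{lem: left and right derivative} gives two-sided continuity of $\partial_t\Psi$ only off an $\ell\otimes\mu$-negligible set, and the set $M$ could well be contained in that exceptional set, so the genericity claim does not apply. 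What semi-concavity actually provides---$\partial_t^-\Psi$ u.s.c. and $\partial_t^+\Psi$ l.s.c.---is one-sided, exactly enough for the paper's pointwise slice argument but not strong enough to force a small two-sided oscillation of $\partial_t\Psi$ near $M$. To salvage your scheme you would essentially have to collapse the time integration back to a single $t_0$, at which point you are rederiving the paper's proof.
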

	The first assumption (that $\Phi$ is $\mathcal{C}^1$ in $t$) means that $(t,z) \mapsto \partial_t \Phi(t,z)$ exists and it is continuous on $]0,T[ \times \Omega$. 
\begin{proof}
We fix $S\in ]0,T[$, $\varepsilon>0$ small enough, and prove that  
	$$
	\Phi\leq \Psi+ 2\varepsilon t \text{ in }  \Omega_{S}. 
	$$
	The function 
	$$
	[0,S]\times \bar{\Omega} \ni (t,z) \mapsto W(t,z):=\Phi(t,z)-\Psi(t,z)-2\varepsilon t
	$$ 
	is upper semi-continuous and bounded.
We are done if the maximum is attained on $\partial_0 \Omega_{S}$. We thus assume that $\max W$
is reached at some point $(t_0,z_0) \in ]0,S]\times {\Omega}$. We want to prove that $W(t_0,z_0)\leq 0$. Assume, by contradiction that it is not the case. 
Then the set 
	$$
	K:= \{z\in \Omega \setdef W(t_0,z) = W(t_0,z_0)\}
	$$
is compact  and
the maximum principle ensures that
	$$
	\partial_t \Phi(t_0,z) \geq \partial_t^{-} \Psi(t_0,z) +2\varepsilon, \ \text{for all}\ z\in K.
	$$
Since $\Psi$ is locally uniformly semi-concave in $t\in ]0,T[$ and continuous on $[0,T[\times \bar{\Omega}$, the left derivative $\partial_t^- \Psi(t,z)$ exists and it is upper semi-continuous in $]0,T[\times \Omega$.  
Hence we can find $r>0$  so small   that
	 $$
	\partial_t \Phi(t_0,z) \geq \partial_t^{-} \Psi(t_0,z) +\varepsilon, \ \text{for all}\ z\in B,
	$$
	where $B=B_r:=\{z\in \Omega \setdef {\rm dist}(z,K)<r\}$. 
	
	Since $\Phi$ is a subsolution (which is $\mathcal{C}^1$ in $t$) while $\Psi$ is a supersolution to \eqref{eq: CMAF}, 
	Proposition \ref{pro: subsolution slice} and Remark \ref{rem: supersolution slice}  ensure that
	$$
	(dd^c \varphi)^n \geq e^{F(t_0,z,\varphi(z))-F(t_0,z,\psi(z))+\varepsilon}(dd^c \psi)^n,
	$$
		setting $\varphi:=\Phi(t_0,\cdot), \psi:= \Psi(t_0,\cdot)$.
	 Since $\varphi$ and $\psi$ are continuous in $\Omega$, $F$ is increasing in $r$, and $\varphi(z)\geq \psi(z)+2\varepsilon t_0$ on $K$, up to shrinking $B$  we can assume that 
	$$
	(dd^c \varphi)^n \geq  e^{\varepsilon}(dd^c \psi)^n \ \text{in} \ B. 
	$$
Set now $\varphi_r:=\varphi + m_r$, where  $m_r:=\min_{\partial B}(\psi-\varphi)$. 
Since $\psi\geq \varphi_r$ on $\partial B$, the comparison principle \cite{Bedford_Taylor_1976Dirichlet}  yields
	\[
	\int_{\{\psi<\varphi_r\}\cap B}e^{\varepsilon} (dd^c \psi)^n \leq \int_{\{\psi<\varphi_r\}\cap B} (dd^c \varphi_r)^n \leq \int_{\{\psi<\varphi_r\}\cap B} (dd^c \psi)^n. 
	\]
Therefore $(dd^c \psi)^n$ does not charge the set $\{z\in B  \setdef  \psi(z)<\varphi_r(z)\}$ and the domination 
principle (see e.g. \cite[Proposition 1.2]{GLZ_stability}) 
  yields 
 $\varphi_r\leq \psi$  in $B$.  In particular 
	\begin{equation*}
		\varphi(z_0)-\psi(z_0) +\min_{\partial B}(\psi-\varphi)=\varphi_r(z_0) -\psi(z_0) \leq 0. 
	\end{equation*}
	
	Since $K\cap \partial B =\emptyset$, we obtain, for all $z\in \partial B$,  $W(t_0,z) < W(t_0,z_0)$ 
	hence
	$$
\f(z)-\p(z) < \f(z_0)-\p(z_0)	 \leq \max_{\partial B} (\f-\p),
	$$
	a contradiction.
	Thus   $\Phi\leq \Psi+2\varepsilon t$ 
	and we conclude by letting $\varepsilon\rightarrow 0$.
\end{proof}

We next establish an estimate for supersolutions to \eqref{eq: CMAF}. 

\begin{lemma}\label{lem: CP second case}
	Assume  $\Psi\in \mathcal{P}(\Omega_T)$ has boundary data $h_{\Psi}$.  If $\Psi$ is a pluripotential supersolution to \eqref{eq: CMAF} then for all $(t,z)\in \Omega_T$,
	$$
	\Psi(t,z) \geq h_{\Psi}(0,z) -c(t), 
	$$
	where $c(t)>0$ satisfies $\lim_{t\to 0^+} c(t)=0$.
\end{lemma}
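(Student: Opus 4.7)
The natural candidate for the lower barrier is the Cauchy sub-barrier
\[
v(t,z) := h_\Psi(0,z) + t(\rho(z) - C) + n\bigl[(t/T)\log(t/T) - t/T\bigr]
\]
constructed in Lemma \ref{lem: sub-barriers Cauchy}: it lies in $\mathcal{S}_{h_\Psi,g,F}(\Omega_T)$ and satisfies $v(t,z) \geq h_\Psi(0,z) - c_0(t)$ uniformly in $z \in \Omega$, for an explicit $c_0(t) \to 0$ as $t\to 0^+$. The lemma therefore reduces to proving $v \leq \Psi$ on $\Omega_T$; taking $c := c_0$ then concludes.

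The boundary and initial analysis is favorable. On $]0,T[\times \partial\Omega$ one has $v \leq h_\Psi = \Psi$ since $v$ is a subsolution with boundary data dominated by $h_\Psi$ and $\Psi$ attains its Cauchy--Dirichlet data. At $\{0\}\times\Omega$, the upper semi-continuous extension of $\Psi$ from Lemma \ref{lem: usc}, combined with the $L^1$ convergence $\Psi_t \to h_\Psi(0,\cdot)$ and the plurisubharmonicity of both limits, forces $\Psi(0,\cdot) = h_\Psi(0,\cdot) = v(0,\cdot)$. Thus for each fixed $\epsilon > 0$, any positive maximum of the upper semicontinuous function $v-\Psi-\epsilon t$ on $\Omega_T$ must occur at an interior point $(t_0,z_0) \in \, ]0,T[\times \Omega$. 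I would then adapt the argument of Lemma \ref{lem: CP first case}: since $v$ is $\mathcal{C}^1$ in $t$ by its explicit formula, the one-sided time inequality $\partial_t v(t_0,z_0) \geq \partial_t^-\Psi(t_0,z_0)+\epsilon$ should hold at such a hypothetical maximum, which combined with the pointwise spatial inequality $(dd^c v_{t_0})(z_0) \leq (dd^c \Psi_{t_0})(z_0)$, the sub/supersolution Monge--Amp\`ere inequalities, and the monotonicity of $F$ in $r$ yields the familiar contradiction.

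The main obstacle is that $\Psi$ is not assumed locally uniformly semi-concave in $t$, so $\partial_t^-\Psi$ is neither defined at every point nor upper semicontinuous; this was essential in Lemma \ref{lem: CP first case} to make sense of the time-derivative inequality in an open neighborhood of the hypothetical maximum. To circumvent this I would replace $\Psi$ by a time-regularized approximant $\Psi^\delta$ that is locally semi-concave in $t$, lies below $\Psi$, increases to $\Psi$ as $\delta \to 0$, and remains an approximate pluripotential supersolution. A natural construction is a Moreau--Yosida inf-convolution in $t$,
\[
\Psi^\delta(t,z) := \inf_{s \in [\max(0,t-\delta),\,\min(T,t+\delta)]}\bigl[\Psi(s,z) + \delta^{-1}(s-t)^2\bigr],
\]
applied to the pair $(v-\epsilon t, \Psi^\delta)$ via the Lemma \ref{lem: CP first case} style argument, followed by letting $\delta \to 0$ and then $\epsilon \to 0$. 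The truly delicate technical point is that inf-convolution in $t$ does not automatically preserve plurisubharmonicity in $z$, and must therefore be coupled with either an additional spatial regularization or a slice-wise Monge--Amp\`ere stability estimate (\`a la Ko{\l}odziej), with error terms controlled by the uniform boundedness of $\Psi$, the continuity of $F$, and the $L^p$ density bound on $(dd^c\Psi_t)^n$ coming from the supersolution inequality.
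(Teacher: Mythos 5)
The paper's proof of this lemma does \emph{not} regularize the supersolution $\Psi$: it shifts $\Psi$ in time. For each $\varepsilon>0$ it compares, on a truncated cylinder $\Omega_s$, the explicit $\mathcal{C}^1$ subsolution
\[
u^{\varepsilon}(t,z) := \Psi(\varepsilon,z)-\delta(s) + t(\rho(z)-C_1) + n\left(t\log (t/T) -t\right)
\]
with $w^{\varepsilon}(t,z):=\Psi(t+\varepsilon,z)+C_2\varepsilon t$, where the Cauchy--Dirichlet data $h^{\varepsilon}$ is taken to be the trace of $\Psi(\cdot+\varepsilon,\cdot)$. The crucial gain is that $w^{\varepsilon}$ is continuous on $[0,s]\times\bar\Omega$ (because $\Psi$ is continuous at positive times), so Lemma~\ref{lem: CP first case} applies verbatim; the Cauchy boundary issue at $t=0$ disappears entirely. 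Letting $\varepsilon\to 0$ then gives the stated bound.

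Your proposal proceeds differently: you keep the Cauchy sub-barrier $v$ anchored at $h_\Psi(0,\cdot)$ and compare it to $\Psi$ all the way down to $t=0$. This runs into two problems. First, the argument needs $\Psi$ to be lower semicontinuous at $\{0\}\times\Omega$ (so that the maximum of the u.s.c.\ quantity $v-\Psi-\epsilon t$ is attained on a compact set and is controlled on the Cauchy boundary), but the $L^1$ Cauchy condition does not give pointwise $\liminf_{t\to 0}\Psi(t,z)\geq h_\Psi(0,z)$; this is essentially the statement being proved, so the direct comparison is close to circular. The time-shift trick is precisely what breaks this circularity. Second, the Moreau--Yosida inf-convolution $\Psi^\delta$ you introduce is indeed semi-concave in $t$, but, as you yourself note, it destroys plurisubharmonicity of the slices $\Psi^\delta(t,\cdot)$ and hence the supersolution structure: the slicewise Monge--Amp\`ere operator $(dd^c\Psi^\delta_t)^n$ need not make sense, nor need it be bounded by $e^{\partial_t\Psi^\delta+F}gdV$. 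You acknowledge this but defer it to ``additional spatial regularization or a slice-wise Monge--Amp\`ere stability estimate,'' which is not a proof; in particular the standard spatial convolution does not commute with the inf in $s$ and would require a separate stability analysis of the parabolic comparison principle under such double regularization. This is a genuine gap. The paper's time-shift avoids both difficulties at once.
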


\begin{proof}
	Fix $0<S<T$. For $s>0$ small enough we set 
	\begin{equation}
		\label{eq: varepsilon(t)}
		\delta(s) := \sup \{ |h_{\Psi}(\tau,z) -h_{\Psi}(t,z)|  \setdef  z \in  \partial \Omega, \ t,\tau \in [0,S], \ |t-\tau|\leq  s \}.
	\end{equation}
	Since $h_{\Psi}$ is continuous on $[0,T[\times  \partial \Omega$, 
	we have $\lim_{s\to 0^+} \delta(s) =0$.
	\smallskip
	
Fix $s\in ]0,(T-S)/2[$. We are going to prove that
	\[
	\Psi(s,z) \geq  h_{\Psi}(0,z)  -\delta(s) +  s(\rho(z)-C) + n (s\log (s/T) -s), 
	\]
	where  $\rho$ is defined in \eqref{eq: rho} and $C$ is a uniform constant. 	
	
	Fix $\e\in ]0,s]$ and let $h^{\varepsilon}$ denote the restriction of $(t,z)\mapsto \Psi(t+\e,z)$ on $\partial_0 \Omega_{s}$.  Then $h^{\varepsilon}$ is a continuous boundary data on $\Omega_{s}$.  Set, for $(t,z)\in \Omega_s$,
	$$
	u^{\varepsilon}(t,z) := \Psi(\varepsilon,z)-\delta(s) +  t(\rho(z)-C_1) + n (t\log (t/T) -t), 
	$$
	where $C_1$ is a positive constant. By definition of $\delta(s)$ we have
	$$
	u^{\varepsilon}(t,z) \leq \Psi(t+\e,z)  = h^{\varepsilon}(t,z), \ \text{for all}\ (t,z)\in \partial_0 \Omega_s. 
	$$ 
Arguing as in the proof of Lemma \ref{lem: sub-barriers Cauchy} we see that for $C_1>0$ big enough (depending on $M_F$), $u^{\varepsilon}$ is a  pluripotential subsolution to \eqref{eq: CMAF} in $\Omega_{s}$.  Moreover, $u_{\varepsilon}$ is of class $\mathcal{C}^1$ in $t\in [0,s]$. On the other hand, a direct computation shows that, for $C_2>0$ large enough and under control (depending on $\kappa_F$), the function 
	$$
[0,s]\times \Omega \ni (t,z) \mapsto	w^{\varepsilon}(t,z) := \Psi(t+\varepsilon,z) +C_2\e t
	$$
	is a  pluripotential supersolution to \eqref{eq: CMAF} and $w^{\varepsilon} \geq  h^{\varepsilon}$ on $\partial_0 \Omega_{s}$. By assumption on $\Psi$,  $w^{\varepsilon}$ is continuous on $[0,s]\times \bar{\Omega}$. It thus follows from Lemma \ref{lem: CP first case} that $w^{\varepsilon} \geq u^{\varepsilon}$  on $[0,s]\times \Omega$.  
	We conclude by  letting $\varepsilon\to 0$.
\end{proof}

We next remove the continuity assumption on $\Psi$ in Lemma \ref{lem: CP first case}.

\begin{lemma}
	\label{lem: CP third case}
		With the same assumptions as in Theorem \ref{thm: parabolic comparison principle}, assume moreover that  $\Phi$ is  $\mathcal{C}^1$ in $t$ and continuous on $]0,T[\times \bar{\Omega}$. Then 
	$$
	h_{\Phi}\leq h_{\Psi} \Longrightarrow \Phi \leq \Psi.
	$$
\end{lemma}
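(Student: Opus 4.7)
The strategy is to reduce to Lemma \ref{lem: CP first case} by regularizing $\Psi$ at the Cauchy boundary $\{0\}\times\Omega$. Since $\Psi$ is a pluripotential supersolution with continuous lateral boundary data, it is continuous on $]0,T[\times \bar\Omega$ (see the comment after Theorem \ref{thm: parabolic comparison principle}), but it may fail to be continuous at $t=0$. I would therefore introduce a family of time-shifted supersolutions $\Psi^\varepsilon$ $(\varepsilon>0)$ which are continuous on $[0, T-\varepsilon[\times \bar\Omega$, apply Lemma \ref{lem: CP first case} to compare each $\Psi^\varepsilon$ with $\Phi$, and let $\varepsilon\to 0^+$.

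Concretely, fix $0<S<T$, let $A:=\kappa_F$ denote the Lipschitz constant of $F$ in the $t$-variable on the relevant compact set, let $c(\cdot)$ be the modulus from Lemma \ref{lem: CP second case} applied to $\Psi$, and let $\delta_S(\cdot)$ be the uniform modulus of continuity of $h_\Phi$ on the compact set $[0,S+1]\times \partial \Omega$. Set $\omega(\varepsilon):=c(\varepsilon)+\delta_S(\varepsilon)\to 0$ and, for $\varepsilon\in\,]0, T-S[$, define
\[
\Psi^\varepsilon(t,z) := \Psi(t+\varepsilon, z) + A\varepsilon\, t + \omega(\varepsilon), \qquad (t,z)\in \Omega_S.
\]
Then $\Psi^\varepsilon$ is continuous on $[0,S]\times\bar\Omega$, inherits local uniform semi-concavity in $t\in\,]0,S[$, and a direct computation using the Lipschitz estimate $F(t+\varepsilon,z,r)\leq F(t,z,r)+\kappa_F\varepsilon$ together with the monotonicity of $F(t,z,\cdot)$ and the bound $\Psi^\varepsilon(t,z)\geq \Psi(t+\varepsilon,z)$ shows that $\Psi^\varepsilon$ is a pluripotential supersolution to \eqref{eq: CMAF} in $\Omega_S$. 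Next I would check that $h_{\Psi^\varepsilon}\geq h_\Phi$ on $\partial_0 \Omega_S$: on $]0,S]\times\partial\Omega$,
\[
h_{\Psi^\varepsilon}(t,\zeta)= h_\Psi(t+\varepsilon,\zeta) + A\varepsilon t + \omega(\varepsilon) \geq h_\Phi(t+\varepsilon,\zeta)+\omega(\varepsilon)\geq h_\Phi(t,\zeta)+c(\varepsilon),
\]
and on $\{0\}\times\Omega$ Lemma \ref{lem: CP second case} yields
\[
h_{\Psi^\varepsilon}(0,z) = \Psi(\varepsilon,z) + \omega(\varepsilon) \geq h_\Psi(0,z) - c(\varepsilon) + \omega(\varepsilon) \geq h_\Phi(0,z).
\]
Applying Lemma \ref{lem: CP first case} with $\Omega_S$ in place of $\Omega_T$ to $\Phi$ and $\Psi^\varepsilon$ then gives $\Phi\leq \Psi^\varepsilon$ in $\Omega_S$.

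To conclude, the continuity of $\Psi$ on $]0,T[\times\bar\Omega$ implies $\Psi^\varepsilon(t,z)\to \Psi(t,z)$ pointwise on $\Omega_S$ as $\varepsilon\to 0^+$, hence $\Phi\leq \Psi$ on $\Omega_S$; letting $S\nearrow T$ yields the claim. The main obstacle is ensuring that the time-shifted $\Psi^\varepsilon$ remains a genuine supersolution: the monotonicity of $F$ in $r$ pushes in the \emph{wrong} direction to absorb the $O(\varepsilon)$-error introduced by the time translation of $F$, which is precisely why the linear correction $\kappa_F\varepsilon\, t$ is built into the definition of $\Psi^\varepsilon$, and why the constant correction $\omega(\varepsilon)$ must combine both the Cauchy-boundary modulus from Lemma \ref{lem: CP second case} and the modulus of continuity of $h_\Phi$ on the lateral boundary.
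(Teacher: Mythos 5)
Your proof is correct and follows essentially the same route as the paper: time-shift the supersolution $\Psi$ by $\varepsilon$, add a linear correction $A\varepsilon t$ (with $A\geq\kappa_F$) to absorb the $t$-Lipschitz error in $F$, add a vanishing constant correction to preserve the boundary inequality, invoke Lemma~\ref{lem: CP first case} on $\Omega_S$, and let $\varepsilon\to 0^+$ then $S\nearrow T$. The only cosmetic difference is that you build the constant correction from the modulus of continuity of $h_\Phi$ on the lateral boundary, whereas the paper uses the modulus $\delta(s)$ of $h_\Psi$ from \eqref{eq: varepsilon(t)}; both give $h_{\Psi^\varepsilon}\geq h_\Phi$ on $\partial_0\Omega_S$, so this is immaterial. (One small point of heuristics: the monotonicity of $F$ in $r$ does \emph{not} push in the wrong direction — shifting a supersolution upward only helps the inequality; it simply provides no quantitative gain, which is why the separate $A\varepsilon t$ correction is needed. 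Your computation itself is fine.)
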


\begin{proof}
Since $h_{\Psi}$ is continuous on $[0,T[\times \partial \Omega$, the proof of Theorem \ref{thm: U is solution general} shows that $\Psi$ is continuous in $]0,T[\times \bar{\Omega}$, but it may not be continuous on $[0,T[\times \bar{\Omega}$. 
	We use an idea in  \cite{DiNezza_Lu_2017KRflow}, exploiting the regularity of $\Psi$ at positive times close to zero.
	We fix $S\in ]0,T[$ and prove that $\Phi \leq \Psi$ on $\Omega_{S}$. 
	
	Fix $s\in ]0,(T-S)/2[$ and set, for $(t,z)\in [0,S]\times \bar{\Omega}$, 
	$$
	v(t,z) : =\Psi(t+s,z)+c(s) +\delta(s)+Ast, 
	$$
	where $\delta(s)$ is defined in \eqref{eq: varepsilon(t)},  $A>0$ is a constant, and $c(s)>0$ is as in Lemma \ref{lem: CP second case} (which ensures $\Psi(s,z) \geq h_{\Psi}(0,z) -c(s)$).  
	
 From the definition of $\delta(s)$ it follows that $v(t,z) \geq \Psi(t,z) = h_{\Psi}(t,z)$  on $[0,S]\times \partial \Omega$. 
For $A>0$ large enough (depending on $\kappa_F$), a direct computation shows that $v$ is a supersolution to \eqref{eq: CMAF}. 
	Since $v$ is continuous on $[0,S]\times \bar{\Omega}$, Lemma \ref{lem: CP first case}  then applies 
and yields $\Phi(t,z)\leq v(t,z)$ on $[0,S]\times \Omega$. 
We conclude by letting $s\to 0$.
\end{proof}

We are now ready to prove the comparison principle. 
 
\begin{proof}[Proof of Theorem \ref{thm: parabolic comparison principle}]  
We can assume without loss of generality that $\Phi=U_{h_{\Phi},g,F}$.   From assumption \eqref{item hPhi} and Theorem \ref{thm: U is solution general} we deduce that $U_{h_{\Phi},g,F}$ is continuous on $]0,T[\times \bar{\Omega}$. 
 We would like to apply Lemma \ref{lem: CP third case} but  $\Phi$ is a priori not $\mathcal{C}^1$ in $t$.
We are going to  regularize $\Phi$ by taking convolution  in $t$.
	
	Fix $0<S<T$. For $s>0$ near $1$ we set, for $(t,z)\in \Omega_S$,
	$$
	W^s(t,z):= s^{-1} \Phi(st),z) - C|s-1|(t+1).
	$$
If $C>0$ is large enough, the proof of Theorem \ref{thm: U is local Lip in t} ensures that $W^s\in \mathcal{S}_{h_{\Phi},g,F} (\Omega_S)$. Let $\{\chi_{\varepsilon}\}_{\varepsilon>0}$ be a family of smoothing kernels in $\mathbb{R}$ approximating the Dirac mass $\delta_0$. 
For $\varepsilon>0$ small enough we define
	\begin{equation}
		\label{eq: Phi epsilon}
		\Phi^{\varepsilon}(t,z) := \int_{\mathbb{R}} W^s(t,z) \chi_{\varepsilon}(s)ds. 
	\end{equation}
 We are going to prove that $\Phi^{\e}$ (or $\Phi^{\e}-O(\e)$) is again a subsolution and use the previous step to conclude.
 
 \smallskip
 
 Let ${\mathcal H}$ denote the space of hermitian positive definite matrix
$H$ that are normalized by $\det H=1$, and let $\Delta_H$ denote the Laplace operator 
\[
\Delta_H \f:=\frac{1}{n}\sum_{j,k=1}^n h_{jk} \frac{\partial^2 \f}{\partial z_j \partial \bar{z}_k}.
\]
Fix $H\in \mathcal{H}$. Since $W^s\in \mathcal{S}_{h_{\Phi},g,F} (\Omega_T)$,
Proposition \ref{pro: subsolution slice} and \cite[Main Theorem]{Guedj_Lu_Zeriahi_2017subsolution}  yield
 $$
 \Delta_H W^s(t,z) \geq \exp \left(\frac{\partial_t W^s(t,z) +F(t,z,W^s(t,z))}{n} \right) g(z)^{1/n}.
 $$
 By definition of $\Phi^{\varepsilon}$ we obtain, using the convexity of the exponential,
\begin{flalign*}
&\Delta_H  \Phi^{\varepsilon}(t,z) = \int_{\mathbb{R}} \Delta_H W^s(t,z) \chi_{\varepsilon}(s)ds 	\\
& \geq  g(z)^{1/n} \int_{\mathbb{R}} \exp \left(\frac{\partial_t W^s(t,z) +F(t,z,W^s(t,z))}{n} \right) \chi_{\varepsilon}(s)ds\\
&\geq g(z)^{1/n}\exp \left (\frac{1}{n}\left (\int_{\mathbb{R}} \left (\partial_t W^s(t,z) +F(t,z,W^s(t,z)) \right)\chi_{\varepsilon}(s)ds \right )\right ).
\end{flalign*}

\noindent{\it Step 1. } {\it To simplify we first treat the case when $F$ is convex in $r$}.
Thus
 \begin{flalign*}
&\Delta_H  \Phi^{\varepsilon}(t,z) = \int_{\mathbb{R}} \Delta_H W^s(t,z) \chi_{\varepsilon}(s)ds 	\\
& \geq g(z)^{1/n}\exp \left (\frac{1}{n}\left (\partial_t \Phi^{\varepsilon}(t,z) + F\left (t,z,\int_{\mathbb{R}} W^s(t,z)\chi_{\varepsilon}(s) ds  \right )\right )  \right )\\
& =g(z)^{1/n}\exp \left (\frac{1}{n}\left (\partial_t \Phi^{\varepsilon}(t,z) + F(t,z,\Phi^{\varepsilon}(t,z))  \right )  \right ).
\end{flalign*}
 
Using Proposition \ref{pro: subsolution slice} and \cite[Main Theorem]{Guedj_Lu_Zeriahi_2017subsolution} again, we infer   that $\Phi^{\varepsilon}$
is a subsolution to \eqref{eq: CMAF}  in $\Omega_S$. 

We now check that $(\Phi^{\varepsilon})^*-O(\varepsilon)\leq h_{\Phi}$ on $\partial_0 \Omega_S$. Indeed, for $z\in \partial\Omega$ we have $W^s(t,z)\leq h_{\Phi}(t,z)$, for all $s$, thus $\Phi^{\varepsilon}(t,z)\leq h_{\Phi}(t,z)$ for all $(t,z)\in [0,S]\times \partial \Omega$. It remains to check that $(\Phi^{\varepsilon})^*(0,z)\leq h_{\Phi}(0,z)$, for all $z\in \Omega$. It follows from Theorem \ref{thm: U is solution general} that $U_{h_{\Phi},g,F,\Omega_T}$ has boundary value $h_{\Phi}$, hence, for $C$ large enough
\[
\lim_{t\to 0} W^s(t,z)\leq h_{\Phi}(0,z) -C|s-1|, \ \text{for all}\ z \in \bar{\Omega}.
\]
From the definition of $\Phi^{\varepsilon}$ in \eqref{eq: Phi epsilon} it follows that 
$$
\lim_{t\to 0} \Phi^{\varepsilon}(t,z) \leq h_{\Phi}(0,z), \ \forall z\in \bar{\Omega}. 
$$
Hence $\Phi^{\varepsilon}-O(\varepsilon) t\in \mathcal{S}_{h_{\Phi},g,F} (\Omega_S)$.
Moreover, $\Phi^{\varepsilon}$ is of class $\mathcal{C}^1$ in $t\in ]0,S[$ and 
$\Phi^{\varepsilon}$ converges pointwise to $\Phi$ as $\varepsilon\to 0$. 
Using Lemma \ref{lem: CP third case} we obtain $\Phi^{\varepsilon}\leq \Psi$ in $\Omega_S$. 
The conclusion follows by letting $\varepsilon\to 0$.

\smallskip

\noindent{\it Step 2. }
{\it We now treat the  case when $F$ is merely uniformly semi-convex in $r$}.  
 It follows from \eqref{eq: local Lipschitz condition h reformulation} and Theorem \ref{thm: U is local Lip in t} that the functions $s\mapsto W^s(t,z)$, $(t,z)\in \Omega_S$, are uniformly Lipschitz in $[1/2,3/2]$. Thus for all $(t,z)\in \Omega_S$,
 \[
 |\Phi^{\varepsilon}(t,z) - \Phi(t,z) | \leq C \varepsilon,
 \]
 for some uniform constant $C$, hence
 \begin{equation}
 	\label{eq: W star square and W square star}
 	 \int_{\mathbb{R}} (W^s)^2(t,z) \chi_{\varepsilon} (s) ds -  \left (\int_{\mathbb{R}} W^s(t,z) \chi_{\varepsilon} (s) ds \right )^2 = O(\varepsilon). 
 \end{equation}
 
 Recall (assumption \eqref{eq: semi convex F}) that the function $r \mapsto F(t,z,r) + C_F r^2$ is convex 
 in a large interval $J\Subset \mathbb{R}$, for fixed $(t,z)\in \Omega_S$. Jensen's inequality    yields
 \begin{flalign*}
 	\int_{\mathbb{R}} & \left ( F(t,z,W^s(t,z))  + C_F (W^s(t,z))^2 \right ) \chi_{\varepsilon}(s) ds \\
 & \geq  F \left (t,z, \int_{\mathbb{R}} W^s (t,z) \chi_{\varepsilon}(s) ds  \right ) + C_F \left( \int_{\mathbb{R}} W^s (t,z) \chi_{\varepsilon}(s) ds \right)^2.  
 \end{flalign*}
 Using this and \eqref{eq: W star square and W square star}  we  obtain
 \begin{equation*}
 	\int_{\mathbb{R}}   F(t,z,W^s(t,z)) \chi_{\varepsilon}(s) ds - F\left (t,z,\int_{\mathbb{R}}W^s(t,z) \chi_{\varepsilon}(s) ds\right) \geq O(\varepsilon). 
 \end{equation*}

We  repeat the previous step to conclude that $\Phi^{\varepsilon} -O(\varepsilon) t \in \mathcal{S}_{h_{\Phi},g,F}(\Omega_S)$.
\end{proof}

\end{document}